\numberwithin{equation}{section}
\theoremstyle{plain}
\newtheorem{theor10}{Theorem}
\newenvironment{theor1}
  {\pushQED{\qed}\begin{theor10}}
  {\popQED\end{theor10}}
\newtheorem{prop10}[theor10]{Proposition}
\newenvironment{prop1}
  {\pushQED{\qed}\begin{prop10}}
  {\popQED\end{prop10}}
\newtheorem{cor10}[theor10]{Corollary}
\newenvironment{cor1}
  {\pushQED{\qed}\begin{cor10}}
  {\popQED\end{cor10}}
\newtheorem{lem10}[theor10]{Lemma}
\newtheorem{theor0}{Theorem}[section]
\newenvironment{theor}
  {\pushQED{\qed}\begin{theor0}}
  {\popQED\end{theor0}}
\newtheorem{lem0}[theor0]{Lemma}
\newenvironment{lem}
  {\pushQED{\qed}\begin{lem0}}
  {\popQED\end{lem0}}
\newtheorem{prop0}[theor0]{Proposition}
\newenvironment{prop}
  {\pushQED{\qed}\begin{prop0}}
  {\popQED\end{prop0}}
\newtheorem{cor0}[theor0]{Corollary}
\theoremstyle{definition}
\newtheorem{rems0}[theor0]{Remarks}
\newenvironment{rems}
  {\pushQED{\qed}\begin{rems0}}
  {\popQED\end{rems0}}
\newtheorem{rem0}[theor0]{Remark}
\newenvironment{rem}
  {\pushQED{\qed}\begin{rem0}}
  {\popQED\end{rem0}}
\theoremstyle{plain}
\newtheorem{as0}[theor0]{Assumption}
\newtheorem*{asn0*}{\assumptionnumber}
  \providecommand{\assumptionnumber}{}
\newenvironment{asn0}[2]
   {\renewcommand{\assumptionnumber}{Assumption \!#1 {\normalfont--- #2}}
    \begin{asn0*}
    \protected@edef\@currentlabel{{\normalfont#1}}}
   {\end{asn0*}}
\newenvironment{asn}
  {\pushQED{\qed}\begin{asn0}}
  {\popQED\end{asn0}}
\newenvironment{asn01}[1]
   {\renewcommand{\assumptionnumber}{Assumption \!#1}
    \begin{asn0*}
    \protected@edef\@currentlabel{{\normalfont#1}}}
   {\end{asn0*}}
\newcommand{\N}{\mathbb N}
\newcommand{\e}{\varepsilon}
\newcommand{\Pc}{\mathcal{P}}
\newcommand{\Fc}{\mathcal{F}}
\newcommand{\Ec}{\mathcal{E}}
\newcommand{\Kc}{\mathcal{K}}
\newcommand{\calC}{\mathcal{C}}
\newcommand{\calR}{\mathcal{R}}
\newcommand{\Lg}{\mathfrak{L}}
\newcommand{\Cg}{\mathfrak{C}}
\newcommand{\Kg}{\mathfrak{K}}
\newcommand{\Gg}{\mathfrak{G}}
\newcommand{\Hg}{\mathfrak{H}}
\newcommand{\Sc}{\mathcal S}
\newcommand{\Nb}{N}
\newcommand{\calM}{\mathcal M}
\newcommand{\Jc}{\mathcal J}
\newcommand{\Qc}{\mathcal Q}
\newcommand{\Lc}{\mathcal L}
\newcommand{\Gc}{\mathcal G}
\newcommand{\Qd}{Q_{L,\rho}}
\newcommand{\R}{\mathbb R}
\newcommand{\Z}{\mathbb Z}
\newcommand{\Ic}{\mathcal I}
\newcommand{\calH}{\mathcal H}
\newcommand{\pv}{\operatorname{p.v.}}
\newcommand{\diam}{\operatorname{diam}}
\newcommand{\dist}{\operatorname{dist}}
\newcommand{\Md}{\mathbb M}
\newcommand{\loc}{{\operatorname{loc}}}
\newcommand{\Id}{\operatorname{Id}}
\newcommand{\E}{\mathbb{E}}
\newcommand{\per}{{\operatorname{per}}}
\newcommand{\D}{\operatorname{D}}
\newcommand{\del}{\delta}
\newcommand{\Bb}{\bar{\mathbf B}}
\newcommand{\Bh}{\hat{\mathbf B}}
\newcommand{\Bt}{\tilde{\mathbf B}}
\newcommand{\Cc}{\bar{\mathbf C}}
\newcommand{\inter}{{\operatorname{int}}}
\newcommand{\Ld}{\operatorname{L}}
\newcommand{\supp}{\operatorname{supp}}
\newcommand{\Div}{{\operatorname{div}}}
\newcommand{\Sym}{{\operatorname{sym}}}
\newcommand{\Skew}{{\operatorname{skew}}}
\newcommand{\Tr}{\operatorname{tr}}
\newcommand{\step}[1]{\noindent \textit{Step} #1.}
\newcommand{\substep}[1]{\noindent \textit{Substep} #1.}
\newcommand{\Pm}{\mathbb{P}}
\newcommand{\pr}[1]{\mathbb{P}\left[ #1 \right]}
\newcommand{\prm}[1]{\mathbb{P}\big[ #1 \big]}
\newcommand{\expec}[1]{\mathbb{E}\left[ #1 \right]}
\newcommand{\expecm}[1]{\mathbb{E}\big[ #1 \big]}
\newcommand{\expecM}[1]{\mathbb{E}\bigg[ #1 \bigg]}
\newcommand{\var}[1]{\mathrm{Var}\left[#1\right]}
\newcommand{\cov}[2]{\mathrm{Cov}\left[#1;#2\right]}
\newcommand{\covM}[2]{\mathrm{Cov}\bigg[#1;#2\bigg]}
\title[On Einstein's effective viscosity formula]{On Einstein's effective viscosity formula}
\author[M. Duerinckx]{Mitia Duerinckx}
\address[Mitia Duerinckx]{Universit\'e Libre de Bruxelles, D\'epartement de Math\'ematique, 1050~Brussels, Belgium \& Universit\'e Paris-Saclay, CNRS, Laboratoire de Math\'ematiques d'Orsay, 91405~Orsay, France}
\email{mitia.duerinckx@ulb.be}
\author[A. Gloria]{Antoine Gloria}
\address[Antoine Gloria]{Sorbonne Universit\'e, CNRS, Universit\'e de Paris, Laboratoire Jacques-Louis Lions, 75005~Paris, France \& Institut Universitaire de France \& Universit\'e Libre de Bruxelles, D\'epartement de Math\'ematique, 1050~Brussels, Belgium}
\email{gloria@ljll.math.upmc.fr}
\begin{document}

\maketitle

\begin{abstract}
In his PhD thesis, Einstein derived an explicit first-order expansion for the effective viscosity of a Stokes fluid with a suspension of small rigid particles at low density.
His formal derivation relied on two implicit assumptions: (i) there is a scale separation between the size of the particles and the observation scale;
and (ii) at first order, dilute particles do not interact with one another.
In mathematical terms, the first assumption amounts to the validity of a homogenization result defining the effective viscosity tensor, which is now well understood.
Next, the second assumption allowed Einstein to approximate this effective viscosity at low density by considering particles as being isolated.
The rigorous justification is, in fact, quite subtle as the effective viscosity is a nonlinear nonlocal function of the ensemble of particles and as hydrodynamic interactions have borderline integrability.
In the present memoir, we establish Einstein's effective viscosity formula in the most general setting.
In addition, we pursue the low-density expansion to arbitrary order in form of a cluster expansion, where the summation of hydrodynamic interactions crucially requires suitable renormalizations. In particular, we justify a celebrated result by Batchelor and Green on the second-order correction and we explicitly describe all higher-order renormalizations for the first time.
In some specific settings, we further address the summability of the whole cluster expansion. Our approach relies on
a combination of combinatorial arguments, variational analysis, elliptic regularity, probability theory, and diagrammatic integration methods.

\bigskip\noindent
{\sc MSC-class:}
76T20; 35R60; 76M50; 35Q35; 76D03; 76D07; 60G55.
\end{abstract}

\setcounter{tocdepth}{1}
\tableofcontents

\vspace{-1cm}
\section{General overview}

\subsection{Historical context} 
At the dawn of the 20th century, the debate was still raging on the existence of atoms,
and Einstein's PhD thesis ``A New Determination of Molecular Dimensions''~\cite{Einstein-06} aimed to support the atomic theory.
This was the second of his five celebrated~1905 contributions and constitutes his most cited work.
The main part was devoted to the hydrodynamic derivation of a formula for the effective viscosity of a fluid with a dilute suspension of rigid particles: the so-called Einstein formula in fluid mechanics, which is the focus of the present memoir. Next, in the same work, Einstein derived a relation between the diffusion constant for suspended particles and their mobility: the so-called Einstein relation in kinetic theory.
He then applied these two relations to sugar dissolved in water: using available empirical data, he deduced an estimate of the Avogadro number and of the size of sugar molecules (after eliminating a calculation error~\cite{Einstein-11}).
We refer to~\cite{Straumann-05} for an inspiring account of this seminal work.
As discussed by Perrin in his extensive report \cite{Perrin-11} at the first Solvay conference in 1911 in Brussels, these discoveries were confirmed by further experiments and shown to agree with other methods to determine the Avogadro number, which sealed the triumph of the atomic theory.

We briefly describe Einstein's argument to estimate the effective viscosity of a dilute suspension.
Viscosity of a fluid is usually measured by shear-flow experiments: a cylindrical vessel is filled with the fluid, a rotating spindle is immersed in it, and one measures the torque needed to make it rotate at constant angular speed.
Assume now that the fluid contains a suspension of small rigid spherical particles and consider their influence on the measured viscosity.
As particles are rigid, they act as obstacles and hinder the fluid flow, thus effectively increasing
the measured viscosity. 
A first challenging question concerns the dynamics of the particles: do they reach a statistical steady state?
If this is the case and if one indeed measures a constant-in-time effective viscosity,
then the latter depends on the steady state, hence possibly on the speed of the spindle itself, which corresponds to possible non-Newtonian behaviors~\cite[Section~7]{GM-11}. Einstein's main idea in~\cite{Einstein-06} was that, in the low-density regime, for spherical particles, the first-order effective change in viscosity should only depend on the volume fraction of the particles and not on their distribution. In particular, this universality would relegate non-Newtonian effects to higher-order corrections.
More precisely, in 3D,
given a fluid with isotropic viscosity $\Id$ and given suspended spherical particles with small volume fraction~$\varphi\ll1$, Einstein's formula for the effective viscosity takes the form
\begin{equation}\label{eq:Einstein}
\Bb\,=\,\Id\big(1+ \tfrac{5}{2}\varphi+o(\varphi)\big).
\end{equation}
Heuristically, the argument is as follows: at low density, particles are scarce and typically well separated,
hence their interactions are negligible to leading order.
The first-order effect on the viscosity should thus be proportional to the volume fraction and correspond to the energy dissipation of a single isolated particle in the fluid. The latter can be computed explicitly for spherical particles and leads to the celebrated $\frac 52$ factor in~\eqref{eq:Einstein};
we refer to Section~\ref{sec:explicitEinstein} below, where this classical calculation is reproduced.

This type of low-density expansions was not new in the physics community at the time, but was very much in line with other work on the micromechanics of heterogeneous media of the late 19th century. Einstein's formula is indeed comparable to the Clausius--Mossotti formula for the effective dielectric constant~\cite{Mossotti-36,Mossotti-50,Clausius-79}, to Maxwell's formula for the effective conductivity in electrostatics~\cite{Maxwell-81}, or to the Lorentz--Lorenz formula for the effective refractive index in optics~\cite{Lorenz-80,Lorentz-09};
we refer to~\cite{Markov-00} for an account of the historical context.

Einstein's formula
triggered a lot of long-lasting activity in fluid mechanics: the large-scale rheology of suspensions was soon considered as a topic in its own right~\cite{KRM-67,FA-67,JA-76}.
Various works have aimed
at understanding to what extent Einstein's formula is robust and accurate.
Robustness has been addressed in particular by establishing corresponding formulas for particles of different shapes, as e.g.~the explicit formulas by Jeffery~\cite{Jeffery-22} for suspensions of ellipsoids (see also~\cite{LH-71,HL-72}).
Accuracy is a more subtle issue and essentially amounts to capturing the next-order term in the low-density expansion.
While particle interactions are neglected at first order,
the next-order correction consists of including the effects of pairwise interactions.
Due to their long-range nature, the sum of pairwise contributions is not summable and some renormalization is therefore needed. This was first achieved by Batchelor and Green~\cite{BG-72},
and we refer to~\cite{Hinch-77,Obrien-79,Almog-Brenner} for other formal renormalization ideas.
A related, yet different, topic concerns the sedimentation of suspended particles under gravity and the computation of their effective settling speed, which happens to require a similar renormalization: the above-mentioned contribution by Batchelor and Green~\cite{BG-72} was indeed inspired by Batchelor's work~\cite{Batchelor-72} on sedimentation. Interestingly, the renormalization of higher-order corrections to the effective viscosity had remained  open  in the physics community.

We also refer to~\cite{ZAB-83,NK-84,AGKL-12} for the asymptotic analysis of the effective viscosity for dilute \emph{periodic} arrays of suspended particles and, in a more mathematical spirit, we mention the pioneering work by S\'anchez-Palencia et al.~\cite{SP-85,LSP-85} using formal two-scale expansions for locally periodic suspensions.

\subsection{Mathematical reformulation and objectives}
As described above, Einstein's formal derivation of~\eqref{eq:Einstein} in~\cite{Einstein-06} relies the following two implicit hypotheses:
\begin{enumerate}[(E1)]
\item {\it Scale separation.}
There is a {scale separation} between the ``microscopic'' particle size and the ``macroscopic'' observation scale. Therefore, the suspension behaves on the observation scale like an ``effective'' fluid with some effective viscosity tensor~$\Bb$ that can then be measured by shear-flow experiments.
\smallskip\item {\it Particle interactions are negligible.}
In the low-density regime, particles are typically well-separated and therefore, to leading order, they do not interact and can be treated as being isolated.
\end{enumerate}
We briefly discuss the validity of these two working hypotheses and then turn to describing the literature and our objectives in the present memoir.

\subsubsection{Einstein's hypothesis~{\rm(E1)}: scale separation}
This first hypothesis concerns the definition of a notion of effective viscosity for suspensions when the particle size $O(\e)$ is much smaller than the observation scale $O(1)$.
Consider a shear-flow experiment to measure the viscosity, say using a rotational viscosimeter.
Let $D$ denote the fluid domain in this device and let $\{x_{\e,n}^t\}_n\subset D$ stand for positions of suspended particles at time~$t$, which evolve over time with the fluid flow.
If inertia is neglected, the dynamics is greatly simplified: given particle positions at a given time, the fluid velocity satisfies steady Stokes equations, which determine instantaneous particle velocities.
In this context, the emergence of an effective viscosity can be split into two parts:
\begin{enumerate}[---]
\item {\it Steady-state microstructure.} As the measured effective viscosity is expected not to depend on time, it implicitly requires particle positions to reach a statistical steady state in the long run.
Focussing on a portion of the fluid in the bulk, we may consider without much loss of generality that the statistical ensemble is stationary (henceforth, ``stationarity'' stands for statistical spatial homogeneity).
In other words, the point set $\{x_{\e,n}^t\}_n$ can be approximately replaced by a random point set
$\{\e x_n:\e x_n\in D\}$
that is the $\e$-rescaling of some stationary random point process $\Pc=\{x_n\}_n$.
The law of this steady state may depend itself on the prescribed shear flow in the viscosimeter, which leads to possible non-Newtonian effects~\cite[Section~7]{GM-11}.
\smallskip\item {\it Steady homogenization problem.} Given a statistical ensemble of particle positions, under an ergodicity assumption, the steady Stokes equations for the fluid velocity are expected to homogenize on the macroscopic observation scale and can be replaced by effective steady Stokes equations with some effective viscosity tensor $\Bb$.
\end{enumerate}
While the rigorous analysis of the steady-state flow-induced microstructure remains a fully open problem at this time, the steady homogenization problem, in contrast, has been extensively studied under various assumptions in our recent series of articles~\cite{DG-19,D-20a,DG-21a,DG-21b} and is by now very well understood. Given a statistical ensemble of particle positions, this provides a rigorous definition of the effective viscosity together with a homogenization result.
More precisely, considering the system at the particle scale,
we denote by $\Ic=\cup_n I_n$ the random ensemble of particles (not necessarily spherical), centered at the points of a point process $\Pc=\{x_n\}_n$, say in the $d$-dimensional Euclidean space $\R^d$ for generality.
The effective viscosity tensor $\Bb$ is defined as a quadratic form on the set $\Md_0^\Sym\subset\R^{d\times d}$ of trace-free symmetric matrices,
\begin{equation}\label{def:eff-vis}
E:\Bb E\,:=\,\expecm{|\!\D(\psi_{E})+E|^2}\,=\,|E|^2+\expecm{|\!\D(\psi_{E})|^2},\qquad E\in\Md_0^\Sym,
\end{equation}
where $\D(\psi_E)$ is the unique stationary symmetric gradient solution, with bounded second moment and vanishing expectation, of the  corrector problem
\begin{equation}\label{eq:cor}
\left\{\begin{array}{ll}
-\triangle\psi_{E}+\nabla\Sigma_{E}=0,&\text{in $\R^d\setminus\Ic$},\\
\Div( \psi_{E})=0,&\text{in $\R^d\setminus\Ic$},\\
\D(\psi_{E}+Ex)=0,&\text{in $\Ic$},\\
\int_{\partial I_{n}}\sigma_{E}\nu=0,&\forall n,\\
\int_{\partial I_{n}}\Theta(x-x_{n})\cdot\sigma_{E}\nu=0,&\forall n,~\forall \Theta\in\Md^\Skew,
\end{array}\right.
\end{equation}
in terms of the associated Cauchy stress tensor
\begin{equation}\label{eq:Cauchystress}
 \sigma_{E}\,:=\,\sigma(\psi_E+Ex,\Sigma_E)\,:=\,2\D(\psi_{E}+Ex)-\Sigma_{E}\Id,
\end{equation}
where $\Md^\Skew\subset\R^{d\times d}$ is the set of skew-symmetric matrices.
Throughout this work, we assume for simplicity that the plain fluid has isotropic viscosity $\Id$.
Equation~\eqref{eq:cor} can be viewed as describing the velocity field $\psi_E+Ex$ of a Stokes fluid in the whole space in presence of rigid suspended particles $\{I_n\}_n$ with linear strain imposed at infinity, \mbox{$\psi_E+Ex\sim Ex$} as $|x|\uparrow\infty$.
The last two boundary conditions in~\eqref{eq:cor} correspond to the balance of forces and torques on each particle.
Note that, if $\Ic$ contains an unbounded chain of touching particles, then the rigidity constraint \mbox{$\D(\psi_E+Ex)|_{\Ic}=0$}  entails that the field~$\psi_E$ would grow linearly along this chain, which would prevent $\D(\psi_E)$ from having vanishing expectation: it shows that this corrector problem can only be well-posed provided that some suitable non-clustering assumption is made.
Different sets of sufficient assumptions are recalled in Section~\ref{sec:main-Einstein} below and we refer to our previous work~\cite{DG-19,D-20a,DG-21a,DG-21b} for a detailed account.

\subsubsection{Einstein's hypothesis~{\rm(E2)}: interactions are negligible.}
As it appears from~\eqref{eq:cor}, the corrector $\psi_E$ depends nonlocally and nonlinearly on the set $\Ic$ of particles via boundary conditions: this corresponds to the multibody nature of hydrodynamic interactions.
Einstein's second hypothesis can be reinterpreted
as claiming that $\psi_E$ can be approximated around each inclusion $I_n$ by the unique decaying solution $\psi_{E}^{\{n\}}$ of the single-particle problem
\begin{equation}\label{eq:singlepart}
\left\{\begin{array}{ll}
-\triangle\psi_{E}^{\{n\}}+\nabla\Sigma_{E}^{\{n\}}=0,&\text{in $\R^d\setminus I_n$},\\
\Div( \psi_{E}^{\{n\}})=0,&\text{in $\R^d\setminus I_n$},\\
\D(\psi_{E}^{\{n\}}+Ex)=0,&\text{in $I_n$},\\
\int_{\partial I_{n}}\sigma(\psi_{E}^{\{n\}}+Ex,\Sigma_{E}^{\{n\}})\nu=0,& \\
\int_{\partial I_{n}}\Theta(x-x_{n})\cdot\sigma(\psi_{E}^{\{n\}}+Ex,\Sigma_{E}^{\{n\}})\nu=0,&~\forall \Theta\in\Md^\Skew.
\end{array}\right.
\end{equation}
This amounts to neglecting the effect of other particles on $\psi_E$ around $I_n$, thus precisely neglecting the multibody nature of the problem.
To give a more precise statement, consider the Voronoi tessellation $\{V_n\}_n$ associated with the set of particles $\{I_n\}_n$, that is,
\[V_n\,:=\,\Big\{x:\dist(x,I_n)<\inf_{m:m\ne n}\dist(x,I_m)\Big\}.\]
The relevant approximation of $\psi_E$ then takes the form
\begin{equation}\label{eq:approx-Einstein-Vor}
\D(\psi_E)~~\approx~~\Psi_E^{\text{Einstein}}\,:=\,\sum_n \D(\psi_E^{\{n\}})\mathds1_{V_n}.
\end{equation}
Inserting this into the definition~\eqref{def:eff-vis} of the effective viscosity
yields after straightforward calculations, in case of spherical particles,
\begin{eqnarray}\label{e.Einstein-intro}
E:\Bb E=|E|^2+\expecm{|\!\D(\psi_E)|^2}&\approx& |E|^2+ \expecm{|\Psi_E^{\text{Einstein}}|^2}\nonumber\\
&=& |E|^2\big(1+\tfrac{d+2}2 \varphi+o(\varphi)\big),
\end{eqnarray}
in terms of the particle volume fraction
\begin{equation}\label{eq:volfrac}
\varphi\,:=\,\varphi(\Ic)\,:=\,\lim_{R\uparrow\infty}R^{-d}|\Ic\cap RQ|,
\end{equation}
where in 3D we recover the celebrated $\frac52$ factor, cf.~\eqref{eq:Einstein};
we refer to Section~\ref{sec:explicitEinstein} for the detailed computation.
Corrections to Einstein's formula are obtained by taking into account that $\psi_E$ does, in fact, depend on the positions of all particles at once. As we shall see, in the low-density regime, this is naturally written in form of a cluster expansion:
the next-order correction, known in the physics literature as the Batchelor--Green correction~\cite{BG-72}, involves the two-particle problem, and so on.

\subsubsection{Objectives}
In this memoir, we focus on the rigorous analysis of Einstein's hypothesis~(E2): we start from the relevant notion of effective viscosity~\eqref{def:eff-vis} as defined by homogenization theory and we study its asymptotic behavior at low density, aiming to justify Einstein's formula~\eqref{e.Einstein-intro} and to describe all higher-order corrections.

The early works~\cite{SP-85,LSP-85,Haines-Mazzu} focussed on Einstein's formula for locally periodic dilute arrays of particles.
It was extended in~\cite{Niethammer-Schubert-19,Hillairet-Wu-19} to the dilute disordered setting under the simplifying assumption that the minimal interparticle distance is large enough (that is, $\ell(\Pc)\gg1$ with the notation~\eqref{eq:def-ell(P)} below).
The next-order Batchelor--Green correction was captured in~\cite{GVH,GVM-20} in the same setting.
The uniform separation assumption is particularly convenient as it allows to exploit the reflection method and rigorously neglect many-particle interactions, e.g.~\cite{Jabin-Otto-04,Hofer-Velazquez-18,Hofer-18,Niethammer-Schubert-19,Hofer-19},
but it is physically quite restrictive and unsatisfactory.
More recently, it was replaced in~\cite{GV-Hofer-20} by some weaker non-concentration condition in the proof of Einstein's formula, however still requiring some control on the minimal interparticle distance. In this context, we shall address the following two main points:
\begin{enumerate}[---]
\item We shall justify Einstein's formula under the weakest assumptions under which homogenization is known to hold, in particular covering the case of the general subcritical percolation condition in~\cite{DG-21a}. At the same time, we aim at optimal error estimates: the error $o(\varphi)$ in~\eqref{eq:Einstein} was often claimed to be $O(\varphi^2)$, but we shall see that it actually strongly depends on the structure of the random ensemble of particles.
\smallskip\item We shall describe higher-order corrections to Einstein's formula in form of a cluster expansion. Due to the long-range nature of hydrodynamic interactions, renormalizations are needed to make sense of cluster contributions. In the physics literature, formal renormalizations were actually still lacking beyond the second-order Batchelor--Green correction. On the rigorous side, even the justification of the latter was restricted to some specific regimes~\cite{GVH,GVM-20,GV-20}.
\end{enumerate}
In terms of techniques, previous results on the topic relied on deterministic analysis, more precisely on various forms of the reflection method. In the present memoir, we rather take inspiration from our work~\cite{DG-16a} on the Clausius--Mossotti conductivity formula based on the triad consisting of: (1)~finite-volume approximation; (2)~cluster expansion; (3)~uniform \mbox{$\ell^1-\ell^2$} energy estimates. Substantially refining on this analysis, we go far beyond~\cite{DG-16a} by covering general dilute regimes (beyond the case of explicit dilution by random deletion), and we shall further describe the explicit renormalization of cluster coefficients.

\subsection{Cluster expansion formalism}
While Einstein's formula~\eqref{e.Einstein-intro} is obtained by considering dilute particles as being isolated, next-order corrections amount to taking into account many-particle interactions and the multibody structure of the corrector field $\psi_E$.
At low density, particles are scarce and one might want to consider contributions of  finite subsets of particles only.
As in  \cite{DG-16a}, taking inspiration from statistical mechanics, see e.g.~\cite[Chapter~19]{Torquato-02}, this is naturally performed by means of cluster expansions, which provide natural asymptotic series at low density.
We recall the formalism, discuss the accuracy of cluster expansions, and describe the key difficulty to apply it to the effective viscosity problem: the long-range nature of hydrodynamic interactions.

\subsubsection{Cluster expansions of multibody quantities}
We recall the cluster expansion formalism in the form that we introduced in~\cite{DG-16a}. As particles are indexed by natural numbers, we denote by~$P(\N)$ the set of subsets of the index set $\N$ and we consider the space~$M(\N)$ of set functions from~$P(\N)$ to a given vector space $V$.
Starting from the corrector problem~\eqref{eq:cor}, for any index subset~\mbox{$H\in P(\N)$}, we may consider\footnote{The corrector problem~\eqref{eq:cor} is, in fact, not well-posed in general for a given deterministic infinite subset~$H$ of particles. In the sequel, we shall rather consider finite-volume approximations of the corrector problem, for which well-posedness is trivial. We skip this detail at the level of the present discussion.}
the associated corrector $\psi_E^H$ obtained by replacing the full set~$\Ic$ of particles by its corresponding subset \mbox{$\Ic^H:=\cup_{n\in H} I_n$}. The map $\psi_E^\#:H\mapsto \psi_E^H$ is then viewed as an element of~$M(\N)$, where $\psi_E^\varnothing\equiv 0$ and where~$\psi_E^\N\equiv\psi_E$ is the original corrector defined in~\eqref{eq:cor}.

In this setting, for all $n\in \N$, we introduce a \emph{difference operator} $\delta^{\{n\}}:M(\N) \to M(\N)$, defined for all~$\Phi\in M(\N)$ by
\[\del^{\{n\}}\Phi^H~:=~\del^{\{n\}}\Phi^{H\cup\#}~:=~\Phi^{H\cup\{n\}}-\Phi^H,\qquad H\subset\N,\]
which provides a natural measure of the sensitivity of $\Phi$ with respect to the index~$n$
(it plays the role of a discrete derivative). Note that for all $n\ne m$,
\[(\del^{\{n\}})^2=-\del^{\{n\}},\qquad \del^{\{n\}}\del^{\{m\}}=\del^{\{m\}}\del^{\{n\}}.\]
For any finite $F\subset\N$, we also define the higher-order difference operator
\[\del^F\,:=\,\prod_{n\in F}\del^{\{n\}},\]
which acts as follows: for all $\Phi \in M(\N)$,
\begin{equation}\label{eq:def-diff}
\del^{F}\Phi^H \,=\,
\sum_{G\subset F}(-1)^{|F\setminus G|}\Phi^{G\cup H},\qquad H\subset \N.
\end{equation}
We take the natural convention $\del^\varnothing\Phi^H:=\Phi^H$.
These difference operators are the building blocks to construct the so-called {\it cluster expansions}, e.g.~\cite[Chapter~19]{Torquato-02}: to order $k$, the cluster expansion of $\Phi\in M(\N)$
takes the form 
\begin{equation*}
\Phi^\N \,\sim\, \Phi^{\varnothing}+\sum_{n}\del^{\{n\}}\Phi^\varnothing+\tfrac1{2!}\sum_{n_1,n_2}^{\ne}\del^{\{n_1,n_2\}}\Phi^\varnothing
+\ldots+\tfrac1{k!}\sum_{n_1,\ldots,n_k}^{\ne}\del^{\{n_1,\ldots,n_k\}}\Phi^\varnothing,
\end{equation*}
 where we use the short-hand notation $\sum_{n_1,\ldots,n_j}^{\ne}$ for sums over $j$-tuples $(n_1,\ldots,n_j)$ of distinct indices. This can be rewritten in the more compact form
\begin{align}\label{eq:clusterphiT}
\Phi^\N\,\sim\, \sum_{j=0}^k\sum_{\sharp F=j}\del^F\Phi^\varnothing,
\end{align}
where $\sum_{\sharp F=j}$ stands for the sum over all sets $F$ of $j$ distinct indices.
This expansion is particularly relevant in the low-density regime when particles are very scarce: the $0$th-order term corresponds to the situation without any particle, the $1$st-order term corresponds to contributions of isolated particles, the $2$nd-order term to contributions of pairs of particles, etc.
Formally, it can be viewed as a Taylor expansion associated with the difference operator~$\delta$,
where under suitable assumptions higher-order terms will be shown to be indeed of higher order at low density.
Note that, if $\Phi\in M(\N)$ only depends on indices in a finite subset $K \subset \N$ in the sense that $\Phi^{H} = \Phi^{H \cap K}$ for all $H\subset\N$, then the expansion~\eqref{eq:clusterphiT} is always a finite sum and is actually {\it equal} to~$\Phi^\N$ provided~$k\ge\sharp K$.

\subsubsection{Multi-point intensities}
The general estimation of the terms in the cluster expansion~\eqref{eq:clusterphiT} naturally leads to the notion of multi-point intensities, which appear as refined measures of diluteness and seem new to the literature.
Given an ergodic stationary point process $\Pc=\{x_n\}_n$,
we start by recalling the standard notion of {\it intensity} of the point process (or {\it one-point intensity} in our terminology below),
\[\lambda(\Pc)\,:=\,\lambda_1(\Pc)\,:=\,\expec{\sharp(\Pc\cap Q)},\]
where $Q:=[-\frac12,\frac12]^d$ stands for the unit cube.
By the ergodic theorem, we have almost surely
\begin{equation}\label{eq:conv-lambda}
\lambda(\Pc)\,=\,\lim_{R\uparrow\infty}R^{-d}\,\sharp\{n:x_n\in RQ\}.
\end{equation}
In particular, provided that random shapes satisfy $|I_n^\circ|\simeq1$ almost surely for all $n$, this relates to the particle volume fraction~\eqref{eq:volfrac} via
\begin{equation}\label{eq:link-philambda}
\varphi(\Ic)\,\simeq\,\lambda(\Pc),
\end{equation}
so that the low-density regime $\varphi(\Ic)\ll1$ is equivalently characterized by the condition~$\lambda(\Pc)\ll1$.
Yet, as we consider nonlinear functions of the point process (like the effective viscosity $\Bb$), this linear notion of diluteness is not strong enough and we need to introduce refined notions of ``multi-point intensities''.

For that purpose, we start by introducing a notation for the {\it minimal distance} of the point process $\Pc$,
\begin{equation}\label{eq:def-ell(P)}
\ell\,:=\,\ell(\Pc)\,:=\, \inf_{n\ne m} |x_n-x_m|_\infty,
\end{equation}
which is almost surely a deterministic characteristic length of $\Pc$.
 The point process is called hardcore if $\ell(\Pc)>0$, which is the case of all the processes considered in this memoir, cf.~\ref{H0} below.
For all $j\ge 1$, provided $\ell=\ell(\Pc)>0$, we then define the {\it$j$-point intensity}
\begin{equation}\label{eq:high-intens}
\qquad\lambda_{j}(\Pc)\,:=\,\sup_{z_1,\ldots,z_j}\expecM{\sum_{n_1,\ldots,n_j}^{\ne}\ell^{-d}\mathds1_{Q_{\ell}(z_1)}(x_{n_1}) \ldots \ell^{-d} \mathds1_{Q_{\ell}(z_j)}(x_{n_j})},
\end{equation}
where $Q_r(z):=z+r Q$ stands for the cube of sidelength $r$ centered at $z$. Note that, by definition~\eqref{eq:def-ell(P)}, each cube $Q_\ell(z)$ contains at most one point of~$\Pc$.
This definition corresponds to the maximum expected number of $j$-tuples of points of $\Pc$ that lie in the $\ell$-neighborhood of an element of $(\R^d)^j$, properly normalized by $\ell$.
Alternatively, recalling that the {\it$j$-point density}~$f_{j}$ associated with~$\Pc$ is the non-negative function defined 
by the following relation,
\begin{equation}\label{eq:def-fj}
\qquad\expecM{\sum_{n_1,\ldots, n_j}^{\ne} \zeta(x_{n_1},\ldots,x_{n_j})} \,=\, \int_{(\R^d)^j}\zeta f_j\qquad\text{for all $\zeta \in C^\infty_c((\R^d)^j)$},
\end{equation}
the definition~\eqref{eq:high-intens} of $j$-point intensity can be reformulated as
\begin{equation}\label{eq:high-intens/Re}
\lambda_j(\Pc)\,=\,\sup_{z_1,\ldots,z_j}\fint_{Q_{\ell}(z_1)\times\ldots\times Q_\ell(z_j)}f_j.
\end{equation}
In the case $\ell(\Pc)=0$, this definition is naturally extended to $\lambda_j(\Pc)=\|f_j\|_{\Ld^\infty((\R^d)^j)}$ for completeness.
In view of upcoming arguments, it is convenient to further introduce the following quantities,
\begin{equation}\label{e.mul-int-eff}
\underline \lambda_j(\Pc) \,:=\, \min_{ \sum_{i} j_i=j} \prod_{i} \lambda_{j_i}(\Pc)~~\le~~\overline \lambda_j(\Pc) \,:=\, \max_{ \sum_{i} j_i=j} \prod_{i} \lambda_{j_i}(\Pc).
\end{equation}
For a Poisson point process,
these quantities are, in fact, equivalent since independence yields $\lambda_j(\Pc)=\lambda(\Pc)^j$ for all $j\ge1$, hence $\underline\lambda_j(\Pc)=\overline\lambda_j(\Pc)=\lambda(\Pc)^j$.
For a hardcore Poisson point process, we similarly find
$\lambda_j(\Pc)\simeq_j\lambda(\Pc)^j$.
In other words, the one-point intensity~$\lambda(\Pc)$ is enough to fully describe
low-density regimes in those cases. However, multi-point intensities are non-trivial in general: for any $\beta\in[0,1]$, one can construct examples of point processes with $\lambda_2(\Pc)\simeq\lambda(\Pc)^{1+\beta}$ (see last paragraph of~Section~\ref{sec:cluster-recap}). For instance, given $e\in\R^d$, the point process $\Pc_e:=\Pc\cup(\Pc+e)$ consists of pairs of points $\{x_n,x_n+e\}$ and thus satisfies $\lambda_2(\Pc_e)\simeq\ell(\Pc_e)^{-d}\lambda(\Pc_e)$, hence $\lambda_2(\Pc_e)\simeq\lambda(\Pc_e)$ provided $\Pc_e$ is hardcore.
The following lemma states some general properties.

\begin{lem}[Multi-point intensities]$ $\label{lem:gen-cond-lambd}
Let $\Pc=\{x_n\}_n$ be an ergodic stationary random point process.
\begin{enumerate}[(i)]
\item For all $j\ge1$, we have
\[\lambda_{j+1}(\Pc)\le\ell(\Pc)^{-d} \lambda_j(\Pc).\]
\item If $\Pc$ is strongly mixing, then for all $j\ge1$ we have
\[\lambda(\Pc)^j \,=\,\underline \lambda_j(\Pc) \,\le\, \overline \lambda_j(\Pc) \,=\,  \lambda_j(\Pc).\]
(The same holds for all $j\le n$ under the mixing assumption~\emph{\ref{Mix-om-n}} introduced in Section~\ref{sec:explicit-prel} provided the rate $\omega$ decays at infinity.)
\smallskip\item Given $j\ge2$ and $\theta\in[0,1]$, for any nonnegative function $\phi\in C^\infty_c((\R^d)^j)$ that satisfies $\phi(z_1,\ldots,z_j)\le C\phi(z_1',\ldots,z_j')$ for all $z_1,z_1',\ldots,z_j,z_j'$ provided $\textstyle\max_i|z_i-z_i'|_\infty\le\theta\ell(\Pc)$ and $\textstyle\min_{j\ne i}|z_i-z_{j}|_\infty\ge\ell(\Pc)$,
we have
\[\int_{(\R^d)^k}\phi f_k\,\le\,C\theta^{-dk}\lambda_k(\Pc)\int_{(\R^d)^k}\phi.\qedhere\]
\end{enumerate}
\end{lem}
\begin{proof}
As each cube $Q_\ell(z)$ contains at most one point of $\Pc$, we find $\sum_n\ell^{-d}\mathds1_{Q_\ell(z)}(x_n)\le\ell^{-d}$ for all $z$, so that item~(i) readily follows from definition~\eqref{eq:high-intens}.

\medskip\noindent
We turn to the proof of~(ii). Given $j\ge1$, for any partition $0=k_1< k_2<\ldots<k_l=j$, setting $j_i:=k_{i+1}-k_i$,
the strong mixing of the point process implies
\[\fint_{Q_\ell(z_1)\times\ldots\times Q_\ell(z_j)}f_j-\prod_{i=1}^l\Big( \fint_{Q_\ell(z_{k_i+1})\times\ldots\times Q_\ell(z_{k_{i+1}})}f_{j_i}\Big)\longrightarrow0,\]
as $\min_{i \ne i'} \dist(Z_{i},Z_{i'})\to\infty$, where we use the short-hand notation $Z_i:=\{z_{k_i+1},\ldots,z_{k_{i+1}}\}$.
In view of~\eqref{eq:high-intens/Re}, using stationarity, this proves the estimate
$\lambda_j(\Pc) \ge \prod_{i=1}^l \lambda_{j_i}(\Pc)$,
from which the claim~(ii) easily follows.

\medskip\noindent
Finally, item~(iii) is a direct consequence of definition~\eqref{eq:high-intens/Re} of multi-point intensities, further using that the $j$-point density satisfies $f_j(x_1,\ldots,x_j)=0$ whenever there are some $i\ne i'$ with $|x_i-x_{i'}|<\ell(\Pc)$.
\end{proof}

\subsubsection{Scaling of cluster expansions}
With the above definitions, we may now determine the scaling of the terms in the cluster expansion~\eqref{eq:clusterphiT} and show the relevance of multi-point intensities. For that purpose, by way of illustration, we place ourselves in the elementary setting of short-range interactions, which will serve as a guideline in the sequel. More precisely, consider a set function $\Phi:P(\N)\to\R$ of the form
\begin{equation}\label{eq:def-Phi-short}
\Phi^H\,:=\,\expecM{g\Big(\sum_{n\in H}h(x_n)\Big)},
\end{equation}
for some $h:\R^d\to\R$ and $g:\R\to\R$ such that
\begin{enumerate}[(a)]
\item $h$ is {\it short-range}, in the sense that $\int_{\R^d}(\sup_{B(z)}|h|)\,dz<\infty$;
\item $g$ is {\it smooth}, in the sense that $g\in C^\infty_b(\R)$.
\end{enumerate}
The cluster expansion of $\Phi^\N$, cf.~\eqref{eq:clusterphiT}, then takes the form
\begin{equation}\label{eq:Phi-cluster}
\Phi^\N \sim \sum_{j=0}^\infty \tfrac1{j!} \bar \Phi^j, \qquad\text{where}\quad \bar \Phi^j:= j! \sum_{\sharp F=j}\del^F\Phi^\varnothing.
\end{equation}
Although cluster coefficients $\{\bar\Phi^j\}_j$ are defined by infinite series, these series are always summable in this short-range setting and we show that they are naturally estimated by multi-point intensities.In particular, the second-order coefficient $\bar\Phi^2$ is bounded by $O(\lambda_2(\Pc))$, which contradicts in general the bound $O(\lambda(\Pc)^2)=O(\varphi^2)$ that one could have naively expected.
Our main goal in this memoir is precisely to establish corresponding expansions and estimates for the effective viscosity~\eqref{eq:Einstein} \& \eqref{def:eff-vis}.

\begin{lem}[Cluster expansions in the short-range setting]\label{lem:short}
Let $\Pc = \{x_n\}_{n}$ be an ergodic stationary point process on $\R^d$ with $\ell(\Pc)\lesssim1$,
let $\Phi$ be a set function of the form~\eqref{eq:def-Phi-short} satisfying the short-range and smoothness assumptions~\emph{(a) \& (b)} above, and let $\{\bar\Phi^j\}_j$ be the associated cluster coefficients~\eqref{eq:Phi-cluster}. Then we have for all~\mbox{$k\ge1$,}
\begin{equation}\label{e.scalings-local}
\Big|\Phi^\N -  \sum_{j=0}^k  \tfrac1{j!}\bar \Phi^j \Big|\,\lesssim_{k,g,h}\, \lambda_{k+1}(\Pc), \qquad |\bar \Phi^k|\,\lesssim_{k,g,h}\, \lambda_k(\Pc),
\end{equation}
in terms of multi-point intensities $\{\lambda_j(\Pc)\}_j$, cf.~\eqref{eq:high-intens}.
\end{lem}

\begingroup\allowdisplaybreaks
\begin{proof}
Given a  sequence $Y:=\{y_n\}_n \subset \R$, define a set function $\Psi_Y:P(\N)\to\R$ by
\[\Psi_Y^H\,:=\,g\Big(\sum_{n\in H}y_n\Big),\qquad H\subset\N.\]
By definition of difference operators, cf.~\eqref{eq:def-diff}, we find, in the spirit of Taylor's remainder formulas, 
\begin{eqnarray*}
\delta^{\{n_1,\ldots,n_k\}}\Psi_Y^\varnothing&=&\int_0^{y_{n_1}}\!\!\!\!\!\ldots\!\int_0^{y_{n_k}}g^{(k)}(t_1+\ldots+t_k)\,dt_1\ldots dt_k,\\
\Psi_Y^\N-\sum_{j=0}^k\sum_{\sharp F=j}\delta^F\Psi_Y^\varnothing
&=&
\sum_{n_1<\ldots<n_{k+1}}\int_0^{y_{n_1}}\!\!\!\!\!\ldots\!\int_0^{y_{n_{k+1}}}\\
&&\hspace{1.7cm}\times g^{(k+1)}\Big(t_1+\ldots+t_{k+1}+\sum_{n>n_{k+1}}y_n\Big)\,dt_1\ldots dt_{k+1}.
\end{eqnarray*}
These identities yield in particular
\begin{eqnarray*}
|\delta^{\{n_1,\ldots,n_k\}}\Psi_Y^\varnothing|&\le&\|g^{(k)}\|_{\Ld^\infty(\R^d)}\prod_{j=1}^k|y_{n_j}|,\\
\Big|\Psi_Y^\N-\sum_{j=0}^k\sum_{\sharp F=j}\delta^F\Psi_Y^\varnothing\Big|
&\le&\|g^{(k+1)}\|_{\Ld^\infty(\R^d)}\sum_{\sharp F=k+1}\prod_{n\in F}|y_{n}|.
\end{eqnarray*}
Setting $Y:=\{h(x_n)\}_n$, noting that definition~\eqref{eq:def-Phi-short} reads $\Phi^H=\expec{\Psi_Y^H}$, inserting the definition~\eqref{eq:Phi-cluster} of cluster coefficients, and recalling the definition~\eqref{eq:def-fj} of multi-point density functions, this yields
\begin{eqnarray*}
|\bar\Phi^k|&\le&\|g^{(k)}\|_{\Ld^\infty(\R^d)}\int_{(\R^d)^k}|h|^{\otimes k}f_k,\\
\Big|\Phi^\N-\sum_{j=0}^k\frac1{j!}\bar\Phi^j\Big|&\le&\tfrac1{(k+1)!}\|g^{(k+1)}\|_{\Ld^\infty(\R^d)}\int_{(\R^d)^{k+1}}|h|^{\otimes(k+1)}f_{k+1}.
\end{eqnarray*}
By definition~\eqref{eq:high-intens/Re} of multi-point intensities, the conclusion follows.
\end{proof}
\endgroup

\subsubsection{Effective viscosity: long-range issues and renormalization}\label{sec:need-renorm}
We apply the above cluster expansion formalism to the effective viscosity~\eqref{def:eff-vis}.
For a finite subset $H\subset\N$, recall the notation $\psi_E^H$ for the solution of the corrector problem~\eqref{eq:cor} where the full set $\Ic$ of particles is replaced by its subset $\Ic^H=\cup_{n\in H}I_n$ (this corrector problem is trivially well-posed when $H$ is finite).
We then define a symmetric linear map $\Bb^H$ on $\Md_0^\Sym$ by
\[E:\Bb^HE\,:=\,\expecm{|\!\D(\psi_{E}^H)(0)+E|^2},\qquad E\in\Md_0^\Sym.\]
In these terms, the formal cluster expansion of the effective viscosity~\eqref{def:eff-vis} takes the form
\begin{equation}\label{e.formal-expansion}
\Bb \,\sim\,  \sum_{j=0}^\infty  \tfrac1{j!}\Bb^j, \qquad \text{where}\quad\Bb^j:= j! \sum_{\sharp F=j}\del^F\Bb^\varnothing.
\end{equation}
Note that $\Bb^0= \Id$ is the plain fluid viscosity.
In contrast with the short-range setting of Lemma~\ref{lem:short} above, however, series defining cluster coefficients~$\{\Bb^j\}_{j\ge1}$ are not summable due to the long-range nature of hydrodynamic interactions.
Indeed, the first coefficient~$\Bb^1$ takes the form
\begin{eqnarray}
E:\Bb^1E&=&\sum_nE:\delta^{\{n\}}\Bb^\varnothing E\nonumber\\
&=&\sum_n\expec{|\!\D(\psi_E^{\{n\}})(0)|^2+2E:\D(\psi_E^{\{n\}})(0)}.\label{eq:cluster-B1}
\end{eqnarray}
As $\psi_E^{\{n\}}$ satisfies the single-particle problem~\eqref{eq:singlepart}, it decays like $|\!\D(\psi_E^{\{n\}})(x)|\lesssim\langle x-x_n\rangle^{-d}$, which entails that the above series is not absolutely convergent,
\[\sum_n\expec{|\!\D(\psi_E^{\{n\}})(0)|}=\infty.\]
The same borderline divergence is observed for all cluster coefficients $\{\Bb^j\}_{j\ge1}$ in~\eqref{e.formal-expansion}.
In order to make sense of cluster coefficients, suitable renormalization procedures are thus required and constitute the major difficulty of the problem.

To first order, the needed renormalization happens to be trivial: by definition of the intensity of the point process, identity~\eqref{eq:cluster-B1} can be equivalently rewritten as follows (say, in case of deterministic particle shapes),
\begin{equation*}
E:\Bb^1E\,=\,\lambda(\Pc)\int_{\R^d}\Big(|\!\D(\psi_E^{\circ})|^2+2E:\D(\psi_E^{\circ})\Big),
\end{equation*}
where $\psi_E^{\circ}$ stands for the solution of the single-particle problem~\eqref{eq:singlepart} with a particle centered at the origin. Here, we observe that in any finite-volume approximation the linear term $\int_{\R^d}E:\D(\psi_E^{\circ})$ would be given a vanishing value as the integral of a gradient.
Removing this linear term, we are left with the following summable integral,
\begin{equation}\label{eq:B1-renorm}
E:\Bb^1E\,=\,\lambda(\Pc)\int_{\R^d}|\!\D(\psi_E^{\circ})|^2,
\end{equation}
which happens to coincide with Einstein's formula~\eqref{e.Einstein-intro} in case of spherical particles.
In contrast, higher-order renormalizations are not obtained by such simple cancellations.
In the physics literature, the difficulty was recognized by Batchelor and Green~\cite{BG-72}, who managed to provide a heuristic renormalization for the second-order term $\Bb^2$. The systematic renormalization of higher orders is more involved and has remained an open problem so far even on the heuristic level in physics.
The present memoir is precisely devoted to the systematic treatment of this difficulty: we provide suitable renormalizations of cluster coefficients and in turn justify the expansion~\eqref{e.formal-expansion} to all orders. In the end, we prove essentially the same estimates on the cluster expansion as in the short-range setting~\eqref{e.scalings-local}, up to (sharp) logarithmic corrections that are persisting manifestations of the long-range nature of interactions, cf.~\eqref{e.intro-main-high-order} below.

\subsection{Main results}
This section is devoted to a brief, informal account of the main results of this memoir, with precise references to the relevant sections. We refer to the conclusion in Section~\ref{sec:gen-dil} for a detailed recap of all our results.
We start with the main assumptions on the ensemble of rigid particles.

\subsubsection{Main assumptions}
Given an underlying probability space $(\Omega,\Pm)$, let $\Pc=\{x_n\}_n$ be a random point process on $\R^d$, consider an associated collection of random shapes $\{I_n^\circ\}_n$, where each $I_n^\circ$ is a random connected open subset of the unit ball $B$, centered at the origin in the sense of $\int_{I_n^\circ} y\,dy=0$, and then define the corresponding inclusions
\[I_n:=x_n+I_n^\circ.\]
Note that random shapes are not required to be independent of the point process~$\Pc$.
We then consider the random set $\Ic:=\bigcup_nI_n$, which we assume to satisfy the following conditions.  Note that the disjointness and $\rho$-regularity conditions below entail that the point process~$\Pc$ is hardcore with $\ell(\Pc)\gtrsim\rho$, cf.~\eqref{eq:def-ell(P)}.

\begin{asn}{\textbf{(H$_\rho$)}}{General conditions with parameter $\rho>0$}\label{H0}$ $
\begin{enumerate}[\quad$\bullet$]
\item \emph{Stationarity and ergodicity:} The point process $\Pc=\{x_n\}_n$ and the associated random set $\Ic$ are stationary and ergodic.\footnote{More precisely, stationarity means that the laws of the translated point process $x+\Pc$ and of translated set $x+\Ic$ are independent of the shift $x\in\R^d$. Ergodicity then means that a measurable function of $\Pc$ or~$\Ic$ is almost surely unchanged for $\Pc$ or $\Ic$ replaced by $x+\Pc$ or $x+\Ic$ for any $x\in\R^d$ only if it is almost surely constant. Note that shifts $x\in\R^d$ can be replaced by discrete shifts $x\in\Z^d$, and periodic point sets can be considered as a particular case, for which the expectation is replaced by the average over a period.}
\smallskip\item \emph{Disjointness:} There holds $I_n\cap I_m=\varnothing$ almost surely for all $n\ne m$.
\smallskip\item \emph{$\rho$-Regularity:}
Random shapes $\{I_n^\circ\}_n$ almost surely satisfy interior and exterior ball conditions with radius $\rho$.
\qedhere
\end{enumerate}
\end{asn}

Next, we define the effective viscosity tensor $\Bb$ associated with the suspension $\Ic$ as the quadratic form on $\Md_0^\Sym$ given in~\eqref{def:eff-vis}.
We emphasize that the corrector problem \eqref{eq:cor} only makes sense provided that all particles $\{I_n\}_n$ are separated. If this separation is uniform, the pressure $\Sigma_{E}\mathds1_{\R^d\setminus \Ic}$ can also be uniquely constructed as a stationary field with finite second moment and vanishing expectation, cf.~\cite[Proposition~2.1]{DG-19}.
When particles are not well separated, the corrector problem should rather be considered via its variational formulation
and the effective viscosity is then defined as the minimum value
\begin{multline}\label{def:eff-vis-min}
E:\Bb E\,=\,\inf\Big\{\expecm{|\!\D(\psi)+E|^2}~:~\text{$\psi\in\Ld^2(\Omega;H^1_\loc(\R^d)^d)$, $\nabla\psi$ stationary,}\\
\text{$\Div(\psi)=0$, $(\D(\psi)+E)|_{\Ic}=0$, $\expec{\D(\psi)}=0$}\Big\}.
\end{multline}
At this stage, nothing prevents this infimum from being infinite: as explained after~\eqref{eq:cor} above, the problem originates from the possible existence of unbounded chains of touching particles. This will be excluded by means of further geometric assumptions, cf.~\ref{Gunif}, \ref{Gmom}, or~\ref{Gperc} below. Even if the infimum is finite, nothing ensures in general that~$\Bb$ defines the effective viscosity in the sense of homogenization theory: we view this as a separate question, which is extensively discussed in different settings in our previous work~\cite{DG-19,D-20a,DG-21a,DG-21b} and will not be further discussed here.
We are now in the position to describe our main results.

\subsubsection{First-order expansion: Einstein's formula}
Assuming that particles are uniformly separated by a positive distance, cf.~\ref{Gunif}, we prove in the general ergodic stationary setting,
\begin{equation}\label{e.intro-main-order1}
|\Bb-\Id-\Bb^1| \,\lesssim\,  \lambda_2(\Pc) \log  \big(2+\tfrac{\lambda(\Pc)}{\lambda_2(\Pc)}\big),\qquad |\Bb^1|\,\simeq \,\lambda(\Pc),
\end{equation}
where $\Bb^1$ is given by the renormalized cluster formula~\eqref{eq:B1-renorm} and takes the explicit form of Einstein's formula $\Bb^1=\frac{d+2}{2}\varphi$ in case of spherical particles. This error estimate is new and optimal, and the stochastic assumption of mere ergodicity is minimal. In particular, we find that Einstein's formula $\Bb\approx\Id+\Bb^1$ is accurate to leading order provided that $\lambda_2(\Pc)\ll\frac{\lambda(\Pc)}{|\!\log\lambda(\Pc)|}$, which amounts to a very weak local independence assumption.
Yet, the uniform separation assumption is not satisfactory from the physical point of view.
At the price of weakening the error estimate~\eqref{e.intro-main-order1}, we may relax this assumption as we did for the homogenization result in~\cite{D-20a,DG-21a}: either we assume moment bounds on the interparticle distance~\ref{Gmom} (see also~\cite{GV-Hofer-20}),
or we consider a subcritical percolation condition~\ref{Gperc} (in which case particles are allowed to touch provided they do not cluster --- which is new).
We refer to Theorem~\ref{theor:Einsteinx3} in Section~\ref{sec:Einsteinx3} for a detailed statement.

\subsubsection{Higher-order cluster corrections}
For the higher-order analysis, we assume for simplicity that particles are uniformly separated by a positive distance, cf.~\ref{Gunif}. Under a slight strengthening of ergodicity,
the formal cluster expansion is well-defined, up to suitable renormalization of cluster coefficients~\eqref{e.formal-expansion}, and it essentially\footnote{The true estimate is in general slightly more complicated than what is stated here; cf.~Theorem~\ref{th:mod-free1} in Section~\ref{sec:gen-dil}.} satisfies for all $k\ge1$,
\begin{equation}\label{e.intro-main-high-order}
\Big|\Bb-\sum_{j=0}^k\Bb^j\Big| \,\lesssim\,  \lambda_{k+1}(\Pc) |\!\log  \lambda(\Pc)|^{k}, \qquad  |\Bb^k|\lesssim  \lambda_k(\Pc)\,|\!\log \lambda(\Pc)|^{k-1}.
\end{equation}
These estimates coincide remarkably with the corresponding result~\eqref{e.scalings-local} in the short-range setting, to the exception of logarithmic corrections that are precisely the manifestation of the long-range nature of hydrodynamic interactions.
The result is new for any $k\ge 2$ and logarithmic corrections are expected to be optimal (optimality is proved for $k=2$, cf.~Theorem~\ref{lem:optimality}).
We also believe that the slightly strengthened ergodicity assumption is necessary for the result to hold.
We refer to Theorem~\ref{th:mod-free1} in Section~\ref{sec:gen-dil} for a detailed statement.
In particular, our analysis justifies the  Batchelor--Green formula for the second-order term~$\Bb^2$, cf.~Proposition~\ref{prop:B2-ren} (see also Corollary~\ref{cor:B2ell-re}), and we develop a systematic renormalization scheme for all higher-order cluster coefficients by means of diagrammatic expansions, cf.~Section~\ref{sec:explicit}.

We emphasize that the above result~\eqref{e.intro-main-high-order} holds without any structural assumption on the dilution process (which we call the model-free setting).
If we make the dilution more specific, considering for instance a random deletion procedure (as in~\cite{DG-16a}) or dilation, then the cluster expansion can be shown to define an absolutely converging series. We refer to Theorem~\ref{th:analytic} for a detailed analyticity statement. All previous results on the second-order expansion~\cite{GVH,GVM-20,GV-20} were, in fact, essentially restricted to such specific settings.

\subsection{Roadmap to the main results}
The rest of the memoir is divided into four sections. Section~\ref{sec:Einsteinx3} is dedicated to the proof of Einstein's formula. Section~\ref{sec:cluster} studies the cluster expansion of finite-volume approximations $\{\Bb_L\}_L$ of the effective viscosity $\Bb$. In Section~\ref{sec:intermezzo}, we deal with the issue of systematic renormalization of cluster coefficients, which leads us to justifying the cluster expansion of $\Bb$. Our different results are combined and summarized in Section~\ref{sec:gen-dil}.
We briefly describe below our approach for each step.

\subsubsection{Einstein's formula: first-order expansion --- Section~\ref{sec:Einsteinx3}}
We develop a new, purely variational approach to Einstein's formula~\eqref{e.intro-main-order1}; a short self-contained proof is given in Section~\ref{sec:Einsteinx3}. It amounts to constructing competitors for the variational problem \eqref{def:eff-vis-min} and to controlling their energy difference by means of elliptic regularity.
The variational nature of the argument allows us to avoid uniform particle separation assumptions and to cover in particular the case of colliding particles under a general non-clustering assumption. It also allows to avoid the need for fine pressure estimates, which is crucial as such estimates would be problematic in case of colliding particles.

\subsubsection{Cluster expansion of the effective viscosity --- Section~\ref{sec:cluster}}
While coefficients in the formal cluster expansion of the effective viscosity $\Bb$ are given by infinite series that are not summable due to the long-range nature of hydrodynamic interactions, cf.~Section~\ref{sec:need-renorm}, we start by considering finite-volume approximations $\{\Bb_L\}_{L\ge1}$ obtained by periodization of the variational problem~\eqref{def:eff-vis-min}.
Section~\ref{sec:cluster} provides a detailed analysis of the cluster expansion of $\Bb_L$ for fixed~$L$.
\begin{enumerate}[$\bullet$]
\item First, we give explicit formulas for the coefficients $\{\Bb_L^j\}_j$ of the cluster expansion, as well as an explicit estimate for the remainder $R^{k+1}_L:=\Bb_L-\sum_{j=0}^k \frac1{j!} \Bb_L^j$,
in terms of correctors associated with finite subsets of particles;
see Theorem~\ref{thm:expansion}. The argument is essentially combinatorial. Note that the proof of remainder estimates further makes key use of the rigidity of the particles.
\smallskip\item Second, we prove that the cluster coefficients $\{\Bb_L^j\}_j$ and the remainder $R_L^{k+1}$ are bounded uniformly in $L$. The idea of the proof is as follows: if infinite-volume cluster formulas are given by infinite series that are not summable, they can in fact be viewed as complicated (non-explicit) combinations of Calder\'on--Zygmund kernels. As the effective viscosity is an $\Ld^2$-based quantity, we may expect to estimate cluster formulas by means of suitable energy estimates, carefully avoiding to take absolute values of any Calder\'on--Zygmund kernel.
Taking inspiration from our previous work~\cite{DG-16a}, this is achieved by means of a hierarchy of so-called interpolating $\ell^1-\ell^2$ energy estimates (also crucially used in \cite{GGMN-21,DG-22}).
As a corollary, uniform estimates allow to define infinite-volume cluster coefficients in the limit \mbox{$\{\Bb^j\}_j:=\lim_{L\uparrow\infty}\{\Bb_L^j\}_j$}.
Yet, being based on energy arguments, these estimates do not display the desired dependence~\eqref{e.intro-main-high-order} on multi-point intensities $\{\lambda_j\}_j$.
\smallskip\item Third, we prove corresponding cluster estimates that have the same dependence on multi-point intensities as in the short-range setting, but display a logarithmic divergence in the large-volume limit.
This is obtained by proceeding as for the short-range setting of Lemma~\ref{lem:short}, and the logarithmic divergence follows from estimating hydrodynamic interactions too roughly.
\end{enumerate}
It remains to show that the dependence on multi-point intensities is actually kept in the large-volume limit (at the price of logarithmic corrections).

\subsubsection{Renormalization of cluster formulas --- Section~\ref{sec:intermezzo}}
In order to prove the relevant infinite-volume cluster estimates~\eqref{e.intro-main-high-order}, we need a better understanding of cluster formulas and of the underlying compensations that make them well-defined in the large-volume limit.
A first route proceeds by assuming an algebraic convergence rate for the finite-volume approximations $\{\Bb_L\}_L$ of the effective viscosity: this is known to hold under quantitative $\alpha$-mixing condition whose rate is then transmitted (suboptimally) to cluster coefficients~$\{\Bb_L^j\}_j$, which allows in turn to keep the desired dependence on multi-point intensities in the cluster estimates while removing the logarithmic divergence. This implicit renormalization argument is particularly robust (see also~\cite{DG-16a}), but it does not provide any understanding of underlying cancellations and leaves several questions open.

Next, further assuming that particle shapes are independent of particle positions, we show that an explicit renormalization of cluster formulas can be developed: taking advantage of several explicit cancellations, cluster formulas can be transformed into summable integral formulas. This renormalization is trivial for $\Bb^1$, cf.~\eqref{eq:B1-renorm}, and the required cancellations are already more involved for $\Bb^2$, as formally understood by Batchelor and Green~\cite{BG-72a}. At higher-order, renormalizations rely on a suitable diagrammatic decomposition of cluster formulas to make cancellations manifest. Next, the direct analysis of renormalized formula allows to recover the desired cluster estimates~\eqref{e.intro-main-high-order} and to show that logarithmic corrections in those bounds are actually optimal in general.

\vfill
\subsection*{Notation}
\begin{enumerate}[\quad$\bullet$]
\item For vector fields $u,u'$ and matrix fields $T,T'$, we set $(\nabla u)_{ij}=\nabla_ju_i$, $\Div (T)_i=\nabla_jT_{ij}$, $T:T'=T_{ij}T_{ij}'$, $(u\otimes u')_{ij}=u_iu'_j$, where we systematically use Einstein's summation convention on repeated indices. We also denote by $(\D(u))_{ij}=\frac12(\nabla_ju_i+\nabla_iu_j)$ the symmetrized gradient. For a velocity field $u$ and associated pressure field $P$, we define the associated Cauchy stress tensor, cf.~\eqref{eq:Cauchystress},
\begin{equation}\label{eq:Cauchystress-re}
\sigma(u,P)\,:=\,2\D(u)-P\Id.
\end{equation}
\item We denote by $\Md_0^\Sym\subset\R^{d\times d}$ the subset of symmetric trace-free matrices, and by $\Md^\Skew$ the subset of skew-symmetric matrices.
\smallskip\item We use the notation $\lesssim$ (resp.\@ $\gtrsim$) for $\le C \times$ (resp.\@ $\ge \tfrac1C\times$) with a constant $C$ that depends only on the dimension $d$ and on the parameters appearing in the different assumptions when applicable. Note that the value of the constant $C$ is allowed to change from one line to another. We add subscripts to $C$, $\lesssim$, or $\gtrsim$ to indicate the dependence on other parameters.
We write $a \simeq b$ when both $a \lesssim b$ and $a\gtrsim b$ hold.
In addition, we write $\ll$ (resp.\@ $\gg$) for $\le \frac1C\times$ (resp.\@ $\ge C\times$) for some sufficiently large constant $C$.
\smallskip\item The ball centered at $x$ of radius $r$ in $\R^d$ is denoted by $B_r(x)$, and we set $B(x)=B_1(x)$, $B_r=B_r(0)$, and $B=B_1(0)$. We denote by $Q_{r}(x)=x+[-\tfrac r2,\tfrac r2)^d$ the cube of sidelength~$r$ centered at $x$, and we set $Q(x)=Q_1(x)$, $Q_r=Q_r(0)$, and $Q=Q_1(0)$.
\smallskip\item  For $x\in\R^d$, we denote by $|x|$ its Euclidean norm and by $|x|_\infty$ its supremum norm.
We also set $\langle x\rangle =(1+|x|^2)^{1/2}$, and similarly $\langle\nabla\rangle=(1-\triangle)^{1/2}$.
\smallskip\item We use the short-hand notation $\sum_{n_1,\ldots,n_j}^{\ne}$ for sums over $j$-tuples $(n_1,\ldots,n_j)$ of distinct indices. We also use the notation $\sum_{\sharp F=j}$ for the sum over all subsets $F$ of $j$ distinct indices.
\end{enumerate}

\newpage
\section{Einstein's formula: first-order expansion}\label{sec:Einsteinx3}

\subsection{Main result}\label{sec:main-Einstein}
Assumption~\ref{H0} first needs to be complemented with suitable geometric assumptions on the ensemble of particles to ensure that the effective viscosity~\eqref{def:eff-vis-min} is finite.
This can either be performed by means of conditions on interparticle distances,
\begin{equation}\label{eq:dist-rhon}
\rho_n\,:=\,\frac12\min_{m:m\ne n}\dist(I_n,I_m),
\end{equation}
or in terms of conditions on the size of clusters of close particles.
This has been the subject of our recent series of articles~\cite{DG-19,D-20a,DG-21a}, where the finiteness of the effective viscosity and the validity of a homogenization result are obtained 
under any of the following three types of assumptions:
\begin{enumerate}[---]
\item interparticle distances are uniformly bounded below, cf.~\cite{DG-19};
\smallskip\item interparticle distances satisfy suitable reciprocal moment bounds, cf.~\cite{D-20a};
\smallskip\item diameters of clusters of close particles satisfy suitable moment bounds in a subcritical percolation perspective, cf.~\cite{DG-21a}.
\end{enumerate}
These are formulated more precisely in Assumptions~\ref{Gunif}, \ref{Gmom}, and~\ref{Gperc} below, respectively.

\begin{asn}{\textbf{(H$_\rho^\text{unif}$)}}{Uniform separation with parameter $\rho>0$}\label{Gunif}$ $\\
Particles are uniformly separated with minimal distance \mbox{$\rho_n>\rho$}, that is, we have almost surely $(I_n+\rho B)\cap (I_m+\rho B)=\varnothing$ for all $n\ne m$.
\end{asn}
\begin{asn}{\textbf{(H$_{\rho,\kappa}^\text{mom}$)}}{Moment condition with parameters $\rho>0$, $\kappa>1$}\label{Gmom}$ $
\begin{enumerate}[\quad$\bullet$]
\item \emph{$\rho$-Uniform non-degeneracy of contact points:}
Pairs of ``$\rho$-close'' particles can be ``$\rho$-locally'' included in pairs of disjoint spheres with ``$\rho$-uniformly'' bounded radius.
For ``$\rho$-close'' particles, instead of~\eqref{eq:dist-rhon}, we then define $\rho_n$
as (half of) the distance between locally covering spheres. For a more precise statement of this geometric condition, we refer to~\cite[Assumption~{\bf(H$_\delta'$)}]{D-20a}. Note that this condition is trivially satisfied in case of spherical particles.
\smallskip\item \emph{Reciprocal moment bound:}
There exists $\Kc_\kappa<\infty$ such that
\[
\lim_{R \uparrow\infty} \bigg(\frac1{\sharp\{n:I_n \subset Q_R\}}  \sum_{n:I_n \subset Q_R} \mu(\rho_n)^{\kappa}\bigg)^\frac1\kappa \,\le \, \Kc_\kappa,\]
in terms of
\begin{equation}\label{e.def-mu}
\qquad\mu(t)\,:=\,\left\{\begin{array}{lll}
t^{-\frac{1}2(5-d)}&:&d<5,\\
\log(2+\frac1t)&:&d=5,\\
1&:&d>5.
\end{array}\right.
\end{equation}
Note that this condition is trivially satisfied for any $\kappa>1$ in case $d>5$.\qedhere
\end{enumerate}
\end{asn}
\begin{asn}{\textbf{(H$_{\rho,\kappa}^\text{perc}$)}}{Cluster condition with parameters $\rho>0$, $\kappa>1$}\label{Gperc}$ $\\
Let $\{K_{q,\rho}\}_q$ be the family of connected components of the fattened set $\Ic+\rho B$, and consider the corresponding clusters
\[\qquad J_{q,\rho}\,:=\,\bigcup_{I_n\subset K_{q,\rho}}I_n.\]
Given $p_0 \gg 1$ large enough (related to the existence of correctors in~\cite[Proposition~2]{DG-21a}),
there exists~$\Kc_\kappa<\infty$ such that
\begin{equation}\label{e.bd.perco}
\lim_{R \uparrow\infty} \bigg(\frac{1}{\sharp\{q:J_{q,\rho}\cap Q_R\ne\varnothing\}}  \sum_{q:J_{q,\rho}\cap Q_R\ne\varnothing} \diam(J_{q,\rho})^{\kappa p_0+d}\bigg)^\frac1\kappa \le \Kc_\kappa.\qedhere
\end{equation}
\end{asn}

Assumptions~\ref{Gmom} and~\ref{Gperc} are always weaker than~\ref{Gunif}. While~\ref{Gmom} only allows for particle contacts in dimension $d>5$ and is in particular incompatible with the 3D steady-state behavior of the two-particle density as computed in~\cite{BG-72a},
Assumption~\ref{Gperc} is of a different nature and allows for particle contacts in any dimension, but implicitly requires some strong mixing condition to ensure the validity of moment bounds on cluster diameters, cf.~\cite{DG-21a}.
In~\cite{DG-19,D-20a,DG-21a}, we show that these assumptions ensure the finiteness of the effective viscosity~\eqref{def:eff-vis-min} and the well-posedness of the corrector problem. In case of~\ref{Gmom} or~\ref{Gperc}, the validity of the homogenization result requires further strengthened conditions.

The following theorem states the validity of Einstein's formula under each of those assumptions. The proof is split between Sections~\ref{sec:Einstein-var}, \ref{sec:prelim-Einstein}, \ref{sec:proof-lem:main}, and~\ref{sec:proof-lem:diff-DN} below.

\begin{theor1}[Einstein's formula]\label{theor:Einsteinx3}
Under Assumption~\ref{H0} and either Assumption~\ref{Gunif}, \ref{Gmom}, or~\ref{Gperc}, for some $\rho>0$ and $\kappa>1$,
we have
\begin{align}\label{e.def-bd-Einstein}
&|\Bb-(\Id+\Bb^1)|\,\lesssim_{\rho}\,\lambda_2(\Pc) \log\big(2+\tfrac{\lambda(\Pc)}{\lambda_2(\Pc)}\big)\\
&\hspace{4cm}+\left\{
\begin{array}{lll}
0&:&\text{in case of~\ref{Gunif}},\\
\Kc_\kappa\, \lambda_2(\Pc)^{1-\frac1\kappa}\lambda(\Pc)^{\frac1\kappa}&:& \text{in case of~\ref{Gmom} or~\ref{Gperc}},
\end{array}
\right.\nonumber
\end{align}
where $\Bb^1$ satisfies
\[|\Bb^1|\simeq \lambda(\Pc),\]
and is defined for all $E\in\Md_0^\Sym$ by
\begin{equation}\label{eq:def-B1}
E:\Bb^1E\,:=\,\sum_n\expecM{\frac{\mathds1_{0\in I_n}}{|I_n|}\int_{\R^d}|\!\D(\psi_E^{\{n\}})|^2},
\end{equation}
where $\psi_E^{\{n\}}$
is the unique decaying solution of the single-particle problem~\eqref{eq:singlepart}.
In particular, the estimate $|\Bb-(\Id+\Bb^1)|=o(\lambda(\Pc))$ holds provided the point process $\Pc$ satisfies the weak local independence condition $\lambda_2(\Pc)=o({\lambda(\Pc)}/{|\!\log\lambda(\Pc)|})$.
\end{theor1}

As outlined in Section~\ref{sec:Einstein-var}, our proof is variational and amounts to proving lower and upper bounds on $\Bb$ that match with $\Id+\Bb^1$ to the required accuracy.
This approach is particularly robust: it allows to obtain the first optimal error estimate and to cover the most general setting regarding particle separation assumptions.
We briefly emphasize these two points:
\begin{enumerate}[---]
\item {\it Optimality:} In case of~\ref{Gunif}, the error estimate~\eqref{e.def-bd-Einstein} for Einstein's formula is new and sharp. As will be seen in Theorem~\ref{lem:optimality}, it indeed coincides with the general scaling of the next term $\Bb^2$ in the cluster expansion: the logarithmic correction is related to the long-range nature of hydrodynamic interactions
and cannot be avoided in general, thus contrasting with the short-range setting~\eqref{e.scalings-local}.\footnote{In some special cases, however, for instance in the statistically isotropic setting, the logarithmic correction can be removed, cf.~Theorem~\ref{lem:optimality}(i).}
In case of~\ref{Gmom} or~\ref{Gperc}, the error estimate~\eqref{e.def-bd-Einstein} displays a further algebraic loss, which is also new and expected to be optimal in general.
If for some exponent $\gamma \ge 1$ the moment bounds in~\ref{Gmom} or~\ref{Gperc} hold with constant $\Kc_\kappa \lesssim \kappa^\gamma$ for all $\kappa\ge1$,\footnote{For~\ref{Gmom}, this would amount to having stretched exponential moment bounds. For~\ref{Gperc}, this holds for some point processes such as the random parking measure with $\gamma$ large enough, cf.~\cite{DG-21a}.} then the error estimate~\eqref{e.def-bd-Einstein} could be upgraded to $\lambda_2(\Pc)\log^{1\vee\gamma}(2+\tfrac{\lambda(\Pc)}{\lambda_2(\Pc)})$ after optimizing in $\kappa$.
\smallskip\item {\it Particle separation:} Most works on the topic~\cite{Niethammer-Schubert-19,Hillairet-Wu-19,GVH,GVM-20,GV-Hofer-20,GV-20} have focussed so far on the simplest setting of~\ref{Gunif} in case when diluteness further holds in the strong form of $\ell(\Pc)\gg1$.
The only exception is the recent independent work~\cite{GV-Hofer-20}, where this last condition is relaxed and where the case of~\ref{Gmom} is further covered.
More generally, our approach allows to further cover essentially any situation for which the effective viscosity~\eqref{def:eff-vis-min} can be proved to be finite. Applied to~\ref{Gperc}, it allows to treat for the first time a 3D setting where particles are allowed to touch.
\end{enumerate}

Next, we further simplify formula~\eqref{eq:def-B1} for the first-order cluster coefficient $\Bb^1$ in the case when particle shapes are independent: we recover the formula obtained in~\cite{Hillairet-Wu-19}, as well as Einstein's explicit formula~\eqref{e.Einstein-intro} in case of spherical particles. The proof is postponed to Section~\ref{sec:explicitEinstein}.

\begin{prop1}[First-order coefficient]\label{prop:B1}
On top of Assumption~\ref{H0},
further assume
\begin{enumerate}[\qquad~~(a)]
\renewcommand{\labelenumi}{\emph{\textbf{(Indep)}}}
\renewcommand{\theenumi}{{\textbf{(Indep)}}}
\item\label{B1} \emph{Independent shapes:} Random shapes $\{I_n^\circ\}_n$ are iid copies of a given random open subset $I^\circ$ in the unit ball $B$, independent of the point process $\Pc$.
\end{enumerate}
Then,
the first-order coefficient $\Bb^1$ defined in~\eqref{eq:def-B1} can be written as
\begin{equation}\label{e.Einstein0}
\Bb^1 \,=\,\lambda(\Pc)\Bh^1,\qquad E:\Bh^1 E \,=\,\expecM{\int_{\R^d}|\!\D(\psi^\circ_E)|^2},
\end{equation}
in terms of
the unique decaying solution of the single-particle problem
\begin{equation}\label{eq:psicirc}
\left\{\begin{array}{ll}
-\triangle \psi^\circ_E+\nabla \Sigma^\circ_E=0,&\text{in $\R^d\setminus I^\circ$},\\
\Div( \psi^\circ_E)=0,&\text{in $\R^d\setminus I^\circ$},\\
\D(\psi^\circ_E)+E=0,&\text{in $I^\circ$},\\
\int_{\partial I^\circ}\sigma^\circ_E\nu=0,&\\
\int_{\partial I^\circ}\Theta x\cdot\sigma^\circ_E\nu=0,&\forall \Theta\in\Md^\Skew.
\end{array}\right.
\end{equation}
In case of spherical particles, $I_n=B(x_n,r_n)$, with iid random radii $\{r_n\}_n$, this reduces to  Einstein's celebrated formula
\begin{equation}\label{e.Einstein}
\Bb^1\,=\,\tfrac{d+2}2\varphi\Id,
\end{equation}
where the volume fraction is in this case $\varphi=\lambda(\Pc)\expec{|I_0|}$.
\end{prop1}

\subsection{Variational approach}\label{sec:Einstein-var}
This section is devoted to setting up our variational approach to prove Theorem~\ref{theor:Einsteinx3},
which is partly inspired by the theory of optimal bounds in homogenization; see e.g.~\cite[Chapters~13 \& 23]{Milton-02}. The new main ingredients are the use of Voronoi tessellations and of elliptic regularity.
Let $E\in\Md_0^\Sym$ be fixed.
In the spirit of the heuristic approximation~\eqref{eq:approx-Einstein-Vor} for the corrector, we start by defining single-particle problems in the neighborhood of each particle.
For a random set $\Ic$ satisfying \ref{H0}, we define 
the associated Voronoi tessellation $\{V_n\}_{n}$ as follows,
$$
V_n \, := \,\big\{x \in \R^d\,:\, \dist(x,I_n)<\dist(x,I_m)~\forall m\ne n\big\}.
$$
By definition, these Voronoi cells pave the whole space $\R^d$ and each $V_n$ contains exactly one inclusion $I_n$. 
We then consider the single-particle problems in $V_n$, with either homogeneous Dirichlet or Neumann boundary conditions on $\partial V_n$,
\begingroup\allowdisplaybreaks
\begin{eqnarray}
\hspace{-0.5cm}\Ec_{n,D}\!\!&:=&\!\!\inf\bigg\{\int_{V_n}|\!\D(\psi)|^2\,:\,\psi\in H^1_0(V_n)^d,~
\Div(\psi)=0,~(\D(\psi)+E)|_{I_n}=0\bigg\},\label{e:def-Dir}\\
\hspace{-0.5cm}\Ec_{n,N}\!\!&:=&\!\!\inf\bigg\{\int_{V_n}|\!\D(\psi)|^2\,:\,\psi\in H^1(V_n)^d,~
\Div(\psi)=0,~(\D(\psi)+E)|_{I_n}=0\bigg\}.\label{e:def-Neu}
\end{eqnarray}
\endgroup
Provided $\Ec_{n,D}<\infty$, the Dirichlet problem~\eqref{e:def-Dir} is well-posed and we denote by $\psi_{n,D}$ its unique minimizer. The Neumann problem~\eqref{e:def-Neu}, on the other hand, is always well-posed and one has the deterministic uniform bound $\Ec_{n,N}\lesssim1$. We denote by $\psi_{n,N}$ the corresponding minimizer: as it is only defined up to a rigid motion,
it can be uniquely chosen such that
$(\psi_{n,N}+E(x-x_n))|_{I_n}=0$.
Next, we define the single-particle problem on the whole space via
\begin{eqnarray}\label{eq:def-infty}
\Ec_{n,\infty}~:=~ \inf\bigg\{\int_{\R^d}|\!\D(\psi)|^2\,:\,\psi\in H^1(\R^d)^d,~
\Div(\psi)=0,~(\D(\psi)+E)|_{I_n}=0\bigg\}.
\end{eqnarray}
Note that the unique minimizer $\psi_{n,\infty}$ of this variational problem coincides with the solution~$\psi_E^{\{n\}}$ of~\eqref{eq:singlepart}.
In case of~\ref{Gperc}, as we only control clusters of close particles,
we naturally merge Voronoi cells that intersect the same cluster:
more precisely, we consider the Voronoi cell associated with each cluster $J_{q,\rho}$,
\[W_q\,:=\,\bigcup_{n:I_n\subset J_{q,\rho}}V_n\]
and we then partition the whole space as
\[\R^d\,=\,\Big(\bigcup_{n\in\Sc}V_n\Big)\cup\Big(\bigcup_{q\in\Sc'}W_q\Big),\]
where $\Sc:=\{n:\rho_n \ge \rho\}$ is the set of indices for well-separated particles
and where $\Sc'$ is the set of indices $q$ such that the cluster $J_{q,\rho}$ is made of at least two particles.
For $n\in\Sc$ we shall consider the single-inclusion problems $\Ec_{n,D},\Ec_{n,N},\Ec_{n,\infty}$ as above, while
for $q\in \Sc'$ it will suffice to consider the single-cluster problem with Dirichlet conditions,
\begin{equation}\label{e:def-Dir-W}
\Fc_{q,D}~:=~ \inf\bigg\{\int_{W_q}|\!\D(\psi)|^2\,:\,\psi\in H^1_0(W_{q})^d,~
\Div(\psi)=0,~  (\D(\psi)+E)|_{J_{q,\rho}}=0\bigg\}.
\end{equation}
The upcoming lemma shows that the error in the first-order expansion $\Bb\sim\Id+\Bb^1$ can be controlled
using single-particle problems (as well as single-cluster problems in case of~\ref{Gperc}). This provides a drastic reduction of complexity since $\Bb$ itself involves the corrector $\psi_E$ with the full set of particles. The proof is postponed to Section~\ref{sec:proof-lem:main} below.

\begin{lem}\label{lem:main}
Under the assumptions of Theorem~\ref{theor:Einsteinx3}, using the above notation~\eqref{e:def-Dir}--\eqref{e:def-Dir-W},
we have
\begin{align}\label{e.var}
&\big|E:\big(\Bb -(\Id+ \Bb^1)\big)E\big|\\
&\qquad\,\lesssim\,\left\{
\begin{array}{lll}
\E\Big[{\sum_n \frac{\mathds 1_{0 \in I_n}}{|I_n|}( \Ec_{n,D}-\Ec_{n,N})}\Big]&:&\text{in case of~\ref{Gunif} or~\ref{Gmom}},
\\
\vspace{-0.3cm}&\\
\E\Big[{\sum_{n\in \Sc} \frac{\mathds 1_{0 \in I_n}}{|I_n|}( \Ec_{n,D}-\Ec_{n,N})}\Big]&&\\
\hspace{1cm}+\E\Big[{\sum_{q\in\Sc'} \frac{\mathds 1_{0 \in J_{q,\rho}}}{|J_{q,\rho}|}\Fc_{q,D}}\Big]&:&\text{in case of~\ref{Gperc}},
\end{array}
\right.\nonumber
\end{align}
where $\Bb^1$ is defined in~\eqref{eq:def-B1}.
\end{lem}

It remains to control the right-hand side in the error estimate~\eqref{e.var}, which amounts to comparing the single-particle problems with Dirichlet or Neumann boundary conditions on Voronoi cells. The proof is postponed to Section~\ref{sec:proof-lem:diff-DN} below.
\begin{lem}\label{lem:diff-DN}
For all $n$,
we have almost surely
\begin{equation}\label{e.estim-diff-DN}
0 \,\le\, \Ec_{n,D}-\Ec_{n,N} \,\lesssim\, \mu(\rho_n)\mathds{1}_{\rho_n<1}+\rho_n^{-d} \mathds{1}_{\rho_n\ge 1},
\end{equation}
where we recall that $\rho_n$ stands for (half of) the interparticle distance, cf.~\eqref{eq:dist-rhon}, and that the weight $\mu$ is defined in~\eqref{e.def-mu}.
In addition, there is $p_0<\infty$ such that for all $q$,
\begin{equation}\label{e.dir-perc-estim}
\Fc_{q,D} \,\lesssim\, \diam(J_{q,\rho})^{p_0}.\qedhere
\end{equation}
\end{lem}

With these two lemmas at hand, combining the estimates, we may now quickly conclude the proof of Theorem~\ref{theor:Einsteinx3}.

\begin{proof}[Proof of Theorem~\ref{theor:Einsteinx3}]
Combining Lemmas~\ref{lem:main}
and~\ref{lem:diff-DN}, we get
\begin{align}
&|\Bb - (\Id+  \Bb^1)|\label{e.Einstein-pr-1}\\
&\quad\,\lesssim\,
\left\{
\begin{array}{lll}
\E\Big[{\sum_n \frac{\mathds1_{0\in I_n}}{|I_n|}\big( \mu(\rho_n) \mathds{1}_{\rho_n<1}+\rho_n^{-d} \mathds1_{\rho_n\ge 1}\big)}\Big]&:&\text{in case of~\ref{Gunif} or~\ref{Gmom}},\\
\vspace{-0.3cm}&&\\
\E\Big[{\sum_{n} \frac{\mathds1_{0\in I_n}}{|I_n|} \rho_n^{-d}\mathds1_{\rho_n\ge\rho}}\Big]&&\\
\qquad+\E\Big[{\sum_{q\in\Sc'} \frac{\mathds1_{0\in J_{q,\rho}}}{|J_{q,\rho}|}\diam (J_{q,\rho})^{p_0} }\Big]&:&\text{in case of~\ref{Gperc},}
\end{array}
\right.\nonumber
\end{align}
and it remains to estimate these expectations. We split the proof into two steps.

\medskip 
\step1 Proof that, if $g\in\Ld^\infty(\R^+)$ is non-increasing with $g(r)\downarrow0$ as $r\uparrow\infty$, then
\begin{equation}\label{e.Einstein-pr-2}
\expecM{\sum_n\frac{\mathds1_{0\in I_n}}{|I_n|}g(\rho_n)}\,\lesssim\, \lambda_2(\Pc)\|g\|_{\Ld^\infty(\R^+)}+\int_{0}^\infty |g'(r)|\,\Big((\lambda_2(\Pc)\,\langle r\rangle^d)\wedge\lambda(\Pc)\Big)\,dr.
\end{equation}
To start with, we rewrite the left-hand side as 
\begin{equation}\label{eq:sum-g-rhon}
\expecM{\sum_{n}\frac{\mathds1_{0\in I_n}}{|I_n|}g(\rho_n)}\,=\,\int_0^\infty g(r)\,d\Lambda(r),
\end{equation}
where the positive measure $\Lambda$ on $\R^+$ is defined by its distribution function
\[\Lambda([0,r])\,:=\,\expecM{\sum_{n}{\frac{\mathds1_{0\in I_n}}{|I_n|}}\mathds1_{\rho_n\le r}}
\,=\,\expecM{\sum_{n}{\frac{\mathds1_{0\in I_n}}{|I_n|}}\,\mathds1_{\exists m\ne n:\,\frac12\dist(I_m,I_n)\le r}}.\]
As $I_n\subset B(x_n)$ for all $n$, we can estimate the latter as
\[\Lambda([0,r])\,\le\,\expecM{\sum_{n}\mathds1_{|x_n|\le1}\,\mathds1_{\exists m\ne n:\,|x_m-x_n|\le 2(r+1)}}.\]
Recalling that $\ell=\ell(\Pc)$ is the minimal distance~\eqref{eq:def-ell(P)}, we deduce that $\Lambda([0,r])=0$ for all $r\le\frac12\ell-1$.
Moreover, we can bound, on the one hand,
\[\Lambda([0,r])\,\le\, \expecM{\sum_n\mathds1_{|x_n|\le1}}\,=\,\lambda(\Pc)|B|,\]
and on the other hand, in terms of the two-point density and intensity, for $r\ge\frac12\ell-1$,
\begin{eqnarray*}
\Lambda([0,r])&\le&\expecM{\sum_{n\ne m}\mathds1_{|x_n|\le1}\,\mathds1_{|x_m-x_n|\le 2(r+1)}}\nonumber\\
&=&\iint_{B\times B_{2(r+1)}}f_2(x,x+y)\,dxdy\nonumber\\
&=&(2(r+1))^{-d}\iint_{B_{2(r+1)}\times B_{2(r+1)}}f_2(x,x+y)\,dxdy\nonumber\\
&\lesssim&\lambda_2(\Pc)\langle r\rangle^d.
\end{eqnarray*}
Combining these estimates yields
\begin{equation}\label{eq:bnd-Lambda}
\Lambda([0,r])\,\lesssim\,
(\lambda_2(\Pc)\langle r\rangle^d)\wedge\lambda(\Pc).
\end{equation}
Under our assumptions on $g$, an integration by parts yields
\[\int_0^\infty g(r)\,d\Lambda(r)\,=\,-g(0)\Lambda(\{0\})+\int_{0}^\infty |g'(r)|\,\Lambda([0,r])\,dr,\]
and the conclusion follows in combination with~\eqref{eq:sum-g-rhon} and~\eqref{eq:bnd-Lambda}.

\medskip
\step2 Conclusion.\\
In case of~\ref{Gunif}, as we have $\rho_n\ge\rho$ for all $n$,
the contributions of $\rho_n<\rho$ can be removed in~\eqref{e.Einstein-pr-1}. Applying~\eqref{e.Einstein-pr-2} with $g(r)=\langle r\rangle^{-d}$, we are then led to
\begin{eqnarray*}
|\Bb-(\Id+\Bb^1)|&\lesssim&\expecM{\sum_n{\frac{\mathds1_{0\in I_n}}{|I_n|}}\langle\rho_n\rangle^{-d}}\\
&\lesssim&\lambda_2(\Pc)+\int_{0}^\infty\langle r\rangle^{-d-1}\Big((\lambda_2(\Pc)\langle r\rangle^d)\wedge\lambda(\Pc)\Big)\,dr,
\end{eqnarray*}
and the conclusion~\eqref{e.def-bd-Einstein} follows after estimating this integral.

\medskip\noindent
Next, in case of~\ref{Gmom}, repeating the same computation as above for the contributions of $\rho_n\ge1$ in~\eqref{e.Einstein-pr-1}, and separating the contributions of $\rho_n\le1$, we find
\[|\Bb-(\Id+\Bb^1)|\,\lesssim\,\lambda_2(\Pc)\log\big(2+\tfrac{\lambda(\Pc)}{\lambda_2(\Pc)}\big)+\expecM{\sum_n{\frac{\mathds1_{0\in I_n}}{|I_n|}}\mu(\rho_n)\mathds1_{\rho_n\le1}},\]
and it remains to estimate the last term.
By H\"older's inequality, we can write for any~$\kappa\ge1$,
\begin{equation*}
\expecM{\sum_n{\frac{\mathds1_{0\in I_n}}{|I_n|}}\mu(\rho_n)\mathds1_{\rho_n\le1}}
\,\le\,\expecM{\sum_n{\frac{\mathds1_{0\in I_n}}{|I_n|}}\mathds1_{\rho_n\le 1}}^{1-\frac1\kappa}\expecM{\sum_n{\frac{\mathds1_{0\in I_n}}{|I_n|}}\mu(\rho_n)^{\kappa}}^{\frac1\kappa}.
\end{equation*}
On the one hand, \eqref{eq:bnd-Lambda} yields
\[\expecM{\sum_n{\frac{\mathds1_{0\in I_n}}{|I_n|}}\mathds1_{\rho_n\le 1}}\,=\,\Lambda([0,1])
\,\lesssim\,\lambda_2(\Pc).\]
On the other hand, by the ergodic theorem, using~\eqref{eq:conv-lambda} and the reciprocal moment condition in~\ref{Gmom}, we find
\begin{eqnarray*}
\lefteqn{\expecM{\sum_n{\frac{\mathds1_{0\in I_n}}{|I_n|}}\,\mu(\rho_n)^{\kappa}}~=~
\lim_{R\uparrow\infty} R^{-d} \sum_{n}{\frac{|I_n\cap Q_R|}{|I_n|}} \,\mu(\rho_n)^\kappa}
\\
&\le&\limsup_{R\uparrow\infty} \frac{\sharp\{n:I_n\cap Q_R\ne\varnothing\}}{R^{d}}\, \frac1{\sharp\{n:I_n\cap Q_R\ne\varnothing\}}\sum_{n:I_n\cap Q_R\ne\varnothing} \mu(\rho_n)^\kappa
\\
&\le& \lambda(\Pc) (\Kc_\kappa)^\kappa,
\end{eqnarray*}
and the conclusion~\eqref{e.def-bd-Einstein} follows.

\medskip
\noindent
Finally, in case of~\ref{Gperc}, repeating again the same computation for the contributions of~$\rho_n\ge\rho$ in~\eqref{e.Einstein-pr-1}, we find
\[|\Bb-(\Id+\Bb^1)|\,\lesssim\,\lambda_2(\Pc)\log\big(2+\tfrac{\lambda(\Pc)}{\lambda_2(\Pc)}\big)+\expecM{\sum_{q\in\Sc'}{\frac{\mathds1_{0\in J_{q,\rho}}}{|J_{q,\rho}|}}\diam(J_{q,\rho})^{p_0}},\]
and it remains to estimate the last term. By H\"older's inequality, we can write for any~$\kappa\ge1$,
\[\expecM{\sum_{q\in\Sc'}\frac{\mathds1_{0\in J_{q,\rho}}}{|J_{q,\rho}|}\diam(J_{q,\rho})^{p_0}}\,\le\,\expecM{\sum_{q\in\Sc'}{\frac{\mathds1_{0\in J_{q,\rho}}}{|J_{q,\rho}|}}}^{1-\frac1\kappa}\expecM{\sum_{q}{\frac{\mathds1_{0\in J_{q,\rho}}}{|J_{q,\rho}|}}\diam(J_{q,\rho})^{\kappa p_0}}^\frac1\kappa.\]
On the one hand, by definition of $\Sc'$, \eqref{eq:bnd-Lambda} yields
\[ \expecM{\sum_{q\in\Sc'}{\frac{\mathds1_{0\in J_{q,\rho}}}{|J_{q,\rho}|}}}\,=\,\expecM{\sum_{n}{\frac{\mathds1_{0\in I_n}}{|I_n|}}\mathds1_{\rho_n\le\rho}}\,=\,\Lambda([0,\rho])
\,\lesssim\,\lambda_2(\Pc).\]
On the other hand, by the ergodic theorem, using~\eqref{eq:conv-lambda}, the condition~\eqref{e.bd.perco} in~\ref{Gperc}, and the fact that there are less clusters than particles, we find
\begingroup\allowdisplaybreaks
\begin{eqnarray*}
\lefteqn{\expecM{\sum_{q}{\frac{\mathds1_{0\in J_{q,\rho}}}{|J_{q,\rho}|}}\diam(J_{q,\rho})^{\kappa p_0}}~=~\lim_{R\uparrow\infty}R^{-d}\sum_{q}{\frac{|J_{q,\rho}\cap Q_R|}{|J_{q,\rho}|}}\diam(J_{q,\rho})^{\kappa p_0}}\\
&\le&\limsup_{R\uparrow\infty}\frac{\sharp\{q:J_{q,\rho}\cap Q_R\ne\varnothing\}}{R^{d}}\frac{1}{\sharp\{q:J_{q,\rho}\cap Q_R\ne\varnothing\}}\sum_{q:J_{q,\rho}\cap Q_R\ne\varnothing} \diam(J_{q,\rho})^{\kappa p_0}\\
&\le&\limsup_{R\uparrow\infty}\frac{\sharp\{n:I_n\cap Q_R\ne\varnothing\}}{R^{d}}\frac1{\sharp\{q:J_{q,\rho}\cap Q_R\ne\varnothing\}}\sum_{q:J_{q,\rho}\cap Q_R\ne\varnothing}\diam(J_{q,\rho})^{\kappa p_0}\\
&\le&\lambda(\Pc)(\Kc_{\kappa})^{\kappa},
\end{eqnarray*}
\endgroup
and the conclusion~\eqref{e.def-bd-Einstein} follows.
\end{proof}

\subsection{Preliminary lemmas}\label{sec:prelim-Einstein}

Before turning to the proof of Lemmas~\ref{lem:main} and~\ref{lem:diff-DN}, which are key to Theorem~\ref{theor:Einsteinx3} as explained above, we start with a couple of preliminary PDE and probabilistic lemmas. We first prove the following trace estimates at particle boundaries.

\begin{samepage}\begin{lem}[Trace estimates]\label{lem:trace-0}$ $
\begin{enumerate}[(i)]
\item For any $\psi\in H^1(I_n)$, we have
\[\qquad\inf_{\kappa\in\R^d,\,\Theta\in\Md^\Skew}\int_{\partial I_n}|\psi-(\kappa+\Theta (x-x_n))|^2\,\lesssim\,\int_{I_n}|\!\D(\psi)|^2.\]
\item For any $\psi\in H^1(I_n+\rho B)$ satisfying the following relations, for some $E\in\Md^\Sym_0$,
\[\qquad\left\{\begin{array}{ll}
-\triangle\psi+\nabla\Sigma=0,&\text{in $(I_n+\rho B)\setminus I_n$},\\
\Div(\psi)=0,&\text{in $(I_n+\rho B)\setminus I_n$},\\
\D(\psi)+E=0,&\text{in $I_n$},
\end{array}\right.\]
we have
\[\qquad\inf_{c\in\R}\int_{\partial I_n}|\sigma(\psi,\Sigma)-c\Id\!|^2\,\lesssim\,\int_{I_n+\rho B}|\!\D(\psi)|^2,\]
where we recall that multiplicative constants may implicitly depend on~$\rho$.
\qedhere
\end{enumerate}
\end{lem}\end{samepage}

\begin{proof}
We split the proof into two steps.

\medskip
\step1 Proof of~(i).\\
We appeal to a trace estimate in form of
\[\int_{\partial I_n}|\psi-(\kappa+\Theta (x-x_n))|^2\,\lesssim\,\int_{I_n}|\langle\nabla\rangle^\frac12(\psi-(\kappa+\Theta (x-x_n)))|^2,\]
and the conclusion follows from Poincaré's and Korn's inequalities.

\medskip
\step2 Proof of~(ii).\\
By definition of the Cauchy stress tensor, a trace estimate yields
\begin{equation}\label{eq:est-bndary-0}
\int_{\partial I_{n}}|\sigma(\psi,\Sigma)-c\Id\!|^2
\,\lesssim\,\int_{(I_{n}+\frac12\rho B)\setminus I_{n}}|\langle\nabla\rangle^\frac12\nabla\psi|^2
+|\langle\nabla\rangle^\frac12(\Sigma-c)|^2.
\end{equation}
By the local regularity theory for the steady Stokes equation near a boundary, e.g.~\cite[Theorems~IV.5.1--5.3]{Galdi}, we have for all $m\ge0$, for all constants $\kappa\in\R^d$ and $c\in\R$,
\begin{multline*}
\|\nabla\psi\|_{H^{m+1}((I_{n}+\frac12\rho B)\setminus I_{n})}+\|\Sigma-c\Id\!\|_{H^{m+1}((I_{n}+\frac12\rho B)\setminus I_n)}\\
\,\lesssim\,\|\psi|_{I_{n}}-\kappa\|_{H^{m+\frac 32}(\partial I_{n})}
+\|\psi-\kappa\|_{H^1((I_{n}+\rho B)\setminus I_n)}
+\|\Sigma-c\Id\!\|_{\Ld^2((I_{n}+\rho B)\setminus I_n)}.
\end{multline*}
Choosing $c:=\fint_{(I_{n}+\rho B)\setminus I_{n}}\Sigma$ and using a local pressure estimate for the steady Stokes equation, e.g.~\cite[Lemma~3.3]{DG-21b}, we find
\begin{equation*}
\|\Sigma-c\Id\!\|_{\Ld^2((I_{n}+\rho B)\setminus I_{n})}
\,\lesssim\,\|\nabla\psi\|_{\Ld^2((I_{n}+\rho B)\setminus I_{n})},
\end{equation*}
so that the above reduces to
\begin{multline*}
\|\nabla\psi\|_{H^{m+1}((I_{n}+\frac12\rho B)\setminus I_{n})}+\|\Sigma-c\Id\!\|_{H^{m+1}((I_{n}+\frac12\rho B)\setminus I_{n})}\\
\,\lesssim\,\|\psi|_{I_n}-\kappa\|_{H^{m+\frac 32}(\partial I_{n})}+\|\psi-\kappa\|_{H^1((I_n+\rho B)\setminus I_n)}.
\end{multline*}
As $\psi$ is affine in $I_n$, we have
\[\|\psi|_{I_{n}}-\kappa\|_{H^{m+\frac 32}(\partial I_{n})}\,\lesssim\,\|\psi-\kappa\|_{H^{m+2}(I_{n})}\,=\,\|\psi-\kappa\|_{H^{1}(I_{n})},\]
and the above then becomes
\begin{equation*}
\|\nabla\psi\|_{H^{m+1}((I_{n}+\frac12\rho B)\setminus I_{n})}+\|\Sigma-c\Id\!\|_{H^{m+1}((I_{n}+\frac12\rho B)\setminus I_{n})}
\,\lesssim\,\|\psi-\kappa\|_{H^1(I_{n}+\rho B)}.
\end{equation*}
Further choosing $\kappa:=\fint_{I_{n}+\rho B}\psi$ and applying Poincaré's inequality, we deduce
\begin{equation*}
\|\nabla\psi\|_{H^{m+1}((I_{n}+\frac12\rho B)\setminus I_n)}+\|\Sigma-c\Id\!\|_{H^{m+1}((I_{n}+\frac12\rho B)\setminus I_n)}
\,\lesssim\,\|\nabla\psi\|_{\Ld^2(I_{n}+\rho B)}.
\end{equation*}
In particular, combined with~\eqref{eq:est-bndary-0}, this leads us to
\begin{equation*}
\inf_{c\in\R}\int_{\partial I_{n}}|\sigma(\psi,\Sigma)-c\Id\!|^2
\,\lesssim\,\int_{I_n+\rho B}|\nabla\psi|^2.
\end{equation*}
Noting that $\sigma(\psi,\Sigma)$ and the equations satisfied by $(\psi,\Sigma)$ are unchanged if a rigid motion is added to $\psi$, the conclusion now follows from Korn's inequality.
\end{proof}

Next, we recall the following standard elliptic regularity estimate for solutions of the free steady Stokes equation.

\begin{lem}[Mean-value property]\label{lem:regularity}
Given $r>0$, if $(\psi,\Sigma)$ is a weak solution of the free Stokes equation in $B_r$,
\[
-\triangle\psi+\nabla\Sigma=0,\qquad
\Div(\psi)=0,\qquad\text{in $B_r$},
\]
then it satisfies
\[|\!\D(\psi)(0)|^2\,\lesssim\,\fint_{B_r}|\!\D(\psi)|^2.\qedhere\]
\end{lem}

\begin{proof}
By scaling, it suffices to consider $r=1$.
For $m>\frac d2$, the Sobolev embedding yields
\begin{equation}\label{eq:Sob}
|\!\D(\psi)(0)|\,\lesssim\,\|\!\D(\psi)\|_{H^m(\frac12B)},
\end{equation}
and it remains to estimate this Sobolev norm.
By the local regularity theory for the steady Stokes equation, e.g.~\cite[Theorem~IV.4.1]{Galdi}, we find for all $\kappa\in\R^d$ and~$c\in\R$,
\[\|\nabla\psi\|_{H^{m}(\frac12B)}+\|\Sigma-c\|_{H^m(\frac12B)}\,\lesssim\,\|\psi-\kappa\|_{H^{1}(B)}+\|\Sigma-c\|_{\Ld^2(B)}.\]
Choosing $c=\fint_B\Sigma$ and using a local pressure estimate for the steady Stokes equation, e.g.~\cite[Lemma~3.3]{DG-21b}, we find
\[\|\Sigma-c\|_{\Ld^2(B)}\,\lesssim\,\|\nabla\psi\|_{\Ld^2(B)}.\]
Inserting this into the above and applying Poincaré's inequality for the choice $\kappa=\fint_B\psi$, we deduce
\[\|\nabla\psi\|_{H^{m}(\frac12B)}\,\lesssim\,\|\nabla\psi\|_{\Ld^2(B)}.\]
For any $\Theta\in\Md^\Skew$, this entails
\[\|\!\D(\psi)\|_{H^{m}(\frac12B)}\,\le\,\|\nabla(\psi-\Theta x)\|_{H^{m}(\frac12B)}\,\lesssim\,\|\nabla(\psi-\Theta x)\|_{\Ld^2(B)},\]
hence, by Korn's inequality,
\[\|\!\D(\psi)\|_{H^{m}(\frac12B)}\,\lesssim\,\|\!\D(\psi)\|_{\Ld^2(B)}.\]
Inserting this into~\eqref{eq:Sob}, the conclusion follows.
\end{proof}

Finally, the following lemma provides a useful property of Voronoi tessellations.
Although it could be obtained as a direct consequence of Palm theory, we include a more elementary proof by means of an approximation argument.

\begin{lem}[Property of Voronoi tessellations]
For all stationary random fields~$\zeta$ with
$\expec{|\zeta|}<\infty$, we have in case of~\ref{Gunif} or~\ref{Gmom},
\begin{equation}\label{e.prop-randtess}
\expec{\zeta}\,=\,\expecM{\sum_{n} \frac{\mathds 1_{0 \in I_n}}{|I_n|} \int_{V_n} \zeta},
\end{equation}
and in case of~\ref{Gperc},
\begin{equation}\label{e.prop-randtess-re}
\expec{\zeta}\,=\,\expecM{\sum_{n\in \Sc} \frac{\mathds 1_{0 \in I_n}}{|I_n|} \int_{V_n} \zeta+\sum_{q\in\Sc'} \frac{\mathds 1_{0 \in J_{q,\rho}}}{|J_{q,\rho}|}\int_{W_q}\zeta}.\qedhere
\end{equation}
\end{lem}

\begin{proof}
By the monotone convergence theorem, it is enough to prove the result for any bounded non-negative random field $0\le \zeta \le M$ with any fixed $M>0$. Let such a $\zeta$ be fixed.
We split the proof into two steps.

\medskip
\step1 Proof of~\eqref{e.prop-randtess} \& \eqref{e.prop-randtess-re} under the additional assumption that almost surely
\begin{equation}\label{e.hyp-Vor-bdd}
\sup_n\diam(V_n)\,<\,\infty.
\end{equation}
In that case, let $K\ge1$ be such that $\diam(V_n)\le K$ almost surely for all $n$.
We consider~\eqref{e.prop-randtess} and~\eqref{e.prop-randtess-re} separately, and split the proof into two further substeps.

\medskip
\substep{1.1} Proof of~\eqref{e.prop-randtess} under assumption~\eqref{e.hyp-Vor-bdd}.\\
By the ergodic theorem, we have almost surely
\[\expecM{\sum_{n} \frac{\mathds 1_{0 \in I_n}}{|I_n|} \int_{V_{n}} \zeta}\,=\,\lim_{R \uparrow\infty}
R^{-d}\sum_{n} \frac{|I_n\cap Q_R|}{|I_n|} \int_{V_{n}} \zeta.\]
As $\zeta \ge 0$ and as assumption~\eqref{e.hyp-Vor-bdd} entails $V_{n}\subset B_{K}(x_n)$ for all $n$, we easily get the two-sided estimate
\[\int_{Q_{R-CK}} \zeta\, \le\, \sum_{n} \frac{|I_n\cap Q_R|}{|I_n|} \int_{V_{n}} \zeta \, \le\, \int_{Q_{R+CK}} \zeta,\]
and the claim~\eqref{e.prop-randtess} then follows from the ergodic theorem.

\medskip
\substep{1.2} Proof of~\eqref{e.prop-randtess-re} under assumption~\eqref{e.hyp-Vor-bdd}.\\
By the ergodic theorem, we have almost surely
\begin{multline*}
\expecM{\sum_{n\in\Sc}\frac{\mathds1_{0\in I_n}}{|I_n|}\int_{V_n}\zeta+\sum_{q\in\Sc'}\frac{\mathds1_{0\in J_{q,\rho}}}{|J_{q,\rho}|}\int_{W_q}\zeta}\\
\,=\,\lim_{R\uparrow\infty}R^{-d}\bigg(\sum_{n\in\Sc}\frac{|I_n\cap Q_R|}{|I_n|}\int_{V_n}\zeta+\sum_{q\in\Sc'}\frac{|J_{q,\rho}\cap Q_R|}{|J_{q,\rho}|}\int_{W_q}\zeta\bigg).
\end{multline*}
As $0\le\zeta\le M$ and as assumption~\eqref{e.hyp-Vor-bdd} entails $V_n\subset B_{K}(x_n)$ for all $n$, we get the two-sided estimate
\begin{multline*}
\int_{Q_{R-CK}}\zeta
\,-\,M\sum_{q:J_{q,\rho}\cap (Q_{R+1}\setminus Q_R)\ne\varnothing}|W_q|\\
\,\le\,\sum_{n\in\Sc}\frac{|I_n\cap Q_R|}{|I_n|}\int_{V_n}\zeta+\sum_{q\in\Sc'}\frac{|J_{q,\rho}\cap Q_R|}{|J_{q,\rho}|}\int_{W_q}\zeta\\
\,\le\,\int_{Q_{R+CK}}\zeta
\,+\,M\sum_{q:J_{q,\rho}\cap (Q_{R+1}\setminus Q_R)\ne\varnothing}|W_q|.
\end{multline*}
By the ergodic theorem, in order to prove~\eqref{e.prop-randtess-re}, it thus remains to show almost surely
\begin{equation*}
\lim_{R\uparrow\infty}R^{-d}\sum_{q:J_{q,\rho}\cap (Q_{R+1}\setminus Q_R)\ne\varnothing}|W_q|\,=\,0,
\end{equation*}
which would follow provided that we show almost surely
\begin{equation}\label{eq:suff-Wq-bnd}
\limsup_{R\uparrow\infty}R^{-d}\sum_{q:J_{q,\rho}\cap Q_R\ne\varnothing}|W_q|\,<\,\infty.
\end{equation}
As $|W_q|\lesssim(\diam(J_{q,\rho})+K)^d$, we can estimate
\begin{multline*}
R^{-d}\sum_{q:J_{q,\rho}\cap Q_R\ne\varnothing}|W_q|\\
\,\lesssim\,K^d\Big(R^{-d}\,\sharp\{q:J_{q,\rho}\cap Q_R\ne\varnothing\}\Big)\bigg(\frac1{\sharp\{q:J_{q,\rho}\cap Q_R\ne\varnothing\}}\sum_{q:J_{q,\rho}\cap Q_R\ne\varnothing}\diam(J_{q,\rho})^d\bigg).
\end{multline*}
To bound the first factor, we simply note that
\begin{eqnarray*}
R^{-d}\,\sharp\{q:J_{q,\rho}\cap Q_R\ne\varnothing\}
&\le&R^{-d}\,\sharp\{n:I_n\cap Q_R\ne\varnothing\}\\
&\le&R^{-d}\,\sharp\{n:x_n\in Q_{R+1}\}\,\xrightarrow{R\uparrow\infty}\,\lambda(\Pc).
\end{eqnarray*}
Appealing to the condition~\eqref{e.bd.perco} in~\ref{Gperc} to estimate the second factor, the claim~\eqref{eq:suff-Wq-bnd} follows.

\medskip
\step2 Relaxing assumption~\eqref{e.hyp-Vor-bdd}.\\
It remains to consider the case when $\sup_n\diam (V_n)=\infty$,
and we proceed by approximation.
Consider a point process $\Pc'=\{x_n'\}_n$ independent of $\Pc,\Ic$ such that almost surely
\[\min_{n\ne m}|x_n'-x_m'|\ge\tfrac12,\qquad\min_{m:m\ne n}|x_n'-x_m'|\le1~~\text{for all $n$}.\]
For instance, $\Pc'$ can be chosen as the random parking process of parameter $\frac14$, cf.~\cite{Penrose-01}. Now, for any integer $\alpha\ge1$, we define the `enriched' point process $\Pc_\alpha$ as follows,
\[\Pc_\alpha\,:=\,\Pc\cup\big\{2^{2\alpha}x_n':\dist(2^{2\alpha}x_n',\Pc)\ge 2^{2\alpha+3}\big\},\]
as well as the corresponding random set
\[\Ic_\alpha\,:=\,\Ic\cup\bigcup_{n:\dist(2^{2\alpha}x_n',\Pc)\ge 2^{2\alpha+3}}B(2^{2\alpha}x_n').\]
Denote by $V_\alpha(x_n)$ the Voronoi cell associated with $I_n$ in $\Ic_\alpha$, and by $V_\alpha(x_n')$ the Voronoi cell associated with $B(2^{2\alpha}x_n')$. By construction, it can be checked that for all $n$,
\[V_n\cap B_{2^{2\alpha+2}}(x_n)\,\subset\,V_\alpha(x_n)\,\subset\,V_n\cap B_{2^{2\alpha+4}}(x_n),\]
which entails that $V_\alpha(x_n)\uparrow V_n$ increasingly as $\alpha\uparrow\infty$ (over integers).
In addition, $\Pc_\alpha,\Ic_\alpha$ satisfy~\ref{H0},
as well as~\eqref{e.hyp-Vor-bdd} with Voronoi diameters bounded by $O(2^{2\alpha})$. They further satisfy Assumption~\ref{Gunif}, \ref{Gmom}, or \ref{Gperc} provided that~$\Pc,\Ic$ satisfy the corresponding assumption.
We focus on the case of~\ref{Gunif} or~\ref{Gmom}, while the case of~\ref{Gperc} is analogous.
Applying the result~\eqref{e.prop-randtess} of Step~1, by definition of $\Pc_\alpha$, we get
\begin{equation*}
\expec\zeta\,=\,\expecM{\sum_n\frac{\mathds1_{0\in I_n}}{|I_n|}\int_{V_\alpha(x_n)}\zeta}+\expecM{\sum_n\mathds1_{\dist(2^{2\alpha}x_n',\Pc)\ge2^{2\alpha+3}}\frac{\mathds1_{0\in B(2^{2\alpha}x_n')}}{|B|}\int_{V_\alpha(x_n')}\zeta}.
\end{equation*}
As $0\le\zeta\le M$, as Voronoi diameters are bounded by $C2^{2\alpha}$ almost surely, and using stationarity and the independence of $\Pc,\Ic$ and $\Pc',\Ic'$, the second right-hand side term satisfies
\begin{eqnarray*}
0&\le&\expecM{\sum_n\mathds1_{\dist(2^{2\alpha}x_n',\Pc)\ge2^{2\alpha+3}}\frac{\mathds1_{0\in B(2^{2\alpha}x_n')}}{|B|}\int_{V_\alpha(x_n')}\zeta}\\
&\lesssim&M2^{2\alpha d}\,\expecM{\sum_n\mathds1_{\dist(2^{2\alpha}x_n',\Pc)\ge2^{2\alpha+3}}\frac{\mathds1_{0\in B(2^{2\alpha}x_n')}}{|B|}}\\
&=&M2^{2\alpha d}\,\expecM{\sum_n\frac{\mathds1_{0\in B(2^{2\alpha}x_n')}}{|B|}}\pr{\dist(0,\Pc)\ge2^{2\alpha+3}}\\
&=&M\lambda(\Pc')\,\pr{\dist(0,\Pc)\ge2^{2\alpha+3}}.
\end{eqnarray*}
Inserting this into the above, we deduce
\begin{multline*}
\expecM{\sum_n\frac{\mathds1_{0\in I_n}}{|I_n|}\int_{V_\alpha(x_n)}\zeta}\\
\,\le\,\expec\zeta\,\le\,\expecM{\sum_n\frac{\mathds1_{0\in I_n}}{|I_n|}\int_{V_\alpha(x_n)}\zeta}+CM\lambda(\Pc')\,\pr{\dist(0,\Pc)\ge 2^{2\alpha+3}}.
\end{multline*}
and the conclusion~\eqref{e.prop-randtess} follows from the monotone convergence theorem.
\end{proof}

\subsection{Proof of Lemma~\ref{lem:main}}\label{sec:proof-lem:main}
Without loss of generality, we can assume that $\Ec_{n,D}<\infty$ almost surely as otherwise the claimed estimate~\eqref{e.var} would be trivial.
The variational definition of the effective viscosity~\eqref{def:eff-vis-min} can be written as
\begin{multline}\label{def:eff-vis-min-bis}
E:\Bb E\,=\, |E|^2+\inf\Big\{\expec{|\!\D(\psi)|^2}~:~\psi\in\Ld^2(\Omega;H^1_\loc(\R^d)^d),~\nabla\psi~\text{stationary},\\
\Div(\psi)=0,~(\D(\psi)+E)|_{\Ic}=0,~\expec{\D(\psi)}=0\Big\},
\end{multline}
and the definition~\eqref{eq:def-B1} of~$\Bb^1$ as
\begin{equation}\label{eq:B1-redef}
E:\Bb^1E
\,=\,\expecM{\sum_n\frac{\mathds1_{0\in I_n}}{|I_n|}\Ec_{n,\infty}}.
\end{equation}
Note that an energy estimate for~\eqref{eq:def-infty} using Bogovskii's construction yields the uniform bound $\Ec_{n,\infty}\,\lesssim\,|E|^2$.
In order to prove~\eqref{e.var}, it remains to compare~\eqref{def:eff-vis-min-bis} to a superposition of the single-particle problems~$\{\Ec_{n,\infty}\}_n$ and to recognize~\eqref{eq:B1-redef}. We split the proof into three steps.

\medskip
 \step{1} Upper bound: proof that we have in case of~\ref{Gunif} or~\ref{Gmom},
\begin{equation}\label{e.above}
E : \Bb E\,\le\,|E|^2+\expecM{\sum_{n} \frac{\mathds 1_{0 \in I_n}}{|I_n|} \Ec_{n,D}},
\end{equation}
and in case of~\ref{Gperc},
\begin{equation}\label{e.above-re}
E : \Bb E\,\le\,|E|^2+\expecM{\sum_{n\in \Sc} \frac{\mathds 1_{0 \in I_n}}{|I_n|} \Ec_{n,D}+\sum_{q\in\Sc'} \frac{ \mathds 1_{0 \in J_{q,\rho}}}{|J_{q,\rho}|}\Fc_{q,D}}.
\end{equation}
We focus on~\eqref{e.above-re}, the proof of~\eqref{e.above} being identical.
We define almost surely
$$
\psi_D\,:=\,\sum_{n\in \Sc} \psi_{n,D}+\sum_{q\in\Sc'}\psi_{q,D} ~~\in H^1_\loc(\R^d)^d,
$$
where the summands $\psi_{n,D}\in H^1_0(V_n)^d$ and $\psi_{q,D} \in H^1_0(W_q)^d$ are implicitly extended by zero outside $V_n$ and $W_q$, respectively.
Properties of Dirichlet minimizers $\{\psi_{n,D}\}_n,\{\psi_{q,D}\}_q$ ensure that $\nabla\psi_D$ is stationary and satisfies $\Div(\psi_D)=0$ and $(\D(\psi_D)+E)|_{\Ic}=0$. Assume that $\D(\psi_D) \in \Ld^2(\Omega)^{d\times d}$ (for otherwise the claim is trivial by~\eqref{e.prop-randtess-re}). Then,
appealing to~\eqref{e.prop-randtess-re}, we find $\expec{\D(\psi_D)}=0$. 
We may then use $\psi_D$ as a test function in the variational problem~\eqref{def:eff-vis-min-bis}, to the effect of
\[E : \Bb E\,\le\,|E|^2+\expecm{|\!\D(\psi_D)|^2},\]
and the claim~\eqref{e.above-re} now follows from~\eqref{e.prop-randtess-re}.

\medskip
\step{2} Lower bound: proof that
\begin{eqnarray}\label{e.below}
E : \Bb E\,\ge\,|E|^2+\expecM{\sum_{n} \frac{\mathds 1_{0 \in I_n}}{|I_n|} \Ec_{n,N}}.
\end{eqnarray}
By~\eqref{e.prop-randtess}, we can write
\begin{eqnarray*}
E : \Bb E&=&|E|^2+\expec{|\!\D(\psi_E)|^2}\\
&=&|E|^2+\expecM{\sum_{n} \frac{\mathds 1_{0 \in I_n}}{|I_n|}\int_{V_n}|\!\D(\psi_E)|^2}.
\end{eqnarray*}
Using the corrector $\psi_E$ as a test function for the Neumann single-particle problem~\eqref{e:def-Neu}, we find $\Ec_{n,N}\le\int_{V_n}|\!\D(\psi_E)|^2$ and the claim~\eqref{e.below} follows.

\medskip
\step{3} Conclusion.\\
In view of~\eqref{e.above} and~\eqref{e.below}, it remains to compare $\Ec_{n,D}$ and $\Ec_{n,N}$ to $\Ec_{n,\infty}$.
On the one hand, since $\psi_{n,D}$ is an admissible test function for $\Ec_{n,\infty}$, we have
\[\Ec_{n,\infty} \,\le\, \int_{\R^d} |\!\D(\psi_{n,D})|^2 \,=\,\int_{V_n} |\!\D(\psi_{n,D})|^2\,=\, \Ec_{n,D}.\]
On the other hand, since the restriction $\psi_{n,\infty}|_{V_n}$ is an admissible test function for $\Ec_{n,N}$, we have
\[\Ec_{n,N} \,\le\, \int_{V_n} |\!\D(\psi_{n,\infty})|^2 \,\le\, \Ec_{n,\infty}.\]
This yields
\[\Ec_{n,N}  \,\le\, \Ec_{n,\infty} \,\le\, \Ec_{n,D},\]
or alternatively,
\[|\Ec_{n,N}-\Ec_{n,\infty}|+|\Ec_{n,D}-\Ec_{n,\infty}| \,=\, \Ec_{n,D}-\Ec_{n,N}.\]
Further note that the minimality of Neumann problems entails
\[\sum_{n:I_n\subset J_{q,\rho}}\Ec_{n,N} \,\le\, \Fc_{q,D},\]
and thus
\[\expecM{\sum_{n\notin\Sc} \frac{\mathds1_{0\in I_n}}{|I_n|}\Ec_{n,N}}\,=\,\expecM{\sum_{q\in\Sc'}\frac{\mathds1_{0\in J_{q,\rho}}}{|J_{q,\rho}|}\sum_{n:I_n\subset J_{q,\rho}}\Ec_{n,N}}\, \le\, \expecM{\sum_{q\in\Sc'}\frac{\mathds1_{0\in J_{q,\rho}}}{|J_{q,\rho}|}\Fc_{q,D}}.\]
Combining these observations with~\eqref{e.above}--\eqref{e.below},
the conclusion~\eqref{e.var}  follows.
\qed

\subsection{Proof of Lemma~\ref{lem:diff-DN}}\label{sec:proof-lem:diff-DN}
The bound~\eqref{e.dir-perc-estim}
on $\Fc_{q,D}$ follows from Bogovskii's construction in form of~\cite[Lemma~4.2]{DG-21a}.
We turn to the proof of~\eqref{e.estim-diff-DN}.
By~\cite[Section~4.1]{D-20a}, there exists $w_n \in W^{1,\infty}_0(V_n)^d$ that is an admissible test function for the Dirichlet problem $\Ec_{n,D}$ such that
\[\Ec_{n,D}\,\le\,\int_{V_n}|\!\D(w_n)|^2\,\lesssim\, \mu(\rho_n),\]
which entails
\[0\,\le\, \Ec_{n,D}-\Ec_{n,N}\,\le\, \Ec_{n,D}\, \lesssim\, \mu(\rho_n).\]
To prove~\eqref{e.estim-diff-DN}, it remains to show that in the case $\rho_n\ge1$ we have
\begin{equation}\label{eq:diff-est1}
\Ec_{n,D}-\Ec_{n,N}~\le~ \rho_n^{-d}.
\end{equation}
This amounts to investigating the role of the different boundary conditions on $\partial V_n$.
The proof will require us to establish in passing the following two fine estimates,
\begin{eqnarray}
\int_{V_n}|\!\D(\psi_{n,D}-\psi_{n,N})|^2&\lesssim&\rho_n^{-d},\label{eq:diff-est2}\\
\int_{I_n+\frac12B}|\!\D(\psi_{n,D}-\psi_{n,N})|^2&\lesssim&\rho_n^{-2d}.\label{eq:diff-est3}
\end{eqnarray}
We assume from now on that $\rho_n\ge1$
and, without loss of generality, $x_n=0$. We drop the index $n$ to simplify notation and we set $r=\rho_n$ (to avoid confusion with the constant~$\rho$ in Assumption~\ref{H0} and elsewhere). 
We split the argument into three steps.

\medskip
\step1 Proof that
\begin{equation}\label{diff-estA}
\Ec_{D}-\Ec_{\infty}\,\lesssim\, r^{-d}.
\end{equation}
First recall the following standard bounds on the whole-space single-particle solution $\psi_\infty$ and on the associated pressure field $\Sigma_\infty$,
\begin{equation}\label{e:decay-single-inc}
|\psi_\infty (x)|\,\lesssim\, \langle x\rangle^{1-d},\qquad |\nabla \psi_\infty(x)| +|\Sigma_\infty(x)|\,\lesssim\, \langle x\rangle^{-d}.
\end{equation}
Define the neighborhood $I^+_r:=I+\frac r2B$, which satisfies $I \subset I^+_r \subset V$ and $\dist(I^+_r,\partial V)=\frac r2$,
and let $\chi\in C^\infty_c(V)$ be a smooth cut-off function with
\[\chi|_{I^+_r}\equiv1,\quad\supp\chi\subset I+rB\subset V,\quad 0\le\chi\le1,\quad|\nabla \chi|\lesssim r^{-1}.\]
Now consider the map $\chi \psi_{\infty}\in H^1_0(V)^d$, which coincides with $\psi_\infty$ on $I^+_r$. To make it an admissible test function for the Dirichlet problem~$\Ec_{D}$, we need to make it divergence-free in $V\setminus I^+_r$.
As the properties of the cut-off function $\chi$ and the incompressibility of~$\psi_\infty$ yield
\[\int_{V \setminus I^+_r} \Div (\chi \psi_\infty) \,=\, \int_{\partial V}  \chi \psi_\infty\cdot\nu  - \int_{I^+_r} \Div(\chi\psi_\infty)\,=\,0,\]
Bogovskii's construction entails that there exists $w \in H^1_0(V \setminus I^+)^d$ such that
\begin{gather}
\Div(w) \,=\,  \Div (\chi \psi_\infty)\,=\, \nabla \chi\cdot  \psi_\infty\qquad \text{in $V\setminus I^+_r$},\nonumber\\
\int_{V \setminus I^+_r} |\nabla w|^2 \,\lesssim\, \int_{V \setminus I^+_r} |\nabla \chi\cdot  \psi_\infty|^2.
\label{e:decay-bog-inc}
\end{gather}
By construction, the map $\chi \psi_{\infty}-w$ is now an admissible test function for $\Ec_{D}$ and we deduce 
\begin{eqnarray*}
 \Ec_{D} &\le& \int_{V} |\!\D(\chi \psi_{\infty}-w)|^2\\
 &=& \int_{V} \chi^2 |\!\D(\psi_{\infty})|^2
+2\int_{V} \chi \D(\psi_\infty) : (\nabla \chi\otimes\psi_\infty)\\
&&+\frac14 \int_{V} |\psi_\infty \otimes \nabla \chi+\nabla \chi \otimes \psi_\infty|^2
+\int_{V} |\!\D(w)|^2-2 \int_{V} \D(w) : \D(\chi \psi_\infty).
\end{eqnarray*}
Using~\eqref{e:decay-single-inc}, \eqref{e:decay-bog-inc}, and properties of the cut-off function $\chi$, the claim~\eqref{diff-estA} follows.

\medskip
\step2 Proof that
\begin{equation}\label{diff-estB}
\Ec_{\infty}-\Ec_{N}\,\lesssim\,r^{-d}.
\end{equation}
More precisely, we shall note that
\begin{equation}\label{diff-estC}
\Ec_{\infty}-\Ec_{N}\,=\,\int_{V}|\!\D(\psi_{N}-\psi_{\infty})|^2,
\end{equation}
and we shall establish the following more precise estimates,
\begin{eqnarray}
\int_{V}|\!\D(\psi_{D}-\psi_{\infty})|^2+\int_{V}|\!\D(\psi_{\infty}-\psi_{N})|^2&\lesssim&r^{-d},\label{diff-estD}\\
\int_{I^+}|\!\D(\psi_{D}-\psi_{\infty})|^2+\int_{I^+}|\!\D(\psi_{\infty}-\psi_{N})|^2&\lesssim&r^{-2d},\label{diff-estE}
\end{eqnarray}
where we use the short-hand notation $I^+:=I+\frac12B$.
We split the proof into five further substeps.

\medskip
\substep{2.1} Proof of~\eqref{diff-estC}.\\
By the Euler--Lagrange equation for $\psi_{N}$ in form of
\[\int_{V} \D(\psi_{N}): \D(\psi_\infty-\psi_{N})\,=\,0,\]
we find
\begin{eqnarray*} 
\int_{V}|\!\D(\psi_\infty)|^2-\int_{V}|\!\D(\psi_{N})|^2&=& \int_{V} \D(\psi_\infty+\psi_{N}):\D(\psi_\infty-\psi_{N})\\
&=&\int_{V} |\!\D(\psi_\infty-\psi_{N})|^2,
\end{eqnarray*}
that is,~\eqref{diff-estC}.

\medskip
\substep{2.2} Proof that
\begin{equation}\label{diff-estD-prep}
\int_{V}|\!\D(\psi_{N}-\psi_{\infty})|^2\lesssim r^{-d}+\Big(\int_{I^+}|\!\D(\psi_{N}-\psi_{\infty})|^2\Big)^\frac12.
\end{equation}
As in~\eqref{eq:singlepart}, the Euler--Lagrange equation for $\psi_\infty$ takes the following form, in terms of the associated pressure field $\Sigma_\infty$ and Cauchy stress tensor \mbox{$\sigma_\infty:=\sigma(\psi_\infty+Ex,\Sigma_\infty)$},
\begin{equation}\label{eq:psi-infty}
\left\{\begin{array}{ll}
-\triangle \psi_\infty+\nabla \Sigma_\infty=0,&\text{in $\R^d\setminus I$},\\
\Div( \psi_\infty)=0,&\text{in $\R^d\setminus I$},\\
\D(\psi_\infty+Ex)=0,&\text{in $I$},\\
\int_{\partial I}\sigma_\infty\nu=0,&\\
\int_{\partial I}\Theta x\cdot\sigma_\infty\nu=0,&\forall \Theta\in\Md^\Skew,
\end{array}\right.
\end{equation}
and similarly the equation for $\psi_N$ is as follows, in terms of the associated pressure $\Sigma_N$ and Cauchy stress tensor $\sigma_N:=\sigma(\psi_N+Ex,\Sigma_N)$,
\begin{equation*}
\left\{\begin{array}{ll}
-\triangle \psi_N+\nabla \Sigma_N=0,&\text{in $V\setminus I$},\\
\Div( \psi_N)=0,&\text{in $V\setminus I$},\\
\sigma_N\nu=0,&\text{on $\partial V$},\\
\D(\psi_N+Ex)=0,&\text{in $I$},\\
\int_{\partial I}\sigma_N\nu=0,&\\
\int_{\partial I}\Theta x\cdot\sigma_N\nu=0,&\forall \Theta\in\Md^\Skew.
\end{array}\right.
\end{equation*}
Testing both equations with $\psi_N-\psi_\infty$, we obtain
\begin{equation}\label{eq:pre-N-infty-L2}
2\int_{V}|\!\D(\psi_N-\psi_\infty)|^2\,=\,
-\int_{\partial V}(\psi_N-\psi_\infty)\cdot\sigma_\infty\nu.
\end{equation}
In order to estimate the right-hand side, we note that, testing the equation for $\psi_N$ with~$\psi_\infty$ and the equation for $\psi_\infty$ with $\psi_N$ in $V$ yields
\begin{eqnarray*}
2\int_V\D(\psi_\infty):\D(\psi_N)&=&-\int_{\partial I}\psi_\infty\cdot\sigma_N\nu,\\
2\int_V\D(\psi_N):\D(\psi_\infty)&=&\int_{\partial V}\psi_N\cdot\sigma_\infty\nu-\int_{\partial I}\psi_N\cdot\sigma_\infty\nu,
\end{eqnarray*}
from which we deduce
\[\int_{\partial V}\psi_N\cdot\sigma_\infty\nu\,=\,\int_{\partial I}\psi_N\cdot\sigma_\infty\nu-\int_{\partial I}\psi_\infty\cdot\sigma_N\nu.\]
Inserting this into~\eqref{eq:pre-N-infty-L2}, recalling $(\psi_N+Ex)|_I=(\psi_\infty+Ex)|_I=0$, and using the boundary conditions for $\psi_N,\psi_\infty$, we get
\begin{equation*}
2\int_{V}|\!\D(\psi_N-\psi_\infty)|^2\,=\,
\int_{\partial V}\psi_\infty\cdot\sigma_\infty\nu-\int_{\partial I}Ex\cdot(\sigma_N-\sigma_\infty)\nu.
\end{equation*}
In view of~\eqref{e:decay-single-inc}, this entails
\begin{equation*}
\int_{V}|\!\D(\psi_N-\psi_\infty)|^2\,\lesssim\,
r^{-d}+\int_{\partial I}|\sigma_N-\sigma_\infty|,
\end{equation*}
and thus, by the trace estimate in Lemma~\ref{lem:trace-0}(ii), the claim~\eqref{diff-estD-prep} follows.

\medskip
\substep{2.3} Proxy for $\psi_N-\psi_\infty$.\\
If the difference $\psi:=\psi_N-\psi_\infty$ satisfied the free steady Stokes equation in the whole domain~$V$, then Lemma~\ref{lem:regularity} would yield $\int_{I^+}|\!\D(\psi)|^2\lesssim r^{-d}\int_{V}|\!\D(\psi)|^2$, and the conclusion~\eqref{diff-estD} for $\psi$ would already follow from~\eqref{diff-estD-prep} together with Young's inequality. However, $\psi$ is rigid in $I$ and does not satisfy the free steady Stokes equation in the whole domain.\footnote{As showed in Lemma~\ref{*lem:mean-value-inc} in Appendix~\ref{append:ell}, the mean-value property can actually be extended in presence of rigid particles. Rather than appealing to this general result here, we provide a self-contained and more elementary approach in the present single-particle setting.} To overcome this issue, we shall compare $\psi$ to a suitable proxy: we consider the solution $(\tilde\psi,\tilde\Sigma)$ of the following auxiliary Neumann problem in $V$,
\begin{equation}\label{eq:psitilde}
\left\{\begin{array}{ll}
-\triangle\tilde\psi+\nabla\tilde \Sigma=0,&\text{in $V$},\\
\Div(\tilde\psi)=0,&\text{in $V$},\\
\sigma(\tilde\psi,\tilde\Sigma)\nu=-\sigma_\infty\nu,&\text{on $\partial V$}.
\end{array}\right.
\end{equation}
Note that the solution is only defined up to a rigid motion, which can be fixed for instance by choosing $\int_V\tilde\psi=0$ and $\int_V\nabla\tilde\psi\in\Md^\Sym_0$.

\medskip\noindent
The rest of this step is devoted to the proof of the following estimates for $\tilde\psi$,
\begin{equation}\label{eq:energy-tilde}
\int_V|\!\D(\tilde\psi)|^2\,\lesssim\,r^{-d},\qquad \int_{I^+}|\!\D(\tilde\psi)|^2\,\lesssim\,r^{-2d}.
\end{equation}
For that purpose, we start by testing equation~\eqref{eq:psitilde} with $\tilde\psi$ itself, to the effect of
\[2\int_V|\!\D(\tilde\psi)|^2\,=\,-\int_{\partial V}\tilde\psi\cdot\sigma_\infty\nu.\]
In order to estimate the right-hand side, we note that, testing the equation for $\tilde\psi$ with $\psi_\infty$ and the equation for $\psi_\infty$ with $\tilde\psi$,
\begin{eqnarray*}
2\int_V\D(\psi_\infty):\D(\tilde\psi)&=&-\int_{\partial V}\psi_\infty\cdot\sigma_\infty\nu,\\
2\int_V\D(\tilde\psi):\D(\psi_\infty)&=&\int_{\partial V}\tilde\psi\cdot\sigma_\infty\nu-\int_{\partial I}\tilde\psi\cdot\sigma_\infty\nu,
\end{eqnarray*}
from which we deduce
\[\int_{\partial V}\tilde\psi\cdot\sigma_\infty\nu\,=\,-\int_{\partial V}\psi_\infty\cdot\sigma_\infty\nu+\int_{\partial I}\tilde\psi\cdot\sigma_\infty\nu.\]
Inserting this into the above yields
\[2\int_V|\!\D(\tilde\psi)|^2\,=\,\int_{\partial V}\psi_\infty\cdot\sigma_\infty\nu-\int_{\partial I}\tilde\psi\cdot\sigma_\infty\nu,\]
and thus, using~\eqref{e:decay-single-inc}, noting that
any rigid motion can be added to $\tilde\psi$,
and appealing to the trace estimate in Lemma~\ref{lem:trace-0}(i),
\[\int_V|\!\D(\tilde\psi)|^2\,\lesssim\,r^{-d}+\Big(\int_{I}|\!\D(\tilde\psi)|^2\Big)^\frac12.\]
As $\tilde\psi$ satisfies the free steady Stokes equation in $V$ and as $|I^+|\lesssim1$ and $\dist(I^+,\partial V)\ge\frac r2$, we may now appeal to Lemma~\ref{lem:regularity}, to the effect of
\begin{equation}\label{eq:regularity}
\int_{I^+}|\!\D(\tilde\psi)|^2\,\lesssim\, r^{-d}\int_{V}|\!\D(\tilde\psi)|^2,
\end{equation}
and the claim~\eqref{eq:energy-tilde} follows.

\medskip
\substep{2.4} Proof of~\eqref{diff-estD} \& \eqref{diff-estE} for $\psi=\psi_N-\psi_\infty$.\\
Decomposing $\psi=(\psi-\tilde\psi)+\tilde\psi$ and using~\eqref{eq:energy-tilde}, we find
\begin{equation}\label{eq:pre-dpsi-V+}
\int_{I^+}|\!\D(\psi)|^2
\,\lesssim\,r^{-2d}+\int_{V}|\!\D(\psi-\tilde\psi)|^2,
\end{equation}
and thus, inserting this into~\eqref{diff-estD-prep},
\begin{equation}\label{eq:pre-dpsi-V}
\int_{V}|\!\D(\psi)|^2
\,\lesssim\,r^{-d}+\Big(\int_{V}|\!\D(\psi-\tilde\psi)|^2\Big)^\frac12.
\end{equation}
Next, noting that $\psi$ satisfies
\begin{equation}\label{eq:psi}
\left\{\begin{array}{ll}
-\triangle \psi+\nabla \Sigma=0,&\text{in $V\setminus I$},\\
\Div( \psi)=0,&\text{in $V\setminus I$},\\
\sigma(\psi,\Sigma)\nu=-\sigma_\infty\nu,&\text{on $\partial V$},\\
\D(\psi)=0,
&\text{in $I$},\\
\int_{\partial I}\sigma (\psi,\Sigma)\nu=0,&\\
\int_{\partial I}\Theta x\cdot\sigma(\psi,\Sigma)\nu=0,&\forall \Theta\in\Md^\Skew,
\end{array}\right.
\end{equation}
and testing this equation as well as~\eqref{eq:psitilde} with $\psi-\tilde\psi$, we find
\[2\int_V|\!\D(\psi-\tilde\psi)|^2\,=\,\int_{\partial I}\tilde\psi\cdot\sigma(\psi,\Sigma)\nu.\]
As we may add any rigid motion to $\tilde\psi$, the trace estimates in Lemma~\ref{lem:trace-0} lead us to
\begin{equation*}
\int_V|\!\D(\psi-\tilde\psi)|^2\,\lesssim\,\Big(\int_{I}|\!\D(\tilde\psi)|^2\Big)^\frac12\Big(\int_{I^+}|\!\D(\psi)|^2\Big)^\frac12.
\end{equation*}
Decomposing again $\psi=(\psi-\tilde\psi)+\tilde\psi$ and using Young's inequality, we get
\begin{equation*}
\int_V|\!\D(\psi-\tilde\psi)|^2\,\lesssim\,\int_{I^+}|\!\D(\tilde\psi)|^2.
\end{equation*}
Inserting this into~\eqref{eq:pre-dpsi-V} and appealing again to~\eqref{eq:energy-tilde}, the conclusion~\eqref{diff-estD} \& \eqref{diff-estE} follows for $\psi=\psi_N-\psi_\infty$.

\medskip
\substep{2.5} Proof of~\eqref{diff-estD} \& \eqref{diff-estE} for $\psi_D-\psi_\infty$.\\
Although this part is in fact not needed for the proof of~\eqref{diff-estB}, it is included for future reference.
We start by decomposing
\[\int_{V}|\!\D(\psi_{D}-\psi_{\infty})|^2\,=\,\int_{V}|\!\D(\psi_D)|^2-\int_{V}|\!\D(\psi_\infty)|^2-2\int_{V}\D(\psi_{D}-\psi_{\infty}):\D(\psi_{\infty}).\]
Testing the equation for $\psi_\infty$ with $\psi_D-\psi_\infty$, we find
\begin{equation*}
2\int_{V}\D(\psi_{D}-\psi_{\infty}):\D(\psi_{\infty})
\,=\,-\int_{\partial V}\psi_{\infty}\cdot\sigma_{\infty}\nu,
\end{equation*}
so that the above becomes
\[\int_V|\!\D(\psi_D-\psi_\infty)|^2\,=\,\Ec_D-\Ec_\infty+\int_{\partial V}\psi_\infty\cdot\sigma_\infty\nu.\]
Using~\eqref{diff-estA} and~\eqref{e:decay-single-inc}, this entails
\begin{equation}\label{diff-estF}
\int_{V}|\!\D(\psi_{D}-\psi_{\infty})|^2\,\lesssim\, r^{-d},
\end{equation}
that is, \eqref{diff-estD} for $\psi_D-\psi_\infty$.

\medskip\noindent
It remains to prove~\eqref{diff-estE}. For that purpose, as above, we proceed by defining a suitable proxy for $\psi_D-\psi_\infty$ satisfying the free steady Stokes equation in the whole domain $V$: we consider the solution $(\hat\psi,\hat\Sigma)$ of the following auxiliary Dirichlet problem in $V$,
\begin{equation*}
\left\{\begin{array}{ll}
-\triangle\hat\psi+\nabla\hat \Sigma=0,&\text{in $V$},\\
\Div(\hat\psi)=0,&\text{in $V$},\\
\hat\psi=-\psi_\infty,&\text{on $\partial V$}.
\end{array}\right.
\end{equation*}
A straightforward adaptation of the proof of~\eqref{eq:energy-tilde} yields
\begin{equation*}
\int_V|\!\D(\hat\psi)|^2\,\lesssim\,r^{-d},\qquad \int_{I^+}|\!\D(\hat\psi)|^2\,\lesssim\,r^{-2d}.
\end{equation*}
From this together with~\eqref{diff-estF}, the conclusion~\eqref{diff-estE} for $\psi_D-\psi_\infty$ easily follows by similar arguments as in Substep~2.4.
\qed

\subsection{Explicit form of Einstein's formula}\label{sec:explicitEinstein}
This last section is devoted to the proof of Proposition~\ref{prop:B1}.
Under Assumption~\ref{B1}, the definition~\eqref{eq:def-B1} of $\Bb^1$ becomes
\begin{equation*}
E:\Bb^1E
\,=\,\lambda(\Pc)\,\expecM{\int_{\R^d}|\!\D(\psi_E^\circ)|^2},
\end{equation*}
that is,~\eqref{e.Einstein0} in terms of the unique decaying solution $\psi_E^\circ$ of the single-particle problem~\eqref{eq:psicirc}.
It remains to prove Einstein's formula~\eqref{e.Einstein} for spherical particles, $I_n=B(x_,r_n)$, with iid random radii $\{r_n\}_n$. By scaling, the above becomes
\begin{equation*}
E:\Bb^1E
\,=\,\lambda(\Pc)\,\expecm{(r_n)^d}\int_{\R^d}|\!\D(\tilde\psi_E^\circ)|^2,
\end{equation*}
in terms of the unique decaying solution $\tilde \psi_E^\circ$ of the rescaled elementary problem
\[\left\{\begin{array}{ll}
-\triangle\tilde \psi_E^\circ+\nabla\tilde \Sigma_E^\circ=0,&\text{in $\R^d\setminus B$},\\
\Div(\tilde \psi_E^\circ)=0,&\text{in $\R^d\setminus B$},\\
\D(\tilde \psi_E^\circ+Ex)=0,&\text{in $B$},\\
\int_{\partial B}\sigma(\tilde\psi_E^\circ+Ex,\tilde\Sigma_E^\circ)\nu=0,&\\
\int_{\partial B}\Theta x\cdot\sigma(\tilde\psi_E^\circ+Ex,\tilde\Sigma_E^\circ)\nu=0,&\forall \Theta\in\Md^\Skew.
\end{array}\right.\]
Alternatively, using the energy identity for this equation,
\begin{equation}\label{eq:pre-Einstein}
E:\Bb^1E
\,=\,\tfrac12\lambda(\Pc)\,\expecm{(r_n)^d}\int_{\partial B}Ex\cdot\sigma(\tilde\psi_E^\circ+Ex,\tilde\Sigma)\nu.
\end{equation}
As is well-known, e.g.~\cite[Section~2.1.3]{GM-11}, $\tilde \psi_E^\circ$ coincides with the unique solution of
\[\left\{\begin{array}{ll}
-\triangle \tilde \psi_E^\circ+\nabla \tilde \Sigma_E^\circ=0,&\text{in $\R^d\setminus B$},\\
\Div( \tilde \psi_E^\circ)=0,&\text{in $\R^d\setminus B$},\\
\tilde \psi_E^\circ=-Ex,&\text{on $\partial B$},
\end{array}\right.\]
and is explicitly given by the following formulas for $|x|\ge1$,
\begin{eqnarray*}
\tilde \psi_E^\circ(x)&:=&-\frac{d+2}2\frac{(x\cdot Ex)x}{|x|^{d+2}}-\frac12\bigg(2\frac{Ex}{|x|^{d+2}}-(d+2)\frac{(x\cdot Ex)x}{|x|^{d+4}}\bigg),\\
\tilde \Sigma_E^\circ(x)&:=&-(d+2)\frac{x\cdot Ex}{|x|^{d+2}}.
\end{eqnarray*}
Inserting this into~\eqref{eq:pre-Einstein}, a direct computation yields
\[E:\Bb^1E\,=\,\tfrac{d+2}2\,\lambda(\Pc)\,\expecm{(r_n)^d}|B||E|^2,\]
that is, Einstein's formula~\eqref{e.Einstein}.  
\qed

\newpage
\section{Cluster expansion of the effective viscosity}\label{sec:cluster}

This section is devoted to the higher-order cluster expansion of the effective viscosity~$\Bb$: starting from finite-volume approximations, we establish cluster formulas and prove uniform estimates in the large-volume limit.
These results are mainly inspired by our previous work~\cite{DG-16a} on the Clausius--Mossotti conductivity formula, where we introduced the triad consisting of: (1)~finite-volume approximation; (2)~cluster expansion; (3)~uniform $\ell^1-\ell^2$ energy estimates.
We further refine the analysis of~\cite{DG-16a}, in particular improving on error estimates, and properly estimating cluster coefficients in case of large uniform particle separation $\ell(\Pc)\gg1$; there are also some new twists due to the rigidity of the inclusions.
Henceforth, in the rest of this memoir, we assume that particles are uniformly separated in the sense of~\ref{H0} and~\ref{Gunif}.

\subsection{Finite-volume approximations}\label{sec:finitevol-approx}
In order to make sense of cluster expansions and avoid diverging series, we start by defining finite-volume approximations of the effective viscosity, obtained by a periodization procedure, which will in turn provide an implicit renormalization of cluster coefficients in the large-volume limit.
More precisely, we define a restriction $\Pc_{L}$ on $Q_L$ of the point process~$\Pc$ via
\[\Pc_{L}\,:=\,\{x_{n}:n\in \Nb_{L}\},\qquad \Nb_{L}\,:=\,\{n:x_{n}\in \Qd\},\qquad \Qd:=Q_{L-2(\ell(\Pc)\vee(1+\rho))}\]
and we consider the corresponding random set
\begin{equation}\label{e.set-perio}
\Ic_{L}\,:=\,\textstyle\bigcup_{n\in \Nb_{L}}I_{n},\qquad I_{n}\,=\,x_{n}+I_n^\circ.
\end{equation}
For convenience, we choose an enumeration $\Pc_{L}=\{x_{n,L}\}_n$ and set $I_{n,L}=x_{n,L}+I_{n,L}^\circ$.
Under Assumptions~\ref{H0} and~\ref{Gunif}, the periodic extension of $\Ic_L$  satisfies
\begin{enumerate}[\quad$\bullet$]
\item\emph{Regularity and separation:}
For all $L$, the periodized random set $\Ic_{L}+L\Z^d$ satisfies the $\rho$-regularity and uniform separation assumptions in~\ref{H0} and~\ref{Gunif}. Moreover, the periodized point process $\Pc_L+L\Z^d$ satisfies $\ell(\Pc_L+L\Z^d)\ge\ell(\Pc)\gtrsim1$.
\smallskip\item\emph{Stabilization:}
For all $L$, there holds $\Pc_L|_{\Qd}=\Pc|_{\Qd}$.
\end{enumerate}
We then define the following finite-volume approximation of the effective viscosity $\Bb$,
\begin{equation}\label{eq:def-per-effvis}
E:\Bb_{L}E\,:=\,\expecM{\fint_{ Q_L}|\!\D(\psi_{E;L})+E|^2},
\end{equation}
where $\psi_{E;L}\in \Ld^2(\Omega;H^1_\per(Q_L)^d)$ is almost surely the unique solution in $H^1_\per(Q_L)^d$, with vanishing average \mbox{$\int_{Q_L}\psi_{E;L}=0$}, of the periodized version of the corrector problem~\eqref{eq:cor},
\begin{equation}\label{eq:cor-per}
\left\{\begin{array}{ll}
-\triangle\psi_{E;L}+\nabla\Sigma_{E;L}=0,&\text{in $Q_L\setminus\Ic$},\\
\Div( \psi_{E;L})=0,&\text{in $Q_L\setminus\Ic$},\\
\D(\psi_{E;L}+Ex)=0,&\text{in $\Ic_L$},\\
\int_{\partial I_{n;L}}\sigma_{E;L}\nu=0,&\forall n,\\
\int_{\partial I_{n;L}}\Theta(x-x_{n;L})\cdot\sigma_{E;L}\nu=0,&\forall n,~\forall \Theta\in\Md^\Skew,
\end{array}\right.
\end{equation}
where we use the short-hand notation $\sigma_{E;L}:=\sigma(\psi_{E;L}+Ex,\Sigma_{E;L})$ for the Cauchy stress tensor.
As a corollary of~\cite[Theorem~1]{DG-19},\footnote{This requires to replace Dirichlet boundary conditions in~\cite{DG-19} by periodic conditions, as is standard in homogenization theory.} in view of the above stabilization property, this finite-volume approximation~\eqref{eq:def-per-effvis} is consistent in the sense of
\begin{equation}\label{eq:conv-BL-B-hom}
\lim_{L \uparrow \infty} \Bb_{L}\,=\,\Bb.
\end{equation}
As opposed to $\Bb$, we emphasize that the approximation $\Bb_L$ depends only on the finite number of inclusions $\{I_{n,L}\}_n$. Indeed, by~\ref{H0}, the number of inclusions in $Q_L$ has almost surely a deterministic upper bound $CL^d$.
The associated cluster expansion is therefore well-defined.

\subsection{Main results}
We start with the cluster expansion of the finite-volume approximation $\Bb_L$, establishing suitable formulas for cluster coefficients and for the remainder.
This is analogous to formulas obtained in our previous work on the conductivity problem~\cite{DG-16a}.
While the formula~\eqref{eq:form-clust-rem} for the remainder naturally involves the original corrector with the whole set~$\Pc_{L}$ of particles, we emphasize that the bound~\eqref{e.DGV-control} only involves correctors associated with finite numbers of inclusions (uniformly in $L$): this is key to the optimal estimates obtained in the sequel and constitutes the first twist wrt \cite{DG-16a}. Indeed, this control is based on the rigidity of the particles and is therefore not available in the generality considered for the conductivity problem in~\cite{DG-16a}; it was first observed at second order by G\'erard-Varet in~\cite{GV-20}. The proof is displayed in Section~\ref{sec:cluster-pr}.

\begin{theor1}[Finite-volume cluster expansion]\label{thm:expansion}
Under Assumptions~\ref{H0} and~\ref{Gunif}, finite-volume approximations of the effective viscosity can be expanded  for all~$L$ and $k\ge1$,
\begin{equation}\label{eq:preconcl-exp}
\Bb_{L}\,=\,\Id+\sum_{j=1}^k\tfrac{1}{j!}\Bb_{L}^{j}+R_{L}^{k+1},
\end{equation}
where the coefficients and remainders are defined as follows:

\smallskip\noindent
$\bullet$ The coefficients $\{\Bb_{L}^{j}\}_j$ are given by cluster formulas, cf.~\eqref{e.formal-expansion},
\begin{equation}\label{eq:form-clust}
E:\Bb^{j}_{L}E\,=\,j!\sum_{\sharp F=j}\expecM{\fint_{ Q_L}\del^F\big(|\!\D(\psi_{E;L}^\varnothing)+E|^2\big)},
\end{equation}
which can be alternatively expressed as
\begin{eqnarray}
\hspace{-0.9cm}E:\Bb^{j}_{L}E\!\!\!&=&\!\!\!
\tfrac12j!\,L^{-d}\sum_{\sharp F=j}\sum_{n\in F}\expecM{\int_{\partial I_{n,L}}E(x-x_{n,L})\cdot\del^{F\setminus\{n\}}\sigma_{E;L}^{\{n\}}\nu}\label{eq:form-clust1}\\
\!\!\!&=&\!\!\!\tfrac12j!\,L^{-d}\sum_{\sharp F=j}\sum_{n\in F}\expecM{\int_{\partial I_{n,L}}\del^{F\setminus\{n\}}\big(\psi_{E;L}^\varnothing+E(x-x_{n,L})\big)\cdot\sigma_{E;L}^{F}\nu},\label{eq:form-clust2}
\end{eqnarray}
where we use the short-hand notation $\sigma_{E;L}^F:=\sigma(\psi_{E;L}^F+Ex,\Sigma^F_{E;L})$ for the Cauchy stress tensor.

\smallskip\noindent
$\bullet$ The remainder $R_{L}^{k+1}$ can be represented as
\begin{equation}\label{eq:form-clust-rem}
E:R_{L}^{k+1}E~=~\tfrac12L^{-d}\sum_{\sharp F=k+1}\sum_{n\in F}\expecM{\int_{\partial I_{n,L}}\del^{F\setminus\{n\}}\big(\psi_{E;L}^\varnothing+E(x-x_{n,L})\big)\cdot \sigma_{E;L}\nu},
\end{equation}
and is estimated as follows,
\begingroup\allowdisplaybreaks
\begin{align}
&|E: R_{L}^{k+1}E|~\lesssim~\expecM{L^{-d}\sum_{n}\int_{I_{n,L}}\Big|\sum_{\sharp F=k\atop n\notin F}\D(\del^{F}\psi_{E;L}^\varnothing)\Big|^2}\label{e.DGV-control}\\
&\hspace{1cm}+\sum_{j=1}^k\,\E\bigg[L^{-d}\sum_{n}\bigg(\int_{I_{n,L}}\Big|\sum_{\sharp F=k\atop n\notin F}\D(\del^{F}\psi_{E;L}^\varnothing)\Big|^2\bigg)^\frac12\nonumber\\
&\hspace{4cm}\times\bigg(\int_{I_{n,L}+\rho B}\Big|\sum_{\sharp F=j-1\atop n\notin F}\D\big(\del^{F}(\psi^{\{n\}}_{E;L}+E(x-x_{n,L}))\big)\Big|^2\bigg)^\frac12\bigg].\nonumber\qedhere
\end{align}
\endgroup
\end{theor1}

In view of the short-range setting~\eqref{e.scalings-local}, we expect $\Bb_{L}^j=O(\lambda_j(\Pc))$ and we aim to prove uniform-in-$L$ estimates that would allow to pass to the large-volume limit and recover a dilute expansion for the original effective viscosity $\Bb$.
This is partially achieved in the upcoming theorem, which states fine estimates on cluster coefficients and on the remainder.
However note that we cannot directly obtain uniform-in-$L$ estimates with the desired scalings $O(\lambda_j(\Pc))$. Instead, the result is twofold:
\begin{enumerate}[---]
\item {\it Uniform estimates:} In item~(i), we state uniform-in-$L$ estimates, which further display the optimal scaling in the order $j$ and in the minimal distance $\ell=\ell(\Pc)$, but fail to capture the general expected dependence on multi-point intensities $\{\lambda_j(\Pc)\}_j$.
\smallskip\item {\it Non-uniform estimates:} In item~(ii), we state non-uniform estimates, which display a logarithmic divergence in the large-volume limit~$L\uparrow\infty$, but have the merit of capturing the correct dependence on multi-point intensities.
\end{enumerate}
Uniform estimates in~(i) allow to deduce the convergence of cluster coefficients~$\{\Bb_{L}^j\}_j$ in the large-volume limit~$L\uparrow\infty$, cf.~\eqref{eq:form-Bj} below: this actually {\it defines} infinite-volume cluster coefficients in a meaningful way, providing an implicit renormalization of diverging series and answering the question raised in Section~\ref{sec:need-renorm}.  As they display the optimal dependence on the minimal distance $\ell=\ell(\Pc)$,
these estimates already yield the desired infinite-volume cluster expansion in the large-separation regime $\ell\gg1$ with $\lambda_j(\Pc)$ replaced by $(\ell^{-d})^j$, which is optimal in some cases (see dilation setting in Theorem~\ref{th:analytic}).
To treat the general model-free dilute setting, however, uniform estimates need to be further derived with the correct dependence on multi-point intensities: this requires to overcome logarithmic divergences in non-uniform estimates in~(ii), which is the subject of Section~\ref{sec:intermezzo}. The proof of the present result is split between Sections~\ref{sec:unif-ell12-pr}, \ref{sec:proof-thm:bounds}, \ref{sec:proof-eq:form-Bj}, and~\ref{sec:non-unif}.

\begin{theor1}[Cluster estimates and large-volume limit]\label{thm:bounds}
Under Assumptions~\ref{H0} and \ref{Gunif}, the coefficients and the remainder of the finite-volume cluster expansion in Theorem~\ref{thm:expansion} satisfy the following two classes of estimates.
\begin{enumerate}[(i)]
\item \emph{Uniform estimates:}
For all $L$ and $k, j\ge1$,
\begin{eqnarray}\label{e.unif-bd-Bb-R}
|\Bb_{L}^{j}|&\le& j!(C\ell^{-d})^{j},\nonumber\\
|R_{L}^{k+1}|&\le& (C\ell^{-d})^{k+1}.
\end{eqnarray}
\item \emph{Non-uniform estimates:}
For all $L$ and~$k, j\ge 1$,
\begin{eqnarray}\label{e.non-unif-bd-Bb-R}
|\Bb_{L}^j|&\lesssim_{j}&\lambda_j(\Pc)\,(\log L)^{j-1},\nonumber\\
|R_{L}^{k+1}|&\lesssim_{k}&\sum_{l=k}^{2k}\lambda_{l+1}(\Pc) (\log L)^{l}.
\end{eqnarray}
\end{enumerate}
In particular, as a consequence of~(i), for all $k,j\ge1$, the following large-volume limits are well-defined,
\begin{equation}\label{eq:form-Bj}
\Bb^j:=\lim_{L\uparrow\infty}\Bb^j_{L},\qquad R^{k+1}:=\lim_{L\uparrow\infty}R^{k+1}_{L},
\end{equation}
so that the cluster expansion~\eqref{eq:preconcl-exp} becomes, for all~$k\ge1$,
\begin{equation}\label{eq:preconcl-exp-error}
\bigg|\Bb-\Big(\Id+\sum_{j=1}^k\tfrac{1}{j!}\Bb^{j}\Big)\bigg|\,\le\,|R^{k+1}|\,\le\,(C\ell^{-d})^{k+1}.\qedhere
\end{equation}
\end{theor1}

\subsection{Preliminary lemmas}\label{sec:preliminary}$ $\hspace{-0.2cm}
Henceforth, we fix $E$ with $|E|=1$ and we skip the associated subscript for notational convenience.
Before turning to the proof of Theorems~\ref{thm:expansion} and~\ref{thm:bounds}, we state a series of preliminary lemmas.
We start with the following useful reformulation of the corrector equation~\eqref{eq:cor}, where the rigidity constraint is viewed as generating a source term concentrated at particle boundaries in steady Stokes equations.

\begin{lem}[Reformulation of the corrector equation]\label{lem:eqn-corH}
For all $H\subset\N$ we have in $ Q_L$,
\begin{equation}\label{eq:reform}
-\triangle \psi_{L}^H+\nabla(\Sigma_{L}^H\mathds1_{ Q_L\setminus\Ic_{L}^H})\,=\,-\sum_{n\in H}\delta_{\partial I_{n,L}}\sigma_{L}^H\nu,
\end{equation}
where $\delta_{\partial I_{n,L}}$ stands for the Hausdorff measure on the boundary of $I_{n,L}$.\footnote{More precisely, we define $\int_{Q_L}\phi_L\,\delta_{\partial I_{n,L}}:=\int_{\partial I_{n,L}}\phi_L$ for any test function~$\phi_L\in C^\infty_\per(Q_L)$.}
\end{lem}

\begin{proof}
For any test function $\phi_L\in C^\infty_\per(Q_L)^d$, recalling that $\psi_L^H$ is divergence-free
and that it satisfies $\D(\psi_L^H)+E=0$ in $\Ic_L^H$,
we find
\begin{eqnarray*}
\lefteqn{\int_{Q_L}\nabla\phi_L:\nabla\psi_L^H-\int_{Q_L\setminus\Ic_L^H}\Sigma_L^H\Div(\phi_L)}\\
&=&2\int_{Q_L}\nabla\phi_L:\D(\psi_L^H)-\int_{Q_L\setminus\Ic_L^H}\Sigma_L^H\Div(\phi_L)\\
&=&\int_{Q_L\setminus\Ic_L^H}\nabla\phi_L:\sigma(\psi_L^H+Ex,\Sigma_L^H).
\end{eqnarray*}
Since the steady Stokes equation for $\psi_L^H$ writes $\Div(\sigma(\psi_L^H+Ex,\Sigma_L^H))=0$ in $Q_L\setminus\Ic_L^H$,
we deduce, after integration by parts,
\begin{equation*}
\int_{Q_L}\nabla\phi_L:\nabla\psi_L^H-\int_{Q_L\setminus\Ic_L^H}\Sigma_L^H\Div(\phi_L)
\,=\,-\sum_{n\in H}\int_{\partial I_{n,L}}\phi_L\cdot\sigma(\psi_L^H+Ex,\Sigma_L^H)\nu.
\end{equation*}
By the arbitrariness of~$\phi_L$, this proves~\eqref{eq:reform}.
\end{proof}

Next, the following result provides corresponding Stokes equations for corrector differences, which will be used abundantly in the sequel.

\begin{lem}[Equations for corrector differences]\label{eq:cordiff-form}
For all disjoint subsets $F,H\subset\N$ with~$F$ finite, we have
in $ Q_L$,
\begin{align}
&-\triangle\del^{F}\psi_{L}^H+\nabla\del^{F}\big(\Sigma_{L}^H\mathds1_{ Q_L\setminus\Ic_{L}^{H}}\big)\label{eq:diff-st}\\
&\hspace{3cm}\,=\,-\sum_{n\in H}\delta_{\partial I_{n,L}}\del^F\sigma^{H}_{L}\nu
-\sum_{n\in F}\delta_{\partial I_{n,L}}\del^{F\setminus\{n\}}\sigma^{H\cup\{n\}}_{L}\nu.\nonumber
\qedhere
\end{align}
\end{lem}
\begin{proof}
The starting point is the equation~\eqref{eq:reform} satisfied by $\psi_{L}^{S\cup H}$,
\[-\triangle\psi_{L}^{S\cup H}+\nabla\big(\Sigma_{L}^{S\cup H}\mathds1_{ Q_L\setminus\Ic_{L}^{S\cup H}}\big)=-\sum_{n\in S\cup H}\delta_{\partial I_{n,L}}\sigma^{S\cup H}_{L}\nu.\]
Using the definition~\eqref{eq:def-diff} of the difference operator, we deduce
\begin{equation*}
-\triangle\del^{F}\psi_{L}^H+\nabla\del^{F}\big(\Sigma_{L}^H\mathds1_{ Q_L\setminus\Ic_{L}^{H}}\big)=-\sum_{S\subset F}(-1)^{|F\setminus S|}\sum_{n\in S\cup H}\delta_{\partial I_{n,L}}\sigma^{S\cup H}_{L}\nu,
\end{equation*}
and it remains to reformulate the right-hand side.
For that purpose, we decompose
\begin{multline*}
-\triangle\del^{F}\psi_{L}^H+\nabla\del^{F}\big(\Sigma_{L}^H\mathds1_{ Q_L\setminus\Ic_{L}^{H}}\big)\\
\,=\,-\sum_{n\in H}\delta_{\partial I_{n,L}}\sum_{S\subset F}(-1)^{|F\setminus S|}\sigma^{S\cup H}_{L}\nu
-\sum_{n\in F}\delta_{\partial I_{n,L}}\sum_{S\subset F}\mathds1_{n\in S}(-1)^{|F\setminus S|}\sigma^{S\cup H}_{L}\nu.
\end{multline*}
Changing summation variables and recognizing the definition~\eqref{eq:def-diff} of the difference operator, the conclusion follows.
\end{proof}

We now state and prove trace estimates, which constitute an upgraded version of Lemma~\ref{lem:trace-0}.
We shall repeatedly appeal to these estimates to control force terms at particle boundaries, which appear in our formulation~\eqref{eq:diff-st} of equations for corrector differences.

\begin{lem}[Trace estimates]\label{lem:trace}
Under Assumptions~\ref{H0} and~\ref{Gunif}, for all families $\Fc$ of finite subsets of $\N$, for all $H\subset \N$ and $n\in\N$ with $n\notin \bigcup_{F\in\Fc}F$, we have
\[\inf_{\kappa\in\R^d,\Theta\in\Md^\Skew}\int_{\partial I_{n,L}}\Big|\sum_{F\in\Fc}\del^F\psi_{L}^H-(\kappa+\Theta (x-x_{n,L}))\Big|^2~\,\lesssim\,~\int_{I_{n,L}}\Big|\sum_{F\in\Fc}\D(\del^F\psi_{L}^H)\Big|^2,\]
and
\[\inf_{c\in\R}\int_{\partial I_{n,L}}\Big|\sum_{F\in\Fc}\del^{F}\sigma_{L}^H-c\Id\Big|^2~\,\lesssim\,~\int_{I_{n,L}+\rho B}\Big|\sum_{F\in\Fc}\D(\del^F(\psi_{L}^H+Ex))\Big|^2.\qedhere\]
\end{lem}

\begin{proof}
We split the proof into three steps.
We set for abbreviation
\[\psi^{\Fc,H}_{L}\,:=\,\sum_{F\in\Fc}\del^{F}\psi_{L}^H,\qquad\Sigma^{\Fc,H}_{L}\,:=\,\sum_{F\in\Fc}\del^{F}\Sigma_{L}^H,\qquad\sigma_{L}^{\Fc,H}\,:=\,\sum_{F\in\Fc}\del^{F}\sigma_{L}^H.\]
We also use the short-hand notation $\bar\psi_L^H:=\psi_L^H+Ex$ and $\bar\psi^{\Fc,H}_{L}:=\sum_{F\in\Fc}\del^{F}\bar\psi_{L}^H$. This expression is equal to $\sum_{F\in\Fc}\delta^F\psi_L^H+Ex$ if $\varnothing\in\Fc$, and to $\sum_{F\in\Fc}\delta^F\psi_L^H$ otherwise.

\medskip
\step1 Proof of the first estimate on $\psi^{\Fc,H}_{L}$.\\
By the trace estimate 
\[\int_{\partial I_{n,L}}|\psi_{L}^{\Fc,H}-(\kappa+\Theta(x-x_{n,L}))|^2\,\lesssim\,\int_{I_{n,L}}\big|\langle\nabla\rangle^\frac12\big(\psi_{L}^{\Fc,H}-(\kappa+\Theta(x-x_{n,L}))\big)\big|^2,\]
combined with Poincaré's inequality, the conclusion follows.

\medskip
\step2 Proof of the second estimate on $\sigma^{\Fc,H}_{L}$ in the case $n\notin H$.\\
As $\sigma_L^{\Fc,H}=\sigma(\bar\psi_L^{\Fc,H},\Sigma_L^{\Fc,H})$, a trace estimate yields
\begin{equation}\label{eq:est-bndary}
\int_{\partial I_{n,L}}|\sigma_{L}^{\Fc,H}-c\Id|^2
\,\lesssim\,\int_{(I_{n,L}+\frac12\rho B)\setminus I_{n,L}}|\langle\nabla\rangle^\frac12\nabla\bar\psi^{\Fc,H}_{L}|^2
+|\langle\nabla\rangle^\frac12(\Sigma^{\Fc,H}_{L}-c)|^2.
\end{equation}
Given $n\notin H$, as the uniform separation assumption in~\ref{Gunif} ensures that no other particle intersects~\mbox{$I_{n,L}+\rho B$}, we note that $(\bar\psi^{\Fc,H}_{L},\Sigma^{\Fc,H}_{L})$ satisfies
\begin{equation}\label{eq:stokes-prereg}
-\triangle\bar\psi_{L}^{\Fc,H}+\nabla\Sigma^{\Fc,H}_{L}=0,\qquad\text{in $I_{n,L}+\rho B$}.
\end{equation}
By the local regularity theory for steady Stokes equations, e.g.~\cite[Theorem~IV.4.1]{Galdi}, we deduce for all $m\ge0$, for all constants $\kappa\in\R^d$ and $c\in\R$,
\begin{multline*}
\|\nabla\bar\psi_{L}^{\Fc,H}\|_{H^{m+1}(I_{n,L}+\frac12\rho B)}+\|\Sigma_{L}^{\Fc,H}-c\|_{H^{m+1}(I_{n,L}+\frac12\rho B)}\\
\,\lesssim\,\|\bar\psi_{L}^{\Fc,H}-\kappa\|_{H^{1}(I_{n,L}+\rho B)}+\|\Sigma_{L}^{\Fc,H}-c\|_{\Ld^2(I_{n,L}+\rho B)}.
\end{multline*}
Choosing $c:=\fint_{I_{n,L}+\rho B}\Sigma_{L}^{\Fc,H}$ and using a local pressure estimate for the steady Stokes equation, e.g.~\cite[Lemma~3.3]{DG-21b}, we find
\begin{equation*}
\|\Sigma_{L}^{\Fc,H}-c\|_{\Ld^2(I_{n,L}+\rho B)}
\,\lesssim\,\|\nabla\bar\psi_{L}^{\Fc,H}\|_{\Ld^2(I_{n,L}+\rho B)},
\end{equation*}
so that the above reduces to
\begin{equation*}
\|\nabla\bar\psi_{L}^{\Fc,H}\|_{H^{m+1}(I_{n,L}+\frac12\rho B)}+\|\Sigma_{L}^{\Fc,H}-c\|_{H^{m+1}(I_{n,L}+\frac12\rho B)}
\,\lesssim\,\|\bar\psi_{L}^{\Fc,H}-\kappa\|_{H^{1}(I_{n,L}+\rho B)}.
\end{equation*}
Further choosing $\kappa:=\fint_{I_{n,L}+\rho B}\bar\psi_{L}^{\Fc,H}$ and applying Poincaré's inequality, we conclude
\begin{equation}\label{eq:reg-inner}
\|\nabla\bar\psi_{L}^{\Fc,H}\|_{H^{m+1}(I_{n,L}+\frac12\rho B)}+\|\Sigma_{L}^{\Fc,H}-c\|_{H^{m+1}(I_{n,L}+\frac12\rho B)}
\,\lesssim\,\|\nabla\bar\psi_{L}^{\Fc,H}\|_{\Ld^2(I_{n,L}+\rho B)}.
\end{equation}
In particular, combining this with~\eqref{eq:est-bndary} and noting that the Cauchy stress tensor $\sigma_L^{\Fc,H}$ is unchanged if we add a rigid motion to $\bar\psi_L^{\Fc,H}$, the conclusion follows from Korn's inequality.

\medskip
\step3 Proof of the second estimate on $\sigma^{\Fc,H}_{L}$ in the case $n\in H$.\\
The starting point is again~\eqref{eq:est-bndary}.
Now, given $n\in H$, we note that $(\bar\psi^{\Fc,H}_{L},\Sigma^{\Fc,H}_{L})$ satisfies, instead of~\eqref{eq:stokes-prereg},
\begin{equation}\label{eq:stokes-prereg-re}
-\triangle\bar\psi_{L}^{\Fc,H}+\nabla\Sigma^{\Fc,H}_{L}=0,\qquad\text{in $(I_{n,L}+\rho B)\setminus I_{n,L}$},
\end{equation}
and $\bar\psi^{\Fc,H}_{L}$ is affine in $I_{n,L}$.
By the local regularity theory for the steady Stokes equation near a boundary, e.g.~\cite[Theorem~IV.5.1--5.3]{Galdi}, we obtain for all $m\ge0$, for all constants~$\kappa\in\R^d$ and~$c\in\R$,
\begin{multline*}
\|\nabla\bar\psi_{L}^{\Fc,H}\|_{H^{m+1}((I_{n,L}+\frac12\rho B)\setminus I_{n,L})}+\|\Sigma_{L}^{\Fc,H}-c\|_{H^{m+1}((I_{n,L}+\frac12\rho B)\setminus I_{n,L})}\\
\,\lesssim\,\|\bar\psi_{L}^{\Fc,H}|_{I_{n,L}}-\kappa\|_{H^{m+\frac 32}(\partial I_{n,L})}
+\|\bar\psi_{L}^{\Fc,H}-\kappa\|_{H^1((I_{n,L}+\rho B)\setminus I_{n,L})}\\
+\|\Sigma_{L}^{\Fc,H}-c\|_{\Ld^2((I_{n,L}+\rho B)\setminus I_{n,L})}.
\end{multline*}
Choosing $c:=\fint_{(I_{n,L}+\rho B)\setminus I_{n,L}}\Sigma_{L}^{\Fc,H}$ and using a local pressure estimate for the steady Stokes equation, e.g.~\cite[Lemma~3.3]{DG-21b}, we find
\begin{equation*}
\|\Sigma_{L}^{\Fc,H}-c\|_{\Ld^2((I_{n,L}+\rho B)\setminus I_{n,L})}
\,\lesssim\,\|\nabla\bar\psi_{L}^{\Fc,H}\|_{\Ld^2((I_{n,L}+\rho B)\setminus I_{n,L})},
\end{equation*}
so that the above reduces to
\begin{multline*}
\|\nabla\bar\psi_{L}^{\Fc,H}\|_{H^{m+1}((I_{n,L}+\frac12\rho B)\setminus I_{n,L})}+\|\Sigma_{L}^{\Fc,H}-c\|_{H^{m+1}((I_{n,L}+\frac12\rho B)\setminus I_{n,L})}\\
\,\lesssim\,\|\bar\psi_{L}^{\Fc,H}|_{I_{n,L}}-\kappa\|_{H^{m+\frac 32}(\partial I_{n,L})}+\|\bar\psi_{L}^{\Fc,H}-\kappa\|_{H^1((I_{n,L}+\rho B)\setminus I_{n,L})}.
\end{multline*}
As $\bar\psi_{L}^{\Fc,H}$ is affine in $I_{n,L}$, we have
\[\|\bar\psi_{L}^{\Fc,H}|_{I_{n,L}}-\kappa\|_{H^{m+\frac 32}(\partial I_{n,L})}\,\lesssim\,\|\bar\psi_{L}^{\Fc,H}-\kappa\|_{H^{m+2}(I_{n,L})}\,=\,\|\bar\psi_{L}^{\Fc,H}-\kappa\|_{H^{1}(I_{n,L})},\]
and the above then becomes
\begin{multline*}
\|\nabla\bar\psi_{L}^{\Fc,H}\|_{H^{m+1}((I_{n,L}+\frac12\rho B)\setminus I_{n,L})}+\|\Sigma_{L}^{\Fc,H}-c\|_{H^{m+1}((I_{n,L}+\frac12\rho B)\setminus I_{n,L})}\\
\,\lesssim\,\|\bar\psi_{L}^{\Fc,H}-\kappa\|_{H^1(I_{n,L}+\rho B)}.
\end{multline*}
Further choosing $\kappa:=\fint_{I_{n,L}+\rho B}\bar\psi_{L}^{\Fc,H}$ and applying Poincaré's inequality, we deduce
\begin{equation*}
\|\nabla\bar\psi_{L}^{\Fc,H}\|_{H^{m+1}((I_{n,L}+\frac12\rho B)\setminus I_{n,L})}+\|\Sigma_{L}^{\Fc,H}-c\|_{H^{m+1}((I_{n,L}+\frac12\rho B)\setminus I_{n,L})}
\,\lesssim\,\|\nabla\bar\psi_{L}^{\Fc,H}\|_{\Ld^2(I_{n,L}+\rho B)}.
\end{equation*}
In particular, combined with~\eqref{eq:est-bndary}, this yields the conclusion as in Step~2.
\end{proof}

\subsection{Cluster formulas}\label{sec:cluster-pr}
This section is devoted to the proof of Theorem~\ref{thm:expansion}.
We start by establishing the validity of expansion~\eqref{eq:preconcl-exp} with coefficients given by formula~\eqref{eq:form-clust2} and with the explicit remainder~\eqref{eq:form-clust-rem}.
The proof is similar to its counterpart for the conductivity problem in our previous work~\cite{DG-16a}.

\begin{lem}[Finite-volume cluster expansion]\label{lem:proof-prop1-1}
Under Assumptions~\ref{H0} and~\ref{Gunif}, finite-volume approximations of the effective viscosity can be expanded for all~$L$ and $k\ge1$ as
\begin{equation}\label{eq:preconcl-exp-RE}
\Bb_{L}\,=\,\Id+\sum_{j=1}^k\tfrac1{j!}\Bb_{L}^{j}+R_{L}^{k+1},
\end{equation}
where the coefficients $\{\Bb_{L}^{j}\}_j$ and the remainder $R_{L}^{k+1}$ are given by formulas~\eqref{eq:form-clust2} and~\eqref{eq:form-clust-rem}, respectively.
\end{lem}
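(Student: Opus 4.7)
The plan follows three steps: a surface-integral representation of $E:(\Bb^{(p)}_{L,\ell}-\Id)E$, an iterated Bernoulli expansion coupled to Möbius inversion for the cluster operators, and a resummation of the tail into the form~\eqref{eq:form-clust-rem} with a single stress tensor $\sigma^{E^{(p)}\cup F}$.

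\emph{Step 1 (surface identity).} I would start from the variational characterization~\eqref{eq:optim-psiL} and integrate by parts on $\T_L^d$. Since $\D(\psi_{E;L,\ell}^{(p)}+Ex)=0$ inside each inclusion, the integrand $\sigma_{E;L,\ell}^{(p)}:\D(\psi_{E;L,\ell}^{(p)})$ vanishes there (as $E$ is trace-free), so the energy reduces to a boundary pairing, and the force/torque balance from~\eqref{eq:cor} absorbs the rigid-motion constants. This yields
\begin{equation*}
E:(\Bb^{(p)}_{L,\ell}-\Id)E\,=\,\tfrac12L^{-d}\expec{\sum_{n\in E^{(p)}_{L,\ell}}\int_{\partial I_{n,L,\ell}}E(x-x_{n,L,\ell})\cdot\sigma_{E;L,\ell}^{E^{(p)}}\nu},
\end{equation*}
the periodic analogue of Batchelor's classical identity.

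\emph{Step 2 (Bernoulli conditioning and Möbius expansion).} For each $n$, I would denote by $\tilde E_n^{(p)}$ the set $E^{(p)}$ with $b_n^{(p)}$ forced to $0$, which is independent of $b_n^{(p)}$ by iid-ness. The identity $\expec{b_n^{(p)}h(E^{(p)})}=p\,\expec{h(\tilde E_n^{(p)}\cup\{n\})}$ extracts one power of $p$ per summand. Iterating this conditioning on $k+1$ distinct indices and inserting, at each step, the Möbius identity $\sigma^{H\cup F}=\sum_{G\subset F}\delta^G\sigma^{H\setminus F}$ (which follows directly from definition~\eqref{eq:def-diff}), one generates an expansion in powers of $p$ whose $j$-th coefficient involves the cluster derivatives $\delta^{F\setminus\{n\}}\sigma^{\{n\}}_{E;L,\ell}$. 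The prefactor $j!$ arises from folding the $|F|=j$ choices of the distinguished index $n\in F$ into the symmetric sum $\sum_{|F|=j}$. Stokes reciprocity on the common fluid region of two Stokes configurations, combined with the rigid trace of $\psi^H+E(x-x_{n,L,\ell})$ inside $I_{n,L,\ell}$ whenever $n\in H$, transfers the $E(x-x_{n,L,\ell})$ factor to the $\psi$-side and produces the form~\eqref{eq:form-clust2}.

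\emph{Step 3 (resumming the remainder).} The main obstacle is to close the expansion at order $k$ with the remainder kept in the compact form~\eqref{eq:form-clust-rem} featuring a single stress tensor $\sigma^{E^{(p)}\cup F}$, rather than a full Möbius tail $\sum_{|F|>k}\delta^F\sigma^\varnothing$. This is achieved by applying the Möbius identity in the reverse direction, i.e.
\begin{equation*}
\delta^F\sigma^{H\setminus F}\,=\,\sigma^{H\cup F}\,-\,\sum_{G\subsetneq F}\delta^G\sigma^{H\setminus F},
\end{equation*}
at the last level of iteration (with $H$ standing for the aggregated configuration from the previous conditionings). The single term $\sigma^{H\cup F}$ survives and, after taking the outer expectation, becomes a pairing against $\sigma^{E^{(p)}\cup F}$, while the lower-order contributions $\sum_{G\subsetneq F}$ produced in the process recombine with the Step~2 terms to reproduce exactly the cluster coefficients~\eqref{eq:form-clust2} for $j\le k$. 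This unexpanded form of the remainder is essential because only it is amenable to the uniform $\ell^1$–$\ell^2$ energy estimates of Proposition~\ref{prop:apest-re} used downstream.
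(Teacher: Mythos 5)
Your proposal follows essentially the same route as the paper's proof: the surface energy identity \eqref{eq:1step} obtained from the rigid-motion and force/torque boundary conditions, the extraction of one power of $p$ per level via conditioning on the iid Bernoulli variables, the difference-operator (M\"obius) combinatorics, and the Stokes reciprocity between configurations that converts the level-$k$ error into $p$ times the level-$(k+1)$ remainder kept in unexpanded form --- which is exactly the paper's induction via \eqref{eq:magic}. The one place where your sketch is thinnest is the precise deterministic reciprocity identity (the paper's \eqref{eq:magic-1}, obtained by testing the equation for $\psi_L^G$ against $\psi_L^{E^{(p)}\cup F}-\psi_L^F$ and vice versa), but your ``reciprocity on the common fluid region'' clause correctly identifies this mechanism, so the argument closes as in the paper.
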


\begin{proof}
Given $E\in\Md_0^\Sym$ with $|E|=1$, we recall that we drop the corresponding subscripts in the notation.
We split the proof into three steps.

\medskip
\step1 General strategy.\\
The starting point is formula~\eqref{eq:def-per-effvis} for the finite-volume approximation of the effective viscosity,
\[E:\Bb_{L}E\,=\,1+\expecM{\fint_{ Q_L}|\!\D(\psi_{L})|^2}.\]
The energy identity for the corrector equation~\eqref{eq:cor-per} takes the form
\begin{equation}\label{eq:indep0}
2\int_{ Q_L}|\!\D(\psi_{L})|^2\,=\,
\sum_{n}\int_{\partial I_{n,L}}E(x-x_{n,L}) \cdot\sigma_{L}\nu,
\end{equation}
and thus, further decomposing
$\sigma_L=\sigma_L^{\{n\}}+(\sigma_L-\sigma_L^{\{n\}})$, we obtain
\begin{multline}\label{eq:1step}
\expecM{2\int_{ Q_L}|\!\D(\psi_{L})|^2}
\,=\,\sum_{n}\expecM{\int_{\partial I_{n,L}}E(x-x_{n,L})\cdot\sigma_{L}^{\{n\}}\nu}\\
+\sum_{n}\expecM{\int_{\partial I_{n,L}}E(x-x_{n,L})\cdot(\sigma_{L}-\sigma_{L}^{\{n\}})\nu}.
\end{multline}
In addition, we shall prove below that for all $k\ge1$,
\begin{multline}\label{eq:magic}
\sum_{\sharp F=k}\sum_{n\in F}\expecM{\int_{\partial I_{n,L}}\del^{F\setminus\{n\}}\big(\psi_{L}^\varnothing+E(x-x_{n,L})\big)\cdot (\sigma_{L}-\sigma_{L}^{ F})\nu}\\
\,=\,\sum_{\sharp F=k+1}\sum_{n\in F}\,\expecM{\int_{\partial I_{n,L}}\del^{F\setminus\{n\}}\psi_{L}^\varnothing\cdot\sigma_{L}\nu}.
\end{multline}
We note that~\eqref{eq:indep0} already proves the claim~\eqref{eq:preconcl-exp-RE} for $k=0$.
Next, we proceed by induction:
if~\eqref{eq:preconcl-exp-RE} holds for some~$k\ge0$,
formulas~\eqref{eq:form-clust2} and~\eqref{eq:form-clust-rem} for $R_L^{k+1},\Bb_L^{k+1}$ allow to
decompose
\begin{multline*}
E:R_{L}^{k+1}E\,=\,\tfrac1{(k+1)!}E:\Bb_{L}^{k+1}E\\
+\tfrac12L^{-d}\sum_{\sharp F=k+1}\sum_{n\in F}\,\expecM{\int_{\partial I_{n,L}}\del^{F\setminus\{n\}}\big(\psi_{L}^\varnothing+E(x-x_{n,L})\big)\cdot(\sigma_{L}-\sigma_{L}^{ F})\nu}.
\end{multline*}
Inserting identity~\eqref{eq:magic}, noting that for $\sharp F=k+2$ there holds
\[\del^{F\setminus\{n\}}\psi_{L}^\varnothing=\del^{F\setminus\{n\}}\big(\psi_{L}^\varnothing+E(x-x_{n,L})\big),\]
and recognizing formula~\eqref{eq:form-clust-rem} for $R_L^{k+2}$, we deduce
\[R_{L}^{k+1}\,=\,\tfrac1{(k+1)!}\Bb_{L}^{k+1}+R_{L}^{k+2},\]
hence the claim~\eqref{eq:preconcl-exp-RE} follows with~$k$ replaced by $k+1$.
It remains to prove~\eqref{eq:magic}, which we do in the next two steps.

\medskip
\step2 Proof that for all $\sharp F=k\ge1$ and $G\subset F$,
\begin{equation}\label{eq:magic-1}
\sum_{n\in F\setminus G}\int_{\partial I_{n,L}}\big(\psi_{L}^{G}+E(x-x_{n,L})\big)\cdot(\sigma_L-\sigma_L^F)\nu
\,=\,\sum_{n\notin F}\int_{\partial I_{n,L}}(\psi_{L}^{ F}-\psi_{L}^{ G})\cdot\sigma_{L}\nu.
\end{equation}
On the one hand, testing the equation~\eqref{eq:reform} for $\psi_{L}^{ G}$ with the difference $\psi_{L}-\psi_{L}^{ F}$, and using the boundary conditions for $\psi_{L},\psi_{L}^{ F},\psi_{L}^{ G}$ on~$\partial I_{n,L}$ with $n\in  G\subset F$, we find
\begin{equation}\label{eq:pre-ident}
\int_{ Q_L}\nabla(\psi_{L}-\psi_{L}^{ F}):\nabla\psi_{L}^{ G}
\,=\,-\sum_{n\in  G}\int_{\partial I_{n,L}}(\psi_{L}-\psi_{L}^{ F})\cdot\sigma_{L}^{ G}\nu\,=\,0.
\end{equation}
On the other hand, equations~\eqref{eq:reform} for $\psi_{L}$ and $\psi_{L}^{ F}$ entail
\begin{equation*}
-\triangle(\psi_{L}-\psi_{L}^{ F})+\nabla\big(\Sigma_{L}\mathds1_{ Q_L\setminus\Ic_{L}}-\Sigma_{L}^{ F}\mathds1_{ Q_L\setminus\Ic_{L}^{F}}\big)\\
\,=\,-\sum_{n\notin F}\delta_{\partial I_{n,L}}\sigma_{L}\nu-\sum_{n\in  F}\delta_{\partial I_{n,L}}(\sigma_{L}-\sigma_{L}^{ F})\nu,
\end{equation*}
and thus, testing with $\psi_{L}^{ G}$ and using the boundary conditions for $\psi_{L},\psi_{L}^{ F},\psi_{L}^{ G}$ on $\partial I_{n,L}$ with $n\in G\subset F$,
\begin{eqnarray*}
\lefteqn{\int_{ Q_L}\nabla\psi_{L}^{ G}:\nabla(\psi_{L}-\psi_{L}^{ F})}\\
&=&-\sum_{n\notin F}\int_{\partial I_{n,L}}\psi_{L}^{ G}\cdot\sigma_{L}\nu-\sum_{n\in  F}\int_{\partial I_{n,L}}\psi_{L}^{ G}\cdot(\sigma_{L}-\sigma_{L}^{ F})\nu\\
&=&-\sum_{n\notin F}\int_{\partial I_{n,L}}\psi_{L}^{ G}\cdot\sigma_{L}\nu
-\sum_{n\in F\setminus G}\int_{\partial I_{n,L}}\psi_{L}^{ G}\cdot(\sigma_{L}-\sigma_{L}^{ F})\nu\\
&&\hspace{4cm}+\sum_{n\in  G}\int_{\partial I_{n,L}}E(x-x_{n,L})\cdot(\sigma_{L}-\sigma_{L}^{ F})\nu.
\end{eqnarray*}
Combined with~\eqref{eq:pre-ident}, this entails
\begin{equation*}
\sum_{n\in F\setminus G}\int_{\partial I_{n,L}}\psi_{L}^{ G}\cdot(\sigma_{L}-\sigma_{L}^{ F})\nu
\,=\,\sum_{n\in  G}\int_{\partial I_{n,L}}E(x-x_{n,L})\cdot(\sigma_{L}-\sigma_{L}^{ F})\nu-\sum_{n\notin F}\int_{\partial I_{n,L}}\psi_{L}^{ G}\cdot\sigma_L\nu,
\end{equation*}
or alternatively,
\begin{multline}\label{eq:magic-0}
\sum_{n\in F\setminus G}\int_{\partial I_{n,L}}\big(\psi_{L}^{ G}+E(x-x_{n,L})\big)\cdot(\sigma_{L}-\sigma_{L}^{ F})\nu\\
\,=\,\sum_{n\in  F}\int_{\partial I_{n,L}}E(x-x_{n,L})\cdot(\sigma_{L}-\sigma_{L}^{ F})\nu
-\sum_{n\notin F}\int_{\partial I_{n,L}}\psi_{L}^{ G}\cdot\sigma_{L}\nu.
\end{multline}
For $G=F$, the left-hand side vanishes, hence
\begin{equation*}
\sum_{n\in  F}\int_{\partial I_{n,L}}E(x-x_{n,L})\cdot(\sigma_{L}-\sigma_{L}^{ F})\nu
\,=\,\sum_{n\notin F}\int_{\partial I_{n,L}}\psi_{L}^{ F}\cdot\sigma_{L}\nu,
\end{equation*}
which allows us to reformulate~\eqref{eq:magic-0} into~\eqref{eq:magic-1}.

\medskip
\step4 Proof of~\eqref{eq:magic}.\\
Denote by  $T_{k,L}$ the left-hand side of~\eqref{eq:magic}.
By the definition~\eqref{eq:def-diff} of the difference operator, we have
\begin{equation*}
T_{k,L}\,=\,-\sum_{\sharp F=k}\sum_{n\in F}\sum_{G\subset F\setminus\{n\}}(-1)^{|F\setminus G|}\,
\expecM{\int_{\partial I_{n,L}}\big(\psi_{L}^{ G}+E(x-x_{n,L})\big)\cdot (\sigma_{L}-\sigma_{L}^{ F})\nu},
\end{equation*}
or alternatively, after changing summation variables,
\begin{equation*}
T_{k,L}\,=\,-\sum_{\sharp F=k}\sum_{G\subset F}(-1)^{|F\setminus G|}
\,\expecM{\sum_{n\in F\setminus G}\int_{\partial I_{n,L}}\big(\psi_{L}^{ G}+E(x-x_{n,L})\big)\cdot(\sigma_{L}-\sigma_{L}^{ F})\nu}.
\end{equation*}
We now appeal to~\eqref{eq:magic-1}, to the effect of
\begin{equation*}
T_{k,L}\,=\,-\sum_{\sharp F=k}\sum_{G\subset F}(-1)^{|F\setminus G|}\,\expecM{\sum_{n\notin F}\int_{\partial I_{n,L}}(\psi_{L}^{ F}-\psi_{L}^{ G})\cdot\sigma_{L}\nu}.
\end{equation*}
Using that $\sum_{G\subset F}(-1)^{|F\setminus G|}=0$ for $F\ne\varnothing$ and recalling the definition~\eqref{eq:def-diff} of the difference operator, this implies
\begin{eqnarray*}
T_{k,L}&=&\sum_{\sharp F=k}\sum_{G\subset F}(-1)^{|F\setminus G|} \,\expecM{\sum_{n\notin F}\int_{\partial I_{n,L}} \psi_{L}^{ G}\cdot\sigma_{L}\nu}\\
&=&\sum_{\sharp F=k}\,\expecM{\sum_{n\notin F}\int_{\partial I_{n,L}}\del^F\psi_{L}^\varnothing\cdot\sigma_{L}\nu},
\end{eqnarray*}
and the claim~\eqref{eq:magic} follows after changing summation variables.
\end{proof}

In the above result, we have naturally come up with the definition~\eqref{eq:form-clust2} of cluster coefficients~$\{\Bb_{L}^j\}_j$.
We now further establish the alternative formulas~\eqref{eq:form-clust} and~\eqref{eq:form-clust1}.
Note that~\eqref{eq:form-clust} coincides with the periodized version of the expected cluster formula~\eqref{e.formal-expansion}.

\begin{lem}[Equivalent cluster formulas]\label{lem:alter}
Under Assumptions~\ref{H0} and~\ref{Gunif}, for all~$L$ and $j\ge1$, the finite-volume cluster coefficient~$\Bb_{L}^j$ defined by formula~\eqref{eq:form-clust2} is equivalently given by~\eqref{eq:form-clust} and~\eqref{eq:form-clust1}.
\end{lem}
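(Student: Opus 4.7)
The plan is to show that both boundary expressions \eqref{eq:form-clust1} and \eqref{eq:form-clust2} coincide with the volume expression \eqref{eq:form-clust}, which, combined with the definition of $\Bb_{L,\ell}^j$ via \eqref{eq:form-clust2} in Lemma~\ref{lem:proof-prop1-1}, yields the three-way equivalence. Throughout, I fix $E\in\Md_0^\Sym$ and $\ell$, drop those subscripts, and argue for a fixed realization (sums over $n$ being implicitly restricted to $E_{L,\ell}$).

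\textbf{Step 1: \eqref{eq:form-clust}~$=$~\eqref{eq:form-clust1}.} For any finite $G\subset\N$, reproducing the energy computation of Step~3 in the proof of Lemma~\ref{lem:proof-prop1-1} on $\psi^G$ (exploiting the rigidity of $\psi^G+Ex$ on $I_{m,L}$ for $m\in G$ together with the force/torque balance of $\sigma^G$) and using $\int_{\T_L^d}\D(\psi^G):E=0$ by $L$-periodicity yields
\[\int_{\T_L^d}2|\D(\psi^G)+E|^2 \,=\, 2L^d|E|^2 + \sum_{m\in G}\int_{\partial I_{m,L}}E(x-x_{m,L})\cdot\sigma^G\nu.\]
Applying $\delta^F$ for $|F|=j\ge1$ annihilates the constant (since $\sum_{G\subset F}(-1)^{|F\setminus G|}=0$), while reindexing $G=\{m\}\cup G'$ with $G'\subset F\setminus\{m\}$ in the boundary sum produces $\delta^{F\setminus\{m\}}\sigma^{\{m\}}$. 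Dividing by $2L^d$, taking expectations and summing over $|F|=j$ with the prefactor $j!$ then matches \eqref{eq:form-clust1}.

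\textbf{Step 2: \eqref{eq:form-clust1}~$=$~\eqref{eq:form-clust2}.} For $j=1$ this is immediate since $\psi^{(0)}\equiv 0$ and $\delta^\varnothing=\mathrm{Id}$. For $j\ge 2$, the term $\delta^{F\setminus\{n\}}(E(x-x_n))$ vanishes identically (the expression being configuration-independent and $|F\setminus\{n\}|\ge 1$), so the integrand in \eqref{eq:form-clust2} reduces to $\delta^{F\setminus\{n\}}\psi^{(0)}\cdot\sigma^F\nu$. The key ingredient is the symmetry identity
\[\int_{\T_L^d}\D(\psi^F):\D(\psi^G) \,=\, \int_{\T_L^d}|\D(\psi^G)|^2,\qquad G\subset F,\]
which I derive by testing equation~\eqref{eq:reform} for $\psi^G$ against $\psi^F$: on each $\partial I_{m,L}$ with $m\in G\subset F$ the function $\psi^F+E(x-x_m)$ is rigid, its constant and skew-symmetric parts cancel against the balanced $\sigma^G\nu$, and the remaining contribution equals $2\int|\D(\psi^G)|^2$ by the energy identity for $\psi^G$. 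Summing term by term in the $\delta^F$-expansion gives $\int\D(\psi^F):\D(\delta^F\psi^{(0)}) = \int\delta^F|\D(\psi^{(0)})|^2$. To identify the boundary sum in \eqref{eq:form-clust2} with $2\int\D(\psi^F):\D(\delta^F\psi^{(0)})$, I test \eqref{eq:reform} for $\psi^F$ against $\psi^G$ for each $G\subsetneq F$, split the resulting boundary sum according to $n\in G$ versus $n\in F\setminus G$, and use the rigid-body replacement $\psi^G\rightsquigarrow-E(x-x_n)$ on $\partial I_{n,L}$ for $n\in G$; reassembling via the combinatorial cancellation $\sum_{H\subset F\setminus\{n\},\,H\ne\varnothing}(-1)^{|H|-1}=1$ (valid precisely when $|F|\ge 2$) produces the claim. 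Since the constant $|E|^2$ and the cross term $4E:\D(\psi^G)$ drop from $\delta^F|\D(\psi^{(0)})+E|^2$ as in Step~1, this matches the prefactor arising in Step~1 and closes the loop.

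\textbf{Main obstacle.} The delicate part is the combinatorial bookkeeping in Step~2: two entangled $\delta^F$-expansions must be matched while distinguishing $G\subset F$ from $G\subsetneq F$ and carefully tracking the rigid-body boundary contributions on each $\partial I_{n,L}$. The pivotal cancellation $\sum_{H\subset F\setminus\{n\},\,H\ne\varnothing}(-1)^{|H|-1}=1$ fails for $|F|=1$, which explains the need to treat $j=1$ separately.
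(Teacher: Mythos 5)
Your proof is correct and takes essentially the same route as the paper's: both arguments rest on the energy identity~\eqref{eq:energy-ident}, on testing the corrector equation~\eqref{eq:reform} against correctors for sub-configurations, on the rigid-motion/force-torque cancellations at particle boundaries, and on the reindexing of the $\delta^F$-sums, with the common pivot being the quantity $\int_{\T_L^d}\nabla\psi_L^F:\nabla\delta^F\psi_L^{(0)}$. The only differences are organizational — you prove the two equivalences in the opposite order, route both through the volume expression~\eqref{eq:form-clust}, and expand $\delta^F$ term by term against~\eqref{eq:reform} instead of invoking the aggregated difference equation~\eqref{eq:diff-st} — and your reconstruction of the $j\ge2$ combinatorics (including the separate trivial treatment of $j=1$) checks out.
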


\begin{proof}
We split the proof into two steps.

\medskip
\step1 Equivalence of~\eqref{eq:form-clust1} and~\eqref{eq:form-clust2}.\\
It suffices to prove that for all finite $F\subset \N$,
\begin{equation}\label{eq:claim-ident-form12}
\sum_{n\in F}\int_{\partial I_{n,L}}\del^{F\setminus\{n\}}\big(\psi_L^\varnothing+E(x-x_{n,L})\big)\cdot\sigma_L^F\nu\,=\,\sum_{n\in F}\int_{\partial I_{n,L}}E(x-x_{n,L})\cdot\del^{F\setminus\{n\}}\sigma_L^{\{n\}}\nu.
\end{equation}
Decomposing $\del^{F\setminus\{n\}}\psi_L^\varnothing=\del^{F\setminus\{n\}}\psi_L^{\{n\}}-\del^{F}\psi_L^\varnothing$ for $n\in F$ and using the boundary conditions, we find
\[\sum_{n\in F}\int_{\partial I_{n,L}}\del^{F\setminus\{n\}}\big(\psi_L^\varnothing+E(x-x_n)\big)\cdot\sigma_L^{F}\nu\,=\,-\sum_{n\in F}\int_{\partial I_{n,L}}\del^{F}\psi_L^\varnothing\cdot\sigma_L^{F}\nu.\]
Testing the equation~\eqref{eq:reform} for $\psi_{L}^{F}$ with $\del^F\psi_L^\varnothing$, this becomes
\[\sum_{n\in F}\int_{\partial I_{n,L}}\del^{F\setminus\{n\}}\big(\psi_L^\varnothing+E(x-x_n)\big)\cdot\sigma_L^{F}\nu\,=\,\int_{ Q_L}\nabla\del^F\psi_L^\varnothing:\nabla\psi_L^{F}.\]
Now testing the equation~\eqref{eq:diff-st} for $\del^F\psi_L^\varnothing$ with $\psi_L^{F}$, and using the boundary conditions, we deduce
\begin{eqnarray*}
\sum_{n\in F}\int_{\partial I_{n,L}}\del^{F\setminus\{n\}}\big(\psi_L^\varnothing+E(x-x_n)\big)\cdot\sigma_L^{F}\nu&=&-\sum_{n\in F}\int_{\partial I_{n,L}}\psi_L^{F}\cdot\del^{F\setminus\{n\}}\sigma_L^{\{n\}}\nu\\
&=&\sum_{n\in F}\int_{\partial I_{n,L}}E(x-x_{n,L})\cdot\del^{F\setminus\{n\}}\sigma_L^{\{n\}}\nu,
\end{eqnarray*}
that is, \eqref{eq:claim-ident-form12}.

\medskip
\step2 Equivalence of~\eqref{eq:form-clust} and~\eqref{eq:form-clust1}.\\
It suffices to prove for all finite $F\subset\N$,
\begin{equation}\label{eq:form-equiv10}
\fint_{ Q_L}\del^F|\!\D(\psi_{L}^\varnothing)|^2
\,=\,\tfrac12L^{-d}\sum_{n\in F}\int_{\partial I_{n,L}}E(x-x_{n,L})\cdot\del^{F\setminus\{n\}}\sigma_{L}^{\{n\}}\nu.
\end{equation}
Recalling the definition~\eqref{eq:def-diff} of the difference operator, we can write
\begin{equation*}
\fint_{ Q_L}\del^F|\!\D(\psi_{L}^\varnothing)|^2=\sum_{G\subset F}(-1)^{|F\setminus G|}\fint_{ Q_L}|\!\D(\psi_{L}^G)|^2,
\end{equation*}
which entails, in view of the energy identity for $\psi_L^G$, cf.~\eqref{eq:indep0},
\begin{equation*}
\fint_{ Q_L}\del^F|\!\D(\psi_{L}^\varnothing)|^2
\,=\,\tfrac12L^{-d}\sum_{G\subset F}\sum_{n\in G}(-1)^{|F\setminus G|}\int_{\partial I_{n,L}}E(x-x_{n,L})\cdot\sigma_{L}^G\nu.
\end{equation*}
After changing summation variables and using again the definition~\eqref{eq:def-diff} of the difference operator, this yields the claim~\eqref{eq:form-equiv10}.
\end{proof}

To conclude the proof of Theorem~\ref{thm:expansion}, it remains to establish
the control~\eqref{e.DGV-control} of the remainder,
which is inspired by a recent work of G\'erard-Varet~\cite{GV-20}
and which we prove in the slightly refined form of~\eqref{e.DGV-control-bis} below.
This extends the argument of~\cite{GV-20} to all $k>2$.

\begin{lem}[Control of the remainder]\label{lem:DVG}
Under Assumptions~\ref{H0} and~\ref{Gunif}, for all $L$ and $j\ge1$, the remainder term defined in~\eqref{eq:form-clust-rem} can be estimated by
\begin{multline}\label{e.DGV-control-bis}
|R_{L}^{k+1}|\,\le\,\expecM{L^{-d}\sum_{n}\int_{I_{n,L}}\Big|\sum_{\sharp F=k\atop n\notin F}\D(\del^{F}\psi_{L}^\varnothing)\Big|^2}\\
+\sum_{j=1}^k\,\bigg|\,\expecM{L^{-d}\sum_{n}\int_{I_{n,L}}\Big(\sum_{\sharp F=k\atop n\notin F}\D(\del^{F}\psi_{L}^\varnothing)\Big):\Big(\sum_{\sharp F=j-1\atop n\notin F}\D(\del^{F}\hat\psi^{\{n\}}_{n,L})\Big)}\bigg|,
\end{multline}
where in view of~\eqref{eq:def-diff} we have defined, with a slight abuse of notation,
\begin{equation}\label{eq:def-delhatpsin}
\del^F\hat\psi^{\{n\}}_{n,L}\,:=\,\sum_{G\subset F}(-1)^{|F\setminus G|}\hat\psi_{n,L}^{G\cup\{n\}},
\end{equation}
where for all $H\subset \N$ and $n\in H$ we denote by $\hat \psi^H_{n,L}$ the solution of the following Neumann boundary value problem in the inclusion $I_{n,L}$ (unique up to a rigid motion),
\begin{equation}\label{e.DG+6}
\left\{\begin{array}{ll}
-\triangle\hat\psi_{n,L}^H+\nabla\hat\Sigma_{n,L}^H=0,&\text{in $I_{n,L}$},\\
\Div(\hat\psi_{n,L}^H)=0,&\text{in $I_{n,L}$},\\
\sigma(\hat\psi_{n,L}^H,\hat\Sigma_{n,L}^H)\nu=\sigma_{L}^H\nu,&\text{on $\partial I_{n,L}$}.
\end{array}\right.
\end{equation}
In particular, this yields the bound~\eqref{e.DGV-control}.
\end{lem}

\begin{proof}
We split the proof into two steps, first showing that~\eqref{e.DG+6} is well-posed, and then proving the bound~\eqref{e.DGV-control-bis}.

\medskip
\step1 Proof that the Neumann problem~\eqref{e.DG+6} is well-posed for all $H\subset\N$ and $n\in H$, and that the solution satisfies
\begin{equation}\label{eq:apest-tildepsin}
\int_{I_{n,L}}|\!\D(\hat\psi_{n,L}^H)|^2\,\lesssim\,\int_{I_{n,L}+\rho B}|\!\D(\psi_L^H)+E|^2.
\end{equation}
In addition, the proof yields similarly
\[\int_{I_{n,L}}\Big|\sum_{\sharp F=j-1\atop n\notin F}\D(\delta^F\hat\psi_{n,L}^{\{n\}})\Big|^2\,\lesssim\,\int_{I_{n,L}+\rho B}\Big|\sum_{\sharp F=j-1\atop n\notin F}\D\big(\delta^F(\psi_L^{\{n\}}+Ex)\big)\Big|^2.\]
This last estimate entails that the bound~\eqref{e.DGV-control} follows from~\eqref{e.DGV-control-bis}.

\medskip\noindent
We turn to the proof of~\eqref{eq:apest-tildepsin}.
The weak formulation of equation~\eqref{e.DG+6} yields for all~$\phi\in H^1(I_{n,L})^d$ with $\Div(\phi)=0$
\begin{equation}\label{eq:weak-form-tilde}
2\int_{I_{n,L}}\D(\phi):\D(\hat\psi_{n,L}^H)\,=\,\int_{\partial I_{n,L}}\phi\cdot\sigma_{L}^H\nu.
\end{equation}
Let us analyze the linear functional defining the right-hand side. Using the incompressibility of~$\phi$ in form of $\int_{\partial I_{n,L}}\phi\cdot\nu=0$, we can add any multiple of the identity matrix to~$\sigma_L^H$.
Further noting that the boundary conditions for $\psi_L^{H}$ on $\partial I_{n,L}$ with $n\in H$ allow to subtract a rigid motion from the test function $\phi$,
we are led to
\begin{multline*}
\Big|\int_{\partial I_{n,L}}\phi\cdot\sigma^{H}_L\nu\Big|\,\le\,\Big(\inf_{\kappa\in\R^d,\,\Theta\in\Md^\Skew}\int_{\partial I_{n,L}}|\phi-(\kappa+\Theta(x-x_{n,L}))|^2\Big)^\frac12\\
\times\Big(\inf_{c\in\R}\int_{\partial I_{n,L}}|\sigma^{H}_L-c\Id|^2\Big)^\frac12.
\end{multline*}
Appealing to the trace estimates of Lemma~\ref{lem:trace}, this becomes
\begin{equation}\label{eq:subtr-press-cond}
\Big|\int_{\partial I_{n,L}}\phi\cdot\sigma^H_L\nu\Big|\,\lesssim\,\Big(\int_{I_{n,L}}|\!\D(\phi)|^2\Big)^\frac12\Big(\int_{I_{n,L}+\rho B}|\!\D(\psi^H_L)+E|^2\Big)^\frac12.
\end{equation}
This proves that the right-hand side in the weak formulation~\eqref{eq:weak-form-tilde} is a continuous linear functional with respect to $\D(\phi)\in\Ld^2(I_{n,L})^{d\times d}$.
The Lax--Milgram theorem then ensures that equation~\eqref{e.DG+6} is well-posed in the sense that it admits a unique solution \mbox{$\D(\hat\psi_{n,L}^H)\in \Ld^2(I_{n,L})^{d\times d}$}, and the a priori bound~\eqref{eq:apest-tildepsin} follows.

\medskip
\step2 Proof of~\eqref{e.DGV-control-bis}.\\
Inserting the energy identity~\eqref{eq:indep0} and the formula~\eqref{eq:form-clust1} for the coefficients, the cluster expansion~\eqref{eq:preconcl-exp} yields the following formula for the remainder,
\begin{eqnarray*}
E:R_L^{k+1}E&=&E:\Bb_{L}E-1-\sum_{j=1}^k\tfrac{1}{j!}E:\Bb_{L}^{j}E\\
&=&\tfrac12L^{-d}\,\expecM{\sum_n\int_{\partial I_{n,L}}E(x-x_{n,L}) \cdot\sigma_{L}\nu}\\
&&-\sum_{j=1}^k\tfrac12L^{-d}\sum_{\sharp F=j}\sum_{n\in F}\expecM{\int_{\partial I_{n,L}}E(x-x_{n,L})\cdot\del^{F\setminus\{n\}}\sigma_{L}^{\{n\}}\nu},
\end{eqnarray*}
or equivalently, changing summation variables,
\begin{equation}\label{eq:rem-reform}
E:R_L^{k+1}E\,=\,\tfrac12L^{-d}\,\expecM{\sum_{n}\int_{\partial I_{n,L}}E(x-x_{n,L})
\cdot\Big(\sigma_{L}-\sum_{j=1}^k\sum_{\sharp F=j-1\atop n\notin F}\del^{F}\sigma_{L}^{\{n\}}\Big)\nu}.
\end{equation}
Consider the cluster expansion error
\begin{eqnarray}
\Psi^{k}_L&:=&\psi_L-\sum_{j=1}^k\sum_{\sharp F=j}\del^{F}\psi_L^\varnothing,\label{eq:def-PsiLp-diff}\\
\Xi^{k}_L&:=&\Sigma_L\mathds1_{ Q_L\setminus\Ic_L}-\sum_{j=1}^k\sum_{\sharp F=j}\del^{F}\big(\Sigma_L^\varnothing\mathds1_{ Q_L\setminus\Ic_L^\varnothing}\big),\nonumber
\end{eqnarray}
and note that in view of~\eqref{eq:diff-st} it satisfies the following equation in $ Q_L$,
\begin{equation}\label{eq:eqn-Psipk}
-\triangle\Psi^{k}_L+\nabla\Xi_L^{k}\,=\,-\sum_n\delta_{\partial I_{n,L}}\Big(\sigma_L-\sum_{j=1}^k\sum_{\sharp F=j-1\atop n\notin F}\del^{F}\sigma_L^{\{n\}}\Big)\nu.
\end{equation}
Testing this equation with $\psi_L$ and using the boundary conditions, the identity~\eqref{eq:rem-reform} for the remainder becomes
\begin{equation*}
E:R_L^{k+1}E\,=\,\expecM{\fint_{ Q_L}\D(\psi_L):\D(\Psi_L^{k})}.
\end{equation*}
Adding and subtracting $\sum_{j=1}^k\sum_{\sharp F=j}\del^F\psi_L^\varnothing$ to $\psi_L$, we deduce
by~\eqref{eq:def-PsiLp-diff},
\begin{equation*}
|E:R_L^{k+1}E|\,\le\,\expecM{\fint_{ Q_L}|\!\D(\Psi_L^{k})|^2}
+\sum_{j=1}^k\,\bigg|\,\expecM{\fint_{ Q_L}\D(\Psi_L^{k}):\sum_{\sharp F=j}\D(\del^F\psi_L^\varnothing)}\bigg|.
\end{equation*}
The conclusion~\eqref{e.DGV-control-bis} then follows from the  estimate
\begin{equation}\label{e.GV+3}
{\int_{ Q_L}|\!\D(\Psi_L^{k})|^2} \,\lesssim\, {\sum_{n}\int_{I_{n,L}}\Big|\sum_{\sharp F=k\atop n\notin F}\D(\del^{F}\psi_L^\varnothing)\Big|^2},
\end{equation}
and from the identity for all $1\le j\le k$
\begin{equation}\label{e.GV+5}
{\int_{ Q_L}\D(\Psi_L^{k}):\sum_{\sharp F=j}\D(\del^F\psi_L^\varnothing)}
\,=\,{\sum_{n}\int_{I_{n,L}}\Big(\sum_{\sharp F=k\atop n\notin F}\D(\del^{F}\psi_L^\varnothing)\Big):\Big(\sum_{\sharp F=j-1\atop n\notin F}\D(\del^{F}\hat\psi_{n,L}^{\{n\}})\bigg)},
\end{equation}
which we prove in the next two substeps, respectively.

\medskip
\substep{2.1} Proof of  \eqref{e.GV+3}.\\
In view of~\eqref{eq:eqn-Psipk}, the cluster expansion error $\Psi_L^{k}$ satisfies
\[-\triangle\Psi_L^k+\nabla\Xi_L^k=0,\qquad\Div(\Psi_L^k)=0,\qquad\text{in $ Q_L\setminus\Ic_L$},\]
which entails
\begin{eqnarray*}
\int_{ Q_L}|\!\D(\Psi_L^k)|^2&=&\sum_n\int_{I_{n,L}}|\!\D(\Psi_L^k)|^2+\int_{Q_L\setminus\Ic_L}|\!\D(\Psi_L^k)|^2\\
&=&\sum_n\int_{I_{n,L}}|\!\D(\Psi_L^k)|^2-\tfrac12\sum_n\int_{\partial I_{n,L}}\Psi_L^k\cdot\sigma(\Psi_L^k,\Xi_L^k)\nu.
\end{eqnarray*}
Hence, using the boundary conditions and the incompressibility constraint
to smuggle in arbitrary constants in the different factors, as in the proof of~\eqref{eq:subtr-press-cond},
and appealing to the trace estimates of Lemma~\ref{lem:trace-0}, we find
\begin{equation*}
\int_{ Q_L}|\!\D(\Psi_L^k)|^2\,\lesssim\,\sum_n\int_{I_{n,L}}|\!\D(\Psi_L^k)|^2+\sum_n\Big(\int_{I_{n,L}}|\!\D(\Psi_L^k)|^2\Big)^\frac12\Big(\int_{I_{n,L}^+}|\!\D(\Psi_L^k)|^2\Big)^\frac12,
\end{equation*}
from which we deduce by Young's inequality,\footnote{As argued in~\cite{GV-20}, this estimate~\eqref{eq:bnd-psi-L2-re} can alternatively be deduced from minimizing properties of Stokes equations for~$\Psi_L^k$ in~$Q_L\setminus\Ic_L$ with prescribed symmetric gradient in $\Ic_L$. We rather give a PDE argument that is more in line with the other arguments of this memoir.}
\begin{equation}\label{eq:bnd-psi-L2-re}
\int_{ Q_L}|\!\D(\Psi_L^{k})|^2\,\lesssim\, {\sum_{n}\int_{I_{n,L}}|\!\D(\Psi_L^{k})|^2}.
\end{equation}
Next, the definition of $\Psi_L^{k}$ and the rigidity constraint for $\psi_L$ in $I_{n,L}$ yield
\begin{equation}\label{eq:pre-telescop}
\D(\Psi_L^{k})\,=\,-E-\sum_{j=1}^k\sum_{\sharp F=j}\D(\del^{F}\psi_L^\varnothing)\qquad\text{in $I_{n,L}$}.
\end{equation}
Distinguishing between the cases $n\in F$ and $n\notin F$, and noting that for $n\in F$ we can decompose $\del^F\psi_L^\varnothing=\del^{F\setminus\{n\}}\psi_L^{\{n\}}-\del^{F\setminus\{n\}}\psi_L^\varnothing$, we find
\begin{equation*}
\sum_{\sharp F=j}\D(\del^{F}\psi_L^\varnothing)\,=\,\sum_{\sharp F=j\atop n\notin F}\D(\del^{F}\psi_L^\varnothing)+\sum_{\sharp F=j-1\atop n\notin F}\D(\del^{F}\psi_L^{\{n\}})-\sum_{\sharp F=j-1\atop n\notin F}\D(\del^{F}\psi_L^\varnothing),
\end{equation*}
and thus, in view of the rigidity constraint for $\del^F\psi_L^{\{n\}}$ in $I_{n,L}$,
\begin{equation*}
\sum_{\sharp F=j}\D(\del^{F}\psi_L^\varnothing)\,=\,-E\mathds1_{j=1}+\sum_{\sharp F=j\atop n\notin F}\D(\del^{F}\psi_L^\varnothing)-\sum_{\sharp F=j-1\atop n\notin F}\D(\del^{F}\psi_L^\varnothing)\qquad\text{in $I_{n,L}$}.
\end{equation*}
Inserting this into~\eqref{eq:pre-telescop} and recognizing a telescoping sum, we deduce for all $n$,
\begin{equation}\label{e.GV+4}
\D(\Psi_L^{k})\,=\,-\sum_{\sharp F=k\atop n\notin F}\D(\del^{F}\psi_L^\varnothing)\qquad\text{in $I_{n,L}$}.
\end{equation}
Combined with~\eqref{eq:bnd-psi-L2-re}, this yields the claim~\eqref{e.GV+3}.

\medskip
\substep{2.2} Proof of~\eqref{e.GV+5}.\\
Testing the equation~\eqref{eq:diff-st} for $\del^F\psi_L^\varnothing$ with $\Psi_L^{k}$, and changing summation variables, we find
\begin{eqnarray*}
2\int_{ Q_L}\D(\Psi_L^{k}):\sum_{\sharp F=j}\D(\del^F\psi_L^\varnothing)
&=&-\sum_{\sharp F=j}\sum_{n\in F}\int_{\partial I_{n,L}}\Psi_L^{k}\cdot\del^{F\setminus\{n\}}\sigma_L^{\{n\}}\nu\\
&=&-\sum_{n}\int_{\partial I_{n,L}}\Psi_L^{k}\cdot\sum_{\sharp F=j-1\atop n\notin F}\del^{F}\sigma_L^{\{n\}}\nu.
\end{eqnarray*}
In view of the equation~\eqref{eq:weak-form-tilde} for $\D(\del^F\hat\psi_{n,L}^{\{n\}})$, this can be rewritten as
\begin{equation*}
\int_{ Q_L}\D(\Psi_L^{k}):\sum_{\sharp F=j}\D(\del^F\psi_L^\varnothing)
\,=\,-\sum_{n}\int_{I_{n,L}}\D(\Psi_L^{k}):\sum_{\sharp F=j-1\atop n\notin F}\D(\del^{F}\hat\psi_{n,L}^{\{n\}}).
\end{equation*}
Combined with~\eqref{e.GV+4}, this yields the claim~\eqref{e.GV+5}.
\end{proof}

\subsection{Uniform $\ell^1-\ell^2$ energy estimates}\label{sec:unif-ell12-pr}
In order to prove uniform cluster estimates, cf.~Theorem~\ref{thm:bounds}(i), our main analytical achievement is the following hierarchy of interpolating $\ell^1-\ell^2$ energy estimates for corrector differences, inspired by our previous work~\cite{DG-16a} on the conductivity problem (which also considers `overlapping particles'; see~\cite{GGMN-21,DG-22} for refinements in that direction). More precisely, we consider the following quantities, for all~$H\subset\N$, all $L$, and~$k,j\ge0$,
\begin{eqnarray*}
S_{L}^{H}(k,j)&:=&\sum_{\sharp G=k}\fint_{ Q_L}\Big|\sum_{\sharp F=j\atop F\cap G=\varnothing}\D(\del^{F\cup G} \psi_{L}^{H})\Big|^2,\\
T_{L}^{H}(k,j)&:=& L^{-d}\sum_{\sharp G=k}\sum_{n\notin G\cup H}\int_{I_{n,L}+\rho B}\Big|\sum_{\sharp F=j\atop F\cap (G\cup \{n\})=\varnothing}\D(\del^{F\cup G} \psi_{L}^{H})\Big|^2,
\end{eqnarray*}
and we prove the following result. The novelty with respect to~\cite{DG-16a} is that we further identify the optimal dependence on the minimal distance $\ell=\ell(\Pc)\gtrsim1$, which appears to be surprisingly challenging and relies on a fine use of elliptic regularity via a duality argument.

\begin{theor}[Uniform $\ell^1-\ell^2$ energy estimates]\label{prop:apest-re}
Under Assumptions~\ref{H0} and~\ref{Gunif}, we have for all $H\subset\N$, all $L$, and~$k,j\ge0$,
\begin{align*}
S_{L}^{H}(k,j)&~\lesssim~\left\{\begin{array}{lll}
\ell^{-d}&:&k=j=0;\\
(C\ell^{-d})^{2(k+j)-1}&:&k,j\ge0,~k+j\ge1;
\end{array}\right.\\
T_{L}^{H}(k,j)&~\lesssim~\left\{\begin{array}{lll}
\ell^{-2d}&:&k=j=0;\\
(C\ell^{-d})^{2(k+j)+1}&:&k,j\ge0,~k+j\ge1.
\end{array}\right.\qedhere
\end{align*}
\end{theor}

The proof is split into two parts in the following two subsections:
to simplify the presentation, we first give a short proof in the spirit of~\cite{DG-16a} without keeping track of the $\ell$-dependence, and we then establish the estimates in their stated optimal form.

\subsubsection{Proof of Theorem~\ref{prop:apest-re} without $\ell$-dependence}\label{sec:proof-prop:apest-re-1}
This section is devoted to the proof that for all $H\subset\N$, all~$L$, and $k,j\ge0$,
\begin{equation}\label{eq:bnd-ap-S0}
S_{L}^{H}(k,j)+T_{L}^{H}(k,j)\,\lesssim\,C^{k+j}.
\end{equation}
For notational convenience, we set $S_{L}^H(k,j)=T_{L}^H(k,j)=0$ for $j<0$ or $k<0$. We split the proof into three steps.

\medskip
\step1 Reduction to $S_{L}^H$: for all $H\subset\N$ and $L,k,j$,
\begin{equation}\label{eq:T<S}
T_{L}^{H}(k,j)\,\lesssim\,S_{L}^H(k,j)+S_{L}^H(k,j-1),
\end{equation}
which entails in particular that it suffices to prove the bound~\eqref{eq:bnd-ap-S0} for $S_{L}^H$.

\medskip\noindent
First note that for all maps $f$ and all~$n\notin G$ we have
\begin{equation}\label{eq:decomp-FGn-not}
\sum_{\sharp F=j\atop F\cap G=\varnothing}f(F\cup G)=\sum_{\sharp F=j\atop F\cap(G\cup\{n\})=\varnothing}f(F\cup G)+\sum_{\sharp F=j-1 \atop F\cap (G\cup\{n\})=\varnothing}f(F\cup G\cup\{n\}).
\end{equation}
Using this identity to decompose $T_{L}^H(j,k)$ and changing summation variables, we find
\begin{multline*}
T_{L}^{H}(k,j)\,\lesssim\,L^{-d}\sum_{\sharp G=k}\sum_{n\notin G\cup H}\int_{I_{n,L}+\rho B}\Big|\sum_{\sharp F=j\atop F\cap G=\varnothing}\D(\del^{F\cup G} \psi_{L}^{H})\Big|^2\\
+L^{-d}\sum_{\sharp G=k+1}\sum_{n\in G\setminus H}\int_{I_{n,L}+\rho B}\Big|\sum_{\sharp F=j-1\atop F\cap G=\varnothing}\D(\del^{F\cup G} \psi_{L}^{H})\Big|^2,
\end{multline*}
and thus, using the disjointness of the fattened inclusions $\{I_{n,L}+\rho B\}_n$ and recognizing the definition of $S_{L}^H$, the claim~\eqref{eq:T<S} follows.

\medskip
\step2 Energy estimate for correctors: for all $H\subset\N$,
\begin{equation}\label{eq:S00}
S_{L}^H(0,0)\,\lesssim\,1.
\end{equation}
As in~\eqref{eq:indep0}, the energy identity for the corrector equation~\eqref{eq:reform} for $\psi_{L}^H$ takes the form
\begin{equation}\label{eq:energy-ident}
2\int_{ Q_L}|\!\D(\psi_{L}^H)|^2\,=\,\sum_{n\in H}\int_{\partial I_{n,L}}E(x-x_{n,L})\cdot\sigma_{L}^H\nu.
\end{equation}
Using the incompressibility constraint $\Tr (E)=0$ to add an arbitrary constant to the pressure in $\sigma_{L}^H$, as in the proof of~\eqref{eq:subtr-press-cond}, and then appealing to the trace estimates of Lemma~\ref{lem:trace-0}(ii), we obtain
\begin{equation*}
\int_{ Q_L}|\!\D(\psi_{L}^H)|^2\,\lesssim\,\sum_{n\in H}\Big(\int_{I_{n,L}+\rho B}|\!\D(\psi_{L}^H)+E|^2\Big)^\frac12.
\end{equation*}
Since the fattened inclusions $\{I_{n,L}+\rho B\}_n$ are disjoint, the Cauchy--Schwarz inequality then yields, recalling the choice of the periodization~\eqref{e.set-perio},
\begin{equation}\label{eq:det-bnd-phiFk00}
\int_{ Q_L}|\!\D(\psi_{L}^H)|^2\,\lesssim\,\sharp\{n\in H:x_{n}\in Q_L\}.
\end{equation}
As the right-hand side is bounded by $CL^d$, the claim~\eqref{eq:S00} follows.
For future reference, we also note that this bound entails, when taking the expectation,
\begin{equation}\label{eq:bnd-psi-L2}
\expecM{\fint_{ Q_L}|\!\D(\psi_{L})|^2}\,\lesssim\,\lambda(\Pc).
\end{equation}

\medskip
\step3 Key recurrence relation: for all $H\subset\N$ and $k,j\ge0$,
\begin{multline}\label{eq:recurr-S}
S_{L}^H(k,j)\,\lesssim\,\mathds1_{k+j\le1}
+S_{L}^H(k+1,j-1)\\
+S_{L}^H(k,j-1)+S_{L}^H(k-1,j)
+S_{L}^H(k,j-2)+S_{L}^H(k-1,j-1),
\end{multline}
which then leads to the conclusion~\eqref{eq:bnd-ap-S0} by a direct double induction argument.

\medskip\noindent
Let a finite subset $G\subset\N$ be momentarily fixed.
In view of~\eqref{eq:diff-st},
the following equation holds in $ Q_L$, for any $F\subset\N$ with $F\cap G=\varnothing$,
\begin{multline*}
-\triangle\del^{F\cup G}\psi_{L}^H+\nabla\del^{F\cup G}\big(\Sigma_{L}^H\mathds1_{ Q_L\setminus\Ic_{L}^{H}}\big)
\,=\,-\sum_{n\in H}\delta_{\partial I_{n,L}}\del^{F\cup G}\sigma^{H}_{L}\nu\\
-\sum_{n\in F \setminus H}\delta_{\partial I_{n,L}}\del^{(F\setminus\{n\})\cup G}\sigma^{H\cup\{n\}}_{L}\nu
-\sum_{n\in G \setminus H}\delta_{\partial I_{n,L}}\del^{F\cup(G\setminus\{n\})}\sigma^{H\cup\{n\}}_{L}\nu.
\end{multline*}
Hence, after summing over $F$ and changing summation variables,
\begin{multline*}
-\triangle\bigg(\sum_{\sharp F=j\atop F\cap G=\varnothing}\del^{F\cup G}\psi_{L}^H\bigg)+\nabla\bigg(\sum_{\sharp F=j\atop F\cap G=\varnothing}\del^{F\cup G}(\Sigma_{L}^H\mathds1_{ Q_L\setminus\Ic_{L}^{H}})\bigg)\\
\,=\,-\sum_{n\in H}\delta_{\partial I_{n,L}}\bigg(\sum_{\sharp F=j\atop F\cap G=\varnothing}\del^{F\cup G}\sigma^{H}_{L}\nu\bigg)
-\sum_{n\notin G\cup H}\delta_{\partial I_{n,L}}\bigg(\sum_{\sharp F=j-1\atop F\cap (G\cup\{n\})=\varnothing}\del^{F\cup G}\sigma^{H\cup\{n\}}_{L}\nu\bigg)\\
-\sum_{n\in G \setminus H}\delta_{\partial I_{n,L}}\bigg(\sum_{\sharp F=j\atop F\cap G=\varnothing}\del^{F\cup(G\setminus\{n\})}\sigma^{H\cup\{n\}}_{L}\nu\bigg).
\end{multline*}
Testing this equation with the solution $\sum_{\sharp F=j: F\cap G=\varnothing}\del^{F\cup G}\psi_{L}^H$ itself, we obtain the energy identity
\begin{equation}\label{eq:ener-ident00}
2\int_{ Q_L}\Big|\sum_{\sharp F=j\atop F\cap G=\varnothing}\D(\del^{F\cup G}\psi_{L}^H)\Big|^2
\,=\,A_{L}^{1}(G,j)+A_{L}^{2}(G,j)+A_{L}^{3}(G,j),
\end{equation}
in terms of
\begin{eqnarray}\label{eq:def-A123}
A_{L}^{1}(G,j)&:=&
-\sum_{n\in H}\int_{\partial I_{n,L}}\bigg(\sum_{\sharp F=j\atop F\cap G=\varnothing}\del^{F\cup G}\psi_{L}^H\bigg)\cdot\bigg(\sum_{\sharp F=j\atop F\cap G=\varnothing}\del^{F\cup G}\sigma^{H}_{L}\nu\bigg),\\
A_{L}^{2}(G,j)&:=&
-\sum_{n \notin G\cup H}\int_{\partial I_{n,L}}\bigg(\sum_{\sharp F=j\atop F\cap G=\varnothing}\del^{F\cup G}\psi_{L}^H\bigg)\cdot\bigg(\sum_{\sharp F=j-1\atop F\cap (G\cup\{n\})=\varnothing}\del^{F\cup G}\sigma^{H\cup\{n\}}_{L}\nu\bigg),\nonumber\\
A_{L}^{3}(G,j)&:=&
-\sum_{n\in G \setminus H}\int_{\partial I_{n,L}}\bigg(\sum_{\sharp F=j\atop F\cap G=\varnothing}\del^{F\cup G}\psi_{L}^H\bigg)\cdot\bigg(\sum_{\sharp F=j\atop F\cap G=\varnothing}\del^{F\cup(G\setminus\{n\})}\sigma^{H\cup\{n\}}_{L}\nu\bigg).\nonumber
\end{eqnarray}
We analyze these three contributions separately and we start with the first one.
In view of the boundary conditions for $\del^{F\cup G}\psi_{L}^H$ on $\partial I_{n,L}$ with $n\in H$, we can rewrite
\begin{eqnarray*}
A_{L}^{1}(G,j)
&=&\sum_{n\in H}\int_{\partial I_{n,L}}\bigg(\sum_{\sharp F=j\atop F\cap G=\varnothing}\del^{F\cup G}(E(x-x_{n,L}))\bigg)\cdot\bigg(\sum_{\sharp F=j\atop F\cap G=\varnothing}\del^{F\cup G}\sigma^{H}_{L}\nu\bigg)\\
&=&\mathds1_{G=\varnothing,j=0}\sum_{n\in H}\int_{\partial I_{n,L}}E(x-x_{n,L})\cdot\sigma^{H}_{L}\nu.
\end{eqnarray*}
Summing over $G\subset\N$ with $\sharp G=k$, and using the energy identity~\eqref{eq:energy-ident}, we deduce
\begin{equation}\label{eq:bnd-T10}
L^{-d}\sum_{\sharp G=k}A_{L}^{1}(G,j)
\,=\,\mathds1_{k=j=0}\,S_{L}^H(0,0).
\end{equation}
We turn to the second term $A_{L}^2$ in~\eqref{eq:ener-ident00}.
Using the boundary conditions and the incompressibility constraints to smuggle in arbitrary constants in the different factors, as in the proof of~\eqref{eq:subtr-press-cond},
and then appealing to the trace estimates of Lemma~\ref{lem:trace}, we find
\begin{multline}\label{eq:1stbnd-A2}
|A_{L}^{2}(G,j)|
\,\lesssim\,\sum_{n\notin G\cup H}\bigg(\int_{I_{n,L}}\Big|\sum_{\sharp F=j\atop F\cap G=\varnothing}\D(\del^{F\cup G}\psi_{L}^H)\Big|^2\bigg)^\frac12\\
\times\bigg(\int_{I_{n,L}+\rho B}
\Big|\sum_{\sharp F=j-1\atop F\cap (G\cup\{n\})=\varnothing}\!\!\!\!\!\D(\del^{F\cup G}(\psi^{H\cup\{n\}}_{L}+Ex))\Big|^2
\bigg)^\frac12.
\end{multline}
Decomposing the second factor via the following identity, for all $n\notin F\cup G\cup H$ and~$F\cap G=\varnothing$,
\[\delta^{F\cup G}(\psi_{L}^{H\cup\{n\}}+Ex)\,=\,\mathds1_{G=F=\varnothing}Ex+\delta^{F\cup G}\psi_{L}^{H}+\delta^{F\cup G\cup\{n\}}\psi_{L}^{H},\]
summing over \mbox{$G\subset\N$} with~\mbox{$\sharp G=k$}, using the Cauchy--Schwarz inequality, and using the disjointness of the fattened inclusions~\mbox{$\{I_{n,L}+\rho B\}_n$}, we get
\begin{equation}\label{eq:bnd-T20}
L^{-d}\sum_{\sharp G=k}|A_{L}^{2}(G,j)|\,\lesssim\,\big(S_{L}^H(k,j)\big)^\frac12\Big(\mathds1_{k=0,j=1}+T_{L}^H(k,j-1)+S_{L}^H(k+1,j-1)\Big)^\frac12.
\end{equation}
We turn to the third contribution $A_{L}^3$ in~\eqref{eq:ener-ident00}.
Decomposing for~$n\in G\setminus H$ and $F\cap G=\varnothing$,
\[\del^{F\cup G}\psi_{L}^H=\del^{F\cup(G\setminus\{n\})}\psi_{L}^{H\cup\{n\}}-\del^{F\cup(G\setminus\{n\})}\psi_{L}^H,\]
and using the boundary conditions, we can rewrite
\begin{multline*}
A_{L,\ell}^{3}(G,j)
\,=\,\mathds1_{\sharp G=1,j=0}\sum_{n\in G \setminus H}\int_{\partial I_{n,L}}E(x-x_{n,L})\cdot \sigma^{H\cup\{n\}}_{L}\nu\\
+\sum_{n\in G\setminus H}\int_{\partial I_{n,L}}\bigg(\sum_{\sharp F=j\atop F\cap G=\varnothing}\del^{F\cup (G\setminus\{n\})}\psi_{L}^{H}\bigg)
\cdot\bigg(\sum_{\sharp F=j\atop F\cap G=\varnothing}\del^{F\cup(G\setminus\{n\})}\sigma^{H\cup\{n\}}_{L}\nu\bigg).
\end{multline*}
Using the boundary conditions and the incompressibility constraints to smuggle in arbitrary constants in the different factors, as in the proof of~\eqref{eq:subtr-press-cond},
and then appealing to the trace estimates of Lemma~\ref{lem:trace},
we find
\begin{multline}\label{eq:start-A3est}
|A_{L}^{3}(G,j)|
\,\lesssim\,\mathds1_{\sharp G=1,j=0}\sum_{n\in G \setminus H}\bigg(\int_{I_{n,L}+\rho B}|\!\D(\psi_{L}^{H\cup\{n\}})+E|^2\bigg)^\frac12\\
+\sum_{n\in G\setminus H}\bigg(\int_{I_{n,L}}\Big|\sum_{\sharp F=j\atop F\cap G=\varnothing}\D(\del^{F\cup (G\setminus\{n\})}\psi_{L}^{H})\Big|^2\bigg)^\frac12\\
\times\bigg(\int_{I_{n,L}+\rho B}\Big|\sum_{\sharp F=j\atop F\cap G=\varnothing}\D(\del^{F\cup(G\setminus\{n\})}(\psi^{H\cup\{n\}}_{L}+Ex))\Big|^2\bigg)^\frac12.
\end{multline}
Decomposing the first right-hand side term and the last factor of the second term via the following identities, for all $n\in G\setminus H$ and $F\cap G=\varnothing$,
\begin{gather}
\psi_{L}^{H\cup\{n\}}=\psi_{L}^{H}+\del^{\{n\}}\psi_{L}^{H},\label{eq:decomp-psimult}\\
\del^{F\cup(G\setminus\{n\})}(\psi_{L}^{H\cup\{n\}}+Ex)\,=\,\mathds1_{\sharp G=1,\sharp F=0}Ex+\del^{F\cup(G\setminus\{n\})}\psi_{L}^{H}+\del^{F\cup(G\setminus\{n\})}\del^{\{n\}}\psi_{L}^{H},\nonumber
\end{gather}
summing over $G\subset\N$ with $\sharp G=k$, and using the Cauchy--Schwarz inequality and the disjointness of the fattened inclusions $\{I_{n,L}+\rho B\}_n$, this becomes
\begin{multline}\label{eq:bnd-T30}
L^{-d}\sum_{\sharp G=k}|A_{L}^{3}(G,j)|
\,\lesssim\,\mathds1_{k=1,j=0}\Big(1+S_{L}^H(0,0)+S_{L}^H(1,0)\Big)^\frac12\\
+\big(T_{L}^H(k-1,j)\big)^\frac12
\Big(\mathds1_{k=1,j=0}+T_{L}^H(k-1,j)+S_{L}^{H}(k,j)\Big)^\frac12.
\end{multline}
Inserting this into~\eqref{eq:ener-ident00}, together with~\eqref{eq:bnd-T10} and~\eqref{eq:bnd-T20}, we obtain
\begin{multline*}
S_{L}^H(k,j)\,\lesssim\,\mathds1_{k=0,j=0}\,S_{L}^H(0,0)+\mathds1_{k=1,j=0}\big(1+S_{L}^H(0,0)+S_{L}^H(1,0)\big)^\frac12\\
+\big(S_{L}^H(k,j)\big)^\frac12\Big(\mathds1_{k=0,j=1}+T_{L}^H(k,j-1)+S_{L}^H(k+1,j-1)\Big)^\frac12\\
+\big(T_{L}^H(k-1,j)\big)^\frac12
\Big(\mathds1_{k=1,j=0}+T_{L}^H(k-1,j)+S_{L}^{H}(k,j)\Big)^\frac12.
\end{multline*}
Using Young's inequality to absorb the occurrences of $S_{L}^H(k,j)$ in the right-hand side into the left-hand side, we are led to
\begin{multline*}
S_{L}^H(k,j)\,\lesssim\,\mathds1_{k=0,j=0}\,S_{L}^H(0,0)+\mathds1_{k=0,j=1}+\mathds1_{k=1,j=0}\big(1+S_{L}^H(0,0)\big)^\frac12\\
+S_{L}^H(k+1,j-1)
+T_{L}^H(k,j-1)
+T_{L}^H(k-1,j),
\end{multline*}
and the claim~\eqref{eq:recurr-S} now follows in combination with~\eqref{eq:T<S} and~\eqref{eq:S00}.\qed

\subsubsection{Proof of Theorem~\ref{prop:apest-re} with optimal $\ell$-dependence}\label{sec:proof-prop:apest-re-2}
It remains to refine the proof of the previous section to capture the optimal dependence on the minimal distance~$\ell=\ell(\Pc)\gtrsim1$. The proof involves a new intricate induction argument that combines both $S_{L}^H$ and $T_{L}^H$,
and the optimal scaling is then captured by a suitable application of elliptic regularity via a duality argument.
By the result of the previous section, we may assume $\ell\gg1$, in which case the uniform separation assumption in~\ref{Gunif} holds in the stronger form of
\begin{equation}\label{eq:ell-separation+}
\tfrac12\inf_{n\ne m}\dist(I_{n,L},I_{m,L})\,\ge\,\tfrac12\ell-1\,\ge\,\tfrac14\ell\,\ge\,\rho,
\end{equation}
and the definition~\eqref{e.set-perio} of the periodization further ensures
\[\inf_n\dist(I_{n,L},\partial Q_L)\,\ge\,\ell-1\,\ge\,\tfrac12\ell\,\ge\,\rho.\]
We split the proof into four steps.

\medskip
\step1 Energy estimate for correctors: for all $H\subset\N$,
\begin{gather}
S_{L}^H(0,0)\,=\,\fint_{ Q_L}|\!\D(\psi_{L}^H)|^2\,\lesssim\,\ell^{-d},\label{eq:st1-nabphiH}\\
T_{L}^H(0,0)\,=\,L^{-d}\sum_{n\notin H}\int_{I_{n,L}+\rho B}|\!\D(\psi_{L}^H)|^2\,\lesssim\,\ell^{-2d}.\label{eq:st1-nabphiH+}
\end{gather}
By the $\ell$-separation property~\eqref{eq:ell-separation+}, the number of points of the process $\Pc_L$ in $ Q_L$ is bounded by $C(L/\ell)^d$, so that the first estimate~\eqref{eq:st1-nabphiH} follows from~\eqref{eq:det-bnd-phiFk00}. It remains to prove~\eqref{eq:st1-nabphiH+}.
For that purpose, first note that for $n\notin H$ the $\ell$-separation property~\eqref{eq:ell-separation+} entails that the following free steady Stokes equations hold in $I_{n,L}+\frac14\ell B\subset Q_L\setminus\Ic_L^H$,
\begin{equation}\label{eq:psiLH-full}
-\triangle \psi_{L}^H+\nabla\Sigma_{L}^H=0,\qquad\Div(\psi_{L}^H)=0,\qquad\text{in $I_{n,L}+\tfrac14\ell B$}.
\end{equation}
Elliptic regularity in form of Lemma~\ref{lem:regularity} then yields
\begin{equation}\label{eq:stand-reg-ut}
\int_{I_{n,L}+\rho B}|\!\D(\psi_{L}^H)|^2\,\lesssim\,\ell^{-d}\int_{I_{n,L}+\frac14\ell B}|\!\D(\psi_{L}^H)|^2.
\end{equation}
Summing this over $n\notin H$ and using the $\ell$-separation property~\eqref{eq:ell-separation+} in form of the disjointness of the fattened inclusions $\{I_{n,L}+\frac14\ell B\}_n$, we deduce
\begin{equation*}
\sum_{n\notin H}\int_{I_{n,L}+\rho B}|\!\D(\psi_{L}^H)|^2\,\lesssim\,\ell^{-d}\int_{ Q_L}|\!\D(\psi_{L}^H)|^2,
\end{equation*}
and the claim~\eqref{eq:st1-nabphiH+} now follows from~\eqref{eq:st1-nabphiH}.

\medskip

\step2 Recurrence relation for $S_{L}^H$: for all $H\subset\N$ and $k,j\ge0$,
\begin{multline}\label{induct-details}
S_{L}^H(k,j)\,\lesssim\,\mathds1_{k+j\le1}\ell^{-d}
+S_{L}^H(k+1,j-1)\\
+T_{L}^H(k,j)
+T_{L}^H(k,j-1)
+T_{L}^H(k-1,j).
\end{multline}
This provides a refined version of the recurrence relation~\eqref{eq:recurr-S}, which can indeed be recovered by appealing to~\eqref{eq:T<S} to bound $T_{L}^H$ in terms of $S_{L}^H$.
The present refined version will be combined with a recurrence relation for~$T_{L}^H$ in the next step.

\medskip\noindent
Let $G\subset\N$ be momentarily fixed.
As in the proof of~\eqref{eq:recurr-S}, the starting point is identity~\eqref{eq:ener-ident00}, that is,
\begin{equation}\label{eq:ener-ident00+}
2\int_{ Q_L}\Big|\sum_{\sharp F=j\atop F\cap G=\varnothing}\D(\del^{F\cup G}\psi_{L}^H)\Big|^2
\,=\,A_{L}^{1}(G,j)+A_{L}^{2}(G,j)+A_{L}^{3}(G,j),
\end{equation}
where we recall that $A_{L}^1,A_{L}^2,A_{L}^3$ are defined in~\eqref{eq:def-A123}.
We analyze these contributions separately. The first one satisfies~\eqref{eq:bnd-T10}, and thus, combined with the energy estimate~\eqref{eq:st1-nabphiH},
\begin{equation}\label{eq:bnd-T10+}
L^{-d}\sum_{\sharp G=k}A_{L}^{1}(G,j)
\,=\,\mathds1_{k=j=0}\,S_{L}^H(0,0)\,\lesssim\,\mathds1_{k=j=0}\ell^{-d}.
\end{equation}
It remains to prove refined versions of~\eqref{eq:bnd-T20} and~\eqref{eq:bnd-T30} for $A^2_{L}$ and $A^3_{L}$,
and we start with the contribution of $A_{L}^2$.
The starting point is the trace estimate~\eqref{eq:1stbnd-A2} used in the proof of~\eqref{eq:bnd-T20}, that is,
\begin{multline*}
|A_{L}^{2}(G,j)|
\,\lesssim\,\sum_{n\notin G\cup H}\bigg(\int_{I_{n,L}}\Big|\sum_{\sharp F=j\atop F\cap G=\varnothing}\D(\del^{F\cup G}\psi_{L}^H)\Big|^2\bigg)^\frac12\\
\times\bigg(\int_{I_{n,L}+\rho B}
\Big|\sum_{\sharp F=j-1\atop F\cap (G\cup\{n\})=\varnothing}\!\!\!\!\!\D(\del^{F\cup G}(\psi^{H\cup\{n\}}_{L}+Ex))\Big|^2
\bigg)^\frac12,
\end{multline*}
which we shall now analyze more carefully. Using identity~\eqref{eq:decomp-FGn-not} to decompose the first factor,
and decomposing the second factor via the following identity, for all $n\notin F\cup G\cup H$ and $F\cap G=\varnothing$,
\[\delta^{F\cup G}(\psi_{L}^{H\cup\{n\}}+Ex)\,=\,\mathds1_{G=F=\varnothing}Ex+\delta^{F\cup G}\psi_{L}^{H}+\delta^{F\cup G\cup\{n\}}\psi_{L}^{H},\]
we find
\begin{multline*}
|A_{L}^{2}(G,j)|
\,\lesssim\,\sum_{n\notin G\cup H}\bigg(\int_{I_{n,L}}\underbrace{\Big|\!\!\sum_{\sharp F=j\atop F\cap (G\cup\{n\})=\varnothing}\!\!\!\!\!\D(\del^{F\cup G}\psi_{L}^H)\Big|^2}_{\displaystyle \clubsuit}
+\underbrace{\Big|\!\sum_{\sharp F=j-1\atop F\cap (G\cup\{n\})=\varnothing}\!\!\!\!\!\D(\del^{F\cup G\cup\{n\}}\psi_{L}^H)\Big|^2}_{\displaystyle \diamondsuit}\bigg)^\frac12\\
\times\bigg(\mathds1_{\sharp G=0,j=1}+\int_{I_{n,L}+\rho B}\underbrace{\Big|\!\sum_{\sharp F=j-1\atop F\cap (G\cup\{n\})=\varnothing}\!\!\!\!\!\D(\del^{F\cup G}\psi^{H}_{L})\Big|^2}_{\displaystyle \spadesuit}
+\underbrace{\Big|\!\sum_{\sharp F=j-1\atop F\cap (G\cup\{n\})=\varnothing}\!\!\!\!\!\D(\del^{F\cup G\cup\{n\}}\psi^{H}_{L})\Big|^2}_{\displaystyle \diamondsuit}
\bigg)^\frac12.
\end{multline*}
Summing over $G\subset\N$ with $\sharp G=k$, using Young's inequality, using the separation property in form of the disjointness of the fattened inclusions~{$\{I_{n,L}+\rho B\}_n$}, using that the number of points of the process~$\Pc_L$ in $Q_L$ is bounded by $C(L/\ell)^d$,
and reorganizing the terms, we conclude
\begin{equation}\label{eq:bnd-T20+}
L^{-d}\sum_{\sharp G=k}|A_{L}^{2}(G,j)|
\,\lesssim\,
\ell^{-d}\mathds1_{k=0,j=1}
+S_{L}^H(k+1,j-1)
+T_{L}^H(k,j)
+T_{L}^H(k,j-1),
\end{equation}
where the last three right-hand side terms come from $\diamondsuit$, $\clubsuit$, $\spadesuit$, respectively.

\medskip\noindent
We turn to the contribution of $A_{L}^3$. The starting point is the trace estimate~\eqref{eq:start-A3est} used in the proof of~\eqref{eq:bnd-T30}.
Further using the decomposition~\eqref{eq:decomp-psimult},
this estimate becomes
\begin{multline}\label{eq:bnd-T3-pre0}
|A_{L}^{3}(G,j)|
\,\lesssim\,\mathds1_{\sharp G=1,j=0}\sum_{n\in G \setminus H}\bigg(1+\int_{I_{n,L}+\rho B}|\!\D(\psi_{L}^{H})|^2+|\!\D(\del^{\{n\}}\psi_{L}^{H})|^2\bigg)^\frac12\\
+\sum_{n\in G\setminus H}\bigg(\int_{I_{n,L}}\Big|\sum_{\sharp F=j\atop F\cap G=\varnothing}\D(\del^{F\cup (G\setminus\{n\})}\psi_{L}^{H})\Big|^2\bigg)^\frac12\\
\times\bigg(\int_{I_{n,L}+\rho B}\Big|\sum_{\sharp F=j\atop F\cap G=\varnothing}\D(\del^{F\cup(G\setminus\{n\})}\psi^{H}_{L})\Big|^2
+\Big|\sum_{\sharp F=j\atop F\cap G=\varnothing}\D(\del^{F\cup G}\psi^{H}_{L})\Big|^2\bigg)^\frac12.
\end{multline}
Summing the first right-hand side term over $G\subset\N$ with $\sharp G=1$, using the Cauchy--Schwarz inequality, recalling that the number of points of the process~$\Pc_L$ in~$ Q_L$ is bounded by $C(L/\ell)^d$, and appealing to the energy estimate~\eqref{eq:st1-nabphiH+}, we find
\begin{eqnarray*}
\lefteqn{\sum_{n\notin H}\bigg(1+\int_{I_{n,L}+\rho B}|\!\D(\psi_{L}^{H})|^2+|\!\D(\del^{\{n\}}\psi_{L}^{H})|^2\bigg)^\frac12}\\
&\lesssim&L^\frac d2\ell^{-\frac d2}\bigg(L^d\ell^{-d}+\sum_{n\notin H}\int_{I_{n,L}+\rho B}|\!\D(\psi_{L}^{H})|^2+\sum_{n\notin H}\int_{ Q_L}|\!\D(\del^{\{n\}}\psi_{L}^{H})|^2\bigg)^\frac12\\
&\lesssim&L^d\Big(\ell^{-2d}+\ell^{-d}S_{L}^H(1,0)\Big)^\frac12.
\end{eqnarray*}
Now
summing~\eqref{eq:bnd-T3-pre0} over $G\subset\N$ with $\sharp G=k$, inserting the above estimate for the first right-hand side term, and using the Cauchy--Schwarz inequality,
we find
\begin{multline}\label{eq:bnd-T30+}
L^{-d}\sum_{\sharp G=k}|A_{L}^{3}(G,j)|
\,\lesssim\,\mathds1_{k=1,j=0}\Big(\ell^{-2d}+\ell^{-d}S_{L}^H(1,0)\Big)^\frac12\\
+\Big(T_{L}^H(k-1,j)\Big)^\frac12
\Big(T_{L}^H(k-1,j)
+S_{L}^H(k,j)\Big)^\frac12.
\end{multline}
Inserting this into~\eqref{eq:ener-ident00+}, together with~\eqref{eq:bnd-T10+} and~\eqref{eq:bnd-T20+}, we conclude
\begin{multline*}
S_{L}^H(k,j)\,\lesssim\,
\mathds1_{k=0,j\le 1}\ell^{-d}
+\mathds1_{k=1,j=0}\Big(\ell^{-2d}+\ell^{-d}S_{L}^H(1,0)\Big)^\frac12
+S_{L}^H(k+1,j-1)\\
+T_{L}^H(k,j)
+T_{L}^H(k,j-1)
+\Big(T_{L}^H(k-1,j)\Big)^\frac12
\Big(T_{L}^H(k-1,j)
+S_{L}^H(k,j)\Big)^\frac12.
\end{multline*}
Using Young's inequality to absorb the occurrence of~$S_{L}^H(k,j)$ in the right-hand side into the left-hand side, the claim~\eqref{induct-details} follows.

\medskip
\step3 Recurrence relation for $T_{L}^H$: for all $H\subset\N$ and $k,j\ge0$,
\begin{multline}\label{eq:recurr-T}
T_{L}^H(k,j)\,\lesssim\,\mathds1_{k=j=0}\ell^{-2d}+\mathds1_{k+j=1}\ell^{-3d}\\
+\ell^{-2d}\Big(T_L^H(k-1,j)+T_L^H(k,j-1)+T_L^H(k+1,j-2)\\
+S_L^H(k,j)+S_L^H(k+1,j-1)+S_L^H(k+2,j-2)\Big).
\end{multline}
Let $k,j\ge0$ be fixed with $k+j\ge1$ (the case $k=j=0$ already follows from~\eqref{eq:st1-nabphiH+}).
For~$G\subset\N$ and $n\notin G$, the $\ell$-separation property~\eqref{eq:ell-separation+} implies that the following free steady Stokes equations hold in $I_{n,L}+\frac14\ell B$,
\begin{gather*}
-\triangle\bigg(\sum_{\sharp F=j\atop F\cap (G\cup\{n\})=\varnothing}\!\!\!\!\del^{F\cup G}\psi_{L}^{H}\bigg)+\nabla\bigg(\sum_{\sharp F=j\atop F\cap (G\cup\{n\})=\varnothing}\!\!\!\!\del^{F\cup G}(\Sigma_{L}^{H}\mathds1_{ Q_L\setminus\Ic_L^{H}})\bigg)\,=\,0,\\
\Div\bigg(\!\!\!\sum_{\sharp F=j\atop F\cap (G\cup\{n\})=\varnothing}\!\!\!\!\del^{F\cup G}\psi_{L}^{H}\bigg)=0,\quad\qquad\text{in $I_{n,L}+\tfrac14\ell B$},
\end{gather*}
so that elliptic regularity in form of Lemma~\ref{lem:regularity} yields
\begin{equation}\label{eq:stand-reg-ut+}
T_{L}^H(k,j)\,\lesssim\,L^{-d}\ell^{-d}\sum_{\sharp G=k}\sum_{n\notin G\cup H}\int_{I_{n,L}+\frac14\ell B}\Big|\!\!\sum_{\sharp F=j\atop F\cap (G\cup\{n\})=\varnothing}\!\!\!\!\D(\del^{F\cup G}\psi_{L}^{H})\Big|^2.
\end{equation}
In order to analyze the right-hand side, we shall appeal to elliptic regularity a second time, now via a duality argument.
For that purpose, we use the following dual representation
\begin{multline}\label{eq:bnd-RLp-4-1}
\sum_{\sharp G=k}\sum_{n\notin G\cup H}\int_{I_{n,L}+\frac14\ell B}\Big|\sum_{\sharp F=j\atop F\cap(G\cup\{n\})=\varnothing}\D(\del^{F\cup G}\psi_{L}^{H})\Big|^2\\
\hspace{-2cm}\,=\,\sup_{\alpha,h}\bigg\{I(\alpha,h)^2\,:\,\sum_{\sharp G=k}\sum_{n\notin G\cup H}|\alpha_{n,G}|^2=1,\\
\int_{ Q_L}|h_{n,G}|^2=1,~~\supp h_{n,G}\subset I_{n,L}+\tfrac14\ell B,~~\forall n,G\bigg\},
\end{multline}
where for any $\alpha=\{\alpha_{n,G}\}_{n,G}\subset\R$ and $h=\{h_{n,G}\}_{n,G}\subset\Ld^2( Q_L)^{d\times d}_\Sym$ we have set for abbreviation
\begin{equation}\label{eq:def-Ialphh}
I(\alpha,h)\,:=\,\sum_{\sharp G=k}\sum_{n\notin G\cup H}\alpha_{n,G}\int_{ Q_L}h_{n,G}:\bigg(\sum_{\sharp F=j\atop F\cap(G\cup\{n\})=\varnothing}\D(\del^{F\cup G}\psi_{L}^{H})\bigg).
\end{equation}
Let $\alpha=\{\alpha_{n,G}\}_{n,G}\subset\R$ and $h=\{h_{n,G}\}_{n,G}\subset\Ld^2( Q_L)^{d\times d}_\Sym$ be momentarily fixed, satisfying the constraints in~\eqref{eq:bnd-RLp-4-1},
\begin{equation}\label{eq:constr-alph-h}
\sum_{\sharp G=k}\sum_{n\notin G\cup H}|\alpha_{n,G}|^2=1,\quad
\int_{ Q_L}|h_{n,G}|^2=1,\quad\supp h_{n,G}\subset I_{n,L}+\tfrac14\ell B,\quad\forall n,G.
\end{equation}
For $n\notin G\cup H$, consider the periodic solution $w_{h,n,G}$ of the following auxiliary steady Stokes problem,
\begin{equation}\label{eq:aux-wnh}
\left\{\begin{array}{ll}
-\triangle w_{h,n,G}+\nabla P_{h,n,G}=\Div( h_{n,G}),&\text{in $ Q_L\setminus\Ic_{L}^H$},\\
\Div( w_{h,n,G})=0,&\text{in $ Q_L\setminus\Ic_{L}^H$},\\
\D(w_{h,n,G})=0,&\text{in $\Ic_{L}^H$},\\
\int_{\partial I_{m,L}}\sigma(w_{h,n,G},P_{h,n,G})\nu=0,&\forall m\in H,\\
\int_{\partial I_{m,L}}\Theta(x-x_{m,L})
\cdot\sigma(w_{h,n,G},P_{h,n,G})\nu=0,&\forall \Theta\in\Md^\Skew,~\forall m\in H.
\end{array}\right.
\end{equation}
Note that this problem is well-posed since $h_{n,G}$ is supported in $I_{n,L}+\frac14\ell B\subset Q_L\setminus\Ic_{L}^H$.
The same argument as for~\eqref{eq:reform} shows that $w_{h,n,G}$ satisfies in $ Q_L$,
\begin{equation*}
-\triangle w_{h,n,G}+\nabla\big(P_{h,n,G}\mathds1_{ Q_L\setminus\Ic_{L}^H}\big)\,=\,\Div( h_{n,G})-\sum_{m\in H}\delta_{\partial I_{m,L}}\sigma(w_{h,n,G},P_{h,n,G})\nu,
\end{equation*}
and, appealing to~\eqref{eq:diff-st} and changing summation variables, we also find in $ Q_L$,
\begingroup\allowdisplaybreaks
\begin{multline*}
-\triangle\bigg(\sum_{\sharp F=j\atop F\cap(G\cup\{n\})=\varnothing}\del^{F\cup G}\psi_{L}^{H}\bigg)+\nabla\bigg(\sum_{\sharp F=j\atop F\cap(G\cup\{n\})=\varnothing}\del^{F\cup G}(\Sigma_{L}^{H}\mathds1_{ Q_L\setminus\Ic_L^{H}})\bigg)\\
\,=\,
-\sum_{m\in H}\delta_{\partial I_{m,L}}\bigg(\sum_{\sharp F=j\atop F\cap(G\cup\{n\})=\varnothing}\del^{F\cup G}\sigma^{H}_{L}\nu\bigg)\\
-\sum_{m\in G\setminus H}\delta_{\partial I_{m,L}}\bigg(\sum_{\sharp F=j\atop F\cap(G\cup\{n\})=\varnothing}\del^{F\cup (G\setminus\{m\})}\sigma^{H\cup\{m\}}_{L}\nu\bigg)\\
-\sum_{m\notin G\cup H\cup\{n\}}\delta_{\partial I_{m,L}}\bigg(\sum_{\sharp F=j-1\atop F\cap(G\cup\{n,m\})=\varnothing}\del^{F\cup G}\sigma^{H\cup\{m\}}_{L}\nu\bigg).
\end{multline*}
\endgroup
Testing the second of these two equations with the solution of the first one, and vice versa, and using the boundary conditions, we can reformulate $I(\alpha,h)$ in~\eqref{eq:def-Ialphh} as follows, provided $k+j\ge1$,
\begin{eqnarray}
I(\alpha,h)&=&-\sum_{\sharp G=k}\sum_{n\notin G\cup H}2\alpha_{n,G}\int_{ Q_L}\D( w_{h,n,G}):\bigg(\sum_{\sharp F=j\atop F\cap(G\cup\{n\})=\varnothing}\D(\del^{F\cup G}\psi_{L}^{H})\bigg)\nonumber\\
&=&I_1(\alpha,h)+I_2(\alpha,h),\label{eq:bnd-RLp-4-2}
\end{eqnarray}
where we have set
\begin{eqnarray*}
I_1(\alpha,h)\!\!&:=&\!\!\sum_{\sharp G=k}\sum_{n\notin G\cup H}\alpha_{n,G}\sum_{m\in G\setminus H}\int_{\partial I_{m,L}}w_{h,n,G}\cdot\bigg(\sum_{\sharp F=j\atop F\cap(G\cup\{n\})=\varnothing}\del^{F\cup (G\setminus\{m\})}\sigma^{H\cup\{m\}}_{L}\nu\bigg),
\\
I_2(\alpha,h)\!\!&:=&\!\!\sum_{\sharp G=k}\sum_{n\notin G\cup H}\alpha_{n,G}\sum_{m\notin G\cup H\cup\{n\}}\int_{\partial I_{m,L}}w_{h,n,G}\cdot\bigg(\sum_{\sharp F=j-1\atop F\cap(G\cup\{n,m\})=\varnothing}\del^{F\cup G}\sigma^{H\cup\{m\}}_{L}\nu\bigg).
\end{eqnarray*}
We only treat  $I_1(\alpha,h)$ in detail since the argument for $I_2(\alpha,h)$ is similar.  Appealing to identity~\eqref{eq:decomp-FGn-not}, we can rewrite
\begin{multline*}
I_1(\alpha,h)\,=\,\sum_{\sharp G=k}\sum_{m\in G\setminus H}\int_{\partial I_{m,L}}\bigg(\sum_{n\notin G\cup H}\alpha_{n,G}w_{h,n,G}\bigg)\cdot\bigg(\sum_{\sharp F=j\atop F\cap G=\varnothing}\del^{F\cup (G\setminus\{m\})}\sigma^{H\cup\{m\}}_{L}\nu\bigg)\\
-\sum_{\sharp G=k}\sum_{n\notin G\cup H}\alpha_{n,G}\sum_{m\in G\setminus H}\int_{\partial I_{m,L}}w_{h,n,G}\cdot\bigg(\sum_{\sharp F=j-1\atop F\cap(G\cup\{n\})=\varnothing}\del^{F\cup (G\setminus\{m\})\cup\{n\}}\sigma^{H\cup\{m\}}_{L}\nu\bigg),
\end{multline*}
or equivalently, after further changing summation variables in the second term,
\begin{multline*}
I_1(\alpha,h)\,=\,\sum_{\sharp G=k}\sum_{m\in G\setminus H}\int_{\partial I_{m,L}}\bigg(\sum_{n\notin G\cup H}\alpha_{n,G}w_{h,n,G}\bigg)\cdot\bigg(\sum_{\sharp F=j\atop F\cap G=\varnothing}\del^{F\cup (G\setminus\{m\})}\sigma^{H\cup\{m\}}_{L}\nu\bigg)\\
-\sum_{\sharp G=k+1}\sum_{m\in G\setminus H}\int_{\partial I_{m,L}}\!\!\bigg(\sum_{n\in G\setminus(H\cup\{m\})}\alpha_{n,G\setminus\{n\}}w_{h,n,G\setminus\{n\}}\bigg)\\
\cdot\bigg(\!\sum_{\sharp F=j-1\atop F\cap G=\varnothing}\del^{F\cup (G\setminus\{m\})}\sigma^{H\cup\{m\}}_{L}\nu\!\bigg).
\end{multline*}
Now using the boundary conditions and the incompressibility constraints to add arbitrary constants to the different factors, as in the proof of~\eqref{eq:subtr-press-cond}, and appealing to the trace estimates of Lemma~\ref{lem:trace}, we are led to
\begin{equation}\label{eq:I1-I11I12}
|I_1(\alpha,h)|\,\lesssim\,I_{1,1}(\alpha,h)+I_{1,2}(\alpha,h),
\end{equation}
where we have set
\begin{eqnarray*}
I_{1,1}(\alpha,h)&:=&\sum_{\sharp G=k}\sum_{m\in G\setminus H}\bigg(\int_{I_{m,L}}\Big|\sum_{n\notin G\cup H}\alpha_{n,G}\D( w_{h,n,G})\Big|^2\bigg)^\frac12\\
&&\hspace{2cm}\times\bigg(\int_{I_{m,L}+\rho B}\Big|\sum_{\sharp F=j\atop F\cap G=\varnothing}\D(\del^{F\cup (G\setminus\{m\})}(\psi^{H\cup \{m\}}_{L}+Ex))\Big|^2\bigg)^\frac12,\\
I_{1,2}(\alpha,h)&:=&\sum_{\sharp G=k+1}\sum_{m\in G\setminus H}\bigg(\int_{I_{m,L}}\Big|\sum_{n\in G\setminus(H\cup\{m\})}\alpha_{n,G\setminus\{n\}}\D( w_{h,n,G\setminus\{n\}})\Big|^2\bigg)^\frac12\\
&&\hspace{2cm}\times\bigg(\int_{I_{m,L}+\rho B}\Big|\sum_{\sharp F=j-1\atop F\cap G=\varnothing}\D(\del^{F\cup (G\setminus\{m\})}(\psi^{H\cup \{m\}}_{L}+Ex))\Big|^2\bigg)^\frac12.
\end{eqnarray*}
We start by estimating $I_{1,1}(\alpha,h)$. Decomposing the second factor via the following identity, for all $m\in G\setminus H$ and $F\cap G=\varnothing$,
\[\del^{F\cup (G\setminus\{m\})}(\psi^{H\cup \{m\}}_{L}+Ex)\,=\,\mathds1_{\sharp G=1,\sharp F=0}Ex+\del^{F\cup (G\setminus\{m\})}\psi^{H}_{L}+\del^{F\cup G}\psi^{H}_{L},\]
noting that the $\ell$-separation property~\eqref{eq:ell-separation+} entails that $\sum_{n\notin G\cup H} \alpha_{n,G}w_{h,n,G}$ satisfies the free steady Stokes equations in $I_{m,L}+\frac14\ell B$ for all $m\notin H$,
and appealing to  elliptic regularity in form of Lemma~\ref{lem:regularity},
we find
\begin{multline}\label{eq:pre-est-I1,1}
I_{1,1}(\alpha,h)\,\lesssim\,\sum_{\sharp G=k}\sum_{m\in G\setminus H}\bigg(\ell^{-d}\int_{I_{m,L}+\frac14\ell B}\Big|\sum_{n\notin G\cup H}\alpha_{n,G}\D( w_{h,n,G})\Big|^2\bigg)^\frac12\\
\times\bigg(\mathds1_{k=1,j=0}+\int_{I_{m,L}+\rho B}\Big|\sum_{\sharp F=j\atop F\cap G=\varnothing}\D(\del^{F\cup (G\setminus\{m\})}\psi^{H}_{L})\Big|^2\\
+\int_{I_{m,L}+\rho B}\Big|\sum_{\sharp F=j\atop F\cap G=\varnothing}\D(\del^{F\cup G}\psi^{H}_{L})\Big|^2\bigg)^\frac12.
\end{multline}
Next, the energy estimate for~\eqref{eq:aux-wnh} yields
\begin{equation*}
\sum_{\sharp G=k}\int_{ Q_L}\Big|\sum_{n\notin G\cup H}\alpha_{n,G}\D( w_{h,n,G})\Big|^2\,\lesssim\,\sum_{\sharp G=k}\int_{ Q_L}\Big|\sum_{n\notin G\cup H}\alpha_{n,G}h_{n,G}\Big|^2,
\end{equation*}
and thus, using the constraints~\eqref{eq:constr-alph-h} on $\alpha,h$, and noting that the $\ell$-separation property~\eqref{eq:ell-separation+} entails that the $h_{n,G}$'s have disjoint supports for different $n$'s,
\begin{eqnarray*}
\sum_{\sharp G=k}\int_{ Q_L}\Big|\sum_{n\notin G\cup H}\alpha_{n,G}\D( w_{h,n,G})\Big|^2
\,\lesssim\,\sum_{\sharp G=k}\sum_{n\notin G\cup H}|\alpha_{n,G}|^2\int_{ Q_L}|h_{n,G}|^2~=~1.
\end{eqnarray*}
Inserting this into~\eqref{eq:pre-est-I1,1}, using the Cauchy--Schwarz inequality, the $\ell$-separation property~\eqref{eq:ell-separation+} in form of the disjointness of the fattened inclusions $\{I_{m,L}+\frac14\ell B\}_m$, using that the number of points of the process $\Pc_L$ in $Q_L$ is bounded by $C(L/\ell)^d$, and changing summation variables, we deduce
\begin{equation}\label{eq:bnd-I11}
L^{-d}I_{1,1}(\alpha,h)^2\,\lesssim\,\mathds1_{k=1,j=0}\ell^{-2d}+\ell^{-d}\Big(T_L^H(k-1,j)
+S_L^H(k,j)\Big).
\end{equation}
We turn to a corresponding estimation for $I_{1,2}(\alpha,h)$. For that purpose, we first note that the disjointness of fattened inclusions $\{I_{m,L}+\frac14\ell B\}_m$ allows to decompose
\begin{multline*}
\sum_{\sharp G=k+1}\sum_{m\in G\setminus H}\int_{I_{m,L}+\frac14\ell B}\Big|\sum_{n\in G\setminus(H\cup\{m\})}\alpha_{n,G\setminus\{n\}}\D( w_{h,n,G\setminus\{n\}})\Big|^2\\
\,\lesssim\,\sum_{\sharp G=k+1}\int_{ Q_L}\Big|\sum_{n\in G\setminus H}\alpha_{n,G\setminus\{n\}}\D( w_{h,n,G\setminus\{n\}})\Big|^2\\
+\sum_{\sharp G=k+1}\sum_{m\in G\setminus H}|\alpha_{m,G\setminus\{m\}}|^2\int_{ Q_L}|\!\D( w_{h,m,G\setminus\{m\}})|^2,
\end{multline*}
and the energy estimate for~\eqref{eq:aux-wnh} then yields
\begin{multline*}
\sum_{\sharp G=k+1}\sum_{m\in G\setminus H}\int_{I_{m,L}+\frac14\ell B}\Big|\sum_{n\in G\setminus(H\cup\{m\})}\alpha_{n,G\setminus\{n\}}\D( w_{h,n,G\setminus\{n\}})\Big|^2\\
\,\lesssim\,\sum_{\sharp G=k+1}\int_{ Q_L}\Big|\sum_{n\in G\setminus H}\alpha_{n,G\setminus\{n\}}h_{n,G\setminus\{n\}}\Big|^2
+\sum_{\sharp G=k+1}\sum_{m\in G\setminus H}|\alpha_{m,G\setminus\{m\}}|^2\int_{ Q_L}|h_{m,G\setminus\{m\}}|^2,
\end{multline*}
from which we deduce, using the constraints~\eqref{eq:constr-alph-h} on $\alpha,h$ and recalling that the $h_{n,G}$'s have disjoint supports for different $n$'s,
\begin{multline*}
\sum_{\sharp G=k+1}\sum_{m\in G\setminus H}\int_{I_{m,L}+\frac14\ell B}\Big|\sum_{n\in G\setminus(H\cup\{m\})}\alpha_{n,G\setminus\{n\}}\D( w_{h,n,G\setminus\{n\}})\Big|^2\\
\,\lesssim\,\sum_{\sharp G=k+1}\sum_{n\in G\setminus H}|\alpha_{n,G\setminus\{n\}}|^2
\,=\,\sum_{\sharp G=k}\sum_{n\notin G\cup H}|\alpha_{n,G}|^2~=~1.
\end{multline*}
With this estimate at hand, we may now repeat the same argument as for~\eqref{eq:bnd-I11} and we obtain
\begin{equation}\label{eq:bnd-I12}
L^{-d}I_{1,2}(\alpha,h)^2\,\lesssim\,
\mathds1_{k=0,j=1}\ell^{-2d}
+\ell^{-d}\Big(T_L^H(k,j-1)+S_L^H(k+1,j-1)\Big).
\end{equation}
Likewise, the second term $I_2(\alpha,h)$ in~\eqref{eq:bnd-RLp-4-2} is easily estimated as follows,
\begin{multline}\label{eq:bnd-RLp-4-4}
L^{-d}I_2(\alpha,h)^2\,\lesssim\,\mathds1_{k=0,j=1}\ell^{-2d}
+\ell^{-d}\Big(T_L^H(k,j-1)+T_L^H(k+1,j-2)\\
+S_L^H(k+1,j-1)+S_L^H(k+2,j-2)\Big).
\end{multline}
Combining these different estimates, that is, \eqref{eq:I1-I11I12}, \eqref{eq:bnd-I11}, \eqref{eq:bnd-I12}, and~\eqref{eq:bnd-RLp-4-4}, inserting them into~\eqref{eq:bnd-RLp-4-1}, and recalling~\eqref{eq:stand-reg-ut+}, the claim~\eqref{eq:recurr-T} follows.

\medskip
\step4 Conclusion.\\
By a direct double induction argument, starting with~\eqref{eq:st1-nabphiH+}, the recurrence relation~\eqref{eq:recurr-T} entails,
for all $H\subset\N$ and $k,j\ge0$,
\begin{multline}\label{eq:bnd-RLp-4}
T_{L}^H(k,j)\,\lesssim\,
\mathds1_{k=j=0}\ell^{-2d}+\mathds1_{k+j\ge1}(C\ell^{-d})^{2(k+j)+1}\\
+\sum_{l=0}^{k+j-1}(C\ell^{-d})^{2(l+1)}\sum_{i=0}^{2(l+1)}S_{L}^H(k+i-l,j-i).
\end{multline}
Combined with the other recurrence relation~\eqref{induct-details}, this yields
\begin{multline*}
S_{L}^H(k,j)\,\lesssim\,\mathds1_{k=j=0}\ell^{-d}+\mathds1_{k+j\ge1}(C\ell^{-d})^{2(k+j)-1}
+S_{L}^H(k+1,j-1)\\
+\sum_{l=0}^{k+j-1}(C\ell^{-d})^{2(l+1)}\sum_{i=0}^{2l+2}S_{L}^H(k+i-l,j-i)\\
+\sum_{l=0}^{k+j-2}(C\ell^{-d})^{2(l+1)}\sum_{i=0}^{2l+3}S_{L}^H(k+i-l-1,j-i).
\end{multline*}
For $\ell\gg1$, occurrences of $S_{L}^H(k,j)$ in the right-hand side can be absorbed into the left-hand side, and we are then left with
\begin{multline*}
S_{L}^H(k,j)\,\lesssim\,\mathds1_{k=j=0}\ell^{-d}+\mathds1_{k+j\ge1}(C\ell^{-d})^{2(k+j)-1}
+S_{L}^H(k+1,j-1)+S_{L}^H(k+2,j-2)\\
+\sum_{l=1}^{k+j-1}(C\ell^{-d})^{2(l+1)}\sum_{i=0}^{2l+2}S_{L}^H(k+i-l,j-i)\\
+\sum_{l=0}^{k+j-2}(C\ell^{-d})^{2(l+1)}\sum_{i=0}^{2l+3}S_{L}^H(k+i-l-1,j-i).
\end{multline*}
By a double induction argument, this relation leads to the conclusion
\[S_{L}^{H}(k,j)\lesssim\left\{\begin{array}{lll}
\ell^{-d}&:&k=j=0,\\
(C\ell^{-d})^{2(k+j)-1}&:&k,j\ge0,~~k+j\ge1.
\end{array}\right.\]
Combining this with~\eqref{eq:bnd-RLp-4} further yields
\[T_{L}^{H}(k,j)\lesssim\left\{\begin{array}{lll}
\ell^{-2d}&:&k=j=0,\\
(C\ell^{-d})^{2(k+j)+1}&:&k,j\ge0,~~k+j\ge1.
\end{array}\right.\]
Recalling that the case $\ell\simeq1$ was already covered in~\eqref{eq:bnd-ap-S0}, this finally concludes the proof of Theorem~\ref{prop:apest-re}.
\qed

\subsection{Uniform cluster estimates}\label{sec:proof-thm:bounds}
This section is devoted to the proof of Theorem~\ref{thm:bounds}(i), based on the interpolating $\ell^1-\ell^2$ energy estimates of Theorem~\ref{prop:apest-re}.
We focus on the bound~\eqref{e.unif-bd-Bb-R} on the remainder $R_{L}^{k+1}$, while the corresponding bounds on cluster coefficients follow along the same lines.
For $k\ge1$, after changing summation variables, the definition~\eqref{eq:form-clust-rem} of the remainder can be written as
\begin{equation*}
E:R_{L}^{k+1}E
\,=\,\tfrac12L^{-d}\sum_n\,\expecM{\int_{\partial I_{n,L}}\Big(\sum_{\sharp F=k\atop n\notin F}\del^{F}\psi_{L}^\varnothing\Big)\cdot\sigma_{L}\nu}.
\end{equation*}
Using the boundary conditions and the incompressibility constraint
to smuggle in arbitrary constants in the different factors, as in the proof of~\eqref{eq:subtr-press-cond},
using the Cauchy--Schwarz inequality, and then appealing to the trace estimates of Lemma~\ref{lem:trace}, we find
\begin{equation*}
|E:R_{L}^{k+1}E|
\,\lesssim\,L^{-d}\,\expecM{\sum_{n}\int_{I_{n,L}}\Big|\sum_{\sharp F=k\atop n\notin F}\D(\del^{F}\psi_{L}^\varnothing)\Big|^2}^\frac12\expecM{\sum_{n}\int_{I_{n,L}+\rho B}|\!\D(\psi_L)+E|^2}^\frac12.
\end{equation*}
Recalling the disjointness of the fattened inclusions $\{I_{n,L}+\rho B\}_n$, recognizing the definition of $S_L$ and $T_L^\varnothing$, and using that in case $\ell\gg1$ the $\ell$-separation property~\eqref{eq:ell-separation+} entails that the number of points of the process $\Pc_L$ in $ Q_L$ is bounded by $C(L/\ell)^d$, we are led to
\begin{equation*}
|R^{k+1}_{L}|
~\lesssim~\expecm{T_{L}^\varnothing(0,k)}^\frac12\Big(\ell^{-d}+\expec{S_L(0,0)}\Big)^\frac12,
\end{equation*}
and the conclusion~\eqref{e.unif-bd-Bb-R} then follows from Theorem~\ref{prop:apest-re}.
\qed

\subsection{Convergence of finite-volume approximations}\label{sec:proof-eq:form-Bj}
This section is devoted to the proof of the convergence result~\eqref{eq:form-Bj} in Theorem~\ref{thm:bounds}.
The idea is as follows: if $\{\Bb_L^j\}_j$ could be viewed as derivatives of $\Bb_L$ in some sense, then the convergence of $\Bb_L$ as $L\uparrow\infty$ and the uniform bounds on $\{\Bb_L^j\}_j$ would ensure the convergence of latter.
We split the proof into two steps, first appealing to a probabilistic argument to view $\{\Bb_L^j\}_j$ as true derivatives, and then concluding by means of standard real analysis.

\medskip
\step1 Dilution by random deletion.\\
Given $p\in[0,1]$, we consider a sequence $\{b_n^{(p)}\}_n$ of iid Bernoulli variables, independent of~$\Pc,\Ic$, with parameter
\[p\,=\,\prm{b_n^{(p)}=1},\]
and we define the corresponding decimated process
\begin{equation}\label{eq:p-decim-def}
\Pc^{(p)}\,:=\,\{x_n\}_{n\in N^{(p)}},\qquad\Ic^{(p)}\,:=\,\textstyle\bigcup_{n\in N^{(p)}}I_n,\qquad\text{where}~~N^{(p)}:=\{n:b_n^{(p)}=1\}.
\end{equation}
Similarly, in the periodized setting~\eqref{e.set-perio}, we set
\begin{equation*}
\Pc_{L}^{(p)}\,:=\,\{x_{n,L}\}_{n\in N^{(p)}},\qquad
\Ic_{L}^{(p)}\,:=\,\textstyle\bigcup_{n\in \Nb^{(p)}}I_{n,L}.
\end{equation*}
By definition, the decimated processes $\Pc^{(p)},\Ic^{(p)}$ satisfy~\ref{H0} and~\ref{Gunif} whenever $\Pc,\Ic$ do, and their periodized versions $\Pc_L^{(p)},\Ic_L^{(p)}$ satisfy the same separation and stabilization properties as $\Pc_L,\Ic_L$ in Section~\ref{sec:finitevol-approx}.
We use the notation $\Bb^{(p)},\Bb_L^{(p)},\{\Bb_L^{(p),j}\}_j,\{R_L^{(p),k+1}\}_k$ for the effective viscosity, its periodized approximation, cluster coefficients, and cluster remainders associated with decimated processes $\Ic^{(p)},\Ic_L^{(p)}$.
As a corollary of \cite[Theorem~1]{DG-19}, as in~\eqref{eq:conv-BL-B-hom}, we have for all $p\in[0,1]$,
\begin{equation}\label{eq:convL-p}
\lim_{L \uparrow \infty} \Bb_{L}^{(p)}\,=\,\Bb^{(p)}.
\end{equation}
In the next two substeps, we shall further prove for all $k,j\ge1$,
\begin{eqnarray}
\Bb_L^{(p),j}&=&p^j\Bb_L^j,\label{eq:BLpj-porder}\\
|R_L^{(p),k+1}|&\le&(Cp\ell^{-d})^{k+1}.\label{eq:bnd-RLpk}
\end{eqnarray}
Combined with the cluster expansion~\eqref{eq:preconcl-exp}, this yields for all $L$ and $k\ge1$,
\begin{equation}\label{eq:pre-concl-anal}
\bigg|\Bb_{L}^{(p)}-\Big(\Id+\sum_{j=1}^k\tfrac{p^j}{j!}\Bb_{L}^{j}\Big)\bigg|\,\le\,(Cp \ell^{-d})^{k+1},
\end{equation}
which entails that $\Bb_L^j$ can be seen as the $j$th derivative of the map $p\mapsto \Bb_L^{(p)}$ at $p=0$.
(Note that this estimate further shows that this map is real-analytic; we shall later come back to this observation as part of Theorem~\ref{th:analytic}.)

\medskip
\substep{1.1} Proof of~\eqref{eq:BLpj-porder}.\\
By definition of decimated processes, the cluster formula~\eqref{eq:form-clust} for $\Bb_L^{(p),j}$ can be written as
\[E:\Bb_L^{(p),j}E\,=\,j!\sum_{\sharp F=j}\expecM{\mathds1_{F\subset N^{(p)}} \fint_{Q_L}\delta^F\big(|\D(\psi_L^\varnothing)+E|^2\big)}.\]
As $N^{(p)}$ is independent of $\Ic$ and as $\prm{F\subset N^{(p)}}=\prm{b_n^{(p)}=1,~\forall n\in F}=p^{\sharp F}$, we get
\begin{equation}
E:\Bb_L^{(p),j}E\,=\,j!p^j\sum_{\sharp F=j}\expecM{\fint_{Q_L}\delta^F\big(|\D(\psi_L^\varnothing)+E|^2\big)}\,=\,p^jE:\Bb_L^j E,
\end{equation}
that is, \eqref{eq:BLpj-porder}.

\medskip
\substep{1.2} Proof of~\eqref{eq:bnd-RLpk}.\\
Let $k\ge1$. By definition of decimated processes, the remainder formula~\eqref{eq:form-clust-rem} for $R_L^{(p),k+1}$ can be written as
\begin{equation*}
E:R_{L}^{(p),k+1}E\,=\,\tfrac12L^{-d}\!\!\sum_{\sharp F=k+1}\sum_{n\in F}\expecM{\mathds1_{F\subset N^{(p)}}\int_{\partial I_{n,L}}\!\!\del^{F\setminus\{n\}}\psi_{L}^\varnothing\cdot \sigma_{L}^{(p)}\nu},
\end{equation*}
or equivalently, using the constraint $F\subset N^{(p)}$ to replace $\sigma_L^{(p)}=\sigma_L^{N^{(p)}}$ by $\sigma_L^{N^{(p)}\cup F}$,
\begin{equation*}
E:R_{L}^{(p),k+1}E\,=\,\tfrac12L^{-d}\!\!\sum_{\sharp F=k+1}\sum_{n\in F}\expecM{\mathds1_{F\subset N^{(p)}}\int_{\partial I_{n,L}}\!\!\del^{F\setminus\{n\}}\psi_{L}^\varnothing\cdot \sigma_{L}^{N^{(p)}\cup F}\nu}.
\end{equation*}
In this expression, the integral
\[\int_{\partial I_{n,L}}\!\!\del^{F\setminus\{n\}}\psi_{L}^\varnothing\cdot \sigma_{L}^{N^{(p)}\cup F}\nu\]
does not depend on the value of $\{b_n^{(p)}\}_{n\in F}$ and is thus independent of
\[\mathds1_{F\subset N^{(p)}}\,=\,\textstyle\prod_{n\in F}\mathds1_{b_n^{(p)}=1},\]
hence we are led to
\begin{equation}\label{eq:reform-RLpk}
E:R_{L}^{(p),k+1}E\,=\,\tfrac12p^{k+1}L^{-d}\!\!\sum_{\sharp F=k+1}\sum_{n\in F}\expecM{\int_{\partial I_{n,L}}\!\!\del^{F\setminus\{n\}}\psi_{L}^\varnothing\cdot \sigma_{L}^{N^{(p)}\cup F}\nu}.
\end{equation}
It remains to estimate the right-hand side and deduce~\eqref{eq:bnd-RLpk}, which is easily done by adapting the proof of Theorem~\ref{thm:bounds}(i) in Section~\ref{sec:proof-thm:bounds}.
For that purpose, we first note that, for all $F\subset\N$, using that $\sum_{H'\subset H}(-1)^{|H'|}=0$ if $H\ne\varnothing$, we have
\begin{eqnarray*}
\sum_{G\subset F}\delta^G\sigma_L^{(p)}
&=&\sum_{G\subset F}\sum_{G'\subset G}(-1)^{|G\setminus G'|}\sigma_L^{N^{(p)}\cup G'}\\
&=&\sum_{G'\subset F}\Big(\sum_{G''\subset F\setminus G'}(-1)^{|G''|}\Big)\,\sigma_L^{N^{(p)}\cup G'}\\
&=&\sigma_L^{N^{(p)}\cup F},
\end{eqnarray*}
so that formula~\eqref{eq:reform-RLpk} can be decomposed as follows, after changing summation variables,
\[E:R_L^{(p),k+1}E\,=\,\tfrac12p^{k+1}L^{-d}\sum_{\sharp F=k}\sum_{n\notin F}\sum_{G\subset F\cup\{n\}}\expecM{\int_{\partial I_{n,L}}\delta^F\psi^\varnothing_L\cdot\delta^G\sigma_L^{(p)}\nu}.\]
Using the following identity, for all maps $f$ and all $n\notin F$,
\[\sum_{G\subset F\cup\{n\}}f(G)\,=\,\sum_{G\subset F}f(G)+\sum_{G\subset F}f(G\cup\{n\}),\]
we deduce
\[E:R_L^{(p),k+1}E\,=\,\tfrac12p^{k+1}L^{-d}\sum_{\sharp F=k}\sum_{G\subset F}\sum_{n\notin F}\expecM{\int_{\partial I_{n,L}}\delta^F\psi_L^\varnothing\cdot\Big(\delta^G\sigma_L^{(p)}+\delta^{G\cup\{n\}}\sigma_L^{(p)}\Big)\nu},\]
or equivalently, further changing summation variables,
\begin{multline*}
E:R_L^{(p),k+1}E\,=\,\tfrac12p^{k+1}L^{-d}\sum_{j=0}^k\sum_{\sharp G=j}\sum_{n\notin G}\E\bigg[\int_{\partial I_{n,L}}\bigg(\sum_{\sharp F=k-j\atop F\cap(G\cup\{n\})=\varnothing}\delta^{F\cup G}\psi_L^\varnothing\bigg)\\
\cdot\Big(\delta^G\sigma_L^{(p)}+\delta^{G\cup\{n\}}\sigma_L^{(p)}\Big)\nu\bigg].
\end{multline*}
Using the boundary conditions for $\delta^G\sigma_L^{(p)}+\delta^{G\cup\{n\}}\sigma_L^{(p)}=\delta^G\sigma_L^{N^{(p)}\cup\{n\}}$ and using the incompressibility constraint to smuggle in arbitrary constants in the different factors, as in the proof of~\eqref{eq:subtr-press-cond}, and then appealing to the trace estimates of Lemma~\ref{lem:trace}, we find
\begin{multline*}
|E:R_L^{(p),k+1}E|\,\lesssim\,p^{k+1}L^{-d}\sum_{j=0}^k\sum_{\sharp G=j}\sum_{n\notin G}\expecM{\int_{I_{n,L}}\Big|\sum_{\sharp F=k-j\atop F\cap (G\cup\{n\})=\varnothing}\D(\delta^{F\cup G}\psi_L^\varnothing)\Big|^2}^\frac12\\
\times\expecM{\mathds1_{j=0}+\int_{I_{n,L}+\rho B}|\!\D(\delta^G\psi_L^{(p)})|^2+|\!\D(\delta^{G\cup\{n\}}\psi_L^{(p)})|^2}^\frac12.
\end{multline*}
Recalling the disjointness of fattened inclusions $\{I_{n,L}+\rho B\}_n$, recognizing the definition of $S_L^{(p)}$ and $T_L^\varnothing$, and using that in case $\ell\gg1$ the $\ell$-separation property~\eqref{eq:ell-separation+} entails that the number of points of the process $\Pc_L$ in $ Q_L$ is bounded by $C(L/\ell)^d$, we deduce
\begin{multline*}
|E:R_L^{(p),k+1}E|\\
\,\lesssim\,p^{k+1}\sum_{j=0}^k\expecm{T_L^\varnothing(j,k-j)}^\frac12
\Big(\mathds1_{j=0}\ell^{-d}+\expecm{S_L^{(p)}(j,0)}+\expecm{S_L^{(p)}(j+1,0)}\Big)^\frac12.
\end{multline*}
Now appealing to Theorem~\ref{prop:apest-re}, the claim~\eqref{eq:bnd-RLpk} follows.

\medskip
\step2 Conclusion.\\
While the uniform estimates of Theorem~\ref{thm:bounds}(i) ensure that the sequence $\{\Bb^j_L\}_{L\ge1}$ converges as $L\uparrow\infty$ up to extraction of a subsequence, we shall use their interpretation as derivatives of the map $p\mapsto\Bb_L^{(p)}$ at $p=0$, together with some real analysis, to deduce
the convergence of the full sequence.
We argue by induction:
given $k\ge0$, we assume that the limits \mbox{$\Bb^{j}=\lim_{L\uparrow\infty}\Bb_{L}^{j}$} exist for all $1\le j\le k$,
and we shall then prove that the limit
\[\Bb^{k+1}=\lim_{L\uparrow\infty}\Bb_{L}^{k+1}\]
also exists. As $\Bb_{L}^{k+1}$ is bounded uniformly in $L$ by Theorem~\ref{thm:bounds}(i), it admits a limit~$\Cc^{k+1}$ as $L\uparrow\infty$ up to extraction of a subsequence. Passing to the limit along this subsequence in~\eqref{eq:pre-concl-anal}, with $k$ replaced by $k+1$, and using~\eqref{eq:convL-p} and the induction assumptions, we get for all~$p$,
\begin{equation}\label{e:super-analytic}
\bigg|\Bb^{(p)}-\Big(\Id+\sum_{j=1}^k\tfrac{p^j}{j!}\Bb^{j}+\tfrac{p^{k+1}}{(k+1)!}\Cc^{k+1}\Big)\bigg|\,\le\, (Cp)^{k+2},
\end{equation}
which proves that $\Cc^{k+1}$ satisfies
\[\Cc^{k+1}\,=\,\lim_{p\downarrow0} \tfrac{(k+1)!}{p^{k+1}}\bigg(\Bb^{(p)}-\Big(\Id+\sum_{j=1}^k\tfrac{p^j}{j!}\Bb^{j}\Big)\bigg),\]
where in particular the limit exists. Since the right-hand side does not depend on the choice of the extracted subsequence, we deduce that the limit $\Cc^{k+1}$ is uniquely defined, hence the limit $\Bb^{k+1}:=\Cc^{k+1}=\lim_{L\uparrow\infty}\Bb_{L}^{k+1}$ actually exists. By induction, this concludes the proof of the convergence result~\eqref{eq:form-Bj} in Theorem~\ref{thm:bounds}.
\qed

\subsection{Non-uniform cluster estimates}\label{sec:non-unif}
This section is devoted to the proof of Theorem~\ref{thm:bounds}(ii).
Taking inspiration from~\cite[Section~5.A]{D-thesis}, we proceed by a direct analysis of Green representation formulas for corrector differences. More precisely, we introduce operators $\{\Jc_{L;H}^n\}_{n,H}$ that describe the fluid velocity generated by localized force dipoles in the presence of a finite number of rigid inclusions: these are viewed as Stokeslets for the problem with rigid inclusions and lead to a useful decomposition of corrector differences, cf.~\eqref{eq:cordiff-JLHn} below.
The following lemma defines such operators and states their optimal decay properties, which are shown to coincide with the decay for the explicit Stokeslet associated with the problem in free space without rigid particles. This result is a particular case of Lemma~\ref{*lem:decay-re}, the proof of which is postponed to Appendix~\ref{append:ell}.

\begin{lem}[Decay of Stokeslets with rigid inclusions]\label{lem:decay}
Let Assumptions~\ref{H0} and~\ref{Gunif} hold,
let $H\subset\N$ be finite and $n\notin H$, and let $(\zeta,P)$ satisfies the following Stokes problem in a neighborhood of $I_{n,L}$,
\begin{equation}\label{eq:zetaP}
\left\{\begin{array}{ll}
-\triangle\zeta+\nabla P=0,&\text{in $(I_{n,L}+\rho B)\setminus I_{n,L}$},\\
\Div(\zeta)=0,&\text{in $(I_{n,L}+\rho B)\setminus I_{n,L}$},\\
\D(\zeta)=0,&\text{in $I_{n,L}$},\\
\int_{\partial I_{n,L}}\sigma(\zeta,P)\nu=0,&\\
\int_{\partial I_{n,L}}\Theta(x-x_{n,L})\cdot\sigma(\zeta,P)\nu=0,&\forall \Theta\in\Md^\Skew.
\end{array}\right.
\end{equation}
Denote by $\Jc_{L;H}^n\zeta\in H^1_\per(Q_L)^d$ the solution of the following Stokes problem,
\begin{equation}\label{eq:def-ILHn}
\left\{\begin{array}{ll}
-\triangle\Jc_{L;H}^n\zeta+\nabla \Qc_{L;H}^n\zeta=-\delta_{\partial I_{n,L}}\sigma(\zeta,P)\nu,&\text{in $Q_L\setminus\Ic_L^H$},\\
\Div(\Jc_{L;H}^n\zeta)=0,&\text{in $Q_L\setminus\Ic_L^H$},\\
\D(\Jc_{L;H}^n\zeta)=0,&\text{in $\Ic_L^H$},\\
\int_{\partial I_{m,L}}\sigma(\Jc_{L;H}^n\zeta,\Qc_{L;H}^n\zeta)\nu=0,&\forall m\in H,\\
\int_{\partial I_{m,L}}\Theta(x-x_{m,L})\cdot \sigma(\Jc_{L;H}^n\zeta,\Qc_{L;H}^n\zeta)\nu=0,&\forall m\in H,\,\forall\Theta\in\Md^\Skew.
\end{array}\right.
\end{equation}
Then, we have for all $z\in Q_L$,
\begin{equation}\label{eq:decay-borderline-re}
\Big(\int_{B(z)}|\!\D(\Jc_{L;H}^n\zeta)|^2\Big)^\frac12\,\lesssim_{\sharp H}\,\langle (z-x_{n,L})_L\rangle^{-d}\Big(\int_{I_{n,L}+\rho B}|\!\D(\zeta)|^2\Big)^\frac12.\qedhere
\end{equation}
\end{lem}

The above definition of operators $\{\Jc_{L;H}^n\}_{n,H}$ is motivated by the following observation:
for all~$F,H\subset\N$ with~$F$ finite and nonempty, equations~\eqref{eq:diff-st} for corrector differences entail, in these terms,
\begin{equation}\label{eq:cordiff-JLHn}
\del^{F}\psi_{L}^H\,=\,\sum_{n\in F\setminus H}\Jc^n_{L;H}\del^{F\setminus\{n\}}(\psi^{H\cup\{n\}}_{L}+Ex).
\end{equation}
Iterating this identity allows to write $\del^{F}\psi_L^H$ as a combination of iterations of $\Jc_{L;H}^n$'s, which are viewed as elementary single-particle contributions.
With the above result at hand, we may now conclude with the proof of Theorem~\ref{thm:bounds}(ii).

\begin{proof}[Proof of Theorem~\ref{thm:bounds}(ii)]
We focus on the bound~\eqref{e.non-unif-bd-Bb-R} on the remainder $R_{L}^{k+1}$, while the corresponding bound on cluster coefficients follows along the same lines.
We split the proof into two steps.

\medskip
\step1 Estimation of corrector differences.\\
For all finite $F,H\subset\N$ with $F$ nonempty, and for all $n\in\N$,
recalling the decomposition~\eqref{eq:cordiff-JLHn} for corrector differences,
Lemma~\ref{lem:decay} yields
\begin{multline*}
\Big(\int_{I_{n,L}+\rho B}|\!\D(\del^{F}\psi_{L}^{H})|^2\Big)^\frac12
\\\lesssim_{\sharp H}\,\sum_{m\in F\setminus H}\langle(x_{n,L}-x_{m,L})_L\rangle^{-d}\Big(\int_{I_{m,L}+\rho B}\big|\!\D\!\big(\del^{F\setminus\{m\}}(\psi^{H\cup\{m\}}_{L}+Ex)\big)\big|^2\Big)^\frac12.
\end{multline*}
Iterating this bound, and recalling that the energy estimate~\eqref{eq:det-bnd-phiFk00} gives for all finite $G\subset \N$
\begin{equation*}
\int_{ Q_L}|\!\D(\psi_{L}^G)|^2\,\lesssim\,\sharp G,
\end{equation*}
we deduce for all $n$, setting $k:=\sharp F\ge1$,
\begin{multline}\label{eq:bnd-delphi-1}
\Big(\int_{I_{n,L}+\rho B}|\!\D(\del^F\psi_{L}^\varnothing)|^2+|\!\D(\del^F\psi_{L}^{\{n\}})|^2\Big)^\frac12\\
\,\lesssim_k\,\sum_{n_1,\ldots,n_k\in F}^{\ne}\langle (x_{n,L}-x_{n_1,L})_L\rangle^{-d}\langle (x_{n_1,L}-x_{n_2,L})_L\rangle^{-d}\ldots\langle (x_{n_{k-1},L}-x_{n_k,L})_L\rangle^{-d}.
\end{multline}

\medskip\noindent
\step2 Conclusion.\\
The starting point is the estimate~\eqref{e.DGV-control} in Theorem~\ref{thm:expansion} for the cluster remainder,
\begin{equation}\label{e.DGV-control-re}
|E:R_{L}^{k+1}E|\,\lesssim\,A_k^\circ+\sum_{j=1}^kA_{j,k},
\end{equation}
in terms of
\begingroup\allowdisplaybreaks
\begin{eqnarray}
\hspace{-1cm}A_{k}^\circ\!\!&:=&\!\!\expecM{L^{-d}\sum_{n}\int_{I_{n,L}}\Big|\sum_{\sharp F=k\atop n\notin F}\D(\del^{F}\psi_{L}^\varnothing)\Big|^2},\label{eq:def-Tk0}\\
\hspace{-1cm}A_{j,k}\!\!&:=&\!\!\E\bigg[L^{-d}\sum_{n}\bigg(\int_{I_{n,L}}\Big|\sum_{\sharp F=k\atop n\notin F}\D(\del^{F}\psi_{L}^\varnothing)\Big|^2\bigg)^\frac12\nonumber\\
&&\hspace{2cm}\times\bigg(\int_{I_{n,L}+\rho B}\Big|\sum_{\sharp F=j-1\atop n\notin F}\D\big(\del^{F}(\psi^{\{n\}}_{L}+Ex)\big)\Big|^2\bigg)^\frac12\bigg].\nonumber
\end{eqnarray}
We shall prove for all $1\le j\le k$,
\begin{eqnarray}
A_k^\circ&\lesssim_k&\sum_{l=0}^{k}\lambda_{k+l+1}(\Pc) (\log L)^{2l},\label{eq:bnd-Tk0}\\
A_{j,k}&\lesssim_k&\sum_{l=0}^{j-1}\lambda_{k+l+1}(\Pc) (\log L)^{2l+k-j+1}.\label{eq:bnd-Tk0+}
\end{eqnarray}
\endgroup
Inserting this into~\eqref{e.DGV-control-re}, the conclusion~\eqref{e.non-unif-bd-Bb-R} follows.
We split the proof into two further substeps, separately proving~\eqref{eq:bnd-Tk0} and~\eqref{eq:bnd-Tk0+}.

\medskip
\substep{2.1} Proof of~\eqref{eq:bnd-Tk0}.\\
Let $k\ge1$. The deterministic bound~\eqref{eq:bnd-delphi-1} yields
\begin{equation}\label{eq:sum-delF-decay}
\sum_{\sharp F=k}\Big(\int_{I_{n,L}}|\!\D(\del^F\psi_{L}^\varnothing)|^2+|\!\D(\del^F\psi_{L}^{\{n\}})|^2\Big)^\frac12
\,\lesssim_{k}\,\sum_{n_1,\ldots,n_k}^{\ne}D_L(x_{n,L},x_{n_1,L},\ldots,x_{n_k,L}),
\end{equation}
where we have set
\[D_L(y_0,y_{1},\ldots,y_{k})\,:=\,\prod_{j=0}^{k-1}\langle(y_j-y_{j+1})_L\rangle^{-d}.\]
Inserting this in the definition~\eqref{eq:def-Tk0} of $A_k^\circ$, expanding the square, separating the different intersection patterns, and reformulating in terms of multi-points densities, cf.~\eqref{eq:def-fj}, we are led to
\begin{multline*}
A_k^\circ\,\lesssim_k\,\sum_{l=0}^kL^{-d}\int_{{(Q_L)}^{k+l+1}}
D_L(x,x_1,\ldots,x_k)
D_L(x,x_1,\ldots,x_{k-l},y_1,\ldots,y_l)\\
\times f_{k+l+1}(x,x_1,\ldots,x_k,y_1,\ldots,y_l)\,dx\,dx_1\ldots dx_k\,dy_1\ldots dy_l,
\end{multline*}
hence, in terms of multi-point intensities, appealing to Lemma~\ref{lem:gen-cond-lambd}(iii),
\begin{multline*}
A_k^\circ\,\lesssim_k\,\sum_{l=0}^k\lambda_{k+l+1}(\Pc)\,L^{-d}\int_{{(Q_L)}^{k+l+1}}
D_L(x,x_1,\ldots,x_k)\\
\times D_L(x,x_1,\ldots,x_{k-l},y_1,\ldots,y_l)\,
dx\,dx_1\ldots dx_k\,dy_1\ldots dy_l.
\end{multline*}
First evaluating integrals over $x_{k-l+1},\ldots,x_k,y_1,\ldots,y_l$, and noting that
\begin{equation*}
\int_{Q_L}\langle(x-y)_L\rangle^{-d}\,dy\,\lesssim\,\log L,
\end{equation*}
we find
\begin{equation*}
A_k^\circ\,\lesssim_k\,\sum_{l=0}^k\lambda_{k+l+1}(\Pc)(\log L)^{2l}L^{-d}\int_{{(Q_L)}^{k-l+1}}
D_L(x,x_1,\ldots,x_{k-l})^2\,dx\,dx_1\ldots dx_{k-l}.
\end{equation*}
Now evaluating the remaining integrals, noting that the square yields an integrable decay,
the claim~\eqref{eq:bnd-Tk0} follows.

\medskip
\substep{2.2} Proof of~\eqref{eq:bnd-Tk0+}.\\
Let $k\ge j\ge1$.
Inserting~\eqref{eq:sum-delF-decay} into the definition~\eqref{eq:def-Tk0} of $A_{j,k}$, expanding the square, and separating the different intersection patterns, we now find
\begin{multline*}
A_{j,k}\,\lesssim_k\,\sum_{l=0}^{j-1}L^{-d}\int_{{(Q_L)}^{k+l+1}}
D_L(x,x_1,\ldots,x_k)
D_L(x,x_1,\ldots,x_{j-l-1},y_1,\ldots,y_l)\\
\times f_{k+l+1}(x,x_1,\ldots,x_k,y_1,\ldots,y_l)\,dx\,dx_1\ldots dx_k\,dy_1\ldots dy_l,
\end{multline*}
where for notational convenience we define $D_L(x):=1$.
This integral can be evaluated exactly as in the proof of~\eqref{eq:bnd-Tk0} and the claim~\eqref{eq:bnd-Tk0+} follows.
\end{proof}

\newpage

\section{Renormalization of cluster formulas
}\label{sec:intermezzo}

This section is devoted to the proof of infinite-volume cluster estimates with optimal dependence on multi-point intensities $\{\lambda_j(\Pc)\}_j$.
It amounts to improving on the non-uniform cluster estimates~\eqref{e.non-unif-bd-Bb-R} in Theorem~\ref{thm:bounds}, which captures the `short-range' dependence on multi-point intensities but displays a logarithmic divergence in the large-volume limit.
This requires a better understanding of cluster formulas and of the underlying compensations that make them well-defined in the large-volume limit.

\subsection{Main results}
We explore two different routes for the renormalization of infinite-volume cluster formulas, leading to two complementary results, cf.~Theorems~\ref{prop:quant} and~\ref{th:renorm-B23} below. We also discuss the optimality of our cluster estimates, cf.~Theorem~\ref{lem:optimality}.

\subsubsection{Implicit renormalization}
Our first route relies on a slight algebraic quantification of the convergence of periodic approximations, cf.~assumption~\ref{QA} below: it implies a corresponding convergence rate for periodized cluster formulas, cf.~\eqref{e:quant-conv-BjL} below, which in turn allows to remove the logarithmic divergence in the non-uniform cluster estimates of Theorem~\ref{thm:bounds}.
This result is particularly general given that the quantitative periodization assumption~\ref{QA} holds under a mere algebraic $\alpha$-mixing condition for $\Ic$, cf.~Remark~\ref{rem:Mix} below.
The obtained cluster estimates~\eqref{eq:bnd-BLj-D2} differ from the canonical short-range setting of Lemma~\ref{lem:short} by some logarithmic factors, which are expected to be optimal in general in link with the long-range nature of hydrodynamic interactions, cf.~Theorem~\ref{lem:optimality} below. The proof is displayed in Section~\ref{sec:append}.

\begin{theor1}[Implicit renormalization of cluster formulas]\label{prop:quant}
On top of Assumptions~\ref{H0} and~\ref{Gunif},
let the following hold:
\begin{enumerate}[\quad~~~(a)]
\renewcommand{\labelenumi}{\emph{\textbf{(QPE)}}}
\renewcommand{\theenumi}{{\textbf{(QPE)}}}
\item\label{QA} \emph{Quantitative periodization assumption:} There exist $C,\gamma>0$ such that we have $|\Bb_L^{(p)}-\Bb^{(p)}|\,\le\,CL^{-\gamma}$ for all~$L\ge 1$ and $p\in[0,1]$, where $\Bb_L^{(p)},\Bb^{(p)}$ refer to the random deletion procedure introduced in Section~\ref{sec:proof-eq:form-Bj}, cf.~\eqref{eq:p-decim-def}.
\end{enumerate}
Then, we have the following estimates for the coefficients and the remainder of the infinite-volume cluster expansion defined by~\eqref{eq:form-Bj} in Theorem~\ref{thm:bounds}: for all $k, j\ge1$,
\begin{eqnarray}
|\Bb^j|&\lesssim_{j}&\lambda_j(\Pc)|\!\log \lambda_j(\Pc)|^{j-1},\label{eq:bnd-BLj-D2}
\\
|R^{k+1}| &\lesssim_k &\sum_{l=k+1}^{2k+1}\lambda_{l}(\Pc) |\!\log \lambda_{k+1}(\Pc)|^{l-1}.\nonumber
\end{eqnarray}
In addition, the convergence result~\eqref{eq:form-Bj} for finite-volume approximations can be quantified: for all $L$ and $k, j\ge1$,
\begin{equation}\label{e:quant-conv-BjL}
|\Bb^j_L-\Bb^j|\,\lesssim_j\,L^{-2^{-j}\gamma},\qquad
|R^{k+1}_L-R^{k+1}|\,\lesssim_k\,L^{-2^{-k}\gamma},
\end{equation}
where $\gamma$ is the exponent in~\emph{\ref{QA}}.
\end{theor1}

\begin{rem}[Quantitative periodization assumption]\label{rem:Mix}
The validity of Assumption~\ref{QA} can be shown to follow
from a slight quantitative mixing condition for the inclusion process~$\Ic$, such as the following:
\begin{enumerate}[\quad~~(a)]
\renewcommand{\labelenumi}{{\textbf{(Mix)}}}
\renewcommand{\theenumi}{{\textbf{(Mix)}}}
\item\label{Mix} \emph{Algebraic $\alpha$-mixing condition:}
There exist $C,\beta>0$ such that for all Borel subsets~$U,V\subset\R^d$ and all events $A\subset\sigma(\Ic|_U)$ and $B\in\sigma(\Ic|_V)$ we have
\begin{equation}\label{e.alpha-mixing-rd-del}
|\pr{A\cap B}-\pr A\pr B\!|\,\le\,C\dist(U,V)^{-\beta}.
\end{equation}
\end{enumerate}
More precisely, this condition~\ref{Mix} implies the validity of~\ref{QA} for some \mbox{$0<\gamma \ll \beta$} (depending on $\beta,d$) and for all $0\le p \le 1$ (since random deletion preserves~\eqref{e.alpha-mixing-rd-del}).
This follows  by-now from standard quantitative homogenization theory: we refer to Appendix~\ref{sec:quant}, where we adapt the techniques developed by Armstrong, Kuusi, Mourrat, and Smart~\cite{AS,Armstrong-Mourrat-16,AKM-book} to the present fluid context.
\end{rem}

The above result provides optimal cluster estimates and its proof is extremely short, cf.~Section~\ref{sec:append}.
Yet, it has three main disadvantages, which call for a more detailed analysis.
\begin{enumerate}[---]
\item {\it No explicit renormalization:} While infinite-volume cluster formulas take the form of diverging series, cf.~Section~\ref{sec:need-renorm}, cluster coefficients are defined as limits of finite-volume approximations, cf.~\eqref{eq:form-Bj}.
Using straightforward cancellations, we showed that the first-order cluster coefficient $\Bb^1$ can be represented by a summable integral, cf.~Proposition~\ref{prop:B1}. A similar explicit renormalization was formally performed for the second-order coefficient $\Bb^2$ by Batchelor and Green~\cite{BG-72}, based on more subtle cancellations. The implicit renormalization approach sheds no light on such questions.
We aim to recover the Batchelor--Green renormalized formula for $\Bb^2$ rigorously, as also discussed in~\cite{GVH,GVM-20,GV-20}, and to investigate how explicit renormalizations can be pursued to higher orders.
\smallskip
\item {\it Mixing assumption:} In view of cluster formulas in Theorem~\ref{thm:expansion}, bounds on the cluster coefficient $\Bb^j$ should only require assumptions on the $j$-point density. Likewise, in view of~\eqref{e.DGV-control}, bounds on the remainder $R_L^{k+1}$ should only require assumptions on the $2k$-point density.
Instead, assumptions~\ref{QA} and~\ref{Mix} boldly involve the whole law of the inclusion process $\Ic$, which we aim to refine.
\smallskip
\item {\it Convergence rates:} As the above approach builds on a convergence rate for periodic approximations of the effective  viscosity $\Bb$, cf.~\ref{QA}, it does not exploit the fact that cluster formulas only involve a finite number of particles at a time and are thus significantly simpler than $\Bb$ itself. In particular, convergence rates for periodic approximations of cluster coefficients are not expected to be worse than for approximations of~$\Bb$ (on the contrary!), while the above result~\eqref{e:quant-conv-BjL} displays an exponential degradation of the rates for higher-order coefficients.
\end{enumerate}

\subsubsection{Explicit renormalization}
Our second route to renormalization of cluster formulas aims to remedy the above three issues and we proceed by an explicit analysis of cancellations.
As in Proposition~\ref{prop:B1}, we assume for convenience that particles have independent shapes, cf.~\ref{B1}, which makes cluster formulas somewhat simpler.
While for $\Bb^1$ and~$\Bb^2$ relatively simple cancellations are enough to turn cluster formulas into summable integrals, higher-order coefficients require a much deeper analysis:
we are led to introducing a diagrammatic decomposition of corrector differences that allows to capture relevant cancellations.
This fully resolves the higher-order renormalization question that was still open in the physics community.
We refer in particular to Section~\ref{sec:explicit} for an explicit display of renormalized formulas for~$\Bb^2$ and~$\Bb^3$, cf.~Proposition~\ref{prop:B2-ren} and~\ref{prop:B3-ren}: we recover the Batchelor--Green formula for~$\Bb^2$ and provide the first renormalized formula for~$\Bb^3$.
Incidentally, these results only require assumptions on finite-order multi-point densities (instead of mixing assumptions) and Dini-type decay (instead of algebraic), which is beyond the reach of quantitative homogenization methods (and thus of our implicit renormalization).
Renormalized formulas allow to recover the same cluster estimates~\eqref{eq:bnd-BLj-D2} as obtained above via implicit renormalization and to further prove essentially optimal convergence rates for finite-volume approximations: the convergence rate~\eqref{eq:pr-convBL2} for~$\Bb^j$ below only degrades logarithmically when increasing~$j$ (as opposed to the exponential degradation in~\eqref{e:quant-conv-BjL}), and it is always better (as it should) than the rate for approximations of the effective viscosity~$\Bb$ itself (cf.~$\gamma\ll\beta$ in Remark~\ref{rem:Mix}).
The proof is displayed in Section~\ref{sec:explicit}.

\begin{theor1}[Explicit renormalization of cluster formulas]\label{th:renorm-B23}
On top of Assumptions~\ref{H0} and~\ref{Gunif}, let the independence assumption~\emph{\ref{B1}} hold for particle shapes, as well as the following, for some rate $\omega\in C^\infty_b(\R^+)$:
\begin{enumerate}[\quad\,~~~(a)]
\renewcommand{\labelenumi}{\emph{\textbf{(Mix$_\omega$)}}}
\renewcommand{\theenumi}{{\textbf{(Mix$_\omega$)}}}
\item\label{Mix-om} \emph{$\alpha$-Mixing assumption with rate $\omega$:}
For all Borel subsets $U,V\subset\R^d$ and all events $A\subset\sigma(\Ic|_U)$ and $B\in\sigma(\Ic|_V)$, we have
\begin{equation*}
\qquad|\pr{A\cap B}-\pr A\pr B\!|\,\le\,\omega(\dist(U,V)).
\end{equation*}
\end{enumerate}
Then, the following hold.
\begin{enumerate}[(i)]
\item For all $j\ge2$, provided $\omega$ satisfies the Dini type condition \mbox{$\int_1^\infty t^{-1}(\log t)^{j-2}\omega(t)\,dt<\infty$}, the infinite-volume cluster coefficient $\Bb^j$ can be described by means of summable integrals as detailed in Section~\ref{sec:explicit}.
\smallskip\item In case of an algebraic mixing rate $\omega(t)\le Ct^{-\beta}$ for some $C,\beta>0$, renormalized formulas lead to the same cluster estimates~\eqref{eq:bnd-BLj-D2} for all $k, j\ge1$.
In addition, the following holds for finite-volume approximations: for all $L$ and $j\ge1$,
\begin{equation}\label{eq:pr-convBL2}
\quad|\Bb^j_L-\Bb^j|\,\lesssim_j\,\tfrac{(\log L)^{j-1}}{L^{\beta\wedge1}}.
\end{equation}
\end{enumerate}
Finally, assumption~\emph{\ref{Mix-om}} can be replaced by corresponding assumptions on the $j$-point density for results on $\Bb^j$, and on the $(2k+1)$-point density for results on $R^{k+1}$.
\end{theor1}

\subsubsection{Optimality of cluster estimates} 
The following result states that logarithmic factors in cluster estimates~\eqref{eq:bnd-BLj-D2} are optimal in general. These factors contrast with the canonical short-range setting of Lemma~\ref{lem:short}: they are related to the long-range nature of hydrodynamic interactions and appear due to the lack of $\Ld^\infty$-boundedness of Calder\'on--Zygmund operators.
We focus on the second-order coefficient $\Bb^2$ for illustration, but, starting from renormalized formulas, the argument could be extended to higher orders as well. The proof is displayed in Section~\ref{sec:opti-B2}.

\begin{theor1}[Optimality of estimates on $\Bb_2$]\label{lem:optimality}$  $
\begin{enumerate}[(i)]
\item \emph{Isotropic setting:}
On top of Assumptions~\ref{H0}, \ref{Gunif}, and~\emph{\ref{B1}}, assume that the $2$-point correlation function $h_2(x,y):=f_2(x,y)-\lambda(\Pc)^2$ satisfies the following decay assumption,
\begin{equation}\label{eq:decay-h2-om}
\quad\iint_{B(x)\times B(y)}|h_2|\,\le\,\omega(|x-y|),
\end{equation}
with some rate $\omega$ satisfying the Dini condition $\int_1^\infty t^{-1}\omega(t)\,dt<\infty$.
If in addition the point process~$\Pc$ is statistically isotropic, which entails that the correlation function is radial, then the following improved estimate holds,
\[\quad|\Bb^2|\,\lesssim\, \lambda_2(\Pc).\]
\item \emph{Optimality in the general setting:} There exists an inclusion process $\Ic$ that satisfies Assumptions~\ref{H0}, \ref{Gunif}, \emph{\ref{B1}}, and~\eqref{eq:decay-h2-om}, as well as the local independence condition $\lambda_2(\Pc)\simeq\lambda(\Pc)^2\ll1$, such that we have
\[\quad|\Bb^2|\,\simeq\, \lambda_2(\Pc) |\!\log \lambda_2(\Pc)|.\qedhere\]
\end{enumerate}
\end{theor1}

\subsection{Implicit renormalization of cluster formulas}\label{sec:append}

This section is devoted to the short proof of Theorem~\ref{prop:quant}, which we split into two steps. We start with the quantitative convergence result~\eqref{e:quant-conv-BjL} for finite-volume approximations of cluster coefficients, which we obtain by quantifying the argument for the corresponding qualitative result~\eqref{eq:form-Bj} in Section~\ref{sec:proof-eq:form-Bj}. The claimed cluster estimates~\eqref{eq:bnd-BLj-D2} then follow by optimization.

\medskip
\step1 Suboptimal convergence result: proof of~\eqref{e:quant-conv-BjL}.\\
Starting from the cluster expansion~\eqref{eq:preconcl-exp} in Theorem~\ref{thm:expansion}, the triangle inequality yields for all $k\ge0$,
\[|R^{k+1}_L-R^{k+1}| \le |\Bb_L-\Bb|+\sum_{j=1}^k |\Bb_L^j-\Bb^j|,\]
so that the convergence rate for the remainder in~\eqref{e:quant-conv-BjL} follows from Assumption~\ref{QA} together with the convergence rate for cluster coefficients.
It remains to prove the latter, that is, for all~$j\ge1$,
\begin{equation}\label{eq:conv-BLj-pr}
|\Bb_L^j-\Bb^j|\,\lesssim_j\,L^{-2^{-j}\gamma}.
\end{equation}
For that purpose, we quantify the induction argument in the proof of the corresponding qualitative convergence result~\eqref{eq:form-Bj} in Section~\ref{sec:proof-eq:form-Bj}.
Let $k\ge0$ and assume that~\eqref{eq:conv-BLj-pr} holds for all $1\le j\le k$.
Taking the same notation as in Section~\ref{sec:proof-eq:form-Bj} for the random deletion procedure, we recall the cluster expansion~\eqref{eq:pre-concl-anal},
for all~$L,p$,
\[\bigg|\Bb_{L}^{(p)}-\Big(\Id+\sum_{j=1}^{k+1}\tfrac{p^j}{j!}\Bb^j_L\Big)\bigg|\,\le\,(Cp)^{k+2}.\]
Hence, comparing to the corresponding estimate in the large-volume limit, we find
\[\bigg|(\Bb_{L}^{(p)}-\Bb^{(p)})-\sum_{j=1}^{k+1}\tfrac{p^j}{j!}(\Bb^j_L-\Bb^j)\bigg|\,\le\,(Cp)^{k+2}.\]
Isolating the difference $\Bb_L^{k+1}-\Bb^{k+1}$, and using Assumption~\ref{QA} and the induction hypothesis to estimate other contributions, we deduce
\begin{eqnarray*}
|\Bb_L^{k+1}-\Bb^{k+1}| &\le  &\tfrac{(k+1)!}{p^{k+1}}\bigg((Cp)^{k+2}+|\Bb^{(p)}_L-\Bb^{(p)}|+\sum_{j=1}^{k}\tfrac{p^j}{j!}|\Bb^{j}_L-\Bb^j|\bigg)\\
&\lesssim_{k}& p+\sum_{j=0}^k p^{j-k-1} L^{-2^{-j}\gamma} .
\end{eqnarray*}
The choice $p=L^{-2^{-k-1}\gamma}$ then yields $|\Bb^{k+1}_L-\Bb^{k+1}|\lesssim_{k}L^{-2^{-k-1}\gamma}$, and the claim~\eqref{eq:conv-BLj-pr} follows by induction for all $j\ge1$.

\medskip
\step2 Uniform cluster estimates: proof of~\eqref{eq:bnd-BLj-D2}.\\
Combining the non-uniform estimates~\eqref{e.non-unif-bd-Bb-R} of Theorem~\ref{thm:bounds}
with the suboptimal convergence result~\eqref{e:quant-conv-BjL}, we find for all $k\ge j\ge1$,
\begin{eqnarray*}
|\Bb^j|&\lesssim_j&L^{-2^{-j}\gamma}+\lambda_j(\Pc)(\log L)^{j-1},\\
|R^{k+1}|&\lesssim_j&L^{-2^{-k}\gamma}+\sum_{l=k}^{2k}\lambda_{l+1}(\Pc)(\log L)^{l},
\end{eqnarray*}
and the conclusion~\eqref{eq:bnd-BLj-D2} follows from the choice $L^{-2^{-j}\gamma}=\lambda_j(\Pc)$ or $L^{-2^{-k}\gamma}=\lambda_{k+1}(\Pc)$, respectively.
\qed

\subsection{Preliminary to explicit renormalization}\label{sec:explicit-prel}
Before turning to the explicit renormalization of cluster formulas and to the proof of Theorem~\ref{th:renorm-B23}, we start with some preliminary definitions and technical tools: we define multi-point correlation functions, which provide a convenient framework to weaken the $\alpha$-mixing condition, we revisit the decomposition~\eqref{eq:cordiff-JLHn} for corrector differences in terms of elementary single-particle contributions, and we state several crucial estimates on the latter.
 
\subsubsection{Multi-point correlation functions}
Multi-point correlation functions $\{h_j\}_j$ of the point process $\Pc$ can be defined inductively from the multi-point densities $\{f_j\}_j$, cf.~\eqref{eq:def-fj}, via the following relations:\footnote{Incidentally, these relations
are known as Mayer's {\it cluster expansions} --- although unrelated to the kind of cluster expansions otherwise studied in this work.}
for all $j\ge1$,
\begin{equation}\label{eq:dens-correl}
f_j(x_1,\ldots,x_j)=\sum_\pi\prod_{H\in\pi}h_{\sharp H}(x_H),
\end{equation}
where $\pi$ runs over all partitions of the index set $\{1,\ldots,j\}$, where $H$ runs over all cells of the partition $\pi$,
and where for $H=\{i_1,\ldots,i_l\}$ we set $x_H:=(x_{i_1},\ldots,x_{i_l})$. For the first values of $k$, these relations read
\begin{eqnarray*}
f_1(z)&=&h_1(z)~\,=\,~\lambda(\Pc),\\
f_{2}(y,z)&=&\lambda(\Pc)^2+h_{2}(y,z),\\
f_{3}(x,y,z)&=&\lambda(\Pc)^3+\lambda(\Pc)\big(h_2(x,y)+h_2(y,z)+h_2(z,x)\big)+h_3(x,y,z),
\end{eqnarray*}
from which $h_1,h_2,h_3$ are easily extracted.
More generally, note that the inductive definition~\eqref{eq:dens-correl} can be explicitly inverted: for all $j\ge1$, we find
\begin{equation}\label{e.inver-cor-func}
h_j(x_1,\ldots,x_j)\,:=\,\sum_{\pi}(\sharp\pi-1)!\,(-1)^{\sharp\pi-1}\prod_{H\in\pi}f_{\sharp H}(x_H),
\end{equation}
where $\pi$ runs over all partitions of the index set $\{1,\ldots,j\}$ and where $\sharp\pi$ stands for the number of cells $H\in\pi$.
The $j$-point correlation function~$h_j$ is thus a symmetric function on the product $(\R^d)^j$ and is a polynomial combination of multi-point densities $(f_i)_{i\le j}$.
The definition of multi-point intensities~\eqref{eq:high-intens/Re} then entails the following bounds on correlations, for all~$j\ge1$,
\begin{equation}\label{eq:high-intens-correl}
\sup_{z_1,\ldots,z_j}\fint_{Q_\ell(z_1)\times\ldots\times Q_\ell(z_j)}|h_j|\,\lesssim_j\,\overline \lambda_j(\Pc),
\end{equation}
where we recall the notation~\eqref{e.mul-int-eff}.
It is easily checked that the $\alpha$-mixing assumption~\ref{Mix-om} implies the decay of correlation functions in the following quantitative sense. Since we could not find any precise reference in the literature, we include a short proof below for completeness.

\begin{lem}\label{lem:decay-correl}
Assume that the point process $\Pc$ satisfies the $\alpha$-mixing condition~\emph{\ref{Mix-om}} with a non-increasing rate $\omega\in C^\infty_b(\R^+)$. Then, correlation functions satisfy for all $j\ge2$ and $x_1,\ldots,x_j\in\R^d$,
\begin{equation}\label{eq:decay-correl}
\int_{B(x_1)\times\ldots\times B(x_j)}|h_j|\,\le\,C^jj!\,\min_{i\ne l}\omega\big((\tfrac1{j}|x_i-x_l|-2)_+\big).\qedhere
\end{equation}
\end{lem}

In this view, it is natural to consider a ``truncated'' version of the $\alpha$-mixing condition~\ref{Mix-om} in form of the decay of a finite number of correlation functions only. This is the natural setting for cluster estimates.
\begin{enumerate}[\quad\,~~~(a)]
\renewcommand{\labelenumi}{\textbf{(Mix$_\omega^n$)}}
\renewcommand{\theenumi}{{\textbf{(Mix$_\omega^n$)}}}
\item\label{Mix-om-n} \emph{Mixing assumption with rate $\omega$ to order $n$:} Multi-point correlation functions satisfy for all $2\le j\le n$ and $x_1,\ldots,x_j\in\R^d$,
\begin{equation*}
\qquad\int_{B(x_1)\times\ldots\times B(x_j)}|h_j|\,\le\,\min_{i\ne l}\omega(|x_i-x_l|).
\end{equation*}
\end{enumerate}

\begin{proof}[Proof of Lemma~\ref{lem:decay-correl}]
We argue by induction: given $j\ge2$, we assume that the claimed decay estimate~\eqref{eq:decay-correl} is already known to hold for $h_2,\ldots,h_{j-1}$, and we prove that it also holds for $h_j$.
Let $x_1,\ldots,x_j\in\R^d$ be fixed.
The conclusion~\eqref{eq:decay-correl} is trivial when $\max_{i\ne l}\frac1j|x_i-x_l|\le2$, and we may thus assume $\max_{i\ne l}\tfrac1j|x_i-x_l|>2$.
Up to relabeling the points, we may further assume that there is $1\le j_*< j$ such that
\begin{gather}
\textstyle|x_1-x_{j}|=\max_{i\ne l}|x_i-x_l|,\nonumber\\
|x_i-x_l|\ge\tfrac1j|x_1-x_j|\,>\,2\quad\text{for all $1\le i\le j_*<l\le j$.}\label{eq:prop-x1-j}
\end{gather}
(The latter condition is obtained by dividing the space between $x_1$ and $x_j$ into $j$ stripes of width $\tfrac1j|x_1-x_j|$, by selecting the one that contains none of the points $x_i$'s with $1< i< j$, and by distinguishing the points on either side of this stripe.)
Let \mbox{$\phi\in C((\R^d)^{j_*})$} and \mbox{$\phi'\in C((\R^d)^{j-j_*})$} be supported in $B(x_1)\times\ldots\times B(x_{j_*})$ and in~$B(x_{j_*+1})\times\ldots\times B(x_{j})$, respectively, with $\|\phi\|_{\Ld^\infty((\R^d)^{j_*})}=\|\phi'\|_{\Ld^\infty((\R^d)^{j-j_*})}=1$.
Appealing to a standard covariance inequality, see e.g.~\cite[Lemma~1.2.3]{Doukhan-94}, the $\alpha$-mixing condition~\ref{Mix-om} then yields
\begin{multline}\label{eq:alphamix-appl}
{\bigg|\covM{\sum_{n_1,\ldots,n_{j_*}}^{\ne}\phi(x_{n_1},\ldots,x_{n_{j_*}})}{\sum_{n_{j_*+1},\ldots,n_{j}}^{\ne}\phi'(x_{n_{j_*+1}},\ldots,x_{n_{j}})}\bigg|}\\
~\le~4\,\omega\bigg(\dist\Big(\bigcup_{i=1}^{j_*}B(x_i),\bigcup_{i=j_*+1}^{j}B(x_i)\Big)\bigg)
~\le~4\,\omega\big(\tfrac1j|x_1-x_j|-2\big).
\end{multline}
Now we expand the covariance in terms of multi-point densities:
in view of~\eqref{eq:prop-x1-j} and
of the support condition for $\phi,\phi'$, we find that the product $\phi(x_{n_1},\ldots,x_{n_{j_*}})\phi'(x_{n_{j_*+1}},\ldots,x_{n_{j}})$ vanishes whenever $n_i=n_l$ for some $1\le i\le j_*<l\le j$, hence
\begin{multline}\label{eq:alphamix-appl-re}
\covM{\sum_{n_1,\ldots,n_{j_*}}^{\ne}\phi(x_{n_1},\ldots,x_{n_{j_*}})}{\sum_{n_{j_*+1},\ldots,n_{j}}^{\ne}\phi'(x_{n_{j_*+1}},\ldots,x_{n_{j}})}\\
~=~\int_{(\R^d)^j}(\phi\otimes\phi')\,(f_j-f_{j_*}\otimes f_{j-j_*}).
\end{multline}
Recalling the relation~\eqref{eq:dens-correl} for density functions in terms of correlations, we get
\[(f_j-f_{j_*}\otimes f_{j-j_*})(z_1,\ldots,z_j)\,=\,\sum_\pi\mathds1_{\exists H\in\pi:H\cap\{1,\ldots,j_*\}\ne\varnothing\ne H\cap\{j_*+1,\ldots,j\}}\prod_{H\in\pi}h_{\sharp H}(z_H).\]
Combining this with~\eqref{eq:alphamix-appl} and~\eqref{eq:alphamix-appl-re}, and isolating the contribution of the $j$-point correlation~$h_j$ (obtained for $\sharp\pi=1$), we are led to
\begin{multline*}
\Big|\int_{(\R^d)^j}(\phi\otimes\phi')h_j\Big|\,\le\,
\sum_{\pi:\sharp\pi>1}\mathds1_{\exists H\in\pi:H\cap\{1,\ldots,j_*\}\ne\varnothing\ne H\cap\{j_*+1,\ldots,j\}}\prod_{H\in\pi}\int_{B(x_H)}|h_{\sharp H}|\\
+4\,\omega\big(\tfrac1j|x_1-x_j|-2\big),
\end{multline*}
where for $H=\{i_1,\ldots,i_l\}$ we set $B(x_H):=B(x_{i_1})\times\ldots\times B(x_{i_l})$.
In view of~\eqref{eq:prop-x1-j}, the induction hypothesis for $\{h_l\}_{l<j}$ entails
\begin{multline*}
\Big|\int_{(\R^d)^j}(\phi\otimes\phi')h_j\Big|\,\le\,
\sum_{\ell=2}^j\sum_{i_1+\ldots+i_\ell=j}\binom{j}{i_1,\ldots,i_\ell}\prod_{s=1}^\ell \big(C^{i_s}i_s!\,\omega(\tfrac1j|x_1-x_j|-2)^{i_s}\big)\\
+4\,\omega\big(\tfrac1j|x_1-x_j|-2\big),
\end{multline*}
from which we easily infer $|\int_{(\R^d)^j}(\phi\otimes\phi')h_j|\le C^jj!\,\omega(\frac1j|x_1-x_j|-2)$. By the arbitrariness of $\phi,\phi'$ and of $x_1,\dots,x_j$, the conclusion~\eqref{eq:decay-correl} follows for $h_j$.
\end{proof}

\subsubsection{Estimates on single-particle contributions}\label{sec:Stokeslet-est-rig}
For notational simplicity, we henceforth assume that particles are spherical with unit radius, \mbox{$I_n=B(x_n)$}; the adaptation to the general case~\ref{B1} with independent particle shapes is straightforward.
As we shall see, the explicit renormalization of $\Bb^j$ is particularly intricate for~\mbox{$j\ge3$} since cancellations are not as apparent as they are for the first two orders: it will require to decompose corrector differences into elementary single-particle contributions in the spirit of~\eqref{eq:cordiff-JLHn}.
We start by slightly changing the point of view for correctors, focussing on particle positions rather than on particle indices in the notation:
given a set $Y\subset Q_L$ of ``background'' positions 
such that
\begin{equation}\label{eq:dist-Y}
\dist(B(y),B(y'))\,>\,2\rho,\qquad\dist(B(y),\partial Q_L)\,>\,\rho,\qquad\text{for all $y,y'\in Y,~ y\ne y'$},
\end{equation}
we denote by $\psi_L^Y\in H^1_\per(Q_L)^d$
the solution of the following periodic corrector problem, using the short-hand notation $\sigma_L^Y:=\sigma(\psi_L^Y+Ex,\Sigma_L^Y)$,
\[\left\{\begin{array}{ll}
-\triangle \psi_L^Y+\nabla\Sigma_L^Y=0,&\text{in $ Q_L\setminus\cup_{y\in  Y}B(y)$},\\
\Div(\psi_L^Y)=0,&\text{in $ Q_L\setminus\cup_{y\in  Y}B(y)$},\\
\D(\psi_L^Y+Ex)=0,&\text{in $\cup_{y\in  Y}B(y)$},\\
\int_{\partial B(y)}\sigma_L^Y\nu=0,&\forall y\in  Y,\\
\int_{\partial B(y)}\Theta(x-y)\cdot\sigma_L^Y\nu=0,&\forall \Theta\in\Md^\Skew,~\forall y\in  Y.
\end{array}\right.\]
Next, similarly as in~\eqref{eq:def-diff}, for any $z\in Q_L$ and any finite subset~\mbox{$Z\subset Q_L$}, provided that the union set $\{z\}\cup Z\cup Y$ satisfies~\eqref{eq:dist-Y}, we can define corrector differences
\[\delta^{\{z\}}\psi_L^{Y}\,:=\,\psi_L^{\{z\}\cup Y}-\psi_L^{Y},\qquad \delta^{Z}\psi_L^{Y}\,:=\,\sum_{W\subset Z}(-1)^{|Z\setminus W|}\psi_L^{W\cup Y}.\]
Compared with the notation that we use elsewhere in this memoir,
this means for all index sets~$F,H\subset\N$,
\[\psi_L^H\,\equiv\,\psi_L^{\{x_{n,L}\}_{n\in H}},\qquad\delta^F\psi_L^H\,\equiv\,\delta^{\{x_{n,L}\}_{n\in F}}\psi_L^{\{x_{n,L}\}_{n\in H}}.\]
For $Y=\{y_1,\ldots,y_m\}$ and $Z=\{z_1,\ldots,z_n\}$, we shall also write for convenience
\begin{equation}\label{eq:corr-xy-notation}
\psi_L^{y_1,\ldots,y_m}\,:=\,\psi_L^{Y},\qquad \delta^{z_1,\ldots,z_n}\psi_L^{y_1,\ldots,y_m}\,:=\,\delta^Z\psi_L^Y.
\end{equation}
Recall that Lemma~\ref{eq:cordiff-form} states that the corrector difference $\del^{Z}\psi_{L}^Y$ satisfies
\begin{equation}\label{eq:corr-diff-repeat}
-\triangle\del^{Z}\psi_{L}^Y+\nabla\del^{Z}\Sigma_{L}^Y
\,=\,-\sum_{z\in Z}\delta_{\partial B(z)}\del^{Z\setminus\{z\}}\sigma^{Y\cup\{z\}}_{L}\nu \qquad \text{ in } Q_L\setminus\cup_{y\in  Y}B(y),
\end{equation}
together with the rigidity contraint $\D(\del^{Z}\psi_{L}^Y)=0$ in $\cup_{y\in Y}B(y)$ and with associated boundary conditions.
In view of this equation, as in~\eqref{eq:cordiff-JLHn}, we can decompose corrector differences into elementary single-particle contributions that we express in terms of operators $\{\Jc_{L;Y}^z\}_{z,Y}$ defined as follows: given a ``tagged'' position~$z\in Q_L$,
given a pair $(\zeta,P)\in H^1(B_{1+\rho}(z))^d\times\Ld^2(B_{1+\rho}(z)\setminus B(z))$ satisfying the following Stokes equations in a neighborhood of $B(z)$,
\begin{equation}\label{eq:zetaP-re}
\left\{\begin{array}{ll}
-\triangle\zeta+\nabla P=0,&\text{in $B_{1+\rho}(z)\setminus B(z)$},\\
\Div(\zeta)=0,&\text{in $B_{1+\rho}(z)\setminus B(z)$},\\
\D(\zeta)=0,&\text{in $B(z)$},\\
\int_{\partial B(z)}\sigma(\zeta,P)\nu=0,&\\
\int_{\partial B(z)}\Theta(x-z)\cdot\sigma(\zeta,P)\nu=0,&\forall\Theta\in\Md^\Skew,
\end{array}\right.
\end{equation}
and given a finite subset $Y\subset Q_L$ of ``background'' positions such that $\{z\}\cup Y$ satisfies~\eqref{eq:dist-Y},
we denote by $\Jc_{L;Y}^z\zeta\in H^1_\per(Q_L)^d$ the solution of the following Stokes problem,
\begin{equation*}
\left\{\begin{array}{ll}
-\triangle \Jc^{z}_{L;Y}\zeta+\nabla \Qc^{z}_{L;Y}\zeta=-\delta_{\partial B(z)}\sigma(\zeta,P)\nu,&\text{in $ Q_L\setminus \cup_{y\in Y}B(y)$},\\
\Div(\Jc^{z}_{L;Y}\zeta)=0,&\text{in $ Q_L\setminus \cup_{y\in Y}B(y)$},\\
\D(\Jc^{z}_{L;Y}\zeta)=0,&\text{in $\cup_{y\in Y}B(y)$},\\
\int_{\partial B(y)}\sigma(\Jc_{L;Y}^z\zeta,\Qc_{L;Y}^z\zeta)\nu=0,&\forall y\in Y,\\
\int_{\partial B(y)}\Theta (x-y)\cdot\sigma(\Jc_{L;Y}^z\zeta,\Qc_{L;Y}^z\zeta)\nu=0,&\forall\Theta\in\Md^\Skew,~\forall y\in Y.
\end{array}\right.
\end{equation*}
These operators $\{\Jc_{L;Y}^z\}_{z,Y}$ describe the fluid velocity generated by localized force dipoles in the presence of a finite number of rigid inclusions and are thus viewed as Stokeslets for the problem with rigid inclusions.
In view of our upcoming analysis (see in particular cancellation properties in Lemma~\ref{lem:new-cancel} below), we further extend the definition of $\Jc_{L;Y}^z$
when the support~$B(z)$ of the force dipole intersects rigid inclusions $\cup_{y\in Y}B(y)$ or the cell boundary~$\partial Q_L$, which was excluded above by assuming that $\{z\}\cup Y$ satisfies~\eqref{eq:dist-Y}. A convenient way to proceed is as follows: given $z\in Q_L$ and $Y\subset Q_L$ with only $Y$ satisfying~\eqref{eq:dist-Y}, we define $\Jc_{L;Y}^z\zeta\in H^1_\per(Q_L)^d$ as the solution of the following Stokes problem,
\begin{equation}\label{eq:def-JLy|z}
\left\{\begin{array}{ll}
-\triangle \Jc^{z}_{L;Y}\zeta+\nabla \Qc^{z}_{L;Y}\zeta=-\delta_{\partial B^L(z)}\sigma(\zeta,P)\nu,&\text{in $ Q_L\setminus \cup_{y\in Y}B(y)$},\\
\Div(\Jc^{z}_{L;Y}\zeta)=0,&\text{in $ Q_L\setminus \cup_{y\in Y}B(y)$},\\
\D(\Jc^{z}_{L;Y}\zeta)=0,&\text{in $\cup_{y\in Y\setminus Y_z}B(y)$},\\
\int_{\partial B(y)}\sigma(\Jc_{L;Y}^z\zeta,\Qc_{L;Y}^z\zeta)\nu=0,&\forall y\in Y\setminus Y_z,\\
\vspace{-0.4cm}&\\
\int_{\partial B(y)}\Theta (x-y)\cdot\sigma(\Jc_{L;Y}^z\zeta,\Qc_{L;Y}^z\zeta)\nu=0,&\forall\Theta\in\Md^\Skew,~\forall y\in Y\setminus Y_z,
\\
\vspace{-0.38cm}&\\
\Jc^{z}_{L;Y}\zeta=V_z+\Theta_z (x-z),&\text{in $\cup_{y\in  Y_z}B(y)$,}\\
&\quad\text{for some $V_z \in \R^d, \Theta_z \in\Md^\Skew$},
\\
\sum_{y\in  Y_z} \int_{\partial B(y)}\sigma(\Jc_{L;Y}^z\zeta,\Qc_{L;Y}^z\zeta)\nu
\\
\hspace{1cm}=\sum_{y\in Y_z}\int_{B(y)\cap\partial B^L(z)}\sigma(\zeta,P)\nu,&
\\
\vspace{-0.38cm}&\\
\sum_{y\in  Y_z} \int_{\partial B(y)}\Theta (x-z)\cdot\sigma(\Jc_{L;Y}^z\zeta,\Qc_{L;Y}^z\zeta)\nu&\\
\hspace{1cm}=\sum_{y\in Y_z}\int_{B(y)\cap\partial B^L(z)}\Theta(x-z)\cdot\sigma(\zeta,P)\nu,&\forall\Theta\in\Md^\Skew,
\end{array}\right.
\end{equation}
where $B^L(z):=(B(z)+L\Z^d)\cap Q_L$ stands for the periodization of the ball $B(z)$ in~$Q_L$,
where we have set $Y_z:= \{y\in Y:B(y)\cap B^L(z)\ne\varnothing\}$, and where we have implicitly extended $(\zeta,P)$ periodically to $B_{1+\rho}(z)+L\Z^d$. We emphasize that these equations are equivalent to the previous simpler ones when $\{z\}\cup Y$ satisfies~\eqref{eq:dist-Y} (hence $Y_z=\varnothing$).
The solution~$\Jc_{L;Y}^z\zeta$ is only defined up to a rigid motion in $ Q_L$, which we fix by further choosing
\[\int_{ Q_L}\Jc_{L;Y}^z\zeta~=~0,\qquad\int_{ Q_L}\nabla\Jc_{L;Y}^z\zeta~\in~\Md_0^\Sym.\]
Note that $\Jc_{L;Y}^z\zeta$  depends of course on the pair~$(\zeta,P)$, not only on $\zeta$, but we leave the pressure field implicit in the notation for convenience. We further define
\[\Jc_{L}^z\zeta\,:=\,\Jc_{L;\varnothing}^z\zeta,\]
for which the defining Stokes problem~\eqref{eq:def-JLy|z} reduces to
\begin{equation}\label{eq:def-JLy}
-\triangle \Jc^{z}_{L}\zeta+\nabla \Qc^{z}_{L}\zeta=-\delta_{\partial B^L(z)}\sigma(\zeta,P)\nu,\qquad
\Div(\Jc^{z}_{L}\zeta)=0,\qquad\text{in $ Q_L$},
\end{equation}
and we define $\Jc_{Y}^z\zeta,\Jc^z\zeta$ as the corresponding operators on whole space, that is, with $B^L(z)$ and $Q_L$ replaced by $B(z)$ and $\R^d$, respectively, in~\eqref{eq:def-JLy|z} and~\eqref{eq:def-JLy}.
In these terms, as in~\eqref{eq:cordiff-JLHn}, given $Y,Z\subset Q_L$, provided that $Y\cup Z$ satisfies~\eqref{eq:dist-Y}, the equation~\eqref{eq:corr-diff-repeat} for corrector differences allows to decompose
\begin{equation}\label{eq:cordiff-JLHn-re}
\delta^Z\psi_L^Y\,=\,\sum_{z\in Z}\Jc_{L;Y}^z\delta^{Z\setminus\{z\}}(\psi_L^{\{z\}\cup Y}+Ex).
\end{equation}
The above definition~\eqref{eq:def-JLy|z} of $\Jc_{L;Y}^z$, with the particular choice of the extension to all~\mbox{$z\in Q_L$}, is dictated by the following key observation. This constitutes the precise cancellation property that we shall repeatedly use for the explicit renormalization of cluster formulas.

\begin{lem}[Cancellation property]\label{lem:new-cancel}
For any $Y\subset Q_L$ satisfying~\eqref{eq:dist-Y}, and for any function~$\zeta$ satisfying~\eqref{eq:zetaP-re} around $z=0$, we have for $\zeta^z:=\zeta(\cdot-z)$,
\begin{equation*}
\int_{Q_L}\big(\Jc_{L;Y}^z\zeta^z\big)\,dz\,=\,0.
\qedhere
\end{equation*}
\end{lem}

\begin{proof}
Integrating equations~\eqref{eq:def-JLy|z} for $\Jc^z_{L;Y}\zeta^z$ over $z$, and noting that
\begin{eqnarray*}
\int_{Q_L}\Big(\sum_{y\in  Y_z}\int_{B(y)\cap \partial B^L(z)}\sigma(\zeta^z,P^z)\nu\Big)\,dz
&=&\sharp Y|B|\int_{\partial B}\sigma(\zeta,P)\nu\\
&=&0,
\end{eqnarray*}
and similarly
\begin{eqnarray*}
\lefteqn{\int_{Q_L}\Big(\delta_{\partial B^L(z) \setminus \cup_{y\in Y} B(y)}\sigma(\zeta^z,P^z)\nu\Big)\,dz}
\\
&=&\int_{Q_L} \int_{\partial B^L(z)}\sigma(\zeta^z,P^z)\nu\,dz-\int_{Q_L}\Big(\sum_{y\in  Y_z}\int_{B(y)\cap \partial B^L(z)}\sigma(\zeta^z,P^z)\nu\Big)\,dz
\\
&=&|Q_L\setminus\cup_{y\in Y} B(y)|\int_{\partial B}\sigma(\zeta,P)\nu\,=\,0,
\end{eqnarray*}
the conclusion follows from the uniqueness of the solution to the Stokes problem~\eqref{eq:def-JLy|z}.
\end{proof}

Next, we establish optimal decay estimates for these operators $\{\Jc_{L;Y}^z\}_{z,Y}$, which are shown to coincide with the decay for the explicit Stokeslets $\{\Jc_L^z\}_z$ associated with the problem in free space without rigid inclusions.
This result corresponds to Lemma~\ref{lem:decay} and the proof is postponed to Appendix~\ref{append:ell} in form of~Lemma~\ref{*lem:decay-re}.

\begin{lem}[Decay of Stokeslets with rigid inclusions]\label{lem:decay-re}
Let $z\in Q_L$, let $(\zeta,P)$ satisfy~\eqref{eq:zetaP-re} at $z$,
and let $Y\subset Q_L$ satisfy~\eqref{eq:dist-Y}.
Then, we have for all~\mbox{$x\in Q_L$},
\begin{align*}
\Big(\int_{B^L(x)}|\!\D(\Jc_{L;Y}^z\zeta)|^2\Big)^\frac12&~\lesssim_{\sharp Y}~\langle(x-z)_L\rangle^{-d}\Big(\int_{B_{1+\rho}(z)}|\!\D(\zeta)|^2\Big)^\frac12,
\\
\Big(\int_{B(x)}|\!\D(\Jc_{Y}^z\zeta)|^2\Big)^\frac12&~\lesssim_{\sharp Y}~\langle x-z\rangle^{-d}\Big(\int_{B_{1+\rho}(z)}|\!\D(\zeta)|^2\Big)^\frac12.\qedhere
\end{align*}
\end{lem}

Finally, since we aim at finite-volume approximation error estimates, we need to quantify the difference
 $\Jc_{L;Y}^z-\Jc_{Y}^z$ between periodized and whole-space Stokeslets.
The proof is postponed to Appendix~\ref{append:ell} in form of~Lemma~\ref{*lem:period-est}.
We emphasize that the stated bounds are not optimal, but will be good enough for our purposes.

\begin{lem}[Periodization error]\label{lem:period-est}
Let $z\in Q_L$, let $(\zeta,P)$ satisfy~\eqref{eq:zetaP-re} at $z$, and let~\mbox{$Y\subset Q_L$} such that $\{z\}\cup Y$ satisfies~\eqref{eq:dist-Y}.
Then, we have for all $x\in Q_{L}$,
\begin{multline*}
\Big(\int_{B^L_{1+\rho}(x)}|\!\D(\Jc_{L;Y}^z\zeta-\Jc_{Y}^z\zeta)|^2\Big)^\frac12
\,\lesssim_{\sharp Y}\,\Big(\int_{B_{1+\rho}(z)}|\!\D(\zeta)|^2\Big)^\frac12\\
\times\Big(\mathds{1}_{|x-z|>\frac L4} \langle (x-z)_L\rangle^{-d}
+\mathds{1}_{|x-z| \le \frac L4}\dist(Y \setminus\{x,z\},\partial Q_L)^{-d}\Big),
 \end{multline*}
where we set for notational convenience $\dist(\varnothing, \partial Q_L):=L$,
and where we denote by $B^L_r(z)=(B_r(z)+L\Z^d)\cap Q_L$ the periodization of the ball~$B_r(z)$ in~$Q_L$.
In addition,
\begin{equation*}
\Big(\int_{B^L_{1+\rho}(x)}|\!\D(\psi_L^Y-\psi^Y)|^2\Big)^\frac12\,\lesssim_{\sharp Y} \, \Big(\langle\dist(x,\partial Q_L)\rangle+\langle\dist(Y\setminus\{x\},\partial Q_L)\rangle\Big)^{-d} .\qedhere
\end{equation*}
\end{lem}

\subsection{Explicit renormalization of cluster formulas}\label{sec:explicit}
This section is devoted to the proof of Theorem~\ref{th:renorm-B23}.
We first describe the explicit renormalization of the second and third cluster coefficients~$\Bb^2$ and~$\Bb^3$,
cf.~Propositions~\ref{prop:B2-ren} and~\ref{prop:B3-ren} below, before turning to the general case, cf.~Proposition~\ref{prop:Bgen-ren}.
For notational simplicity, we assume that particles are spherical with unit radius, \mbox{$I_n=B(x_n)$}, but we emphasize that the general case follows along the same lines under the independence assumption~\ref{B1}. More precisely, it suffices to replace each occurence of spherical particles below by iid random shapes and to further take the expectation with respect to the latter; we omit the detail.

\subsubsection{Explicit renormalization of $\Bb^2$: Batchelor--Green formula}
We start with the analysis of $\Bb^2$ and rigorously establish the so-called Batchelor--Green formula~\cite{BG-72}.

\begin{prop}[Batchelor--Green renormalization of $\Bb^2$]\label{prop:B2-ren}
Let~\ref{H0} and~\ref{Gunif} hold, and assume for simplicity that particles are spherical with unit radius, $I_n=B(x_n)$.
Let also the mixing assumption~\emph{\ref{Mix-om-n}} hold to order $n=2$ with some non-increasing rate \mbox{$\omega\in C^\infty_b(\R^+)$} satisfying the Dini condition $\int_1^\infty \tfrac1t\,\omega(t)\,dt<\infty$, as well as the doubling condition $\omega(2t) \simeq \omega(t)$ for all $t\ge0$.
Then, the infinite-volume second-order cluster coefficient~$\Bb^2$ defined in~\eqref{eq:form-Bj} can be expressed as follows,
\begin{equation}\label{eq:lim-form-B2}
E:\Bb^2E\,=\,\int_{\R^d}\Big(\int_{\partial B}\psi^z\cdot\sigma^0\nu\Big)h_2(0,z)\,dz
+\int_{\R^d}\Big(\int_{\partial B}\psi^z\cdot\delta^{z}\sigma^{0}\nu\Big)f_2(0,z)\,dz,
\end{equation}
where both integrals are absolutely converging and where we use the notation~\eqref{eq:corr-xy-notation}.
In addition, the following estimates hold:
\begin{enumerate}[(i)]
\item \emph{Uniform cluster estimate:}
\[|\Bb^2_L|\,\lesssim\,\lambda_2(\Pc)+\int_1^\infty \tfrac1t\big(\omega(t)\wedge\lambda_2(\Pc)\big)\,dt,\]
hence, in case of an algebraic weight $\omega(t)\le Ct^{-\beta}$ for some $C,\beta>0$,
\[|\Bb^2_L|\,\lesssim\,\lambda_2(\Pc)|\!\log\lambda(\Pc)|.\]
\item \emph{Periodization error estimate:}
\begin{equation*}
|\Bb^2_L-\Bb^2|\,\lesssim\,
(\omega(L)+ \tfrac1L)  \log L+\int_1^\infty\tfrac1{t+L}\,\omega(t)\,dt.
\end{equation*}
\item \emph{Uniform remainder estimate:} If~\emph{\ref{Mix-om-n}} further holds with $n=3$, then
\begin{equation*}
|R_L^{2}|\,\lesssim\,\lambda_2(\Pc)
+\int_1^\infty\tfrac{1}t\big(\omega(t)\wedge\lambda_2(\Pc)\big)\,dt
+\int_1^\infty\tfrac{\log t}t\big(\omega(t)\wedge\lambda_3(\Pc)\big)\,dt,
\end{equation*}
hence, in case of an algebraic weight $\omega(t)\le Ct^{-\beta}$ for some $C,\beta>0$,
\begin{equation*}
|R_{L}^{2}|\,\lesssim\,\lambda_2(\Pc)|\!\log\lambda(\Pc)|+\lambda_3(\Pc)|\!\log\lambda(\Pc)|^2.\qedhere
\end{equation*}
\end{enumerate}
\end{prop}

\begingroup\allowdisplaybreaks
\begin{proof}
We split the proof into four steps.
Given $E\in\Md_0^\Sym$ with $|E|=1$, for notational convenience, we write $\Bb_L^2$, $\Bb^2$, and $R^2_L$ for $E:\Bb_L^2E$, $E:\Bb^2E$, and $E:R^2_LE$.

\medskip
\step{1} Reformulation of $\Bb^2_L$:
\begin{multline}\label{eq:BL2-rewr}
\Bb_L^2\,=\,L^{-d}\iint_{Q_{L,\rho}\times Q_{L,\rho}}\Big(\int_{\partial B(y)}\psi^{z}_{L}\cdot\sigma^{y}_{L}\nu\Big)\,h_2(y,z)\,dydz\\
+L^{-d}\iint_{Q_{L,\rho}\times Q_{L,\rho}}\Big(\int_{\partial B(y)}\psi^{z}_{L}\cdot\delta^{z}\sigma^{y}_{L}\nu\Big)\,f_2(y,z)\,dydz\\
-\lambda(\Pc)^2L^{-d}\iint_{Q_{L,\rho}\times (Q_L\setminus Q_{L,\rho})}\Big(\int_{\partial B(y)}\psi^{z}_{L}\cdot\sigma^{y}_{L}\nu\Big)\,dydz,
\end{multline}
where we recall the short-hand notation $Q_{L,\rho}=Q_{L-2(\ell\vee(1+\rho))}$, cf.~\eqref{e.set-perio}.

\medskip\noindent
By definition, cf.~\eqref{eq:form-clust2}, the finite-volume approximation $\Bb^2_L$ is given by
\[\Bb_L^2\,=\,L^{-d}\sum_{m\ne n}\expecM{\int_{\partial B(x_{n,L})}\psi_L^{\{m\}}\cdot\sigma^{\{m,n\}}_L\nu}.\]
Decomposing $\sigma^{\{m,n\}}_L=\sigma^{\{n\}}_L+\delta^{\{m\}}\sigma^{\{n\}}_L$, this turns into
\[\Bb_L^2\,=\,L^{-d}\sum_{m\ne n}\expecM{\int_{\partial B(x_{n,L})}\psi^{\{m\}}_L\cdot\sigma^{\{n\}}_L\nu}
+L^{-d}\sum_{m\ne n}\expecM{\int_{\partial B(x_{n,L})}\psi^{\{m\}}_L\cdot\delta^{\{m\}}\sigma^{\{n\}}_L\nu}.\]
In terms of multi-point densities, cf.~\eqref{eq:def-fj}, recalling the choice of the finite-volume approximation with $\Pc_L=\{x_n:x_n\in\Qd\}$, cf.~\eqref{e.set-perio}, and using the notation~\eqref{eq:corr-xy-notation},
we can rewrite
\begin{multline}\label{eq:pre-BL2-rewr}
\Bb_L^2\,=\,
L^{-d}\iint_{Q_{L,\rho}\times Q_{L,\rho}}\Big({\int_{\partial B(y)}\psi^{z}_{L}\cdot\sigma^{y}_{L}\nu}\Big)\,f_2(y,z)\,dydz\\
+L^{-d}\iint_{Q_{L,\rho}\times Q_{L,\rho}}\Big({\int_{\partial B(y)}\psi^{z}_{L}\cdot\delta^{z}\sigma^{y}_{L}\nu}\Big)\,f_2(y,z)\,dydz,
\end{multline}
and it remains to further analyze the first right-hand side term.
For that purpose, we note that $\psi_{L}^z=\psi_{L}^0(\cdot-z)$ and $\sigma_{L}^y=\sigma_{L}^0(\cdot-y)$, so that
\[\int_{\partial B(y)}\psi^{z}_{L}\cdot\sigma^{y}_{L}\nu\,=\,\int_{\partial B}\psi^{0}_{L}(\cdot+y-z)\cdot\sigma^{0}_{L}\nu.\]
Integrating over $z$, using the periodicity of $\psi_{L}^0$, and recalling that $\int_{\partial B}\sigma_{L}^0\nu=0$, we deduce
\begin{equation}\label{eq:cancel-BG}
\int_{Q_L}\Big(\int_{\partial B(y)}\psi^{z}_{L}\cdot\sigma^{y}_{L}\nu\Big)dz\,=\,0.
\end{equation}
Decomposing $f_2(y,z)=\lambda(\Pc)^2+h_2(y,z)$ in terms of the $2$-point correlation function $h_2$, and then using this cancellation property~\eqref{eq:cancel-BG} to reformulate the first right-hand side term in~\eqref{eq:pre-BL2-rewr}, the claim~\eqref{eq:BL2-rewr} follows.

\medskip
\step{2} Uniform estimate: proof of~(i).\\
Using the boundary conditions and the incompressibility constraints to smuggle in arbitrary constants in the different factors, as in the proof of~\eqref{eq:subtr-press-cond},
and appealing to the trace estimates of Lemma~\ref{lem:trace-0}, we find
\begin{eqnarray}
\Big|\int_{\partial B(y)}\psi_L^z\cdot\sigma_L^y\nu\Big|&\lesssim&\Big(\int_{B(y)}|\!\D(\psi_L^z)|^2\Big)^\frac12\Big(\int_{B_{1+\rho}(y)}|\!\D(\psi_L^y)|^2\Big)^\frac12,\nonumber\\
\Big|\int_{\partial B(y)}\psi_L^z\cdot\delta^z\sigma_L^y\nu\Big|&\lesssim&\Big(\int_{B(y)}|\!\D(\psi_L^z)|^2\Big)^\frac12\Big(\int_{B_{1+\rho}(y)}|\!\D(\delta^z\psi_L^y)|^2\Big)^\frac12.\label{eq:util-trace-psisig}
\end{eqnarray}
Hence, applying the decay estimates of Lemma~\ref{lem:decay-re} to $\psi_L^z=\Jc_{L}^z(\psi_L^z+Ex)$ and to~$\delta^z\psi_L^y=\Jc_{L;y}^z(\psi_L^{y,z}+Ex)$, combined with the energy estimate~\eqref{eq:det-bnd-phiFk00}, we get
\begin{eqnarray}
\Big|\int_{\partial B(y)}\psi_L^z\cdot\sigma_L^y\nu\Big|&\lesssim&\langle(y-z)_L\rangle^{-d},\nonumber\\
\Big|\int_{\partial B(y)}\psi_L^z\cdot\delta^z\sigma_L^y\nu\Big|&\lesssim&\langle(y-z)_L\rangle^{-2d}.\label{eq:est-psi-delsig-2d}
\end{eqnarray}
Formula~\eqref{eq:BL2-rewr} for $\Bb_L^2$ can then be estimated as follows,
\begin{multline*}
|\Bb_L^2|\,\lesssim\,L^{-d}\iint_{Q_{L}\times Q_{L}}\langle (y-z)_L\rangle^{-d}|h_2(y,z)|\,dydz\\
+L^{-d}\iint_{Q_{L}\times Q_{L}}\langle (y-z)_L\rangle^{-2d}f_2(y,z)\,dydz
+\lambda(\Pc)^2L^{-d}\iint_{Q_{L}\times (Q_L\setminus Q_{L,\rho})}\langle (y-z)_L\rangle^{-d}\,dydz.
\end{multline*}
In terms of the two-point intensity, recalling that $\overline\lambda_2(\Pc)=\lambda_2(\Pc)$ by Lemma~\ref{lem:gen-cond-lambd}(ii) in view of~\ref{Mix-om-n}, we can estimate the $2$-point correlation function as follows: appealing both to~\eqref{eq:high-intens-correl} and to the decay assumption~\ref{Mix-om-n}, and arguing as in Lemma~\ref{lem:gen-cond-lambd}(iii), we find
\begin{multline}\label{eq:example-Mix-om-n}
\iint_{Q_L\times Q_L}\langle(y-z)_L\rangle^{-d}|h_2(y,z)|\,dydz\\
\,\lesssim\,\iint_{Q_L\times Q_L}\langle(y-z)_L\rangle^{-d}\big(\omega(|y-z|)\wedge\lambda_2(\Pc)\big)\,dydz.
\end{multline}
The above then becomes
\begin{multline*}
|\Bb_L^2|\,\lesssim\,L^{-d}\iint_{Q_{L}\times Q_{L}}\langle (y-z)_L\rangle^{-d}\big(\omega(|y-z|)\wedge\lambda_2(\Pc)\big)\,dydz\\
+\lambda_2(\Pc)\bigg(L^{-d}\iint_{Q_{L}\times Q_{L}}\langle (y-z)_L\rangle^{-2d}dydz
+L^{-d}\iint_{Q_{L}\times (Q_L\setminus Q_{L,\rho})}\langle (y-z)_L\rangle^{-d}\,dydz\bigg).
\end{multline*}
As $\omega$ is non-increasing and as $|(y-z)_L|\le |y-z|$, the first right-hand side term is bounded by
\begin{eqnarray*}
\lefteqn{L^{-d}\iint_{Q_{L}\times Q_{L}}\langle (y-z)_L\rangle^{-d}\big(\omega(|y-z|)\wedge\lambda_2(\Pc)\big)\,dydz}
\\
&\le&L^{-d}\iint_{Q_{L}\times Q_{L}}\langle (y-z)_L\rangle^{-d}\big(\omega(|(y-z)_L|)\wedge\lambda_2(\Pc)\big)\,dydz
\\
&\lesssim& \int_{Q_{L}} \langle z\rangle^{-d}\big(\omega(|z|)\wedge\lambda_2(\Pc)\big)\,dz\\
&\lesssim&\int_1^\infty \tfrac1t\big(\omega(t)\wedge\lambda_2(\Pc)\big)\,dt,
\end{eqnarray*}
and the conclusion~(i) follows after similarly estimating the other terms.

\medskip
\step3 Convergence result: proof of~(ii).\\
Comparing identities~\eqref{eq:lim-form-B2} and~\eqref{eq:BL2-rewr}, we have
\begin{equation}\label{eq:decomp-B2L-B2}
|\Bb^2_L-\Bb^2|\,\le\,A_L^1+A_L^2+A_L^3,
\end{equation}
where we have set for abbreviation
\begin{eqnarray*}
A_L^1&:=&\bigg|L^{-d}\iint_{Q_{L,\rho}\times Q_{L,\rho}}\Big(\int_{\partial B(y)}\psi^{z}_{L}\cdot\sigma^{y}_{L}\nu\Big)\,h_2(y,z)\,dydz\\
&&\hspace{4cm}-\int_{\R^d}\Big(\int_{\partial B}\psi^z\cdot\sigma^0\nu\Big)h_2(0,z)\,dz\bigg|,\\
A_L^2&:=&\bigg|L^{-d}\iint_{Q_{L,\rho}\times Q_{L,\rho}}\Big(\int_{\partial B(y)}\psi^{z}_{L}\cdot\delta^{z}\sigma^{y}_{L}\nu\Big)\,f_2(y,z)\,dydz\\
&&\hspace{4cm}-\int_{\R^d}\Big(\int_{\partial B}\psi^z\cdot\delta^{z}\sigma^{0}\nu\Big)f_2(0,z)\,dz\bigg|,\\
A_L^3&:=&\lambda(\Pc)^2L^{-d}\iint_{Q_{L,\rho}\times (Q_L\setminus Q_{L,\rho})}\Big|\int_{\partial B(y)}\psi^{z}_{L}\cdot\sigma^{y}_{L}\nu\Big|\,dydz.
\end{eqnarray*}
We estimate these three contributions separately and we start with $A_L^1$.
Noting that stationarity yields $h_2(y,z)=h_2(0,z-y)$, and using that $\psi^z=\psi^{z-y}(\cdot-y)$  and $\sigma^y=\sigma^0(\cdot-y)$, we can write
\begin{eqnarray*}
\lefteqn{L^{-d}\iint_{Q_{L,\rho}\times Q_{L,\rho}}\Big(\int_{\partial B(y)}\psi^{z}\cdot\sigma^{y}\nu\Big)\,h_2(y,z)\,dydz}\\
&=&L^{-d}\iint_{Q_{L,\rho}\times Q_{L,\rho}}\Big(\int_{\partial B}\psi^{z-y}\cdot\sigma^{0}\nu\Big)\,h_2(0,z-y)\,dydz\\
&=&\int_{\R^d}L^{-d}|\Qd\cap(\Qd+z)|\Big(\int_{\partial B}\psi^{z}\cdot\sigma^{0}\nu\Big)\,h_2(0,z)\,dz,
\end{eqnarray*}
and thus, setting for abbreviation $\gamma^2_{L,\rho}(z):=L^{-d}|\Qd\cap(\Qd+z)|$, we get by the triangle inequality,
\begin{align}
&A_L^1\,\lesssim\,
\int_{\R^d}\big(1-\gamma^2_{L,\rho}(z)\big)\Big|\int_{\partial B}\psi^{z}\cdot\sigma^0\nu\Big||h_2(0,z)|\,dz\label{eq:estim-AL1-B2-1}\\
&~+L^{-d}\iint_{Q_{L}\times Q_{L}} \bigg(\Big|\int_{\partial B(y)}(\psi^{z}_{L}-\psi^z)\cdot\sigma^{y}_{L}\nu\Big|+\Big|\int_{\partial B(y)}\psi^{z}\cdot(\sigma^y_L-\sigma^y)\nu\Big|\bigg)|h_2(y,z)|\,dydz.\nonumber
\end{align}
Appealing to the trace estimates of Lemma~\ref{lem:trace}, decomposing
\begin{eqnarray*}
\psi^{z}_{L}-\psi^z&=&\Jc^z_L(\psi^z_L+Ex)-\Jc^z(\psi^z+Ex)\\
&=&\Jc^z(\psi^z_L-\psi^z)+(\Jc^z_{L}-\Jc^z)(\psi^z_L+Ex),
\end{eqnarray*}
using the decay estimates of Lemma~\ref{lem:decay-re}, the periodization error estimates of Lemma~\ref{lem:period-est}, and the energy estimate~\eqref{eq:det-bnd-phiFk00},
we find
\begin{eqnarray*}
\lefteqn{\Big|\int_{\partial B(y)}(\psi^{z}_{L}-\psi^z)\cdot\sigma^{y}_{L}\nu\Big|}\\
&\lesssim&\Big(\int_{B(y)}|\!\D(\psi^{z}_{L}-\psi^z)|^2\Big)^\frac12\Big(\int_{Q_L}|\!\D(\psi^{y}_{L}+Ex)|^2\Big)^\frac12\\
&\lesssim&\Big(\int_{B(y)}|\!\D(\Jc^z(\psi^{z}_{L}-\psi^z))|^2+|\!\D((\Jc_L^z-\Jc^z)(\psi_L^z+Ex))|^2\Big)^\frac12\\
&\lesssim&\langle y-z\rangle^{-d}\Big(\int_{B_{1+\rho}(z)}|\!\D(\psi^{z}_{L}-\psi^z)|^2\Big)^\frac12\\
&&\quad+\Big(\mathds1_{|y-z|>\frac L4}\langle(y-z)_L\rangle^{-d}+\mathds1_{|y-z|\le\frac L4}L^{-d}\Big)\Big(\int_{Q_L}|\!\D(\psi_L^z+Ex)|^2\Big)^\frac12\\
&\lesssim&\mathds1_{|y-z|>\frac L4}\langle(y-z)_L\rangle^{-d}+\mathds1_{|y-z|\le\frac L4}L^{-d},
\end{eqnarray*}
and similarly,
\begin{eqnarray*}
\Big|\int_{\partial B}\psi^{z}\cdot\sigma^0\nu\Big|&\lesssim&\langle z\rangle^{-d},\\
\Big|\int_{\partial B(y)}\psi^{z}\cdot(\sigma^y_L-\sigma^y)\nu\Big|&\lesssim&L^{-d}\langle y-z\rangle^{-d}.
\end{eqnarray*}
Inserting these estimates into~\eqref{eq:estim-AL1-B2-1}, we get
\begin{multline*}
A_L^1\,\lesssim\,
\int_{\R^d}\big(1-\gamma^2_{L,\rho}(z)\big)\langle z\rangle^{-d}|h_2(0,z)|\,dz\\
+L^{-d}\iint_{Q_{L}\times Q_{L}} \Big(\mathds{1}_{|y-z|>\frac L4} \langle (y-z)_L\rangle^{-d}+
 \mathds{1}_{|y-z| \le \frac L4}L^{-d}\Big) \,|h_2(y,z)|\,dydz.
\end{multline*}
Using the decay assumption~\ref{Mix-om-n} for $h_2$, noting that
\begin{equation}\label{eq:QL+QL}
1-\gamma^2_{L,\rho}(z)\,=\,1-L^{-d}|\Qd\cap(\Qd+z)|\,\lesssim\,\tfrac{|z|}L\wedge1,
\end{equation}
and using that $\int_{Q_L}\langle y\rangle^{-d}dy\lesssim\log L$,
we conclude after straightforward simplifications,
\begin{eqnarray}
A_L^1&\lesssim&\int_{\R^d}\big(\tfrac{|z|}L\wedge1\big)\langle z\rangle^{-d}\omega(|z|)\,dz+\omega(L)\log L+L^{-d}\int_{Q_{2L}}\omega(|z|)\,dz\nonumber\\
&\lesssim&\omega(L)\log L+\int_0^\infty\tfrac1{t+L}\,\omega(t)\,dt.\label{eq:decomp-B2L-B2-A1}
\end{eqnarray}
We turn to the estimation of the second term $A_L^2$ in~\eqref{eq:decomp-B2L-B2}.
By stationarity, as above, we find
\begin{multline*}
A_L^2\,\lesssim\,\int_{\R^d}\big(1-\gamma^2_{L,\rho}(z)\big)\Big|\int_{\partial B}\psi^z\cdot\delta^{z}\sigma^{0}\nu\Big|f_2(0,z)\,dz\\
+L^{-d}\iint_{Q_{L}\times Q_{L}}\bigg(\Big|\int_{\partial B(y)}(\psi^{z}_{L}-\psi^{z})\cdot\delta^{z}\sigma^{y}_{L}\nu\Big|+\Big|\int_{\partial B(y)}\psi^{z}\cdot(\delta^{z}\sigma^{y}_{L}-\delta^{z}\sigma^{y})\nu\Big|\bigg)\,f_2(y,z)\,dydz.
\end{multline*}
Recalling $\psi_L^z-\psi^z=\Jc^z(\psi_L^z-\psi^z)+(\Jc_L^z-\Jc^z)(\psi_L^z+Ex)$, further decomposing
\begin{eqnarray*}
\delta^{z}\psi^{y}_{L}-\delta^{z}\psi^{y}&=&\Jc_{L;y}^{z}(\psi^{y,z}_{L}+Ex)-\Jc_y^z(\psi^{y,z}+Ex)\\
&=&\Jc_{y}^{z}(\psi^{y,z}_{L}-\psi^{y,z})+(\Jc_{L;y}^{z}-\Jc_y^z)(\psi^{y,z}_{L}+Ex),
\end{eqnarray*}
and using the trace estimates of Lemma~\ref{lem:trace}, the decay estimates of Lemma~\ref{lem:decay-re}, the periodization error estimates of Lemma~\ref{lem:period-est}, and the energy estimate~\eqref{eq:det-bnd-phiFk00}, we find
\begin{eqnarray*}
\Big|\int_{\partial B}\psi^z\cdot\delta^{z}\sigma^{0}\nu\Big|&\lesssim&\langle z\rangle^{-2d},\\
\Big|\int_{\partial B(y)}(\psi^{z}_{L}-\psi^{z})\cdot\delta^{z}\sigma^{y}_{L}\nu\Big|&\lesssim&\langle(y-z)_L\rangle^{-d}\big(d_L(y)+d_L(z)\big)^{-d}\\
\Big|\int_{\partial B(y)}\psi^{z}\cdot(\delta^{z}\sigma^{y}_{L}-\delta^{z}\sigma^{y})\nu\Big|&\lesssim&\langle y-z\rangle^{-2d}\big(d_L(y)+d_L(z)\big)^{-d}+L^{-d}\langle (y-z)_L\rangle^{-d}.
\end{eqnarray*}
where we have set for abbreviation $d_L(z):=\langle\dist(z,\partial Q_L)\rangle$.
Inserting these estimates into the above, we get
\begin{equation*}
A_L^2\,\lesssim\,\int_{\R^d}\big(1-\gamma^2_{L,\rho}(z)\big)\langle z\rangle^{-2d}f_2(0,z)\,dz\\
+L^{-d}\iint_{Q_{L}\times Q_{L}}\langle(y-z)_L\rangle^{-d}d_L(y)^{-d}f_2(y,z)\,dydz.
\end{equation*}
In terms of the two-point intensity, appealing to Lemma~\ref{lem:gen-cond-lambd}(iii),
recalling~\eqref{eq:QL+QL}, and using that $\int_{Q_L}\langle y\rangle^{-d}dy\lesssim\log L$ and $\int_{Q_L}d_L(y)^{-d}dy\lesssim L^{d-1}$, we deduce
\begin{eqnarray}
A_L^2&\lesssim&\lambda_2(\Pc)\int_{\R^d}\big(\tfrac{|z|}L\wedge1\big)\langle z\rangle^{-2d}dz
+\lambda_2(\Pc)L^{-d}\iint_{Q_{L}\times Q_{L}}\langle(y-z)_L\rangle^{-d}d_L(y)^{-d}\,dydz\nonumber\\
&\lesssim&\lambda_2(\Pc)\tfrac{\log L}L.\label{eq:decomp-B2L-B2-A2}
\end{eqnarray}
It remains to estimate the last term $A_L^3$ in~\eqref{eq:decomp-B2L-B2}.
Using again the trace estimates of Lemma~\ref{lem:trace}, the decay estimates of Lemma~\ref{lem:decay-re}, and the energy estimate~\eqref{eq:det-bnd-phiFk00}, we find
\[\Big|\int_{\partial B(y)}\psi^{z}_{L}\cdot\sigma^{y}_{L}\nu\Big|\,\lesssim\,\langle(y-z)_L\rangle^{-d},\]
and thus
\begin{eqnarray}
A_L^3&\lesssim&\lambda(\Pc)^2\,L^{-d}\iint_{Q_{L,\rho}\times (Q_L\setminus Q_{L,\rho})}\langle(y-z)_L\rangle^{-d}\,dydz\nonumber\\
&\lesssim&\lambda(\Pc)^2\,\tfrac{\log L}L.\label{eq:decomp-B2L-B2-A2-re}
\end{eqnarray}
Combining this with~\eqref{eq:decomp-B2L-B2}, \eqref{eq:decomp-B2L-B2-A1}, and~\eqref{eq:decomp-B2L-B2-A2}, the conclusion~(ii) follows.

\medskip
\step{4} Uniform remainder estimate: proof of~(iii).\\
The starting point is the refined estimate~\eqref{e.DGV-control-bis} on remainders, which reads in this case
\begin{multline*}
|R_{L}^{2}|\,\le\,\expecM{L^{-d}\sum_{n}\int_{B(x_{n,L})}\Big|\sum_{m:m\ne n}\D(\psi_{L}^{\{m\}})\Big|^2}\\
+\bigg|\,\expecM{L^{-d}\sum_{n}\int_{B(x_{n,L})}\Big(\sum_{m:m\ne n}\D(\psi_{L}^{\{m\}})\Big):\D(\hat\psi^{\{n\}}_{n,L})}\bigg|,
\end{multline*}
where we recall that $\hat\psi^{\{n\}}_{n,L}$ is defined by~\eqref{e.DG+6}.
Expanding the square and separating the different intersection patterns, this can be rewritten as follows,
in terms of multi-point densities,
\begin{multline*}
|R_{L}^{2}|\,\le\,L^{-d}\iint_{(Q_{L,\rho})^2}\Big(\int_{B(x)}|\!\D(\psi_{L}^y)|^2\Big)f_2(x,y)\,dxdy\\
+L^{-d}\bigg|\iiint_{(Q_{L,\rho})^3}\Big(\int_{B(x)}\D(\psi_{L}^y):\D(\psi_{L}^z)\Big)f_3(x,y,z)\,dxdydz\bigg|\\
+L^{-d}\bigg|\iint_{(Q_{L,\rho})^2}\Big(\int_{B(x)}\D(\psi_{L}^y):\D(\hat\psi^{x}_{x,L})\Big)f_2(x,y)\,dxdy\bigg|,
\end{multline*}
where we use the obvious notation for $\hat\psi^{x}_{x,L}$ such that $\hat \psi_{x_n,L}^{x_n}:=\hat \psi_{n,L}^{\{n\}}$.
Replacing $f_2,f_3$ by their expansions~\eqref{eq:dens-correl} in terms of correlation functions,
and noting that several contributions can be turned into boundary terms by application of the identity $\int_{Q_L}\D(\psi_L^y)\,dy=0$, we obtain
\begingroup\allowdisplaybreaks
\begin{align*}
|R_{L}^{2}|\,&\lesssim\,L^{-d}\iint_{(Q_{L,\rho})^2}\Big(\int_{B(x)}|\!\D(\psi_{L}^y)|^2\Big)f_2(x,y)\,dxdy\\
&+L^{-d}\bigg|\iiint_{(Q_{L,\rho})^3}\Big(\int_{B(x)}\D(\psi_{L}^y):\D(\psi_{L}^z)\Big)\big(\lambda(\Pc)h_2(y,z)+h_3(x,y,z)\big)\,dxdydz\bigg|\\
&+L^{-d}\bigg|\iint_{(Q_{L,\rho})^2}\Big(\int_{B(x)}\D(\psi_{L}^y):\D(\hat\psi^{x}_{x,L})\Big)h_2(x,y)\,dxdy\bigg|\\
&+\lambda(\Pc)L^{-d}\bigg|\iiint_{(Q_{L,\rho})^2\times(Q_L\setminus Q_{L,\rho})}\Big(\int_{B(x)}\D(\psi_{L}^y):\D(\psi_{L}^z)\Big)h_2(x,y)\,dxdydz\bigg|\\
&+\lambda(\Pc)^3L^{-d}\bigg|\iiint_{Q_{L,\rho}\times(Q_L\setminus Q_{L,\rho})^2}\Big(\int_{B(x)}\D(\psi_{L}^y):\D(\psi_{L}^z)\Big)\,dxdydz\bigg|\\
&+\lambda(\Pc)^2L^{-d}\bigg|\iint_{Q_{L,\rho}\times(Q_L\setminus Q_{L,\rho})}\Big(\int_{B(x)}\D(\psi_{L}^y):\D(\hat\psi^{x}_{x,L})\Big)\,dxdy\bigg|.
\end{align*}
\endgroup
Using~\eqref{eq:apest-tildepsin} to estimate $\hat\psi_{x,L}^x$ in terms of $\psi_L^x$,
\[\int_{B(x)}|\!\D(\hat\psi_{x,L}^x)|^2\,\lesssim\,\int_{B_{1+\rho}(x)}|\!\D(\psi_L^x)+E|^2\,\lesssim\,1,\]
and appealing to the decay estimates of Lemma~\ref{lem:decay-re}, we deduce
\begingroup\allowdisplaybreaks
\begin{align*}
|R_{L}^{2}|\,&\lesssim\,L^{-d}\iint_{(Q_{L,\rho})^2}\langle(x-y)_L\rangle^{-2d}f_2(x,y)\,dxdy\\
&+L^{-d}\iiint_{(Q_{L,\rho})^3}\langle(x-y)_L\rangle^{-d}\langle(x-z)_L\rangle^{-d}\Big(\lambda(\Pc)|h_2(y,z)|+|h_3(x,y,z)|\Big)\,dxdydz\\
&+L^{-d}\iint_{(Q_{L,\rho})^2}\langle(x-y)_L\rangle^{-d}|h_2(x,y)|\,dxdy\\
&+\lambda(\Pc)L^{-d}\iiint_{(Q_{L,\rho})^2\times(Q_L\setminus Q_{L,\rho})}\langle(x-y)_L\rangle^{-d}\langle(x-z)_L\rangle^{-d}|h_2(x,y)|\,dxdydz\\
&+\lambda(\Pc)^3L^{-d}\iiint_{Q_{L,\rho}\times(Q_L\setminus Q_{L,\rho})^2}\langle(x-y)_L\rangle^{-d}\langle(x-z)_L\rangle^{-d}\,dxdydz\\
&+\lambda(\Pc)^2L^{-d}\iint_{Q_{L,\rho}\times(Q_L\setminus Q_{L,\rho})}\langle(x-y)_L\rangle^{-d}\,dxdy.
\end{align*}
\endgroup
In terms of multi-point intensities, appealing to Lemma~\ref{lem:gen-cond-lambd}(ii)--(iii), and using both~\eqref{eq:high-intens-correl} and the decay assumption~\ref{Mix-om-n} to estimate correlation functions similarly as in~\eqref{eq:example-Mix-om-n}, the conclusion~(iii) follows after straightforward computations.
\end{proof}
\endgroup

\subsubsection{Explicit renormalization of $\Bb^3$}
The explicit renormalization of~$\Bb^2$ above is solely based on the simple and neat cancellation property~\eqref{eq:cancel-BG}. Higher-order cluster formulas require more subtle cancellations,
which can only be captured after suitably decomposing corrector differences in terms of elementary single-particle contributions as in~\eqref{eq:cordiff-JLHn-re}.
Before turning to the general case and proving Theorem~\ref{th:renorm-B23}, we start with a detailed account of the third-order cluster coefficient~$\Bb^3$, which contains all the necessary new ingredients.

\begingroup\allowdisplaybreaks
\begin{prop}[Renormalization of $\Bb^3$]\label{prop:B3-ren}
Let~\ref{H0} and~\ref{Gunif} hold, and assume for simplicity that particles are spherical with unit radius, $I_n=B(x_n)$.
Let also the mixing assumption~\emph{\ref{Mix-om-n}} hold to order $n=3$ with some non-increasing rate $\omega\in C^\infty_b(\R^+)$ satisfying the Dini type condition $\int_1^\infty \tfrac{\log t}t\omega(t)\,dt<\infty$, as well as the doubling condition $\omega(2t) \simeq \omega(t)$ for all $t\ge0$.
Then, the infinite-volume third-order cluster coefficient $\Bb^3$ defined in~\eqref{eq:form-Bj} can be expressed as follows,
\begin{align}\label{eq:lim-form-B3}
E:\Bb^3 E\,&=\,
3\iint_{\R^d\times\R^d}\Big({\int_{\partial B}\big(\Jc^y\Jc_{y}^z(\psi^z+Ex)\big) \cdot\sigma^{0}\nu}\Big)\big(\lambda(\Pc) h_2(0,z)+h_3(0,y,z)\big)\,dydz\nonumber\\
&+3\iint_{\R^d\times\R^d}\!\Big({\int_{\partial B}(\Jc^y\Jc_{y}^z\delta^y\psi^{z}) \cdot\sigma^{0}\nu}\Big)\big(f_3(0,y,z)-\lambda(\Pc)f_2(y,z)\big)\,dydz\nonumber\\
&+3\iint_{\R^d\times\R^d}\Big(\int_{\partial B} \big(\Jc^y\Jc_{y}^z(\psi^z+Ex)\big)\cdot\delta^y\sigma^0\nu\Big)\big(f_3(0,y,z)-\lambda(\Pc)f_2(0,y)\big)\,dydz\nonumber\\
&+3\iint_{\R^d\times\R^d}\Big({\int_{\partial B}(\Jc^z\delta^y\psi^z)\cdot\delta^y\sigma^0\nu}\Big)f_3(0,y,z)\,dydz\nonumber\\
&+3\iint_{\R^d\times\R^d}\Big({\int_{\partial B}(\Jc^y\Jc_{y}^z\delta^y\psi^z)\cdot\delta^y\sigma^0\nu}\Big)f_3(0,y,z)\,dydz\nonumber\\
&+\tfrac32\iint_{\R^d\times\R^d}\Big(\int_{\partial B}\delta^{y,z}\psi^\varnothing\cdot\delta^{y,z}\sigma^{0}\nu\Big)f_3(0,y,z)\,dydz,
\end{align}
where all the integrals are absolutely convergent and where we use the notation~\eqref{eq:corr-xy-notation}.
In addition, the following estimates hold:
\begin{enumerate}[(i)]
\item \emph{Uniform cluster estimate:}
\[|\Bb^3_L|\,\lesssim\,\lambda_3(\Pc)+\int_1^\infty\tfrac{\log t}{t}\big(\omega(t)\wedge\lambda_3(\Pc)\big)dt,\]
hence, in case of an algebraic weight $\omega(t)\le Ct^{-\beta}$ for some $C,\beta>0$,
\[|\Bb^3_L|\,\lesssim\,\lambda_3(\Pc)|\!\log\lambda(\Pc)|^2.\]
\item \emph{Periodization error estimate:} 
\begin{equation*}
 |\Bb^3_L-\Bb^3|\,\lesssim\,
\tfrac{\log L}L+\omega(L)(\log L)^2+\int_1^\infty\tfrac{\log t}{t+L}\,\omega(t)\,dt.
\end{equation*}
\item \emph{Uniform remainder estimate:} If~\emph{\ref{Mix-om-n}} further holds with $n=5$, then
\begin{equation*}
|R_L^3|\,\lesssim\,\lambda_3(\Pc)
+\sum_{j=3}^5\int_1^\infty\tfrac{(\log t)^{j-2}}t\big(\omega(t)\wedge\lambda_j(\Pc)\big)\,dt,
\end{equation*}
hence, in case of an algebraic weight $\omega(t)\le Ct^{-\beta}$ for some $C,\beta>0$,
\begin{equation*}
|R_L^3|\,\lesssim\,\lambda_3(\Pc)|\!\log\lambda(\Pc)|^2+\lambda_4(\Pc)|\!\log\lambda(\Pc)|^3+\lambda_5(\Pc)|\!\log\lambda(\Pc)|^4.\qedhere
\end{equation*}
\end{enumerate}
\end{prop}

\begin{proof}
We split the proof into four steps.
Given $E\in\Md_0^\Sym$ with $|E|=1$, for notational convenience, we write $\Bb_L^3$, $\Bb^3$, and $R^3_L$ for $E:\Bb_L^3E$, $E:\Bb^3E$, and $E:R^3_LE$.

\medskip
\step{1} Reformulation of $\Bb_L^3$
\begin{align}\label{eq:BL3-rewr}
&\Bb^3_L\,=\,
E_L^{3}\\
&+3L^{-d}\!\!\iiint_{(Q_{L,\rho})^3}\!\Big({\int_{\partial B(x)}(\Jc_L^y\Jc_{L;y}^z\bar \psi_L^z) \cdot\sigma^{x}_{L}\nu}\Big)\big(\lambda(\Pc) h_2(x,z)+h_3(x,y,z)\big)\,dxdydz\nonumber\\
&+3L^{-d}\!\!\iiint_{(Q_{L,\rho})^3}\!\Big({\int_{\partial B(x)}(\Jc_L^y\Jc_{L;y}^z\Jc_{L;z}^y \bar\psi_L^{y,z}) \cdot\sigma^{x}_{L}\nu}\Big)\big(f_3(x,y,z)-\lambda(\Pc)f_2(y,z)\big)\,dxdydz\nonumber\\
&+3L^{-d}\iiint_{(Q_{L,\rho})^3}\Big(\int_{\partial B(x)} (\Jc_L^y\Jc_{L;y}^z\bar\psi_L^z)\cdot\delta^y\sigma^x_L\nu\Big)\big(f_3(x,y,z)-\lambda(\Pc)f_2(x,y)\big)\,dxdydz\nonumber\\
&+3L^{-d}\iiint_{(Q_{L,\rho})^3}\Big({\int_{\partial B(x)}(\Jc_L^z\Jc_{L;z}^y\bar\psi_L^{y,z})\cdot\delta^y\sigma^x_L\nu}\Big)f_3(x,y,z)\,dxdydz\nonumber\\
&+3L^{-d}\iiint_{(Q_{L,\rho})^3}\Big({\int_{\partial B(x)}(\Jc_L^y\Jc_{L;y}^z\Jc_{L;z}^y\bar\psi_L^{y,z})\cdot\delta^y\sigma^x_L\nu}\Big)f_3(x,y,z)\,dxdydz\nonumber\\
&+\tfrac32L^{-d}\iiint_{(Q_{L,\rho})^3}\Big(\int_{\partial B(x)}\delta^{y,z}\psi_L^\varnothing\cdot\delta^{y,z}\sigma^{x}_{L}\nu\Big)f_3(x,y,z)\,dxdydz,\nonumber
\end{align}
where we henceforth use the short-hand notation $\bar \psi_L^Y:=\psi_L^Y+Ex$,
and where $E_L^{3}$ stands for boundary terms,
\begin{align*}
E_L^{3}~&:=~3\lambda(\Pc)^3L^{-d}\iiint_{\Qd\times(Q_L\setminus\Qd)^2}\Big({\int_{\partial B(x)}(\Jc_L^y\Jc_{L;y}^z\bar\psi_L^z) \cdot\sigma^{x}_{L}\nu}\Big)\,dxdydz\\
&-3\lambda(\Pc)L^{-d}\iiint_{(\Qd)^2\times(Q_L\setminus \Qd)}\Big({\int_{\partial B(x)}(\Jc_L^y\Jc_{L;y}^z\bar\psi_L^z) \cdot\sigma^{x}_{L}\nu}\Big) h_2(x,y)\,dxdydz\\
&-3\lambda(\Pc)L^{-d}\iiint_{(Q_L\setminus\Qd)\times(\Qd)^2}\Big({\int_{\partial B(x)}(\Jc_L^y\Jc_{L;y}^z\bar\psi_L^z) \cdot\sigma^{x}_{L}\nu}\Big)h_2(y,z)\,dxdydz\\
&-3\lambda(\Pc)L^{-d}\iiint_{(Q_L\setminus\Qd)\times(\Qd)^2}\Big({\int_{\partial B(x)}(\Jc_L^y\Jc_{L;y}^z\Jc_{L;z}^y\bar\psi_L^{y,z}) \cdot\sigma^{x}_{L}\nu}\Big)f_2(y,z)\,dxdydz\\
&-3\lambda(\Pc)L^{-d}\iiint_{(Q_{L,\rho})^2\times (Q_L\setminus Q_{L,\rho})}\Big(\int_{\partial B(x)} (\Jc_L^y\Jc_{L;y}^z\bar\psi_L^z)\cdot\delta^y\sigma^x_L\nu\Big)f_2(x,y)\,dxdydz.
\end{align*}

\medskip\noindent
By definition, cf.~\eqref{eq:form-clust2}, the finite-volume approximation $\Bb^3_L$ is given by
\[\Bb^3_L\,=\,\tfrac32L^{-d}\sum_{n,m,p}^{\ne}\expecM{\int_{\partial B(x_{n,L})}\delta^{\{m,p\}}\psi_L^\varnothing\cdot\sigma^{\{n,m,p\}}_L\nu}.\]
Decomposing
\[\sigma^{\{n,m,p\}}_L=\sigma^{\{n\}}_L+\delta^{\{m\}}\sigma^{\{n\}}_L+\delta^{\{p\}}\sigma^{\{n\}}_L+\delta^{\{m,p\}}\sigma^{\{n\}}_L,\]
this becomes by symmetry,
\begin{multline*}
\Bb^3_L\,=\,
\tfrac32L^{-d}\sum_{n,m,p}^{\ne}\expecM{\int_{\partial B(x_{n,L})}\delta^{\{m,p\}}\psi_L^\varnothing\cdot\sigma^{\{n\}}_L\nu}\\
+3L^{-d}\sum_{n,m,p}^{\ne}\expecM{\int_{\partial B(x_{n,L})}\delta^{\{m,p\}}\psi_L^\varnothing\cdot\delta^{\{m\}}\sigma^{\{n\}}_L\nu}\\
+\tfrac32L^{-d}\sum_{n,m,p}^{\ne}\expecM{\int_{\partial B(x_{n,L})}\delta^{\{m,p\}}\psi_L^\varnothing\cdot\delta^{\{m,p\}}\sigma^{\{n\}}_L\nu}.
\end{multline*}
In terms of multi-point densities, cf.~\eqref{eq:def-fj}, recalling the choice of the finite-volume approximation with $\Pc_L=\{x_n:x_n\in\Qd\}$, cf.~\eqref{e.set-perio}, we can rewrite
\begin{multline}\label{eq:pre-BL3-rewr}
\Bb^3_L\,=\,
\tfrac32L^{-d}\iiint_{(Q_{L,\rho})^3}\Big(\int_{\partial B(x)}\delta^{y,z}\psi_L^\varnothing\cdot\sigma^{x}_{L}\nu\Big)f_3(x,y,z)\,dxdydz\\
+3L^{-d}\iiint_{(Q_{L,\rho})^3}\Big(\int_{\partial B(x)}\delta^{y,z}\psi_L^\varnothing\cdot\delta^y\sigma^x_L\nu\Big)f_3(x,y,z)\,dxdydz\\
+\tfrac32L^{-d}\iiint_{(Q_{L,\rho})^3}\Big(\int_{\partial B(x)}\delta^{y,z}\psi_L^\varnothing\cdot\delta^{y,z}\sigma^{x}_{L}\nu\Big)f_3(x,y,z)\,dxdydz.
\end{multline}
It remains to further analyze the first two right-hand side terms and we split the proof into two further substeps.
To capture cancellations, we shall expand $\delta^{y,z}\psi_L^\varnothing$ and $\delta^y\psi^x_L$ in terms of one-particle contributions as in~\eqref{eq:cordiff-JLHn-re}.

\medskip
\substep{1.1} Proof that
\begin{eqnarray}\label{eq:pre-BL3-rewr.3-rewragain}
\lefteqn{L^{-d}\iiint_{(Q_{L,\rho})^3}\Big({\int_{\partial B(x)}\delta^{y,z}\psi_L^\varnothing\cdot\sigma^{x}_{L}\nu}\Big)f_3(x,y,z)\,dxdydz}\\
&=&E_L^{3,1}+2L^{-d}\!\!\iiint_{(Q_{L,\rho})^3}\!\Big({\int_{\partial B(x)}(\Jc_L^y\Jc_{L;y}^z\bar\psi_L^z) \cdot\sigma^{x}_{L}\nu}\Big)\big(\lambda(\Pc) h_2(x,z)+h_3(x,y,z)\big)\,dxdydz\nonumber\\
&&+2L^{-d}\!\!\iiint_{(Q_{L,\rho})^3}\!\Big({\int_{\partial B(x)}(\Jc_L^y\Jc_{L;y}^z\Jc_{L;z}^y\bar\psi_L^{y,z}) \cdot\sigma^{x}_{L}\nu}\Big)\big(f_3(x,y,z)-\lambda(\Pc)f_2(y,z)\big)\,dxdydz,\nonumber
\end{eqnarray}
where we recall the short-hand notation $\bar\psi_L^Y=\psi_L^Y+Ex$,
and where $E_L^{3,1}$ stands for boundary terms,
\begin{align*}
E_L^{3,1}~&:=~2\lambda(\Pc)^3L^{-d}\iiint_{\Qd\times(Q_L\setminus\Qd)^2}\Big({\int_{\partial B(x)}(\Jc_L^y\Jc_{L;y}^z\bar\psi_L^z) \cdot\sigma^{x}_{L}\nu}\Big)\,dxdydz\\
&-2\lambda(\Pc)L^{-d}\iiint_{(\Qd)^2\times(Q_L\setminus \Qd)}\Big({\int_{\partial B(x)}(\Jc_L^y\Jc_{L;y}^z\bar\psi_L^z) \cdot\sigma^{x}_{L}\nu}\Big) h_2(x,y)\,dxdydz\\
&-2\lambda(\Pc)L^{-d}\iiint_{(Q_L\setminus\Qd)\times(\Qd)^2}\Big({\int_{\partial B(x)}(\Jc_L^y\Jc_{L;y}^z\bar\psi_L^z) \cdot\sigma^{x}_{L}\nu}\Big)h_2(y,z)\,dxdydz\\
&-2\lambda(\Pc)L^{-d}\iiint_{(Q_L\setminus\Qd)\times(\Qd)^2}\Big({\int_{\partial B(x)}(\Jc_L^y\Jc_{L;y}^z\Jc_{L;z}^y\bar\psi_L^{y,z}) \cdot\sigma^{x}_{L}\nu}\Big)f_2(y,z)\,dxdydz.
\end{align*}
In view of~\eqref{eq:cordiff-JLHn-re}, corrector differences can be decomposed as
\begin{eqnarray}
\delta^{y,z}\psi_L^\varnothing
&=&\Jc_L^y\delta^z\psi_L^y+\Jc_L^z\delta^y\psi_L^z\nonumber\\
&=&\Jc_L^y\Jc_{L;y}^z\bar\psi_L^{y,z}+\Jc_L^z\Jc_{L;z}^y\bar\psi_L^{y,z},\label{eq:decomp-JJ-pre}
\end{eqnarray}
and thus, further writing
\begin{eqnarray*}
\psi_L^{y,z}&=&\psi_L^z+\delta^y\psi_L^z~~=~~\psi_L^z+\Jc_{L;z}^y\bar \psi_L^{y,z}\\
&=&\psi_L^y+\delta^z\psi_L^y~~=~~\psi_L^y+\Jc_{L;y}^z\bar \psi_L^{y,z},
\end{eqnarray*}
we deduce
\begin{equation}\label{eq:decomp-JJ}
\delta^{y,z}\psi_L^\varnothing
\,=\,\Jc_L^y\Jc_{L;y}^z\bar\psi_L^{z}+\Jc_L^z\Jc_{L;z}^y\bar\psi_L^{y}
+\Jc_L^y\Jc_{L;y}^z\Jc_{L;z}^y\bar\psi_L^{y,z}+\Jc_L^z\Jc_{L;z}^y\Jc_{L;y}^z\bar\psi_L^{y,z}.
\end{equation}
From this decomposition, we get by symmetry
\begin{eqnarray}\label{eq:pre-BL3-rewr.3}
\lefteqn{L^{-d}\iiint_{(Q_{L,\rho})^3}\Big({\int_{\partial B(x)}\delta^{y,z}\psi_L^\varnothing\cdot\sigma^{x}_{L}\nu}\Big)f_3(x,y,z)\,dxdydz}\\
&=&2L^{-d}\iiint_{(Q_{L,\rho})^3}\Big({\int_{\partial B(x)}(\Jc_L^y\Jc_{L;y}^z\bar\psi_L^z) \cdot\sigma^{x}_{L}\nu}\Big)f_3(x,y,z)\,dxdydz\nonumber\\
&&+2L^{-d}\iiint_{(Q_{L,\rho})^3}\Big({\int_{\partial B(x)}(\Jc_L^y\Jc_{L;y}^z\Jc_{L;z}^y\bar\psi_L^{y,z}) \cdot\sigma^{x}_{L}\nu}\Big)f_3(x,y,z)\,dxdydz.\nonumber
\end{eqnarray}
We are now in position to exploit cancellations properties.
First, we note that Lemma~\ref{lem:new-cancel} yields, recalling $\psi_L^z=\psi_L^0(\cdot-z)$,
\begin{equation}\label{eq:cancel-intJc}
\int_{Q_L}(\Jc_{L;y}^z\bar\psi_L^z)\,dz\,=\,0,
\end{equation}
and thus, for all $x,y\in\R^d$,
\begin{equation*}
\int_{Q_L}\Big(\int_{\partial B(x)}(\Jc_L^y\Jc_{L;y}^z\bar\psi_L^z)\cdot\sigma_L^x\nu\Big)\,dz\,=\,0.
\end{equation*}
In addition, similarly to what was done in~\eqref{eq:cancel-BG} for the renormalization of $\Bb_L^2$, writing
\[\int_{\partial B(x)}(\Jc_L^y\Jc_{L;y}^z \bar\psi_L^z) \cdot\sigma^{x}_{L}\nu\,=\,\int_{\partial B}(\Jc_L^y\Jc_{L;y}^z\bar\psi_L^z)(\cdot+x) \cdot\sigma^0_L\nu,\]
and using the condition $\int_{\partial B}\sigma_L^0\nu=0$, we find
\begin{equation}\label{eq:cancel-intx0}
\int_{Q_L}\Big(\int_{\partial B(x)}(\Jc_L^y\Jc_{L;y}^z\bar\psi_L^z) \cdot\sigma^{x}_{L}\nu\Big)\,dx
\,=\,0,
\end{equation}
and likewise,
\[\int_{Q_L}\Big(\int_{\partial B(x)}(\Jc_L^y\Jc_{L;y}^z\Jc_{L;z}^y\bar\psi_L^{y,z}) \cdot\sigma^{x}_{L}\nu\Big)\,dx\,=\,0.\]
Turning back to~\eqref{eq:pre-BL3-rewr.3}, replacing $f_3$ by its expansion~\eqref{eq:dens-correl} in terms of correlation functions, and using these three cancellation properties, the claim~\eqref{eq:pre-BL3-rewr.3-rewragain} follows.

\medskip
\substep{1.2} Proof that
\begin{eqnarray}\label{eq:pre-BL3-rewr.2plop}
\lefteqn{L^{-d}\iiint_{(Q_{L,\rho})^3}\Big({\int_{\partial B(x)}\delta^{y,z}\psi_L^\varnothing\cdot\delta^y\sigma^x_L\nu}\Big)f_3(x,y,z)\,dxdydz}\\
&=&E_L^{3,2}+L^{-d}\!\!\iiint_{(Q_{L,\rho})^3}\!\!\Big(\int_{\partial B(x)} (\Jc_L^y\Jc_{L;y}^z\bar\psi_L^z)\cdot\delta^y\sigma^x_L\nu\Big)\!\big(f_3(x,y,z)-\lambda(\Pc)f_2(x,y)\big)dxdydz\nonumber\\
&&+L^{-d}\iiint_{(Q_{L,\rho})^3}\Big({\int_{\partial B(x)}(\Jc_L^z\Jc_{L;z}^y\bar\psi_L^{y,z})\cdot\delta^y\sigma^x_L\nu}\Big)f_3(x,y,z)\,dxdydz\nonumber\\
&&+L^{-d}\iiint_{(Q_{L,\rho})^3}\Big({\int_{\partial B(x)}(\Jc_L^y\Jc_{L;y}^z\Jc_{L;z}^y\bar\psi_L^{y,z})\cdot\delta^y\sigma^x_L\nu}\Big)f_3(x,y,z)\,dxdydz,\nonumber
\end{eqnarray}
where $E_L^{3,2}$ stands for a boundary term,
\[E_L^{3,2}\,:=\,-\lambda(\Pc)L^{-d}\iiint_{(Q_{L,\rho})^2\times (Q_L\setminus Q_{L,\rho})}\Big(\int_{\partial B(x)} (\Jc_L^y\Jc_{L;y}^z\bar\psi_L^z)\cdot\delta^y\sigma^x_L\nu\Big)f_2(x,y)\,dxdydz.\]
Inserting this into~\eqref{eq:pre-BL3-rewr}, together with~\eqref{eq:pre-BL3-rewr.3-rewragain},
the claim~\eqref{eq:BL3-rewr} follows.

\medskip\noindent
We turn to the proof of~\eqref{eq:pre-BL3-rewr.2plop}. As this term benefits from some additional decay due to the factor $\delta^y\sigma_L^x$, we only need the following (asymetric) simpler version of~\eqref{eq:decomp-JJ},
\begin{equation}\label{eq:decomp-JJ-short}
\delta^{y,z}\psi_L^\varnothing
\,=\,\Jc_L^y\Jc_{L;y}^z\bar\psi_L^{z}+\Jc_L^z\Jc_{L;z}^y\bar\psi_L^{y,z}
+\Jc_L^y\Jc_{L;y}^z\Jc_{L;z}^y\bar\psi_L^{y,z},
\end{equation}
which leads us to
\begin{eqnarray}\label{eq:pre-BL3-rewr.2}
\lefteqn{L^{-d}\iiint_{(Q_{L,\rho})^3}\Big({\int_{\partial B(x)}\delta^{y,z}\psi_L^\varnothing\cdot\delta^y\sigma^x_L\nu}\Big)f_3(x,y,z)\,dxdydz}\\
&=&L^{-d}\iiint_{(Q_{L,\rho})^3}\Big({\int_{\partial B(x)} (\Jc_L^y\Jc_{L;y}^z\bar\psi_L^z)\cdot\delta^y\sigma^x_L\nu}\Big)f_3(x,y,z)\,dxdydz\nonumber\\
&&+L^{-d}\iiint_{(Q_{L,\rho})^3}\Big({\int_{\partial B(x)}(\Jc_L^z\Jc_{L;z}^y\bar\psi_L^{y,z})\cdot\delta^y\sigma^x_L\nu}\Big)f_3(x,y,z)\,dxdydz\nonumber\\
&&+L^{-d}\iiint_{(Q_{L,\rho})^3}\Big({\int_{\partial B(x)}(\Jc_L^y\Jc_{L;y}^z\Jc_{L;z}^y\bar\psi_L^{y,z})\cdot\delta^y\sigma^x_L\nu}\Big)f_3(x,y,z)\,dxdydz,\nonumber
\end{eqnarray}
and it remains to analyze the first right-hand side term. For that purpose,
we use again the elementary cancellation property~\eqref{eq:cancel-intJc}, now in form of
\[\int_{Q_L}\Big(\int_{\partial B(x)} (\Jc_L^y\Jc_{L;y}^z\bar\psi_L^z)\cdot\delta^y\sigma^x_L\nu\Big)dz\,=\,0,\]
which entails
\begin{eqnarray*}
\lefteqn{L^{-d}\iiint_{(Q_{L,\rho})^3}\Big(\int_{\partial B(x)} (\Jc_L^y\Jc_{L;y}^z\bar\psi_L^z)\cdot\delta^y\sigma^x_L\nu\Big)f_3(x,y,z)\,dxdydz}\\
&=&L^{-d}\iiint_{(Q_{L,\rho})^3}\Big(\int_{\partial B(x)} (\Jc_L^y\Jc_{L;y}^z\bar\psi_L^z)\cdot\delta^y\sigma^x_L\nu\Big)\big(f_3(x,y,z)-\lambda(\Pc)f_2(x,y)\big)\,dxdydz\\
&&-\lambda(\Pc)L^{-d}\iiint_{(Q_{L,\rho})^2\times (Q_L\setminus Q_{L,\rho})}\Big(\int_{\partial B(x)} (\Jc_L^y\Jc_{L;y}^z\bar\psi_L^z)\cdot\delta^y\sigma^x_L\nu\Big)f_2(x,y)\,dxdydz,
\end{eqnarray*}
and the claim~\eqref{eq:pre-BL3-rewr.2plop} follows.

\medskip
\step{2} Uniform estimates: proof of~(i).\\
As in the proof of Proposition~\ref{prop:B2-ren}(i), appealing to the trace estimates of Lemma~\ref{lem:trace-0}, the decay estimates of Lemma~\ref{lem:decay-re}, and the energy estimate~\eqref{eq:det-bnd-phiFk00}, formula~\eqref{eq:BL3-rewr} for $\Bb_L^3$ can be estimated as follows,
\begin{align*}
|\Bb^3_L|\,&\lesssim\,
|E_L^{3}|\\
&+L^{-d}\!\!\iiint_{(Q_L)^3}\langle (x-y)_L\rangle^{-d}\langle(y-z)_L\rangle^{-d}\big(\lambda(\Pc) |h_2(x,z)|+|h_3(x,y,z)|\big)\,dxdydz\nonumber\\
&+L^{-d}\!\!\iiint_{(Q_L)^3}\langle(x-y)_L\rangle^{-d}\langle(y-z)_L\rangle^{-2d}|f_3(x,y,z)-\lambda(\Pc)f_2(y,z)|\,dxdydz\nonumber\\
&+L^{-d}\iiint_{(Q_L)^3}\langle(x-z)_L\rangle^{-d}\langle(z-y)_L\rangle^{-d}\langle(y-x)_L\rangle^{-d}f_3(x,y,z)\,dxdydz\nonumber\\
&+L^{-d}\iiint_{(Q_L)^3}\langle(x-y)_L\rangle^{-2d}\langle(y-z)_L\rangle^{-2d}f_3(x,y,z)\,dxdydz,
\end{align*}
and, for boundary terms,
\begin{align*}
|E_L^{3}|~&\lesssim~
\lambda(\Pc)^3L^{-d}\iiint_{Q_L\times(Q_L\setminus\Qd)^2}\langle(x-y)_L\rangle^{-d}\langle(y-z)_L\rangle^{-d}\,dxdydz\\
&+\lambda(\Pc)L^{-d}\iiint_{(Q_L\setminus \Qd)\times(Q_L)^2}\langle(x-y)_L\rangle^{-d}\langle(y-z)_L\rangle^{-d}|h_2(y,z)|\,dxdydz\\
&+\lambda(\Pc)L^{-d}\iiint_{(Q_L\setminus\Qd)\times(Q_L)^2}\langle(x-y)_L\rangle^{-d}\langle(y-z)_L\rangle^{-2d}f_2(y,z)\,dxdydz.
\end{align*}
In terms of multi-point intensities, appealing to Lemma~\ref{lem:gen-cond-lambd},
using both~\eqref{eq:high-intens-correl} and the decay assumption~\ref{Mix-om-n} to estimate correlation functions similarly as in~\eqref{eq:example-Mix-om-n},
we deduce after straightforward computations
\begin{eqnarray}
|\Bb^3_L|&\lesssim&
|E_L^{3}|
+\lambda_3(\Pc)+\int_1^\infty\tfrac{\log t}{t}\big(\omega(t)\wedge\lambda_3(\Pc)\big)dt,\nonumber\\
|E_L^{3}|&\lesssim&
\tfrac{\log L}L\bigg(\lambda_3(\Pc)+\int_1^L\tfrac1t\big(\omega(t)\wedge\lambda_3(\Pc)\big)dt\bigg),
\label{eq:estim-EL3}
\end{eqnarray}
and the conclusion~(i) follows.

\medskip
\step{3} Convergence result: proof of~(ii).\\
In terms of $\gamma^3_{L,\rho}(y,z):=L^{-d}|\Qd\cap(y+\Qd)\cap(z+\Qd)|$,
using stationarity and recalling that $\delta^y\psi^z=\Jc_z^y\bar\psi^{y,z}$,
the formula~\eqref{eq:lim-form-B3} for the infinite-volume cluster coefficient $\Bb^3$ takes the equivalent form
\begin{align*}
\Bb^3&=\,E^4_L\\
&+3L^{-d}\!\!\iiint_{(Q_{L,\rho})^3}\!\Big({\int_{\partial B(x)}(\Jc^y\Jc_{y}^z\bar \psi^z) \cdot\sigma^{x}\nu}\Big)\big(\lambda(\Pc) h_2(x,z)+h_3(x,y,z)\big)\,dxdydz\nonumber\\
&+3L^{-d}\!\!\iiint_{(Q_{L,\rho})^3}\!\Big({\int_{\partial B(x)}(\Jc^y\Jc_{y}^z\Jc_{z}^y \bar\psi^{y,z}) \cdot\sigma^{x}\nu}\Big)\big(f_3(x,y,z)-\lambda(\Pc)f_2(y,z)\big)\,dxdydz\nonumber\\
&+3L^{-d}\iiint_{(Q_{L,\rho})^3}\Big(\int_{\partial B(x)} (\Jc^y\Jc_{y}^z\bar\psi^z)\cdot\delta^y\sigma^x\nu\Big)\big(f_3(x,y,z)-\lambda(\Pc)f_2(x,y)\big)\,dxdydz\nonumber\\
&+3L^{-d}\iiint_{(Q_{L,\rho})^3}\Big({\int_{\partial B(x)}(\Jc^z\Jc_{z}^y\bar\psi^{y,z})\cdot\delta^y\sigma^x\nu}\Big)f_3(x,y,z)\,dxdydz\nonumber\\
&+3L^{-d}\iiint_{(Q_{L,\rho})^3}\Big({\int_{\partial B(x)}(\Jc^y\Jc_{L;y}^z\Jc_{z}^y\bar\psi^{y,z})\cdot\delta^y\sigma^x\nu}\Big)f_3(x,y,z)\,dxdydz\nonumber\\
&+\tfrac32L^{-d}\iiint_{(Q_{L,\rho})^3}\Big(\int_{\partial B(x)}\delta^{y,z}\psi^\varnothing\cdot\delta^{y,z}\sigma^{x}\nu\Big)f_3(x,y,z)\,dxdydz\nonumber,
\end{align*}
where
\begin{align*}
E^4_L&:=\,3\iint_{\R^d\times\R^d}(1-\gamma^3_{L,\rho}(y,z))\Big({\int_{\partial B}(\Jc^y\Jc_{y}^z\bar \psi^z) \cdot\sigma^{0}\nu}\Big)\big(\lambda(\Pc) h_2(0,z)+h_3(0,y,z)\big)\,dydz\\
&+3\iint_{\R^d\times\R^d}(1-\gamma^3_{L,\rho}(y,z))\!\Big({\int_{\partial B}(\Jc^y\Jc_{y}^z\delta^y\psi^{z}) \cdot\sigma^{0}\nu}\Big)\big(f_3(0,y,z)-\lambda(\Pc)f_2(y,z)\big)\,dydz\nonumber\\
&+3\iint_{\R^d\times\R^d}(1-\gamma^3_{L,\rho}(y,z))\Big(\int_{\partial B} (\Jc^y\Jc_{y}^z\bar\psi^z)\cdot\delta^y\sigma^0\nu\Big)\big(f_3(0,y,z)-\lambda(\Pc)f_2(0,y)\big)\,dydz\nonumber\\
&+3\iint_{\R^d\times\R^d}(1-\gamma^3_{L,\rho}(y,z))\Big({\int_{\partial B}(\Jc^z\delta^y\psi^z)\cdot\delta^y\sigma^0\nu}\Big)f_3(0,y,z)\,dydz\nonumber\\
&+3\iint_{\R^d\times\R^d}(1-\gamma^3_{L,\rho}(y,z))\Big({\int_{\partial B}(\Jc^y\Jc_{y}^z\delta^y\psi^z)\cdot\delta^y\sigma^0\nu}\Big)f_3(0,y,z)\,dydz\nonumber\\
&+\tfrac32\iint_{\R^d\times\R^d}(1-\gamma^3_{L,\rho}(y,z))\Big(\int_{\partial B}\delta^{y,z}\psi^\varnothing\cdot\delta^{y,z}\sigma^{0}\nu\Big)f_3(0,y,z)\,dydz,\nonumber
\end{align*}
so that the identity~\eqref{eq:BL3-rewr} for $\Bb_L^3$ entails
\begin{align}
&\Bb^3_L-\Bb^3\,=\,
E_L^{3}-E_L^4\label{eq:decomp-B3L-B3}\\
&+3L^{-d}\!\!\iiint_{(Q_{L,\rho})^3}\!\Big(\int_{\partial B(x)}(\Jc^y_L\Jc_{L;y}^z\bar \psi^z_L) \cdot\sigma^{x}_L\nu-(\Jc^y\Jc_{y}^z\bar \psi^z) \cdot\sigma^{x}\nu\Big)\nonumber\\
&\hspace{7cm}\times\big(\lambda(\Pc) h_2(x,z)+h_3(x,y,z)\big)\,dxdydz\nonumber\\
&+3L^{-d}\!\!\iiint_{(Q_{L,\rho})^3}\!\Big({\int_{\partial B(x)}(\Jc^y_L\Jc_{L;y}^z\Jc_{L;z}^y \bar\psi^{y,z}_L) \cdot\sigma^{x}_L\nu-(\Jc^y\Jc_{y}^z\Jc_{z}^y \bar\psi^{y,z}) \cdot\sigma^{x}\nu}\Big)\nonumber\\
&\hspace{7cm}\times\big(f_3(x,y,z)-\lambda(\Pc)f_2(y,z)\big)\,dxdydz\nonumber\\
&+3L^{-d}\iiint_{(Q_{L,\rho})^3}\Big(\int_{\partial B(x)} (\Jc^y_L\Jc_{L;y}^z\bar\psi^z_L)\cdot\delta^y\sigma^x_L\nu- (\Jc^y\Jc_{y}^z\bar\psi^z)\cdot\delta^y\sigma^x\nu\Big)\nonumber\\
&\hspace{7cm}\times\big(f_3(x,y,z)-\lambda(\Pc)f_2(x,y)\big)\,dxdydz\nonumber\\
&+3L^{-d}\iiint_{(Q_{L,\rho})^3}\Big({\int_{\partial B(x)}(\Jc^z_L\Jc_{L;z}^y\bar\psi^{y,z}_L)\cdot\delta^y\sigma^x_L\nu-(\Jc^z\Jc_{z}^y\bar\psi^{y,z})\cdot\delta^y\sigma^x\nu}\Big)f_3(x,y,z)\,dxdydz\nonumber\\
&+3L^{-d}\iiint_{(Q_{L,\rho})^3}\Big({\int_{\partial B(x)}(\Jc^y_L\Jc_{L;y}^z\Jc_{L;z}^y\bar\psi^{y,z}_L)\cdot\delta^y\sigma^x_L\nu-(\Jc^y\Jc_{y}^z\Jc_{z}^y\bar\psi^{y,z})\cdot\delta^y\sigma^x\nu}\Big)\nonumber\\
&\hspace{9cm}\times f_3(x,y,z)\,dxdydz\nonumber\\
&+\tfrac32L^{-d}\iiint_{(Q_{L,\rho})^3}\Big(\int_{\partial B(x)}\delta^{y,z}\psi^\varnothing_L\cdot\delta^{y,z}\sigma^{x}_L\nu-\delta^{y,z}\psi^\varnothing\cdot\delta^{y,z}\sigma^{x}\nu\Big)f_3(x,y,z)\,dxdydz\nonumber.
\end{align}
The first boundary contribution $E_L^3$ is already estimated in~\eqref{eq:estim-EL3}.
Noting that
\begin{equation*}
1-\gamma_{L,\rho}^3(y,z)
\,\lesssim\,\tfrac{\langle y\rangle}L\wedge1+\tfrac{\langle z\rangle}L\wedge1,
\end{equation*}
using the trace estimates of Lemma~\ref{lem:trace},
the decay estimates of Lemma~\ref{lem:decay-re}, the energy estimate~\eqref{eq:det-bnd-phiFk00}, and Lemma~\ref{lem:gen-cond-lambd}(iii),
and further using~\eqref{eq:high-intens-correl} and the decay assumption~\ref{Mix-om-n} to estimate correlation functions similarly as in~\eqref{eq:example-Mix-om-n},
we obtain for the second boundary contribution in~\eqref{eq:decomp-B3L-B3},
\[E_L^4\,\lesssim\,\tfrac1L\int_1^L(\log t)\,\omega(t)\,dt+\int_L^\infty\tfrac{\log t}{t}\,\omega(t)\,dt.\]
It remains to estimate the remaining six right-hand side terms in~\eqref{eq:decomp-B3L-B3}.
We focus on the first term, which is the most involved, and we skip the detail for the last five ones. We split the proof into two further substeps.

\medskip
\substep{3.1} First periodization error term in~\eqref{eq:decomp-B3L-B3}: proof that
\begin{multline}\label{eq:B3L-term1-period}
\bigg|L^{-d}\!\!\iiint_{(\Qd)^3}\!\Big(\int_{\partial B(x)}(\Jc^y_L\Jc_{L;y}^z\bar \psi^z_L) \cdot\sigma^{x}_L\nu-(\Jc^y\Jc_{y}^z\bar \psi^z)\cdot\sigma^{x}\nu\Big)\\
\times\big(\lambda(\Pc) h_2(x,z)+h_3(x,y,z)\big)\,dxdydz\bigg|\\
 ~~\lesssim~~\omega(L)(\log L)^2+\tfrac{1}L\int_1^L(\log t) \omega(t)\,dt.
\end{multline}
Decomposing
\begin{multline*}
{(\Jc_L^{y}\Jc_{L;y}^{z}\bar \psi_L^{z}) \cdot\sigma^x_{L}\nu
-(\Jc^{y}\Jc_{y}^{z}\bar \psi^{z}) \cdot\sigma^x}\nu
\,=\,(\Jc_L^{y}\Jc_{L;y}^{z}\bar \psi_L^{z}) \cdot(\sigma^x_{L}-\sigma^x)\nu
+(\Jc_L^{y}-\Jc^{y})\Jc_{L;y}^{z}\bar \psi_L^{z}\cdot \sigma^x\nu
\\
+\Jc^{y}(\Jc_{L;y}^{z}-\Jc_{y}^{z})\bar \psi_L^{z}\cdot \sigma^x\nu
+\Jc^{y}\Jc_{y}^{z}(\psi_L^{z}-\psi^{z})\cdot \sigma^x\nu,
\end{multline*}
and appealing to the trace estimates of Lemma~\ref{lem:trace},
we find
\begin{multline*}
 \Big|{\int_{\partial B(x)}(\Jc^y_L\Jc_{L;y}^z\bar \psi^z_L) \cdot\sigma^{x}_L\nu-(\Jc^y\Jc_{y}^z\bar \psi^z)\cdot\sigma^{x}\nu} \Big|
\\
\lesssim\,
  \Big(\int_{B(x)} |\!\D(\Jc_L^{y}\Jc_{L;y}^{z}\bar \psi_L^{z})|^2 \Big)^\frac12\Big(\int_{B_{1+\rho}(x)} |\!\D(\psi_L^{x}-\psi^x)|^2 \Big)^\frac12   
 \\
 +   \Big(\int_{B(x)} |\!\D((\Jc_L^{y}-\Jc^{y})\Jc_{L;y}^{z}\bar \psi_L^{z})|^2 \Big)^\frac12\Big(\int_{B_{1+\rho}(x)} |\!\D(\psi^{x})|^2 \Big)^\frac12   
 \\
+ \Big(\int_{B(x)} |\!\D(\Jc^{y}(\Jc_{L;y}^{z}-\Jc_{y}^{z})\bar \psi_L^{z})|^2 \Big)^\frac12\Big(\int_{B_{1+\rho}(x)} |\!\D(\psi^{x})|^2 \Big)^\frac12 \\
+   \Big(\int_{B(x)} |\!\D(\Jc^{y}\Jc_{y}^{z}(\psi_L^{z}-\psi^{z}))|^2 \Big)^\frac12\Big(\int_{B_{1+\rho}(x)} |\!\D(\psi^{x})|^2 \Big)^\frac12 .
\end{multline*}
We then appeal to the decay estimates of Lemma~\ref{lem:decay-re}, to the periodization error estimates of Lemma~\ref{lem:period-est}, and to the energy estimate~\eqref{eq:det-bnd-phiFk00}, in form of
\begin{eqnarray*}
\Big(\int_{B_{1+\rho}(x)} |\!\D(\psi^x_L-\psi^x)|^2 \Big)^\frac12&\lesssim& L ^{- d} ,\\
\Big(\int_{B_{1+\rho}(x)} |\!\D(\Jc_L^{y}\Jc_{L;y}^{z}\bar \psi_L^{z})|^2 \Big)^\frac12&\lesssim& \langle (x-y)_L\rangle^{-d}   \langle (y-z)_L\rangle^{-d},\\
\Big(\int_{B_{1+\rho}(x)} |\!\D((\Jc_L^{y}-\Jc^{y})\Jc_{L;y}^{z}\bar \psi_L^{z})|^2 \Big)^\frac12&\lesssim& 
\langle (y-z)_L\rangle^{-d}\\
&&\times\Big(\mathds{1}_{|x-y|>\frac L4} \langle (x-y)_L\rangle^{-d}+
 \mathds{1}_{|x-y| \le \frac L4}L^{-d}\Big),\\
\Big(\int_{B_{1+\rho}(x)} |\!\D(\Jc^{y}(\Jc_{L;y}^{z}-\Jc_{y}^{z})\bar \psi_L^{z})|^2 \Big)^\frac12&\lesssim& \langle x-y\rangle^{- d}\\
&&\times\Big(\mathds{1}_{|y-z|>\frac L4} \langle (y-z)_L\rangle^{-d}+ \mathds{1}_{|y-z| \le \frac L4}L^{-d}\Big),
 \\
\Big(\int_{B_{1+\rho}(x)} |\!\D(\Jc^{y}\Jc_{y}^{z}(\psi_L^{z}-\psi^{z}))|^2 \Big)^\frac12&\lesssim& L^{-d}\langle x-y\rangle^{- d} \langle (y-z)_L\rangle^{- d},
\end{eqnarray*}
so that the above becomes
\begin{multline*}
 \Big|{\int_{\partial B(x)}(\Jc^y_L\Jc_{L;y}^z\bar \psi^z_L) \cdot\sigma^{x}_L\nu-(\Jc^y\Jc_{y}^z\bar \psi^z)\cdot\sigma^{x}\nu} \Big|\\
\lesssim\,
\mathds{1}_{|x-y|>\frac L4} \langle (x-y)_L\rangle^{-d}\langle (y-z)_L\rangle^{-d}
+\mathds{1}_{|x-y| \le \frac L4}L^{-d}\langle (y-z)_L\rangle^{-d}\\
+\mathds{1}_{|y-z|>\frac L4}\langle x-y\rangle^{-d}\langle (y-z)_L\rangle^{-d}
+\mathds{1}_{|y-z| \le \frac L4}L^{-d}\langle x-y\rangle^{-d}.
\end{multline*}
Further using~\eqref{eq:high-intens-correl} and the decay assumption~\ref{Mix-om-n} to estimate correlation functions similarly as in~\eqref{eq:example-Mix-om-n}, we get by symmetry
\begin{multline}\label{eq:B3L-term1-period-pre}
\bigg|L^{-d}\!\!\iiint_{(\Qd)^3}\!\Big(\int_{\partial B(x)}(\Jc^y_L\Jc_{L;y}^z\bar \psi^z_L) \cdot\sigma^{x}_L\nu-(\Jc^y\Jc_{y}^z\bar \psi^z)\cdot\sigma^{x}\nu\Big)\\
\times\big(\lambda(\Pc) h_2(x,z)+h_3(x,y,z)\big)\,dxdydz\bigg|\\
~\lesssim~L^{-d}\iiint_{(Q_L)^3}\mathds1_{|x-y|>\frac L4}\langle(x-y)_L\rangle^{-d}\langle(y-z)_L\rangle^{-d}\,\omega(|x-z|)\,dxdydz\\
+L^{-2d}\iiint_{(Q_L)^3}\mathds1_{|x-y|\le\frac L4}\langle(y-z)_L\rangle^{-d}\,\omega(|x-z|)\,dxdydz.
\end{multline}
We start with the first right-hand side term, which is the most delicate one to estimate.
By properties of $\omega$, we may decompose 
\[\omega(|x-z|)\, \lesssim\, \mathds1_{|x-z|>\frac L4}\,\omega(L) + \mathds1_{|x-z|\le\frac L4}\,\omega(|x-z|).\]
The first contribution with $|x-z|>\frac L4$ is easily estimated,
\begin{equation*}
\omega(L)L^{-d}\iiint_{(Q_L)^3}\mathds1_{|x-y|>\frac L4}\mathds1_{|x-z|>\frac L4}\langle(x-y)_L\rangle^{-d}\langle(y-z)_L\rangle^{-d}\,dxdydz
\,\lesssim\,\omega(L)(\log L)^2,
\end{equation*}
and we turn to the contribution with $|x-z|\le\frac L4$. For that purpose, we interpolate between two bounds for the integral with respect to $y$,
\begin{eqnarray*}
\lefteqn{\int_{Q_L} \mathds{1}_{|x-y|>\frac L4} \mathds1_{|x-z|\le\frac L4}\langle (x-y)_L\rangle^{- d} \langle (y-z)_L\rangle^{-d}  \, dy}\\
&\lesssim&\mathds1_{|x-z|\le\frac L4}\Big( \langle( x-z)_L\rangle^{-d} \log(2+|(x-z)_L|)\Big) \wedge \langle\dist(\{x,z\},\partial Q_L)\rangle^{-d}\\
&\lesssim&\Big( \langle x-z\rangle+\dist(\{x,z\},\partial Q_L)\Big)^{-d}\log(2+|x-z|),
\end{eqnarray*}
where we further used that $(x-z)_L=x-z$ if $|x-z|<\frac L4$. By symmetry, we then get
\begin{eqnarray}
\lefteqn{L^{-d}\iiint_{(Q_L)^3}\mathds1_{|x-y|>\frac L4}\mathds1_{|x-z|\le\frac L4}\langle(x-y)_L\rangle^{-d}\langle(y-z)_L\rangle^{-d}\,\omega(|x-z|)\,dxdydz}\nonumber\\
&\lesssim&L^{-d}\iint_{(Q_L)^2}\Big(\langle x-z\rangle+\dist(\{x\},\partial Q_L)\Big)^{-d}\,\log(2+|x-z|)\,\omega(|x-z|)\,dxdz\nonumber\\
&\lesssim& \tfrac{1}L \int_1^L\log(t)\omega(t)\,dt,\label{eq:estimploplop}
\end{eqnarray}
where the last bound follows from a straightforward computation, carefully distinguishing between the cases $\langle x-z\rangle\ge\dist(\{x\},\partial Q_L)$ and $\langle x-z\rangle\le\dist(\{x\},\partial Q_L)$.
Indeed, on the one hand, the part with $\langle x-z\rangle\ge\langle\dist(\{x\},\partial Q_L)\rangle$ can be estimated by
\begin{align*}
&{L^{-d}\iint_{(Q_L)^2}\mathds1_{\dist(\{x\},\partial Q_L)\le\langle x-z\rangle}\langle x-z\rangle^{-d}\log(2+|x-z|)\omega(|x-z|)\,dxdz}\\
&\qquad~\lesssim~L^{-d}\int_{Q_{2L}}(L^{d-1}\langle y\rangle)\,\langle y\rangle^{-d}\log(2+|y|)\omega(|y|)dy\\
&\qquad~\lesssim~\tfrac1L\int_1^L(\log r)\omega(r)\,dr,
\end{align*}
and on the other hand the part with $\langle x-z\rangle\le\dist(\{x\},\partial Q_L)$ is estimated by
\begin{align*}
&{L^{-d}\int_{Q_L}\langle\dist(\{x\},\partial Q_L)\rangle^{-d}\Big(\int_{\{z\in Q_L:\langle x-z\rangle\le\dist(\{x\},\partial Q_L)\}}\log(2+|x-z|)\omega(|x-z|)\,dz\Big)dx}\\
&\qquad\lesssim~L^{-d}\int_1^Lr^{d-1}\langle L-r\rangle^{-d}\Big(\int_1^{L-r}s^{d-1}(\log s)\omega(s)\,ds\Big)dr\\
&\qquad\lesssim~\tfrac1L\int_{L/2}^L\langle L-r\rangle^{-d}\Big(\int_1^{L-r}s^{d-1}(\log s)\omega(s)\,ds\Big)dr+L^{-d}\int_1^{L}s^{d-1}(\log s)\omega(s)ds\\
&\qquad\lesssim~\tfrac1L\int_1^{L/2}s^{d-1}(\log s)\omega(s)\Big(\int_{L/2}^{L-s}\langle L- r\rangle^{-d}dr\Big)ds+L^{-d}\int_1^{L}s^{d-1}(\log s)\omega(s)\,ds\\
&\qquad\lesssim~\tfrac1L\int_1^{L}(\log s)\omega(s)\,ds,
\end{align*}
which yields the bound~\eqref{eq:estimploplop}.
It remains to estimate the second right-hand side term in~\eqref{eq:B3L-term1-period-pre}, for which we directly find
\begin{multline*}
L^{-2d}\iiint_{(Q_L)^3}\mathds1_{|x-y|\le\frac L4}\langle(y-z)_L\rangle^{-d}\,\omega(|x-z|)\,dxdydz\\
\,\lesssim\,(\log L)L^{-d}\int_1^Lt^{d-1}\omega(t)\,dt\,\lesssim\,\tfrac1L\int_1^L(\log t)\omega(t)\,dt.
\end{multline*}
Inserting these different estimates into~\eqref{eq:B3L-term1-period-pre}, the claim~\eqref{eq:B3L-term1-period} follows using the doubling property of $\omega$.

\medskip
\substep{3.2} Conclusion.\\
The next four terms 
of \eqref{eq:decomp-B3L-B3} are estimated similarly as the first periodization error term, and we skip most details for brevity. We solely briefly comment on the last term in~\eqref{eq:decomp-B3L-B3}, which is slightly different as it involves second-order corrector differences.
We claim that
\begin{multline}\label{eq:estim-lastterm-B3L-B3}
\bigg|L^{-d}\iiint_{(Q_{L,\rho})^3}\Big(\int_{\partial B(x)}\delta^{y,z}\psi^\varnothing_L\cdot\delta^{y,z}\sigma^{x}_L\nu-\delta^{y,z}\psi^\varnothing\cdot\delta^{y,z}\sigma^{x}\nu\Big)f_3(x,y,z)\,dxdydz\bigg|\\
\,\lesssim\,\lambda_3(\Pc)\tfrac1L.
\end{multline}
By~\eqref{eq:decomp-JJ-pre}, and arguing similarly as for $\delta^{y,z}\psi_L^{x}$, we can decompose
\begin{eqnarray*}
\delta^{y,z}\psi_L^\varnothing
&=&\Jc_L^y\Jc_{L;y}^z\bar\psi_L^{y,z}+\Jc_L^z\Jc_{L;z}^y\bar\psi_L^{y,z},\\
\delta^{y,z}\psi_L^{x}
&=&\Jc_{L;x}^y\Jc_{L;x,y}^z\bar\psi_L^{x,y,z}+\Jc_{L;x}^z\Jc_{L;x;z}^y\bar\psi_L^{x,y,z},
\end{eqnarray*}
so that, proceeding as in Substep~3.1 above, we can write $\delta^{y,z}\psi_L^\varnothing\cdot \delta^{y,z}\sigma_L^{x}-\delta^{y,z}\psi^\varnothing\cdot \delta^{y,z}\sigma^{x}$ as the difference of four terms, which can each be written 
as a telescopic sum of six terms involving ``elementary'' periodization errors.
The most delicate of those terms is the following one,
\begin{multline*}
L^{-d}\iiint_{(Q_{L})^3}\Big(\int_{B(x)} |\!\D(\Jc^y\Jc_{y}^z\bar\psi^{y,z})|^2\Big)^\frac12 
\\
\times \Big(\int_{B_{1+\rho}(x)} |\!\D(\Jc_{x}^z\Jc_{x,z}^y(\psi_L^{x,y,z}-\psi^{x,y,z}))|^2\Big)^\frac12 f_3(x,y,z)\,dxdydz.
\end{multline*}
By the decay estimates of Lemma~\ref{lem:decay-re}, the periodization error estimates of Lemma~\ref{lem:period-est}, and the energy estimate~\eqref{eq:det-bnd-phiFk00}, and further appealing to Lemma~\ref{lem:gen-cond-lambd}(iii) to control $f_3$ in terms of the multi-point intensity,
this term is bounded by
\begin{equation*}
L^{-d}\lambda_3(\Pc)\iiint_{(Q_{L})^3} \langle x-y \rangle^{-d} \langle x-z \rangle^{-d}  \langle y-z \rangle^{-2d} 
\langle \dist(y,\partial Q_L) \rangle^{-d}\,dxdydz 
\,\lesssim\, \lambda_3(\Pc) \tfrac1L.
\end{equation*}
All the other terms can be bounded similarly, and the claim~\eqref{eq:estim-lastterm-B3L-B3} follows.
This concludes the proof of~(ii).

\medskip
\step4 Analysis of remainders.\\
Starting from~\eqref{e.DGV-control-bis}, expanding the products, and separating the different intersection patterns, we are led to the following, in terms of multi-point intensities,
\begin{eqnarray*}
|R_{L}^3|&\lesssim&L^{-d}\bigg|\int_{(Q_{L,\rho})^5}\Big(\int_{B(x)}\D(\delta^{y,z}\psi_L^\varnothing):\D(\delta^{y',z'}\psi_L^\varnothing)\Big)\,f_5(x,y,z,y',z')\,dxdydzdy'dz'\bigg|\\
&&+L^{-d}\bigg|\int_{(Q_{L,\rho})^4}\Big(\int_{B(x)}\D(\delta^{y,z}\psi_L^\varnothing):\D(\delta^{y,z'}\psi_L^\varnothing)\Big)\,f_4(x,y,z,z')\,dxdydzdz'\bigg|\\
&&+L^{-d}\bigg|\int_{(Q_{L,\rho})^3}\Big(\int_{B(x)}\D(\del^{y,z}\psi_{L}^\varnothing):\D(\del^{z'}\hat\psi^{x}_{x,L})\Big)\,f_4(x,y,z,z')\,dxdydzdz'\bigg|\\
&&+L^{-d}\int_{(Q_{L,\rho})^3}\Big(\int_{B(x)}|\!\D(\delta^{y,z}\psi_L^\varnothing)|^2\Big)\,f_3(x,y,z)\,dxdydz\\
&&+L^{-d}\bigg|\int_{(Q_{L,\rho})^3}\Big(\int_{B(x)}\D(\del^{y,z}\psi_{L}^\varnothing):\D(\del^{y}\hat\psi^{x}_{x,L})\Big)\,f_3(x,y,z)\,dxdydz\bigg|\\
&&+L^{-d}\bigg|\int_{(Q_{L,\rho})^3}\Big(\int_{B(x)}\D(\del^{y,z}\psi_{L}^\varnothing):\D(\hat\psi^{x}_{x,L})\Big)\,f_3(x,y,z)\,dxdydz\bigg|.
\end{eqnarray*}
As in the analysis of $\Bb_L^3$ in Step~1, cancellations are unravelled by decomposing $\delta^{y,z}\psi_L^\varnothing$ in terms of single-particle contributions. We leave the details to the reader.
\end{proof}
\endgroup

\subsubsection{Higher-order explicit renormalization}
Finally, we turn to the general higher-order case. The obtained renormalized formulas are not displayed in the statement as they take the form of  intricate diagrammatic expansions and require notation that will be introduced in the proof.
 
\medskip

\begin{prop}[Higher-order renormalizations]\label{prop:Bgen-ren}
Let~\ref{H0} and~\ref{Gunif} hold, and assume for simplicity that particles are spherical with unit radius, $I_n=B(x_n)$.
Let also the mixing assumption~\emph{\ref{Mix-om-n}} hold to order $n=k+1\ge2$ with rate $\omega\in C^\infty_b(\R^+)$ satisfying the Dini type condition $\int_1^\infty \tfrac1t(\log t)^{k-1}\omega(t)\,dt<\infty$.
Then, the infinite-volume $(k+1)$th-order cluster coefficient~$\Bb^{k+1}$, cf.~\eqref{eq:form-Bj}, can be expressed by means of absolutely convergent integrals.
In addition, in case of an algebraic rate $\omega(t)\le Ct^{-\beta}$ for some $C,\beta>0$, the following hold.
\begin{enumerate}[(i)]
\item \emph{Uniform estimate:}
\[|\Bb^{k+1}_L|\,\lesssim\,\lambda_{k+1}(\Pc)|\!\log\lambda(\Pc)|^k.\]
\item \emph{Convergence result:}
\begin{equation*}
|\Bb^{k+1}_L-\Bb^{k+1}|\,\lesssim\,\tfrac{(\log L)^{k}}{L^{\beta\wedge1}}.
\end{equation*}
\item \emph{Uniform remainder estimate:} If~\emph{\ref{Mix-om-n}} further holds with $n=2k+1$ with algebraic weight $\omega(t)\le Ct^{-\beta}$, then
\begin{equation}\label{e.neededDVG}
|R_L^{k+1}|\,\lesssim\,\sum_{j=k}^{2k}\lambda_{j+1}(\Pc)|\!\log\lambda(\Pc)|^{j}.\qedhere
\end{equation}
\end{enumerate}
\end{prop}

\begin{proof}
Let $k\ge1$ be fixed. By definition, cf.~\eqref{eq:form-clust2}, the periodic approximation $\Bb^{k+1}_L$ is given by
\begin{equation*}
\Bb^{k+1}_{L}\,=\,\tfrac12(k+1)!\,L^{-d}\sum_{\sharp F=k+1}\sum_{n\in F}\expecM{\int_{\partial B(x_{n,L})}\del^{F\setminus\{n\}}\psi_{L}^\varnothing\cdot\sigma_{L}^{F}\nu},
\end{equation*}
and thus, decomposing $\sigma^F_L=\sum_{G\subset F\setminus\{n\}}\delta^G\sigma_L^{\{n\}}$ for $n\in F$, we get by symmetry,
\begin{equation*}
\Bb^{k+1}_{L}\,=\,\tfrac{k+1}2\sum_{l=0}^{k}\binom{k}{l}L^{-d}\sum_{n_0,n_1,\ldots,n_{k}}^{\ne}\expecM{\int_{\partial B(x_{n_0,L})}\del^{\{n_1,\ldots,n_k\}}\psi_{L}^\varnothing\cdot\delta^{\{n_{l+1},\ldots,n_{k}\}}\sigma_{L}^{\{n_0\}}\nu}.
\end{equation*}
In terms of multi-point densities, cf.~\eqref{eq:def-fj}, recalling the choice~\eqref{e.set-perio} of the finite-volume approximation, and using the notation~\eqref{eq:corr-xy-notation}, this becomes
\begin{multline}\label{eq:decomp-BLj1-intens}
\Bb^{k+1}_{L}\,=\,\tfrac{k+1}2\sum_{l=0}^{k}\binom{k}{l}L^{-d}\int_{(\Qd)^{k+1}}\bigg(\int_{\partial B(x_0)}\del^{x_1,\ldots,x_k}\psi_{L}^\varnothing\cdot\delta^{x_{l+1},\ldots,x_k}\sigma_{L}^{x_0}\nu\bigg)\\
\times f_{k+1}(x_0,\ldots,x_k)\,dx_0\ldots dx_k.
\end{multline}
We now need to capture enough cancellations to make these integrals absolutely summable uniformly in the large-volume limit.
For that purpose, similarly to what we did for $\Bb^3_L$ in the proof of Proposition~\ref{prop:B3-ren}, we shall proceed to a suitable expansion of $\del^{x_1,\ldots,x_k}\psi_{L}^\varnothing$ in terms one-particle contributions. For general order $k$, the expansion is conveniently expressed in terms of diagrams.
We split the proof into six steps.

\medskip
\step{1} Diagrammatic decomposition of $\delta^{x_1,\ldots,x_k}\psi_L^\varnothing$.\\
We start with some terminology and notation:
\begin{enumerate}[---]
\item We use the standard notation $[k]:=\{1,\ldots,k\}$ and for any subset $S\subset[k]$ we define~$x_S:=(x_i)_{i\in S}$.
\smallskip\item Given a sequence $I=(i_1,\ldots,i_l)$ of indices, the first index $i_1$ is called the {\it root} of $I$, the last index $i_l$ is its {\it endpoint}, and~$l$ is its {\it length}. The associated index set is denoted by $\langle I\rangle:=\{i_1,\ldots,i_l\}$ and we define the {\it cardinality} of $I$ as $\sharp I:=\sharp\langle I\rangle$. An index $i$ is then said to belong to $I$ (for short,~$i\in I$) if it belongs to the index set $\langle I\rangle$, and we define $x_I:=x_{\langle I\rangle}=(x_i)_{i\in\langle I\rangle}$.
Two sequences $I$ and $J$ are said to be {\it disjoint} if there is no index belonging to both, that is, $\langle I\rangle\cap\langle J\rangle=\varnothing$.
\smallskip\item The {\it concatenation} of two sequences $I=(i_1,\ldots,i_l)$ and $J=(j_1,\ldots,j_m)$ is denoted by $I\uplus J:=(i_1,\ldots,i_{l},j_1,\ldots,j_m)$.
\smallskip\item A {\it string} of indices is defined as any sequence of distinct indices with length $\ge1$.
\smallskip\item Given a string $I=(i_1,\ldots,i_l)$ and an index set $S$, we define the {\it elementary contribution of~$I$ given~$S$} as the following composition of operators,
\begin{equation}\label{eq:string-contr}
\calR^I_{L;S}(x_{[k]})\,:=\,\Jc_{L;x_S}^{x_{i_1}}\Jc_{L;x_{i_1}}^{x_{i_2}}\Jc_{L;x_{i_1},x_{i_2}}^{x_{i_3}}\ldots\Jc_{L;x_{i_1},\ldots,x_{i_{l-1}}}^{x_{i_l}},
\end{equation}
where we recall that the $\Jc_{L,Y}^z$'s are defined in~\eqref{eq:def-JLy|z}.
\smallskip\item A {\it block} is defined as any sequence $B$ of indices that takes the form
\begin{equation}\label{eq:def-block}
B=(b)\uplus I_1\uplus\ldots\uplus I_r,
\end{equation}
where $r\ge0$ (for $r=0$ we simply have $B=(b)$) and where $I_1,\ldots,I_r$ are strings of length $\ge2$ with the following property:
for all $1\le j\le r$, the endpoint of~$I_j$ belongs to $(b)\uplus I_1\uplus\ldots\uplus I_{j-1}$ but other elements of~$I_j$ do not.
\smallskip\item Given a block $B=(b)\uplus I_1\uplus\ldots\uplus I_r$ and an index set $S$, we define the {\it elementary contribution of~$B$ given~$S$} as the following composition of operators,
\begin{equation}\label{eq:block-contr}
\calC^B_{L;S}(x_{[k]})\,:=\,\Jc_{L;x_S}^{x_b}\calR^{I_1}_{L;\{b\}}(x_{[k]})\calR^{I_2}_{L;\langle(b)\uplus I_1\rangle}(x_{[k]})\ldots\calR^{I_n}_{L;\langle(b)\uplus I_1\uplus\ldots\uplus I_{n-1}\rangle}(x_{[k]}).
\end{equation}
\end{enumerate}
In these terms, we claim that $\delta^{x_1,\ldots,x_j}\psi_L^\varnothing$ can be decomposed as follows,
\begin{equation}\label{eq:diagram}
\delta^{x_1,\ldots,x_k}\psi_L^\varnothing\,=\,\sum_{r=1}^k~\sum_{B_1,\ldots, B_r}\calC^{B_1}_{L;\varnothing}(x_{[k]})\calC^{B_2}_{L;\langle B_1\rangle}(x_{[k]})\ldots \calC^{B_r}_{L;\langle B_{r-1}\rangle}(x_{[k]})\,\bar \psi_L^{x_{B_r}},
\end{equation}
where we recall the short-hand notation $\bar \psi_L^{x_{B_r}}= \psi_L^{x_{B_r}}+Ex$, and where the sum $\sum_{B_1,\ldots, B_r}$ runs over all $r$-tuples of disjoint blocks $B_1,\ldots,B_r$ such that
\[\langle B_1\uplus\ldots\uplus B_r\rangle=[k].\]
Note that this sum~\eqref{eq:diagram} is obviously finite, uniformly in $L$.
Any sequence of indices of~$[k]$ can be viewed as a walk on the vertex set~$[k]$, thus inducing a (traversable) graph on~$[k]$ where edges are defined by successive elements of the sequence (with possible multiplicities).
In this view, each term in~\eqref{eq:diagram} can be conveniently represented by a corresponding diagram, cf.~Figure~\ref{fig:block-decomp}; as we shall see, these graphical representations will prove crucial in estimating the different terms.

\begin{figure}
{\begin{center}
{\small\begin{tikzpicture}[scale=0.9]
\begin{scope}[every node/.style={circle,thick,draw}]
    \node (0) at (-2,0) {0};
    \node (1) at (0,0) {1};
    \node (2) at (0,1.5) {2};
    \node (3) at (2,0) {3};
    \node (4) at (3,1.5) {4};
    \node (5) at (4,0) {5};
    \node (6) at (2,-1.5) {6};
    \node (7) at (4,-1.5) {7};
    \node (8) at (6,0) {8};
    \node (9) at (8.5,0) {9};
    \node (10) at (7.5,1.5) {10};
    \node (11) at (9.5,1.5) {11};
\end{scope}
\begin{scope}[>={Stealth[black]},
every node/.style={fill=white,circle},
every edge/.style={draw=black,very thick}]
    \path [->] (0) edge (1);
    \path [->] (1) edge[bend left=30] (2);
    \path [->] (2) edge[bend left=30] (1);
    \path [->] (1) edge (3);
    \path [->] (3) edge (4);
    \path [->] (4) edge (5);
    \path [->] (5) edge (3);
    \path [->] (3) edge (6);
    \path [->] (6) edge (7);
    \path [->] (7) edge (5);
    \path [->] (5) edge (8);
    \path [->] (8) edge (9);
    \path [->] (8) edge (9);
    \path [->] (9) edge (10);
    \path [->] (10) edge (11);
    \path [->] (11) edge (9);
\end{scope}
\begin{scope}[every node/.style={circle,fill,inner sep=0pt,minimum size=0.8pt}]
     \node (AA') at (-0.75,-2) {};
     \node (AA'') at (-0.75,-2.4) {};
     \node (BB') at (1,-2) {};
     \node (BB'') at (1,-2.4) {};
     \node (CC') at (5,-2) {};
     \node (CC'') at (5,-2.4) {};
     \node (DD') at (7,-2) {};
     \node (DD'') at (7,-2.4) {};
     \node (EE') at (10,-2) {};
     \node (EE'') at (10,-2.4) {};
\end{scope}
\begin{scope}[every node/.style={circle,fill,inner sep=0pt,minimum size=0pt}]
     \node (A) at (-0.75,-2.2) {};
     \node (A') at (-0.75,-2) {};
     \node (A'') at (-0.75,-2.4) {};
     \node (B) at (1,-2.2) {};
     \node (B') at (1,-2) {};
     \node (B'') at (1,-2.4) {};
     \node (C) at (5,-2.2) {};
     \node (C') at (5,-2) {};
     \node (C'') at (5,-2.4) {};
     \node (D) at (7,-2.2) {};
     \node (D') at (7,-2) {};
     \node (D'') at (7,-2.4) {};
     \node (E) at (10,-2.2) {};
     \node (E') at (10,-2) {};
     \node (E'') at (10,-2.4) {};
\end{scope}
\begin{scope}[>={Stealth[black]},
every edge/.style={draw=black,thick}]
    \path [-] (A) edge node[below,pos=0.5] {$B_1$} (B);
    \path [-] (B) edge node[below,pos=0.5] {$B_2$} (C);
    \path [-] (C) edge node[below,pos=0.5] {$B_3$} (D);
    \path [-] (D) edge node[below,pos=0.5] {$B_4$} (E);
    \path [-] (A') edge (A'');
    \path [-] (B') edge (B'');
    \path [-] (C') edge (C'');
    \path [-] (D') edge (D'');
    \path [-] (E') edge (E'');
\end{scope}
\end{tikzpicture}}
\caption{\label{fig:block-decomp}
Each term in~\eqref{eq:diagram} can be represented by means of a directed graph on the index set $\{0\}\cup[k]$, where edges are given by pairs of consecutive elements in $B_1\uplus\ldots\uplus B_r$ with possible multiplicities and where we include the edge $(0,b)$ with $b$ the root of $B_1$.
In this way, an edge to $i$ corresponds to an operator $\Jc_{L;x_T}^{x_i}$ in~\eqref{eq:diagram}.
For instance, the above diagram is associated to blocks $B_1=(1)\uplus(2,1)$, $B_2=(3)\uplus(4,5,3)\uplus(6,7,5)$, $B_3=(8)$, and $B_4=(9)\uplus(10,11,9)$.}
\end{center}}
\end{figure}

\medskip\noindent
We turn to the proof of~\eqref{eq:diagram}.
More precisely, we shall  prove the following seemingly simpler statement: for all disjoint index sets $S,T\subset[k]$ with $S\ne\varnothing$, we have
\begin{equation}\label{eq:diagram-ter}
\delta^{x_S}\psi_L^{x_T}\,=\,\sum_{b\in S}\Jc^{x_b}_{L;x_T}\delta^{x_{S\setminus\{b\}}}\bar \psi_L^{x_b}+\sum_{I\text{ string}\atop\langle I\rangle\subset S}~\sum_{c\in T}\calR^{I\uplus(c)}_{L;T}\underbrace{\delta^{x_{S\setminus \langle I\rangle}}\bar\psi_L^{x_I,x_T}}_{\displaystyle \clubsuit}.
\end{equation}
We quickly argue that this indeed implies~\eqref{eq:diagram}.
First, we iteratively replace the corrector difference $\clubsuit$ in~\eqref{eq:diagram-ter}, using \eqref{eq:diagram-ter} itself, to the effect that
\begin{eqnarray*}
\lefteqn{\delta^{x_S}\psi_L^{x_T}}\\
&=&\sum_{l\ge0}\sum_{I_1,\ldots,I_l\,\text{disjoint strings}\atop\langle I_1\rangle,\ldots,\langle I_l\rangle\subset S}\sum_{b\in S\setminus\langle I_1\uplus\ldots\uplus I_l\rangle}\sum_{c_1,\ldots,c_l\atop\forall j: c_j\in T\cup\langle I_1\uplus\ldots\uplus I_{j-1}\rangle}\calR_{L;T}^{I_1\uplus(c_1)}(x_{[k]})\calR_{L;T\cup\langle I_1\rangle}^{I_2\uplus(c_2)}(x_{[k]})\\
&&\hspace{3.5cm}\ldots\calR_{L;T\cup\langle I_1\uplus\ldots\uplus I_{l-1}\rangle}^{I_l\uplus(c_l)}(x_{[k]})\Jc_{L;x_{T\cup\langle I_1\uplus\ldots\uplus I_{l}\rangle}}^{x_b}\delta^{x_{S\setminus\langle I_1\uplus\ldots\uplus I_l\uplus(b)\rangle}}{\bar \psi_L^{x_b}}\\
&&\vspace{-1cm}\\
&&+\sum_{l\ge1}\sum_{I_1,\ldots,I_l\,\text{disjoint strings}\atop\langle I_1\rangle\cup\ldots\cup\langle I_l\rangle= S}\sum_{c_1,\ldots,c_l\atop\forall j: c_j\in T\cup\langle I_1\uplus\ldots\uplus I_{j-1}\rangle}\calR_{L;T}^{I_1\uplus(c_1)}(x_{[k]})\calR_{L;T\cup\langle I_1\rangle}^{I_2\uplus(c_2)}(x_{[k]})\\
&&\hspace{8cm}\ldots\calR_{L;T\cup\langle I_1\uplus\ldots\uplus I_{l-1}\rangle}^{I_l\uplus(c_l)}(x_{[k]}){\bar \psi_L^{x_{S\cup T}}}.
\end{eqnarray*}
In particular, recalling the notation~\eqref{eq:string-contr} for elementary contributions $\calR_{L;S}^I$, and recognizing the definition~\eqref{eq:block-contr} of block contributions, we deduce for all disjoint index sets $S,T\subset[k]$ with $S\ne\varnothing$,
\begin{multline*}
\sum_{b\in S}\Jc_{L;x_T}^{x_b}\delta^{x_{S\setminus\{b\}}}{\bar \psi_L^{x_b}}\\
\,=\,
\sum_{B\,\text{block}\atop\langle B\rangle\subset S}\calC^{B}_{L;T}(x_{[k]})\bigg(\sum_{b\in S\setminus\langle B\rangle}\Jc_{L;x_{\langle B\rangle}}^{x_b}\delta^{x_{S\setminus (\langle B\rangle\cup\{b\})}}{\bar \psi_L^{x_{b}}}\bigg)
+\sum_{B\,\text{block}\atop\langle B\rangle= S}\calC^{B}_{L;T}(x_{[k]}){\bar \psi_L^{x_S}}.
\end{multline*}
Iterating this identity, starting from~\eqref{eq:cordiff-JLHn-re} in form of
\begin{equation*}
\delta^{x_S}\psi_L^{\varnothing}\,=\,
\sum_{b\in S}\Jc_{L}^{x_b}\delta^{x_{S\setminus\{b\}}}{\bar \psi_L^{x_b}},
\end{equation*}
the claim~\eqref{eq:diagram} follows.

\medskip\noindent
We are left with the proof of~\eqref{eq:diagram-ter}. Given disjoint index sets $S,T\subset[k]$ with $S\ne\varnothing$, in view of~\eqref{eq:cordiff-JLHn-re}, corrector differences can be decomposed as
\begin{equation*}
\delta^{x_S}\psi_L^{x_T}\,=\,\sum_{b\in S}\Jc^{x_b}_{L;x_T}\delta^{x_{S\setminus\{b\}}}{\bar \psi_L^{x_b,x_T}}.
\end{equation*}
Decomposing {$\bar \psi_L^{x_b,x_T}=\bar \psi_L^{x_b}+(\psi_L^{x_b,x_T}-\psi_L^{x_b})$}, this becomes
\begin{equation}\label{eq:pre-diagram-ter}
\delta^{x_S}\psi_L^{x_T}\,=\,\sum_{b\in S}\Jc^{x_b}_{L;x_T}\delta^{x_{S\setminus\{b\}}}{\bar\psi_L^{x_b}}+\sum_{b\in S}\Jc^{x_b}_{L;x_T}\delta^{x_{S\setminus\{b\}}}(\psi_L^{x_b,x_T}-\psi_L^{x_b}),
\end{equation}
and it remains to further decompose the last right-hand side term.
For that purpose, in view of Lemma~\ref{eq:cordiff-form}, for all $D \subset S$ with $D\ne \varnothing$, we note that $\delta^{x_{S\setminus D}}(\psi_L^{x_D,x_T}-\psi_L^{x_D})$ satisfies
\begin{multline*}
-\triangle\delta^{x_{S\setminus D}}(\psi_L^{x_D,x_T}-\psi_L^{x_D})+\nabla\delta^{x_{S\setminus D}}\Big(\Sigma_L^{x_D,x_T}\mathds1_{ Q_L\setminus\cup_{i\in D \cup T}B(x_i)}- \Sigma_L^{x_D}\mathds1_{ Q_L\setminus \cup_{b \in D} B(x_{b})}\Big)\\
\,=\,-\sum_{b \in D} \delta_{\partial B(x_b)}\delta^{x_{S\setminus D}}{(\sigma_L^{x_D,x_T}-\sigma_L^{x_D})}\\
-\sum_{i\in S\setminus D}\delta_{\partial B(x_i)}\delta^{x_{S\setminus D \cup\{i\}}}{(\sigma_L^{x_D,x_i,x_T}-\sigma_L^{x_D,x_i})}
-\sum_{i\in T}\delta_{\partial B(x_i)}\delta^{x_{S\setminus D}}{\sigma_L^{x_D,x_T}},
\end{multline*}
which then allows to write
\[\delta^{x_{S\setminus D}}(\psi_L^{x_D,x_T}-\psi_L^{x_D})\,=\,\sum_{i\in S\setminus D}\Jc_{L;x_D}^{x_i}\delta^{x_{S\setminus D\cup \{i\}}}(\psi_L^{x_D,x_i,x_T}-\psi_L^{x_D,x_i})+\sum_{i\in T}\Jc^{x_i}_{L;x_D}\delta^{x_{S\setminus D}} \bar \psi_L^{x_D,x_T}.\]
Using iteratively this identity for $D$ exhausting $S$, we obtain upon recognizing the definition~\eqref{eq:string-contr} of elementary contributions,
\[\sum_{b\in S}\Jc_{L;x_T}^{x_b}\delta^{x_{S\setminus\{b\}}}(\psi_L^{x_b,x_T}-\psi_L^{x_b})\,=\,
\sum_{I\text{ string}\atop\langle I\rangle\subset S}~\sum_{c\in T}\calR^{I\uplus(c)}_{L;T}(x_{[k]})\delta^{x_{S\setminus \langle I\rangle}}\psi_L^{x_I,x_T}.\]
Inserting this into~\eqref{eq:pre-diagram-ter}, the claim~\eqref{eq:diagram-ter} follows.

\medskip
\step2 Estimation of block contributions and graphical notation.\\
Let $B$ be a block of indices with root $b$ and endpoint $f$.
By definition of elementary block contributions, cf.~\eqref{eq:block-contr}, for any index set $S$ that is disjoint from $\langle B\rangle$, Lemma~\ref{lem:decay-re} yields
\begin{equation}\label{eq:decomp-CLB-integral}
 \Big(\int_{B(z)}|\nabla\calC_{L;S}^{B}(x_{[k]})\zeta|^2\Big)^\frac12 
\,\lesssim\, \langle (z-x_{b})_L\rangle^{-d}D_B(x_B)\Big(\int_{B(x_{f})}|\nabla\zeta|^2\Big)^\frac12,
\end{equation}
where for any sequence $C=(c_1,\ldots,c_m)$ we define
\begin{equation}\label{eq:def-DBxB}
D_C(x_C)\,:=\,\langle(x_{c_1}-x_{c_2})_L\rangle^{-d}\ldots\langle(x_{c_{m-1}}-x_{c_m})_L\rangle^{-d}.
\end{equation}
As such contributions will be combined in intricate ways in the sequel, we introduce a convenient graphical notation. 
Integration variables are represented by small black circles and frozen variables by small white circles. The index of a frozen variable is occasionally indicated in the corresponding circle. A solid line between two vertices $i$ and $j$ represents a factor $\langle(x_i-x_j)_L\rangle^{-d}$. In particular, multiple edges correspond to powers of this factor. For instance, we have
\[{\tiny\begin{tikzpicture}[baseline={([yshift=-.8ex]current bounding box.center)},scale=0.6]
\begin{scope}[every node/.style={circle,thick,draw}]
    \node (1) at (0,0) {$1$};
    \node (4) at (2,0) {$4$};
\end{scope}
\begin{scope}[every node/.style={circle,fill,inner sep=0pt,minimum size=3pt}]
    \node (2) at (1,0.7) {};
    \node (3) at (1,-0.7) {};
\end{scope}
\begin{scope}[>={Stealth[black]},
every node/.style={fill=white,circle},
every edge/.style={draw=black,very thick}]
    \path [-] (1) edge (2);
    \path [-] (1) edge (3);
    \path [-] (2) edge[bend left=20] (3);
    \path [-] (2) edge[bend left=-20] (3);
    \path [-] (3) edge (4);
\end{scope}
\end{tikzpicture}}
\,=\,
\int_{(Q_L)^2}\langle(x_1-x_2)_L\rangle^{-d}\langle(x_2-x_3)_L\rangle^{-2d}\langle(x_3-x_1)_L\rangle^{-d}\langle(x_3-x_4)_L\rangle^{-d}\,dx_2dx_3.\]
When evaluating integrals with borderline factors $\langle(x_i-x_j)_L\rangle^{-d}$, we naturally obtain logarithmic factors, for which we shall use the short-hand notation
\begin{equation}\label{eq:def-Lc}
\Lc_L((z_i)_{i\in J})\,:=\,\log\Big(2+\max_{i,j\in J}|(z_i-z_j)_L|\Big).
\end{equation}
This is combined into our graphical notation as follows: a symbolic prefactor $\Lc$ in front of a diagram indicates that a factor $\Lc_L(x_S)$ is to be included into the corresponding integral, where~$S$ stands for the set of all implemented indices. For instance, for any power $\mu\ge0$, we have
\begin{multline*}
\Lc^\mu\,{\tiny\begin{tikzpicture}[baseline={([yshift=-.8ex]current bounding box.center)},scale=0.6]
\begin{scope}[every node/.style={circle,thick,draw}]
    \node (1) at (0,0) {$1$};
    \node (4) at (2,0) {$4$};
\end{scope}
\begin{scope}[every node/.style={circle,fill,inner sep=0pt,minimum size=3pt}]
    \node (2) at (1,0.7) {};
    \node (3) at (1,-0.7) {};
\end{scope}
\begin{scope}[>={Stealth[black]},
every node/.style={fill=white,circle},
every edge/.style={draw=black,very thick}]
    \path [-] (1) edge (2);
    \path [-] (1) edge (3);
    \path [-] (2) edge[bend left=20] (3);
    \path [-] (2) edge[bend left=-20] (3);
    \path [-] (3) edge (4);
\end{scope}
\end{tikzpicture}}
\,=\,
\int_{(Q_L)^2}\Lc_L(x_1,x_2,x_3,x_4)^\mu\\
\times\langle(x_1-x_2)_L\rangle^{-d}\langle(x_2-x_3)_L\rangle^{-2d}\langle(x_3-x_1)_L\rangle^{-d}\langle(x_3-x_4)_L\rangle^{-d}\,dx_2dx_3.
\end{multline*}
Noting that for any $\gamma>0$ and $\mu\ge0$ a direct evaluation of integrals yields
\begin{equation*}
\int_{Q_L}\Lc_L(x,y,z)^\mu\langle(x-y)_L\rangle^{-d}\langle(y-z)_L\rangle^{-\gamma}\,dy
\,\lesssim\,\left\{\begin{array}{lll}
\langle(x-z)_L\rangle^{-d}\Lc_L(x,z)^{\mu}&:&\gamma>d,\\
\langle(x-z)_L\rangle^{-d}\Lc_L(x,z)^{\mu+1}&:&\gamma= d,\\
\langle(x-z)_L\rangle^{-\gamma}\Lc_L(x,z)^{\mu+1}&:&\gamma< d,
\end{array}\right.
\end{equation*}
we deduce with our graphical notation
\begin{eqnarray}\label{eq:bound-integ-kernel-diag}
\Lc^\mu\,{\tiny\begin{tikzpicture}[baseline={([yshift=-.8ex]current bounding box.center)},scale=0.6]
\begin{scope}[every node/.style={circle,draw,fill=white,inner sep=0pt,minimum size=3pt}]
    \node (1) at (0,0) {};
    \node (3) at (2,0) {};
\end{scope}
\begin{scope}[every node/.style={circle,fill,inner sep=0pt,minimum size=3pt}]
    \node (2) at (1,0) {};
\end{scope}
\begin{scope}[>={Stealth[black]},
every node/.style={fill=white,circle},
every edge/.style={draw=black,very thick}]
    \path [-] (1) edge (2);
    \path [-] (2) edge (3);
\end{scope}
\end{tikzpicture}}
&\lesssim&
\Lc^{\mu+1}\,
{\tiny\begin{tikzpicture}[baseline={([yshift=-.8ex]current bounding box.center)},scale=0.6]
\begin{scope}[every node/.style={circle,draw,fill=white,inner sep=0pt,minimum size=3pt}]
    \node (1) at (0,0) {};
    \node (3) at (1,0) {};
\end{scope}
\begin{scope}[>={Stealth[black]},
every node/.style={fill=white,circle},
every edge/.style={draw=black,very thick}]
    \path [-] (1) edge (3);
\end{scope}
\end{tikzpicture}}\\
\Lc^\mu\,{\tiny\begin{tikzpicture}[baseline={([yshift=-.8ex]current bounding box.center)},scale=0.6]
\begin{scope}[every node/.style={circle,draw,fill=white,inner sep=0pt,minimum size=3pt}]
    \node (1) at (0,0) {};
    \node (3) at (2,0) {};
\end{scope}
\begin{scope}[every node/.style={circle,fill,inner sep=0pt,minimum size=3pt}]
    \node (2) at (1,0) {};
\end{scope}
\begin{scope}[>={Stealth[black]},
every node/.style={fill=white,circle},
every edge/.style={draw=black,very thick}]
    \path [-] (1) edge[bend left=30] (2);
    \path [-] (1) edge[bend left=-30] (2);
    \path [-] (2) edge (3);
\end{scope}
\end{tikzpicture}}
&\lesssim&
\Lc^\mu\,{\tiny\begin{tikzpicture}[baseline={([yshift=-.8ex]current bounding box.center)},scale=0.6]
\begin{scope}[every node/.style={circle,draw,fill=white,inner sep=0pt,minimum size=3pt}]
    \node (1) at (0,0) {};
    \node (3) at (1,0) {};
\end{scope}
\begin{scope}[>={Stealth[black]},
every node/.style={fill=white,circle},
every edge/.style={draw=black,very thick}]
    \path [-] (1) edge (3);
\end{scope}
\end{tikzpicture}}\nonumber
\end{eqnarray}
which allows for instance to estimate graphically
\[{\tiny\begin{tikzpicture}[baseline={([yshift=-.8ex]current bounding box.center)},scale=0.6]
\begin{scope}[every node/.style={circle,thick,draw}]
    \node (1) at (0,0) {$1$};
    \node (4) at (2,0) {$4$};
\end{scope}
\begin{scope}[every node/.style={circle,fill,inner sep=0pt,minimum size=3pt}]
    \node (2) at (1,0.7) {};
    \node (3) at (1,-0.7) {};
\end{scope}
\begin{scope}[>={Stealth[black]},
every node/.style={fill=white,circle},
every edge/.style={draw=black,very thick}]
    \path [-] (1) edge (2);
    \path [-] (1) edge (3);
    \path [-] (2) edge[bend left=20] (3);
    \path [-] (2) edge[bend left=-20] (3);
    \path [-] (3) edge (4);
\end{scope}
\end{tikzpicture}}
~\lesssim~
{\tiny\begin{tikzpicture}[baseline={([yshift=-.8ex]current bounding box.center)},scale=0.6]
\begin{scope}[every node/.style={circle,thick,draw}]
    \node (1) at (0,0) {$1$};
    \node (4) at (2,0) {$4$};
\end{scope}
\begin{scope}[every node/.style={circle,fill,inner sep=0pt,minimum size=3pt}]
    \node (3) at (1,-0.7) {};
\end{scope}
\begin{scope}[every node/.style={circle,inner sep=0pt,minimum size=0pt}]
    \node (2) at (1,0.7) {};
\end{scope}
\begin{scope}[>={Stealth[black]},
every node/.style={fill=white,circle},
every edge/.style={draw=black,very thick}]
    \path [-] (1) edge[bend left=20] (3);
    \path [-] (1) edge[bend left=-20] (3);
    \path [-] (3) edge (4);
\end{scope}
\end{tikzpicture}}
~\lesssim~
{\tiny\begin{tikzpicture}[baseline={([yshift=-.8ex]current bounding box.center)},scale=0.6]
\begin{scope}[every node/.style={circle,thick,draw}]
    \node (1) at (0,0) {$1$};
    \node (4) at (1.4,0) {$4$};
\end{scope}
\begin{scope}[>={Stealth[black]},
every node/.style={fill=white,circle},
every edge/.style={draw=black,very thick}]
    \path [-] (1) edge (4);
\end{scope}
\end{tikzpicture}}
~=~
\langle(x_1-x_4)_L\rangle^{-d}.
\]
The counting of logarithmic factors in the sequel will be quite trivial as we shall notice that at most one logarithmic factor appears each time a vertex disappears in the graphical representation. This rough bound can often be improved, but it suffices for our purposes.

\medskip\noindent
We need to add one more ingredient to our graphical notation. Indeed, in the sequel, we replace the density function $f_{k+1}$ in~\eqref{eq:decomp-BLj1-intens} by its expansion~\eqref{eq:dens-correl} in terms of correlation functions, and we estimate the latter by appealing both to the decay assumption~\ref{Mix-om-n} and to the uniform bound~\eqref{eq:high-intens-correl}.
This leads us to combine products of the form~$D_C(x_C)$ with products of factors of the form $\omega((x_i-x_j)_L)\wedge\lambda_{p_i}(\Pc)$ for some $p_i\ge0$. In our graphical notation, such a factor is represented by a dashed line between vertices~$i$ and~$j$. In principle, the value $p_i$ should be included in the notation to precise the value of the edge. For convenience, we rather use a simplified notation: for a diagram with $s$ dashed lines, a symbolic prefactor $\lambda^\circ_{k}$ indicates that the dashed lines correspond to factors \mbox{$(\omega((\cdot)_L)\wedge\lambda_{p_i}(\Pc))_{1\le i\le s}$} with any $p_1,\ldots,p_s\ge1$ satisfying $p_1+\ldots+p_s=k$, and we take the sum over the different possible choices of such $p_i$'s. For instance,
\begin{multline*}
\lambda_{k}^\circ\,{\tiny\begin{tikzpicture}[baseline={([yshift=-.8ex]current bounding box.center)},scale=0.6]
\begin{scope}[every node/.style={circle,thick,draw}]
    \node (1) at (0,0) {$1$};
    \node (4) at (2,0) {$4$};
\end{scope}
\begin{scope}[every node/.style={circle,fill,inner sep=0pt,minimum size=3pt}]
    \node (2) at (1,0.7) {};
    \node (3) at (1,-0.7) {};
\end{scope}
\begin{scope}[>={Stealth[black]},
every node/.style={fill=white,circle},
every edge/.style={draw=black,very thick}]
    \path [-] (1) edge (3);
    \path [-] (2) edge[bend left=-20] (3);
    \path [-] (3) edge (4);
\end{scope}
\begin{scope}[>={Stealth[black]},
every edge/.style={draw=black,very thick,dashed}]
    \path [-] (1) edge (2);
    \path [-] (2) edge[bend left=20] (3);
\end{scope}
\end{tikzpicture}}
\,=\,
\sum_{p_1,p_2\ge1\atop p_1+p_2=k}\int_{(Q_L)^2}\big(\omega((x_1-x_2)_L)\wedge\lambda_{p_1}(\Pc)\big)\big(\omega((x_2-x_3)_L)\wedge\lambda_{p_2}(\Pc)\big)\\
\times\langle(x_2-x_3)_L\rangle^{-d}\langle(x_3-x_1)_L\rangle^{-d}\langle(x_3-x_4)_L\rangle^{-d}\,dx_2dx_3.
\end{multline*}
In addition, a symbolic prefactor $\lambda_{k}'$ in front of a diagram with~$s$ dashed lines indicates that the whole expression is multiplied by a factor $\lambda_{p_0}(\Pc)$ and that the dashed lines correspond to factors $(\omega((\cdot)_L)\wedge\lambda_{p_i}(\Pc))_{1\le i\le s}$ with any $p_0,\ldots,p_s\ge1$ satisfying $p_0+\ldots+p_s=k$, where we again take the sum over all possible choices.
In other words,
\[\big(\lambda_{k}'\big)
\,=\,
\sum_{p=1}^k\lambda_{p}(\Pc)\times\big(\lambda_{k-p}^\circ\big).\]
As obviously $\omega((\cdot)_L)\wedge\lambda_{p_i}(\Pc)\le\lambda_{p_i}(\Pc)$, we get with our notation
\begin{equation}\label{eq:rule-lambda-double}
\lambda_{k}^\circ\,{\tiny\begin{tikzpicture}[baseline={([yshift=-.8ex]current bounding box.center)},scale=0.6]
\begin{scope}[every node/.style={circle,draw,fill=white,inner sep=0pt,minimum size=3pt}]
    \node (1) at (0,0) {};
    \node (2) at (1,0) {};
\end{scope}
\begin{scope}[>={Stealth[black]},
every node/.style={fill=white,circle},
every edge/.style={draw=black,very thick}]
    \path [-] (1) edge[bend left=30] (2);
\end{scope}
\begin{scope}[>={Stealth[black]},
every edge/.style={draw=black,very thick,dashed}]
    \path [-] (1) edge[bend left=-30] (2);
\end{scope}
\end{tikzpicture}}
~\le~
\lambda_k'\,{\tiny\begin{tikzpicture}[baseline={([yshift=-.8ex]current bounding box.center)},scale=0.6]
\begin{scope}[every node/.style={circle,draw,fill=white,inner sep=0pt,minimum size=3pt}]
    \node (1) at (0,0) {};
    \node (2) at (1,0) {};
\end{scope}
\begin{scope}[>={Stealth[black]},
every node/.style={fill=white,circle},
every edge/.style={draw=black,very thick}]
    \path [-] (1) edge (2);
\end{scope}
\end{tikzpicture}}
~=~
\lambda_k(\Pc)\,{\tiny\begin{tikzpicture}[baseline={([yshift=-.8ex]current bounding box.center)},scale=0.6]
\begin{scope}[every node/.style={circle,draw,fill=white,inner sep=0pt,minimum size=3pt}]
    \node (1) at (0,0) {};
    \node (2) at (1,0) {};
\end{scope}
\begin{scope}[>={Stealth[black]},
every node/.style={fill=white,circle},
every edge/.style={draw=black,very thick}]
    \path [-] (1) edge (2);
\end{scope}
\end{tikzpicture}}
\end{equation}
Next, we combine this with the notation~$\Lc$ for logarithmic factors:
in front of a diagram with a prefactor $\lambda_k^\circ$, a symbolic prefactor~$\Lc$ indicates that either a factor $\Lc(x_S)$ is to be included into the corresponding integral, where~$S$ stands for the set of implemented indices, or that one of the factors $\omega((x_i-x_j)_L)\wedge\lambda_{p_i}(\Pc)$ is to be replaced by
\[\big(\omega((x_i-x_j)_L)\Lc_L(x_i,x_j)\big)\wedge\big(\lambda_{p_i}(\Pc)|\!\log\lambda(\Pc)|\big),\]
and we take the sum over the two choices.
Powers of $\Lc$ are defined accordingly: for instance, for any $\mu\ge0$,
\begin{multline*}
\Lc^\mu\lambda_{k}^\circ\,{\tiny\begin{tikzpicture}[baseline={([yshift=-.8ex]current bounding box.center)},scale=0.6]
\begin{scope}[every node/.style={circle,thick,draw}]
    \node (1) at (0,0) {$1$};
    \node (4) at (2,0) {$4$};
\end{scope}
\begin{scope}[every node/.style={circle,fill,inner sep=0pt,minimum size=3pt}]
    \node (2) at (1,0.7) {};
    \node (3) at (1,-0.7) {};
\end{scope}
\begin{scope}[>={Stealth[black]},
every node/.style={fill=white,circle},
every edge/.style={draw=black,very thick}]
    \path [-] (1) edge (3);
    \path [-] (2) edge[bend left=-20] (3);
    \path [-] (3) edge (4);
\end{scope}
\begin{scope}[>={Stealth[black]},
every edge/.style={draw=black,very thick,dashed}]
    \path [-] (1) edge (2);
    \path [-] (2) edge[bend left=20] (3);
\end{scope}
\end{tikzpicture}}
\,=\,
\sum_{\mu_0,\mu_1,\mu_2\ge0\atop\mu_0+\mu_1+\mu_2=\mu}
\sum_{p_0,p_1,p_2\ge1\atop p_0+p_1+p_2=k}\int_{(Q_L)^2}\Lc_L(x_1,x_2,x_3,x_4)^{\mu_0}\\
\times\Big(\big(\omega((x_1-x_2)_L)\Lc_L(x_1,x_2)^{\mu_1}\big)\wedge\big(\lambda_{p_1}(\Pc)|\!\log\lambda(\Pc)|^{\mu_1}\big)\Big)\\
\times \Big(\big(\omega((x_2-x_3)_L)\Lc(x_2,x_3)^{\mu_2}\big)\wedge\big(\lambda_{p_2}(\Pc)|\!\log\lambda(\Pc)|^{\mu_2}\big)\Big)\\
\times\langle(x_2-x_3)_L\rangle^{-d}\langle(x_3-x_1)_L\rangle^{-d}\langle(x_3-x_4)_L\rangle^{-d}\,dx_2dx_3.
\end{multline*}
When $\Lc$ is in front of a diagram with a prefactor $\lambda_k'$, we add the possibility of multiplying the whole expression by a factor $|\!\log\lambda(\Pc)|$: for all $\mu\ge0$, this means
\[\big(\Lc^\mu\lambda_k'\big)\,=\,\sum_{\nu=0}^\mu\sum_{p=1}^k\lambda_p(\Pc)|\!\log\lambda(\Pc)|^\nu\times\big(\Lc^{\mu-\nu}\lambda_{k-p}^\circ\big).\]
In case of an algebraic rate $\omega(t)=Ct^{-\beta}$ for some $\beta\in(0,d)$,
a direct evaluation of integrals yields, for all $\lambda>0$ and $\mu,\nu\ge0$,
\begin{multline*}
\int_{Q_L}\Lc_L(x,y,z)^\mu\langle(x-y)_L\rangle^{-d}\Big(\big(\omega((y-z)_L)\Lc_L(y,z)^\nu\big)\wedge\big(\lambda|\!\log\lambda|^\nu\big)\Big)\,dy\\
\,\lesssim\,\big(\omega((x-z)_L)\Lc_L(x-z)^{\mu+\nu+1}\big)\wedge\big(\lambda|\!\log\lambda|^{\mu+\nu+1}\big).
\end{multline*}
With our graphical notation, recalling Lemma~\ref{lem:gen-cond-lambd}(ii), this estimate and similar computations yield for all $\mu\ge0$,
\begin{eqnarray}\label{eq:bound-integ-kernel-diag-dash}
\Lc^\mu\lambda_k^\circ
\,{\tiny\begin{tikzpicture}[baseline={([yshift=-.8ex]current bounding box.center)},scale=0.6]
\begin{scope}[every node/.style={circle,draw,fill=white,inner sep=0pt,minimum size=3pt}]
    \node (1) at (0,0) {};
    \node (3) at (2,0) {};
\end{scope}
\begin{scope}[every node/.style={circle,fill,inner sep=0pt,minimum size=3pt}]
    \node (2) at (1,0) {};
\end{scope}
\begin{scope}[>={Stealth[black]},
every node/.style={fill=white,circle},
every edge/.style={draw=black,very thick}]
    \path [-] (1) edge (2);
\end{scope}
\begin{scope}[>={Stealth[black]},
every edge/.style={draw=black,very thick,dashed}]
    \path [-] (2) edge (3);
\end{scope}
\end{tikzpicture}}
&\lesssim&
\Lc^{\mu+1}\lambda_k^\circ\,
{\tiny\begin{tikzpicture}[baseline={([yshift=-.8ex]current bounding box.center)},scale=0.6]
\begin{scope}[every node/.style={circle,draw,fill=white,inner sep=0pt,minimum size=3pt}]
    \node (1) at (0,0) {};
    \node (3) at (1,0) {};
\end{scope}
\begin{scope}[>={Stealth[black]},
every edge/.style={draw=black,very thick,dashed}]
    \path [-] (1) edge (3);
\end{scope}
\end{tikzpicture}}
\\
\nonumber
\Lc^\mu\lambda_k^\circ
\,{\tiny\begin{tikzpicture}[baseline={([yshift=-.8ex]current bounding box.center)},scale=0.6]
\begin{scope}[every node/.style={circle,draw,fill=white,inner sep=0pt,minimum size=3pt}]
    \node (1) at (0,0) {};
    \node (3) at (2,0) {};
\end{scope}
\begin{scope}[every node/.style={circle,fill,inner sep=0pt,minimum size=3pt}]
    \node (2) at (1,0) {};
\end{scope}
\begin{scope}[>={Stealth[black]},
every node/.style={fill=white,circle},
every edge/.style={draw=black,very thick}]
    \path [-] (1) edge[bend left=30] (2);
    \path [-] (1) edge[bend left=-30] (2);
\end{scope}
\begin{scope}[>={Stealth[black]},
every edge/.style={draw=black,very thick,dashed}]
    \path [-] (2) edge (3);
\end{scope}
\end{tikzpicture}}
&\lesssim&
\Lc^{\mu}\lambda_k^\circ\,
{\tiny\begin{tikzpicture}[baseline={([yshift=-.8ex]current bounding box.center)},scale=0.6]
\begin{scope}[every node/.style={circle,draw,fill=white,inner sep=0pt,minimum size=3pt}]
    \node (1) at (0,0) {};
    \node (3) at (1,0) {};
\end{scope}
\begin{scope}[>={Stealth[black]},
every edge/.style={draw=black,very thick,dashed}]
    \path [-] (1) edge (3);
\end{scope}
\end{tikzpicture}}
\\
\nonumber
\Lc^\mu\lambda_k^\circ
\,{\tiny\begin{tikzpicture}[baseline={([yshift=-.8ex]current bounding box.center)},scale=0.6]
\begin{scope}[every node/.style={circle,draw,fill=white,inner sep=0pt,minimum size=3pt}]
    \node (1) at (0,0) {};
    \node (3) at (2,0) {};
\end{scope}
\begin{scope}[every node/.style={circle,fill,inner sep=0pt,minimum size=3pt}]
    \node (2) at (1,0) {};
\end{scope}
\begin{scope}[>={Stealth[black]},
every node/.style={fill=white,circle},
every edge/.style={draw=black,very thick}]
    \path [-] (1) edge[bend left=30] (2);
    \path [-] (2) edge (3);
\end{scope}
\begin{scope}[>={Stealth[black]},
every edge/.style={draw=black,very thick,dashed}]
    \path [-] (1) edge[bend left=-30] (2);
\end{scope}
\end{tikzpicture}}
&\lesssim&
\Lc^{\mu+1}\lambda_k'\,
{\tiny\begin{tikzpicture}[baseline={([yshift=-.8ex]current bounding box.center)},scale=0.6]
\begin{scope}[every node/.style={circle,draw,fill=white,inner sep=0pt,minimum size=3pt}]
    \node (1) at (0,0) {};
    \node (3) at (1,0) {};
\end{scope}
\begin{scope}[>={Stealth[black]},
every node/.style={fill=white,circle},
every edge/.style={draw=black,very thick}]
    \path [-] (1) edge (3);
\end{scope}
\end{tikzpicture}}
\\
\nonumber
\Lc^\mu\lambda_k^\circ
\,{\tiny\begin{tikzpicture}[baseline={([yshift=-.8ex]current bounding box.center)},scale=0.6]
\begin{scope}[every node/.style={circle,draw,fill=white,inner sep=0pt,minimum size=3pt}]
    \node (1) at (0,0) {};
    \node (3) at (2,0) {};
\end{scope}
\begin{scope}[every node/.style={circle,fill,inner sep=0pt,minimum size=3pt}]
    \node (2) at (1,0) {};
\end{scope}
\begin{scope}[>={Stealth[black]},
every node/.style={fill=white,circle},
every edge/.style={draw=black,very thick}]
    \path [-] (1) edge[bend left=30] (2);
\end{scope}
\begin{scope}[>={Stealth[black]},
every edge/.style={draw=black,very thick,dashed}]
    \path [-] (2) edge (3);
    \path [-] (1) edge[bend left=-30] (2);
\end{scope}
\end{tikzpicture}}
&\lesssim&
\Lc^{\mu+1}\lambda_k'\,
{\tiny\begin{tikzpicture}[baseline={([yshift=-.8ex]current bounding box.center)},scale=0.6]
\begin{scope}[every node/.style={circle,draw,fill=white,inner sep=0pt,minimum size=3pt}]
    \node (1) at (0,0) {};
    \node (3) at (1,0) {};
\end{scope}
\begin{scope}[>={Stealth[black]},
every edge/.style={draw=black,very thick,dashed}]
    \path [-] (1) edge (3);
\end{scope}
\end{tikzpicture}}
\end{eqnarray}
which allows for instance to estimate graphically
\begin{multline*}
\Lc^\mu\lambda_{k}^\circ\,{\tiny\begin{tikzpicture}[baseline={([yshift=-.8ex]current bounding box.center)},scale=0.6]
\begin{scope}[every node/.style={circle,thick,draw}]
    \node (1) at (0,0) {$1$};
    \node (4) at (2,0) {$4$};
\end{scope}
\begin{scope}[every node/.style={circle,fill,inner sep=0pt,minimum size=3pt}]
    \node (2) at (1,0.7) {};
    \node (3) at (1,-0.7) {};
\end{scope}
\begin{scope}[>={Stealth[black]},
every node/.style={fill=white,circle},
every edge/.style={draw=black,very thick}]
    \path [-] (1) edge (3);
    \path [-] (2) edge[bend left=-20] (3);
    \path [-] (3) edge (4);
\end{scope}
\begin{scope}[>={Stealth[black]},
every edge/.style={draw=black,very thick,dashed}]
    \path [-] (1) edge (2);
    \path [-] (2) edge[bend left=20] (3);
\end{scope}
\end{tikzpicture}}
~\lesssim~
\Lc^{\mu+1}\lambda_{k}'\,{\tiny\begin{tikzpicture}[baseline={([yshift=-.8ex]current bounding box.center)},scale=0.6]
\begin{scope}[every node/.style={circle,thick,draw}]
    \node (1) at (0,0) {$1$};
    \node (4) at (2,0) {$4$};
\end{scope}
\begin{scope}[every node/.style={circle,fill,inner sep=0pt,minimum size=3pt}]
    \node (3) at (1,-0.7) {};
\end{scope}
\begin{scope}[every node/.style={circle,inner sep=0pt,minimum size=0pt}]
    \node (2) at (1,0.7) {};
\end{scope}
\begin{scope}[>={Stealth[black]},
every node/.style={fill=white,circle},
every edge/.style={draw=black,very thick}]
    \path [-] (1) edge[bend left=-20] (3);
    \path [-] (3) edge (4);
\end{scope}
\begin{scope}[>={Stealth[black]},
every edge/.style={draw=black,very thick,dashed}]
    \path [-] (1) edge[bend left=20] (3);
\end{scope}
\end{tikzpicture}}
~\lesssim~
\Lc^{\mu+2}\lambda_{k}'\,{\tiny\begin{tikzpicture}[baseline={([yshift=-.8ex]current bounding box.center)},scale=0.6]
\begin{scope}[every node/.style={circle,thick,draw}]
    \node (1) at (0,0) {$1$};
    \node (4) at (1.4,0) {$4$};
\end{scope}
\begin{scope}[>={Stealth[black]},
every node/.style={fill=white,circle},
every edge/.style={draw=black,very thick}]
    \path [-] (1) edge (4);
\end{scope}
\end{tikzpicture}}\\
~\le~\lambda_k(\Pc)\langle(x_1-x_4)_L\rangle^{-d}\big(|\!\log\lambda(\Pc)|+\Lc_L(x_1,x_4)\big)^{\mu+2}.
\end{multline*}
This notation will be used abundantly in the sequel.

\medskip
\step3 Partial integration on blocks.\\
Let $B$ be a block of indices with root $b$ and endpoint $f$.
We shall establish the following key estimate for partial integrals on~$B$: for all $\alpha,\beta\in \langle B\rangle$,
\begin{multline}\label{eq:integ-DB-plop}
\int_{(Q_L)^{\sharp \langle B\rangle\setminus \{b,f,\alpha,\beta\}}}D_B(x_B)\,dx_{\langle B\rangle\setminus \{b,f,\alpha,\beta\}}
\,\lesssim_B\,\Lc_L(x_b,x_f,x_\alpha,x_\beta)^{\sharp \langle B\rangle\setminus \{b,f,\alpha,\beta\}}\\
\times\Big(D_{(\alpha,b,f,\beta)}(x_{[k]})+D_{(\alpha,f,b,\beta)}(x_{[k]})\langle(x_b-x_f)_L\rangle^{-d}\Big)\\
\wedge\Big(D_{(\alpha,f,b,\beta)}(x_{[k]})+D_{(\alpha,b,f,\beta)}(x_{[k]})\langle(x_b-x_f)_L\rangle^{-d}\Big).
\end{multline}
%
With the above graphical notation, if $b,f,\alpha,\beta$ are all distinct, this can be written as
\begin{equation}\label{eq:integ-DB-plop-diagr-bis}
{\tiny\begin{tikzpicture}[baseline={([yshift=-.8ex]current bounding box.center)},scale=0.6]
\filldraw[gray] (0,0) rectangle (2,2);
\begin{scope}[every node/.style={circle,thick,draw,fill=white}]
    \node (1) at (0,1) {$b$};
    \node (2) at (2,1) {$f$};
    \node (3) at (1,2) {$\alpha$};
    \node (4) at (1,0) {$\beta$};
\end{scope}
\end{tikzpicture}}
~\lesssim~
\Lc^{\sharp B-4}\bigg(
{\tiny\begin{tikzpicture}[baseline={([yshift=-.8ex]current bounding box.center)},scale=0.6]
\begin{scope}[every node/.style={circle,thick,draw}]
    \node (1) at (0,1) {$b$};
    \node (2) at (2,1) {$f$};
    \node (3) at (1,2) {$\alpha$};
    \node (4) at (1,0) {$\beta$};
\end{scope}
\begin{scope}[>={Stealth[black]},
every node/.style={fill=white,circle},
every edge/.style={draw=black,very thick}]
    \path [-] (1) edge (2);
    \path [-] (1) edge (3);
    \path [-] (4) edge (2);
\end{scope}
\end{tikzpicture}}
+
{\tiny\begin{tikzpicture}[baseline={([yshift=-.8ex]current bounding box.center)},scale=0.6]
\begin{scope}[every node/.style={circle,thick,draw}]
    \node (1) at (0,1) {$b$};
    \node (2) at (2,1) {$f$};
    \node (3) at (1,2) {$\alpha$};
    \node (4) at (1,0) {$\beta$};
\end{scope}
\begin{scope}[>={Stealth[black]},
every node/.style={fill=white,circle},
every edge/.style={draw=black,very thick}]
    \path [-] (1) edge[bend left=10] (2);
    \path [-] (2) edge[bend left=10] (1);
    \path [-] (1) edge (4);
    \path [-] (3) edge (2);
\end{scope}
\end{tikzpicture}}
\bigg)
\wedge\bigg(
{\tiny\begin{tikzpicture}[baseline={([yshift=-.8ex]current bounding box.center)},scale=0.6]
\begin{scope}[every node/.style={circle,thick,draw}]
    \node (1) at (0,1) {$b$};
    \node (2) at (2,1) {$f$};
    \node (3) at (1,2) {$\alpha$};
    \node (4) at (1,0) {$\beta$};
\end{scope}
\begin{scope}[>={Stealth[black]},
every node/.style={fill=white,circle},
every edge/.style={draw=black,very thick}]
    \path [-] (1) edge (2);
    \path [-] (2) edge (3);
    \path [-] (4) edge (1);
\end{scope}
\end{tikzpicture}}
+
{\tiny\begin{tikzpicture}[baseline={([yshift=-.8ex]current bounding box.center)},scale=0.6]
\begin{scope}[every node/.style={circle,thick,draw}]
    \node (1) at (0,1) {$b$};
    \node (2) at (2,1) {$f$};
    \node (3) at (1,2) {$\alpha$};
    \node (4) at (1,0) {$\beta$};
\end{scope}
\begin{scope}[>={Stealth[black]},
every node/.style={fill=white,circle},
every edge/.style={draw=black,very thick}]
    \path [-] (1) edge[bend left=10] (2);
    \path [-] (2) edge[bend left=10] (1);
    \path [-] (2) edge (4);
    \path [-] (3) edge (1);
\end{scope}
\end{tikzpicture}}
\bigg),
\end{equation}
where henceforth gray squares stand for integration on a generic block.
As these estimates will be abundantly used in the sequel, we also display the important special case when one further integrates over $b$ or $f$, which is deduced by applying~\eqref{eq:bound-integ-kernel-diag},
\begin{equation}\label{eq:block-integr-b=f}
{\tiny\begin{tikzpicture}[baseline={([yshift=-.8ex]current bounding box.center)},scale=0.6]
\filldraw[gray] (0,0) rectangle (2,2);
\begin{scope}[every node/.style={circle,thick,draw,fill=white}]
    \node (1) at (0,1) {$b$};
    \node (3) at (1,2) {$\alpha$};
    \node (4) at (1,0) {$\beta$};
\end{scope}
\end{tikzpicture}}\hspace{0.28cm}
~\lesssim~
\Lc^{\sharp B-3}\,
{\tiny\begin{tikzpicture}[baseline={([yshift=-.8ex]current bounding box.center)},scale=0.6]
\begin{scope}[every node/.style={circle,thick,draw}]
    \node (1) at (0,1) {$b$};
    \node (3) at (1,2) {$\alpha$};
    \node (4) at (1,0) {$\beta$};
\end{scope}
\begin{scope}[>={Stealth[black]},
every node/.style={fill=white,circle},
every edge/.style={draw=black,very thick}]
    \path [-] (1) edge (3);
    \path [-] (4) edge (1);
\end{scope}
\end{tikzpicture}}
\qquad\text{and}\qquad
{\tiny\begin{tikzpicture}[baseline={([yshift=-.8ex]current bounding box.center)},scale=0.6]
\filldraw[gray] (0,0) rectangle (2,2);
\begin{scope}[every node/.style={circle,thick,draw,fill=white}]
    \node (1) at (2,1) {$f$};
    \node (3) at (1,2) {$\alpha$};
    \node (4) at (1,0) {$\beta$};
\end{scope}
\end{tikzpicture}}
~\lesssim~
\Lc^{\sharp B-3}\,
{\tiny\begin{tikzpicture}[baseline={([yshift=-.8ex]current bounding box.center)},scale=0.6]
\begin{scope}[every node/.style={circle,thick,draw}]
    \node (1) at (2,1) {$f$};
    \node (3) at (1,2) {$\alpha$};
    \node (4) at (1,0) {$\beta$};
\end{scope}
\begin{scope}[>={Stealth[black]},
every node/.style={fill=white,circle},
every edge/.style={draw=black,very thick}]
    \path [-] (1) edge (3);
    \path [-] (4) edge (1);
\end{scope}
\end{tikzpicture}}
\end{equation}
and we further display the special cases when $\alpha=\beta$,
\begingroup\allowdisplaybreaks
\begin{eqnarray}\label{eq:block-integr-alph=beta}
{\tiny\begin{tikzpicture}[baseline={([yshift=-.8ex]current bounding box.center)},scale=0.6]
\filldraw[gray] (0,0) rectangle (2,2);
\begin{scope}[every node/.style={circle,thick,draw,fill=white}]
    \node (1) at (0,1) {$b$};
    \node (2) at (2,1) {$f$};
    \node (3) at (1,2) {$\alpha$};
\end{scope}
\end{tikzpicture}}
&\lesssim&
\Lc^{\sharp B-3}\,
{\tiny\begin{tikzpicture}[baseline={([yshift=-.8ex]current bounding box.center)},scale=0.6]
\begin{scope}[every node/.style={circle,thick,draw}]
    \node (1) at (0,0) {$b$};
    \node (2) at (2,0) {$f$};
    \node (3) at (1,1.2) {$\alpha$};
\end{scope}
\begin{scope}[>={Stealth[black]},
every node/.style={fill=white,circle},
every edge/.style={draw=black,very thick}]
    \path [-] (1) edge (2);
    \path [-] (1) edge (3);
\end{scope}
\end{tikzpicture}}
\wedge
{\tiny\begin{tikzpicture}[baseline={([yshift=-.8ex]current bounding box.center)},scale=0.6]
\begin{scope}[every node/.style={circle,thick,draw}]
    \node (1) at (0,0) {$b$};
    \node (2) at (2,0) {$f$};
    \node (3) at (1,1.2) {$\alpha$};
\end{scope}
\begin{scope}[>={Stealth[black]},
every node/.style={fill=white,circle},
every edge/.style={draw=black,very thick}]
    \path [-] (1) edge (2);
    \path [-] (2) edge (3);
\end{scope}
\end{tikzpicture}}
\\
\label{eq:block-integr-alph=beta}
{\tiny\begin{tikzpicture}[baseline={([yshift=-.8ex]current bounding box.center)},scale=0.6]
\filldraw[gray] (0,0) rectangle (2,2);
\begin{scope}[every node/.style={circle,thick,draw,fill=white}]
    \node (1) at (0,1) {$b$};
    \node (3) at (1,2) {$\alpha$};
\end{scope}
\end{tikzpicture}}\hspace{0.28cm}
&\lesssim&
\Lc^{\sharp B-2}\,
{\tiny\begin{tikzpicture}[baseline={([yshift=-.8ex]current bounding box.center)},scale=0.6]
\begin{scope}[every node/.style={circle,thick,draw}]
    \node (1) at (0,0) {$b$};
    \node (3) at (1,1.2) {$\alpha$};
\end{scope}
\begin{scope}[>={Stealth[black]},
every node/.style={fill=white,circle},
every edge/.style={draw=black,very thick}]
    \path [-] (1) edge[bend left=10] (3);
    \path [-] (3) edge[bend left=10] (1);
\end{scope}
\end{tikzpicture}}
\\
\label{eq:block-integr-alph=beta-rev}
{\tiny\begin{tikzpicture}[baseline={([yshift=-.8ex]current bounding box.center)},scale=0.6]
\filldraw[gray] (0,0) rectangle (2,2);
\begin{scope}[every node/.style={circle,thick,draw,fill=white}]
    \node (2) at (2,1) {$f$};
    \node (3) at (1,2) {$\alpha$};
\end{scope}
\end{tikzpicture}}
&\lesssim&
\Lc^{\sharp B-2}\,
{\tiny\begin{tikzpicture}[baseline={([yshift=-.8ex]current bounding box.center)},scale=0.6]
\begin{scope}[every node/.style={circle,thick,draw}]
    \node (2) at (2,1) {$f$};
    \node (3) at (1,2) {$\alpha$};
\end{scope}
\begin{scope}[>={Stealth[black]},
every node/.style={fill=white,circle},
every edge/.style={draw=black,very thick}]
    \path [-] (3) edge[bend left=10] (2);
    \path [-] (2) edge[bend left=10] (3);
\end{scope}
\end{tikzpicture}}
\\
\label{eq:block-integr-b=alpha=bet}
{\tiny\begin{tikzpicture}[baseline={([yshift=-.8ex]current bounding box.center)},scale=0.6]
\filldraw[gray] (0,0) rectangle (2,2);
\begin{scope}[every node/.style={circle,thick,draw,fill=white}]
    \node (1) at (0,1) {$b$};
    \node (2) at (2,1) {$f$};
\end{scope}
\end{tikzpicture}}
&\lesssim&
\Lc^{\sharp B-2}\,
{\tiny\begin{tikzpicture}[baseline={([yshift=-.8ex]current bounding box.center)},scale=0.6]
\begin{scope}[every node/.style={circle,thick,draw}]
    \node (1) at (0,1) {$b$};
    \node (2) at (2,1) {$f$};
\end{scope}
\begin{scope}[>={Stealth[black]},
every node/.style={fill=white,circle},
every edge/.style={draw=black,very thick}]
    \path [-] (1) edge[bend left=10] (2);
    \path [-] (1) edge[bend left=-10] (2);
\end{scope}
\end{tikzpicture}}
\end{eqnarray}
\endgroup
We shall in fact prove a much more precise estimates, see~\eqref{eq:bnd-4} below, but the above convenient estimates will be enough for our purposes.
Powers of the logarithmic factor in each of these estimates is equal to the difference between the numbers of vertices in the left-hand side and in the right-hand side: indeed, in view of~\eqref{eq:bound-integ-kernel-diag}, each vertex that is integrated yields at most one logarithmic factor. This could in fact be improved in~\eqref{eq:block-integr-alph=beta}--\eqref{eq:block-integr-b=alpha=bet} based on~\eqref{eq:bnd-4}, but we shall not need such refinements.

\medskip\noindent
Before turning to the proof, we make a notational comment. A special role is of course played in the above estimates by the root $b$ and by the endpoint~$f$ of the block. In the sequel, even when vertices are not labeled explicitly, as e.g.~in~\eqref{eq:bnd-4}, we take the convention that the root and the endpoint are always drawn respectively on the left and on the right sides of the square (or at one of these two locations in case they coincide), while all other distinguished vertices are drawn indistinctly on the upper and lower sides.

\medskip\noindent
We turn to the proof of~\eqref{eq:integ-DB-plop}.
For that purpose, we shall study geometric properties of the graph $\Gg$ associated with the block~$B$. Letting $B=(b_1,b_2,\ldots,b_l)$ with $b_1=b$ and $b_l=f$, we recall that we define vertices of $\Gg$ as the elements of the index set $\langle B\rangle=\{b_i\}_{1\le i\le l}$, and edges of $\Gg$ as pairs of consecutive indices $(b_i,b_{i+1})$ with $1\le i< l$. Note that $\Gg$ is connected and may have multiple edges but no self-loop. 
We shall repeatedly use the following observation: as edges of~$\Gg$ are defined from the block $B$, we note that~$b$ and~$f$ have odd degree $\ge3$ and that other vertices have even degree $\ge2$ (where the degree of a vertex is the number of unoriented edges containing that vertex; see e.g.~Figure~\ref{fig:block-path}).
We split the proof into three further substeps.


\medskip
\substep{3.1} Cyclic estimate.\\
In the spirit of~\eqref{eq:def-DBxB}, for a graph $\Hg$ on the index set $[k]$, we define
\begin{equation}\label{eq:notation-DH}
D_\Hg(x_{[k]})\,:=\,\prod_{(i,j)\in\Hg}\langle(x_i-x_j)_L\rangle^{-d},
\end{equation}
where the notation $(i,j)\in\Hg$ means that $(i,j)$ is an edge of $\Hg$.
Provided that $\Hg$ is Eulerian (that is, provided that $\Hg$ is connected and that each vertex has even degree), we claim that for all vertices $\alpha,\beta,\gamma\in\langle\Hg\rangle$,
\begin{multline}\label{eq:cyclic-unif}
\int_{(Q_L)^{\sharp\Hg\setminus\{\alpha,\beta,\gamma\}}}D_\Hg(x_{[k]})\,dx_{\langle \Hg\rangle\setminus\{\alpha,\beta,\gamma\}}\\
\,\lesssim_{\Hg}\,
\Lc^{\sharp\Hg\setminus\{\alpha,\beta,\gamma\}}\bigg(~
{\tiny\begin{tikzpicture}[baseline={([yshift=-.8ex]current bounding box.center)},scale=0.6]
\begin{scope}[every node/.style={circle,thick,draw}]
    \node (1) at (0,0) {$\alpha$};
    \node (2) at (1.8,0) {$\gamma$};
    \node (3) at (0.9,1.4) {$\beta$};
\end{scope}
\begin{scope}[>={Stealth[black]},
every node/.style={fill=white,circle},
every edge/.style={draw=black,very thick}]
    \path [-] (1) edge (2);
    \path [-] (1) edge (3);
    \path [-] (3) edge (2);
\end{scope}
\end{tikzpicture}}
+
{\tiny\begin{tikzpicture}[baseline={([yshift=-.8ex]current bounding box.center)},scale=0.6]
\begin{scope}[every node/.style={circle,thick,draw}]
    \node (1) at (0,0) {$\alpha$};
    \node (2) at (1.8,0) {$\gamma$};
    \node (3) at (0.9,1.4) {$\beta$};
\end{scope}
\begin{scope}[>={Stealth[black]},
every node/.style={fill=white,circle},
every edge/.style={draw=black,very thick}]
    \path [-] (1) edge[bend left=10] (2);
    \path [-] (1) edge[bend left=-10] (2);
    \path [-] (1) edge[bend left=10] (3);
    \path [-] (1) edge[bend left=-10] (3);
\end{scope}
\end{tikzpicture}}
+
{\tiny\begin{tikzpicture}[baseline={([yshift=-.8ex]current bounding box.center)},scale=0.6]
\begin{scope}[every node/.style={circle,thick,draw}]
    \node (1) at (0,0) {$\alpha$};
    \node (2) at (1.8,0) {$\gamma$};
    \node (3) at (0.9,1.4) {$\beta$};
\end{scope}
\begin{scope}[>={Stealth[black]},
every node/.style={fill=white,circle},
every edge/.style={draw=black,very thick}]
    \path [-] (2) edge[bend left=10] (1);
    \path [-] (2) edge[bend left=-10] (1);
    \path [-] (2) edge[bend left=10] (3);
    \path [-] (2) edge[bend left=-10] (3);
\end{scope}
\end{tikzpicture}}
+
{\tiny\begin{tikzpicture}[baseline={([yshift=-.8ex]current bounding box.center)},scale=0.6]
\begin{scope}[every node/.style={circle,thick,draw}]
    \node (1) at (0,0) {$\alpha$};
    \node (2) at (1.8,0) {$\gamma$};
    \node (3) at (0.9,1.4) {$\beta$};
\end{scope}
\begin{scope}[>={Stealth[black]},
every node/.style={fill=white,circle},
every edge/.style={draw=black,very thick}]
    \path [-] (3) edge[bend left=10] (2);
    \path [-] (3) edge[bend left=-10] (2);
    \path [-] (3) edge[bend left=10] (1);
    \path [-] (3) edge[bend left=-10] (1);
\end{scope}
\end{tikzpicture}}
~\bigg).
\end{multline}
%
We will use standard terminology from graph theory, which we recall here for clarity: a walk is a sequence of edges joining a sequence of vertices; a trail is a walk in which all edges are distinct (taking edge multiplicity into account); a path is a trail in which all vertices are also distinct; a circuit is a trail in which the first and last vertices coincide; a cycle is a circuit in which only the first and last vertices coincide.

\medskip\noindent
We turn to the proof of~\eqref{eq:cyclic-unif} and we argue by induction on the size of $\Hg$.
Assume that~$\alpha,\beta,\gamma$ are distinct (the cases $\alpha=\beta\ne\gamma$ and $\alpha=\beta=\gamma$ can be treated similarly and are skipped for brevity).
The result is straightforward if $\sharp\Hg=3$ as no integral is performed in that case. We turn to the case~$\sharp\Hg>3$.
As $\Hg$ is Eulerian, there is a circuit that covers $\Hg$ (that is, a circuit that visits every edge of $\Hg$ exactly once).
Removing some subcircuits, we deduce that one of the following two possibilities must hold up to a permutation of~$\alpha,\beta,\gamma$:
\begin{enumerate}[(a)]
\item either there is a cycle $\Cg$ visiting $\alpha,\beta,\gamma$;
\smallskip\item or there is a cycle $\Cg_1$ visiting $\alpha,\beta$ and a cycle $\Cg_2$ visiting $\alpha,\gamma$ such that vertices of $\Cg_1$ and $\Cg_2$ are all distinct except $\alpha$.
\end{enumerate}
Both cases can be treated similarly and we focus on the first one for brevity. Let~$\Cg$ be a cycle visiting $\alpha,\beta,\gamma$.
Denote by $\Hg'$ the (possibly empty) subgraph of $\Hg$ induced by the complement of the edge set of the cycle $\Cg$.
As $\Hg$ is Eulerian and as $\Cg$ is a cycle, we notice that $\Hg'$ is the union of Eulerian subgraphs $\Hg'_1,\ldots,\Hg'_s$ that are edge-disjoint. We may then decompose
\[D_{\Hg}(x_{[k]})\,=\,D_{\Cg}(x_{[k]})D_{\Hg_1'}(x_{[k]})\ldots D_{\Hg_s'}(x_{[k]}).\]
For all $1\le i\le s$, there is a vertex $j_i$ of $\Hg_i'$ that also belongs to the cycle $\Cg$. Summing separately over repeated variables, we may then estimate
\begin{multline*}
\int_{(Q_L)^{\sharp\Hg\setminus\{\alpha,\beta,\gamma\}}}D_{\Hg}(x_{[k]})\,dx_{\langle\Hg\rangle\setminus\{\alpha,\beta,\gamma\}}\\
\,\lesssim\,\int_{(Q_L)^{\sharp\Cg\setminus\{\alpha,\beta,\gamma\}}}D_{\Cg}(x_{[k]})\bigg(\prod_{i=1}^s\int_{(Q_L)^{\sharp \Hg_i'-1}}D_{\Hg_i'}(x_{[k]})\,dx_{\langle\Hg_i'\rangle\setminus\{j_i\}}\bigg)dx_{\langle\Cg\rangle\setminus\{\alpha,\beta,\gamma\}}.
\end{multline*}
As the $\Hg_i'$'s are strict Eulerian subgraphs of $\Hg$, an induction argument allows to assume that the claim~\eqref{eq:cyclic-unif} is already known to hold for $\Hg$ replaced by any of the $\Hg_i'$'s. In particular, upon integration, this entails
\[\int_{(Q_L)^{\sharp \Hg_i'-1}}D_{\Hg_i'}(x_{[k]})\,dx_{\langle\Hg_i'\rangle\setminus\{j_i\}}\,\lesssim\,\Lc^{\sharp\Hg_i'-1}.\]
The above then reduces to
\begin{equation*}
\int_{(Q_L)^{\sharp\Hg\setminus\{\alpha,\beta,\gamma\}}}D_{\Hg}(x_{[k]})\,dx_{\langle\Hg\rangle\setminus\{\alpha,\beta,\gamma\}}
\,\lesssim\,{\Lc^{\sum_i (\sharp\Hg_i'-1)}}\int_{(Q_L)^{\sharp\Cg\setminus\{\alpha,\beta,\gamma\}}}D_{\Cg}(x_{[k]})\,dx_{\langle\Cg\rangle\setminus\{\alpha,\beta,\gamma\}},
\end{equation*}
where the right-hand side is now simply an integral of the form
\begin{equation*}
{\tiny\begin{tikzpicture}[baseline={([yshift=-.8ex]current bounding box.center)},scale=0.6]
\begin{scope}[every node/.style={circle,thick,draw}]
    \node (1) at (0,0) {$\alpha$};
    \node (2) at (2,3) {$\beta$};
    \node (3) at (4,0) {$\gamma$};
\end{scope}
\begin{scope}[every node/.style={circle,fill,inner sep=0pt,minimum size=3pt}]
    \node (1a) at (0.7,0) {};
    \node (1b) at (1.2,0) {};
    \node (1e) at (2.8,0) {};
    \node (1f) at (3.3,0) {};
    \node (2a) at (0.35,0.525) {};
    \node (2b) at (0.6,0.9) {};
    \node (2e) at (1.4,2.1) {};
    \node (2f) at (1.625,2.475) {};
    \node (3a) at (3.65,0.525) {};
    \node (3b) at (3.4,0.9) {};
    \node (3e) at (2.6,2.1) {};
    \node (3f) at (2.375,2.475) {};
\end{scope}
\begin{scope}[every node/.style={circle,inner sep=0pt,minimum size=0pt}]
    \node (1c) at (1.5,0) {};
    \node (1d) at (2.5,0) {};
    \node (2c) at (0.75,1.125) {};
    \node (2d) at (1.25,1.875) {};
    \node (3c) at (3.25,1.125) {};
    \node (3d) at (2.75,1.875) {};
\end{scope}
\begin{scope}[>={Stealth[black]},
every node/.style={fill=white,circle},
every edge/.style={draw=black,very thick}]
    \path [-] (1) edge (1c);
    \path [-] (3) edge (1d);
    \path [-] (1) edge (2c);
    \path [-] (2) edge (2d);
    \path [-] (2) edge (3d);
    \path [-] (3) edge (3c);
\end{scope}
\begin{scope}[every node/.style={circle}]
    \node (13) at (2,0) {$\cdot$};
    \node (13) at (2.3,0) {$\cdot$};
    \node (13) at (1.7,0) {$\cdot$};
    \node (12) at (1,1.5) {$\cdot$};
    \node (12) at (1.15,1.725) {$\cdot$};
    \node (12) at (0.85,1.275) {$\cdot$};
    \node (23) at (3,1.5) {$\cdot$};
    \node (23) at (2.85,1.725) {$\cdot$};
    \node (23) at (3.15,1.275) {$\cdot$};
\end{scope}
\end{tikzpicture}}
\end{equation*}
Using~\eqref{eq:bound-integ-kernel-diag} to evaluate the integrals, noting that the number of appearing logarithmic factors is bounded by the length of $\Cg$ minus $3$ and that the length of $\Cg$ is bounded by the total number of vertices
minus the number of vertices not in the cycle (that is, $\sum_i(\sharp\Hg_i'-1)$),
the claim~\eqref{eq:cyclic-unif} follows. More precisely, we obtain in this way the first right-hand side term in~\eqref{eq:cyclic-unif}, while other terms correspond to case~(b) above.

\medskip
\substep{3.2} Path decomposition of the graph $\Gg$ associated with a block $B$.\\
We show that, if $b\ne f$, there exist three edge-disjoint trails $\Lg^1,\Lg^2,\Lg^3$ that cover $\Gg$ (that is, the union of their vertex sets is the vertex set of $\Gg$ and the disjoint union of their edge sets is the edge set of $\Gg$).
We refer to Figure~\ref{fig:block-path} for an illustrative example.

\medskip\noindent
As $b$ and $f$ have odd degree $\ge3$ and as all other vertices of $\Gg$ have even degree, we can find a trail $\Lg^1$ from $b$ to $f$. Without loss of generality, we can assume that $b$ and $f$ are visited only once by $\Lg^1$. Then consider the subgraph $\Gg'$ of $\Gg$ induced by the complement of the edge set of $\Lg^1$. By construction, all vertices of $\Gg'$ now have even degree, and the definition of the block $B$ ensures that $\Gg'$ must be connected. This allows to find two other disjoint trails~$\Lg^2,\Lg^3$ from $b$ to $f$ in $\Gg'$.

\medskip\noindent
Next, assume that a vertex $\alpha\in\langle\Gg\rangle$ is not visited by any of the three constructed trails $\Lg^1,\Lg^2,\Lg^3$. Recalling that $\Gg$ is connected, a degree argument as above ensures that there exists a circuit $\Kg$ from $\alpha$ to itself that is disjoint from the trails $\Lg^1,\Lg^2,\Lg^3$ and that crosses at least one of them.
A detour via $\Kg$ is then easily added to those trails in such a way that they remain disjoint and that at least one of them now visits $\alpha$. Repeating this construction, we are led to edge-disjoint trails $\Lg^1,\Lg^2,\Lg^3$ that visit all vertices of~$\Gg$.

\medskip\noindent
Finally, consider the subgraph $\Gg''$ of $\Gg$ induced by the complement of the union of the edge sets of $\Lg^1,\Lg^2,\Lg^3$. By construction, all vertices of $\Gg''$ have even degree, which allows to write $\Gg''$ as a union of edge-disjoint circuits. Adding detours via these circuits, we can assume that the trails $\Lg^1,\Lg^2,\Lg^3$ cover the whole graph $\Gg$, and the claim follows.

\begin{figure}
{\begin{center}
{\small\begin{tikzpicture}[scale=0.7]
\begin{scope}[every node/.style={circle,thick,draw}]
    \node (1) at (0,0) {$1$};
    \node (2) at (2,2) {$2$};
    \node (3) at (4,0) {$3$};
    \node (4) at (2,-2) {$4$};
    \node (5) at (3.6,1.6) {$5$};
    \node (6) at (8,0) {$6$};
    \node (7) at (5.3,-1.6) {$7$};
\end{scope}
\begin{scope}[>={Stealth[black]},
every node/.style={fill=white,circle},
every edge/.style={draw=black,very thick}]
    \path [-] (1) edge[bend left=10] (2);
    \path [-] (2) edge[bend left=-10] (3);
    \path [-] (3) edge (1);
    \path [-] (1) edge[bend left=-10] (4);
    \path [-] (4) edge[bend left=10] (3);
    \path [-] (3) edge[bend left=-10] (5);
    \path [-] (5) edge (2);
    \path [-] (2) edge[bend left=30] (6);
    \path [-] (6) edge[bend left=-10] (4);
    \path [-] (4) edge[bend left=-10] (7);
    \path [-] (7) edge[bend left=-10] (6);
\end{scope}
\end{tikzpicture}}
\caption{\label{fig:block-path}
This graph represents the block $B=(1)\uplus(2,3,1)\uplus(4,3)\uplus(5,2)\uplus(6,4)\uplus(7,6)$.
The path decomposition of Substep~2.3 can be chosen in this case as $\Lg^1=(1,2,6)$, $\Lg^2=(1,4,7,6)$, and $\Lg^3=(1,3,2,5,3,4,6)$.}
\end{center}}
\end{figure}

\medskip
\substep{3.3} Proof of~\eqref{eq:integ-DB-plop}.\\
Let $\Lg^1,\Lg^2,\Lg^3$ be three covering edge-disjoint trails from $b$ to $f$ as constructed above.
Given a vertex $\alpha$, distinguishing between the number of paths from $b$ to $f$ to which $\alpha$ belongs, and removing cycles, we get the following four possibilities:
\begin{enumerate}[(a)]
\item either there exists a cycle $\Cg$ from $b$ or from $f$ that visits $\alpha$ and there exist three paths $\Kg^1,\Kg^2,\Kg^3$ from $b$ to $f$, such that $\Cg,\Kg^1,\Kg^2,\Kg^3$ are edge-disjoint and cross each other only at $b$ or $f$;
\item or there exists a path $\Kg^1$ from $b$ to $f$ that visits $\alpha$ and there exist two other paths $\Kg^2,\Kg^3$ from $b$ to $f$ that do not, such that $\Kg^1,\Kg^2,\Kg^3$ are edge-disjoint and cross each other only at $b$ or $f$;
\item or there exist two paths $\Kg^1,\Kg^2$ from $b$ to $f$ that visit $\alpha$ and there exists another path $\Kg^3$ from $b$ to $f$ that does not, such that $\Kg^1,\Kg^2,\Kg^3$ are edge-disjoint and cross each other only at $\alpha$, $b$ or $f$;
\item or there exist three paths $\Kg^1,\Kg^2,\Kg^3$ from $b$ to $f$ that visit $\alpha$ and that are edge-disjoint and cross each other only at $\alpha$, $b$ and $f$.
\end{enumerate}
Given another vertex $\beta$, and distinguishing between corresponding cases, we obtain three distinguished paths $\Kg^1,\Kg^2,\Kg^3$ from $b$ to $f$ that may visit or not $\alpha$ and $\beta$, in different possible orders, and we obtain up to two cycles $\Cg^1,\Cg^2$ from $b$ or $f$ visiting $\alpha$ or $\beta$.
The subgraph of $\Gg$ induced by the complement of the union of the edge sets of those three paths and possible cycles is necessarily a disjoint union of Eulerian graphs and can be removed by duplicating variables as in Substep~3.1. It remains to consider the union of those three paths and possible cycles. Considering different patterns and using~\eqref{eq:bound-integ-kernel-diag} to estimate consecutive edges along each path between frozen vertices $b,f,\alpha,\beta$, we are led to
\begin{equation}\label{eq:bnd-4}
{\tiny\begin{tikzpicture}[baseline={([yshift=-.8ex]current bounding box.center)},scale=0.6]
\filldraw[gray] (0,0) rectangle (1,1);
\begin{scope}[every node/.style={circle,draw,fill=white,inner sep=0pt,minimum size=3pt}]
    \node (1) at (0,0.5) {};
    \node (2) at (1,0.5) {};
    \node (3) at (0.5,1) {};
    \node (4) at (0.5,0) {};
\end{scope}
\end{tikzpicture}}
~\lesssim~
\Lc^{\sharp B-4}\bigg(
{\tiny\begin{tikzpicture}[baseline={([yshift=-.8ex]current bounding box.center)},scale=0.8]
\begin{scope}[every node/.style={circle,draw,fill=white,inner sep=0pt,minimum size=3pt}]
    \node (1) at (0,0.5) {};
    \node (2) at (1,0.5) {};
    \node (3) at (0.5,1) {};
    \node (4) at (0.5,0) {};
\end{scope}
\begin{scope}[>={Stealth[black]},
every node/.style={fill=white,circle},
every edge/.style={draw=black,very thick}]
    \path [-] (1) edge (2);
    \path [-] (1) edge (3);
    \path [-] (1) edge (4);
    \path [-] (3) edge (2);
    \path [-] (4) edge (2);
\end{scope}
\end{tikzpicture}}
+
{\tiny\begin{tikzpicture}[baseline={([yshift=-.8ex]current bounding box.center)},scale=0.8]
\begin{scope}[every node/.style={circle,draw,fill=white,inner sep=0pt,minimum size=3pt}]
    \node (1) at (0,0.5) {};
    \node (2) at (1,0.5) {};
    \node (3) at (0.5,1) {};
    \node (4) at (0.5,0) {};
\end{scope}
\begin{scope}[>={Stealth[black]},
every node/.style={fill=white,circle},
every edge/.style={draw=black,very thick}]
    \path [-] (1) edge (2);
    \path [-] (1) edge (3);
    \path [-] (2) edge (4);
    \path [-] (3) edge (4);
\end{scope}
\end{tikzpicture}}
+
{\tiny\begin{tikzpicture}[baseline={([yshift=-.8ex]current bounding box.center)},scale=0.8]
\begin{scope}[every node/.style={circle,draw,fill=white,inner sep=0pt,minimum size=3pt}]
    \node (1) at (0,0.5) {};
    \node (2) at (1,0.5) {};
    \node (3) at (0.5,1) {};
    \node (4) at (0.5,0) {};
\end{scope}
\begin{scope}[>={Stealth[black]},
every node/.style={fill=white,circle},
every edge/.style={draw=black,very thick}]
    \path [-] (1) edge (3);
    \path [-] (3) edge (2);
    \path [-] (1) edge[bend left=20] (4);
    \path [-] (1) edge[bend left=-20] (4);
    \path [-] (2) edge[bend left=20] (4);
    \path [-] (2) edge[bend left=-20] (4);
\end{scope}
\end{tikzpicture}}
+
{\tiny\begin{tikzpicture}[baseline={([yshift=-.8ex]current bounding box.center)},scale=0.8]
\begin{scope}[every node/.style={circle,draw,fill=white,inner sep=0pt,minimum size=3pt}]
    \node (1) at (0,0.5) {};
    \node (2) at (1,0.5) {};
    \node (3) at (0.5,1) {};
    \node (4) at (0.5,0) {};
\end{scope}
\begin{scope}[>={Stealth[black]},
every node/.style={fill=white,circle},
every edge/.style={draw=black,very thick}]
    \path [-] (1) edge[bend left=20] (3);
    \path [-] (1) edge[bend left=-20] (3);
    \path [-] (1) edge[bend left=15] (2);
    \path [-] (1) edge[bend left=-15] (2);
    \path [-] (2) edge (4);
    \path [-] (1) edge (4);
\end{scope}
\end{tikzpicture}}
+
{\tiny\begin{tikzpicture}[baseline={([yshift=-.8ex]current bounding box.center)},scale=0.8]
\begin{scope}[every node/.style={circle,draw,fill=white,inner sep=0pt,minimum size=3pt}]
    \node (1) at (0,0.5) {};
    \node (2) at (1,0.5) {};
    \node (3) at (0.5,1) {};
    \node (4) at (0.5,0) {};
\end{scope}
\begin{scope}[>={Stealth[black]},
every node/.style={fill=white,circle},
every edge/.style={draw=black,very thick}]
    \path [-] (1) edge[bend left=20] (3);
    \path [-] (1) edge[bend left=-20] (3);
    \path [-] (1) edge (2);
    \path [-] (1) edge[bend left=20] (4);
    \path [-] (1) edge[bend left=-20] (4);
    \path [-] (2) edge[bend left=20] (4);
    \path [-] (2) edge[bend left=-20] (4);
\end{scope}
\end{tikzpicture}}
+
{\tiny\begin{tikzpicture}[baseline={([yshift=-.8ex]current bounding box.center)},scale=0.8]
\begin{scope}[every node/.style={circle,draw,fill=white,inner sep=0pt,minimum size=3pt}]
    \node (1) at (0,0.5) {};
    \node (2) at (1,0.5) {};
    \node (3) at (0.5,1) {};
    \node (4) at (0.5,0) {};
\end{scope}
\begin{scope}[>={Stealth[black]},
every node/.style={fill=white,circle},
every edge/.style={draw=black,very thick}]
    \path [-] (1) edge[bend left=20] (3);
    \path [-] (1) edge[bend left=-20] (3);
    \path [-] (1) edge[bend left=20] (2);
    \path [-] (1) edge (2);
    \path [-] (1) edge[bend left=-20] (2);
    \path [-] (2) edge[bend left=20] (4);
    \path [-] (2) edge[bend left=-20] (4);
\end{scope}
\end{tikzpicture}}
+\text{sym.}
\bigg),
\end{equation}
where for brevity ``sym.'' stands for the sum of all other graphs obtained by reflecting the six pictured graphs with respect to the vertical axis, the horizontal axis, or both (which corresponds to permuting $\alpha$ and $\beta$, $b$ and $f$, or both).
In fact, the analysis of all possible patterns produces a larger number of terms, but we claim that all others are bounded by the above.
For instance, another possible pattern corresponds to the case of three paths $\Kg^1,\Kg^2,\Kg^3$ from $b$ to $f$ visiting both $\alpha$ and $\beta$, where $\Kg^1,\Kg^2$ visit $\alpha$ before $\beta$ while $\Kg^3$ visit them in reverse order: we claim that the corresponding contribution can be bounded as follows,
\begin{equation}\label{eq:bound-4-special}
{\tiny\begin{tikzpicture}[baseline={([yshift=-.8ex]current bounding box.center)},scale=0.8]
\begin{scope}[every node/.style={circle,draw,fill=white,inner sep=0pt,minimum size=3pt}]
    \node (1) at (0,0.5) {};
    \node (2) at (1,0.5) {};
    \node (3) at (0.5,1) {};
    \node (4) at (0.5,0) {};
\end{scope}
\begin{scope}[>={Stealth[black]},
every node/.style={fill=white,circle},
every edge/.style={draw=black,very thick}]
    \path [-] (1) edge[bend left=20] (3);
    \path [-] (1) edge[bend left=-20] (3);
    \path [-] (1) edge (4);
    \path [-] (3) edge (4);
    \path [-] (3) edge (2);
    \path [-] (2) edge[bend left=20] (4);
    \path [-] (2) edge[bend left=-20] (4);
\end{scope}
\end{tikzpicture}}
~\lesssim~
{\tiny\begin{tikzpicture}[baseline={([yshift=-.8ex]current bounding box.center)},scale=0.8]
\begin{scope}[every node/.style={circle,draw,fill=white,inner sep=0pt,minimum size=3pt}]
    \node (1) at (0,0.5) {};
    \node (2) at (1,0.5) {};
    \node (3) at (0.5,1) {};
    \node (4) at (0.5,0) {};
\end{scope}
\begin{scope}[>={Stealth[black]},
every node/.style={fill=white,circle},
every edge/.style={draw=black,very thick}]
    \path [-] (1) edge (3);
    \path [-] (1) edge (2);
    \path [-] (3) edge (4);
    \path [-] (2) edge (4);
\end{scope}
\end{tikzpicture}}
\end{equation}
which is indeed bounded by the right-hand side of~\eqref{eq:bnd-4}.
This bound follows from
\begin{equation*}
{\tiny\begin{tikzpicture}[baseline={([yshift=-.8ex]current bounding box.center)},scale=0.8]
\begin{scope}[every node/.style={circle,draw,fill=white,inner sep=0pt,minimum size=3pt}]
    \node (1) at (0,0.5) {};
    \node (2) at (1,0.5) {};
    \node (3) at (0.5,1) {};
\end{scope}
\begin{scope}[>={Stealth[black]},
every node/.style={fill=white,circle},
every edge/.style={draw=black,very thick}]
    \path [-] (1) edge (3);
    \path [-] (2) edge (3);
\end{scope}
\end{tikzpicture}}
~\lesssim~
{\tiny\begin{tikzpicture}[baseline={([yshift=-.8ex]current bounding box.center)},scale=0.8]
\begin{scope}[every node/.style={circle,draw,fill=white,inner sep=0pt,minimum size=3pt}]
   \node (1) at (0,0.5) {};
    \node (2) at (1,0.5) {};
    \node (3) at (0.5,1) {};
\end{scope}
\begin{scope}[>={Stealth[black]},
every node/.style={fill=white,circle},
every edge/.style={draw=black,very thick}]
    \path [-] (1) edge (2);
 \end{scope}
\end{tikzpicture}}
\end{equation*}
which is itself nothing but the triangle inequality
$\langle (x_1-x_3)_L \rangle \le \langle (x_1-x_2)_L \rangle+\langle (x_2-x_3)_L
 \rangle$ post-processed into $\langle (x_1-x_3)_L \rangle \le 2\langle (x_1-x_2)_L \rangle\langle (x_2-x_3)_L
 \rangle$ and put to the power $-d$.
A straightforward similar inspection of all other possible patterns shows that the bound~\eqref{eq:bnd-4} indeed holds; we skip the detail for brevity.

\medskip\noindent
Finally, removing a few edges in~\eqref{eq:bnd-4}, we are led in particular to the claim~\eqref{eq:integ-DB-plop-diagr-bis}. The claims~\eqref{eq:block-integr-b=f}--\eqref{eq:block-integr-b=alpha=bet} follow as straightforward corollaries after integrations using~\eqref{eq:bound-integ-kernel-diag}.

\medskip
\step4 Approximate cancellation of translation-invariant averages on given blocks.\\
Let $B$ be a block of indices with root $b$ and endpoint $f$, and let $S,T$ be disjoint index sets with \mbox{$(S\cup T)\cap\langle B\rangle=\varnothing$}. Let $m:=\sharp B$ and $s:=\sharp S$.
For all $x_B,x_S$, let $\zeta^{x_S}_{L;x_B}\in H^1_\per(Q_L)^d$ satisfy~\eqref{eq:zetaP-re} at~$z=x_{f}$,
and assume that $\zeta_L$ is equivariant under translations in the sense that
\[\zeta^{x_S+[z]_S}_{L;x_B+[z]_B}(\cdot+z)=\zeta^{x_S}_{L;x_B},\qquad\text{for all $z\in\R^d$,}\]
where $[z]_B$ (resp.~$[z]_S$) stands for the element of~$(\R^d)^m$ (resp.~$(\R^d)^s$) with all coordinates equal to $z$.
Then, for any function $h$ on $(Q_L)^{m+s}$ that is translation-invariant in the sense that $h(x_B+[z]_B,x_S+[z]_S)=h(x_B,x_S)$ for all~$z\in\R^d$, we have for any linear functional~$F:H^1_\per(Q_L)^d\to\R$,
\begin{multline}\label{eq:cancel-CB}
\bigg|\int_{(Q_L)^{m+s}}F\big[\calC_{L;T}^B(x_{[k]})\zeta_{L;x_B}^{x_S}\big]\,h(x_B,x_S)\,dx_Bdx_S\bigg|\\
\,\le\,\int_{Q_L}\bigg(\int_{(Q_L+x_b)^{m+s-1}\setminus(Q_L)^{m+s-1}}\Big|F\Big[\calC_{L;T}^B(x_{[k]})\zeta^{x_S}_{L;x_B}\Big]\Big|\\
\times\big(|h_L(x_B,x_S)|+|h(x_B,x_S)|\big)\,dx_{\langle B\rangle\setminus \{b\}}dx_S\bigg)dx_{b},
\end{multline}
where we have defined the periodization $h_L(z):=h(z_L)$ where $z_L\in (Q_L)^{m+s}$ stands for the reduction of $z\in(\R^d)^{m+s}$ modulo $(L\Z^d)^{m+s}$.
Note that we do not obtain an exact cancellation in general for such a symmetric average on a block, but this bound reduces it to a boundary term.

\medskip\noindent
We turn to the proof of~\eqref{eq:cancel-CB}.
Set for abbreviation $C:=\langle B\rangle\setminus \{b\}$. By definition of elementary block contributions, cf.~\eqref{eq:block-contr}, we can write
\begin{equation}\label{eq:def-xi-zeta}
\calC_{L;T}^B(x_{[k]})\zeta_{L;x_B}^{x_S}\,=\,\Jc_{L;x_T}^{x_{b}}\xi_{L;x_{b}}^{x_C;x_S},
\end{equation}
for some function $\xi_{L;x_b}^{x_C;x_S}$ that satisfies~\eqref{eq:zetaP-re} at \mbox{$z=x_{b}$} and is such that $\xi_L$ is equivariant under translations. The left-hand side of~\eqref{eq:cancel-CB} then becomes
\begin{multline*}
\int_{(Q_L)^{m+s}}F\big[\calC_{L;T}^B(x_{[k]})\zeta_{L;x_B}^{x_S}\big]\,h(x_B,x_S)\,dx_Bdx_S\\
\,=\,\int_{(Q_L)^{m+s}}F\big[\Jc_{L;x_T}^{x_{b}}\xi_{L;x_{b}}^{x_C;x_S}\big]\,h(x_{b},x_C,x_S)\,dx_{b}dx_Cdx_S,
\end{multline*}
and thus, using the equivariance of $\xi$ under translations,
\begin{multline*}
\int_{(Q_L)^{m+s}}F\big[\calC_{L;T}^B(x_{[k]})\zeta_{L;x_B}^{x_S}\big]\,h(x_B,x_S)\,dx_Bdx_S\\
\,=\,\int_{(Q_L)^{m+s}}F\Big[\Jc_{L;x_T}^{x_{b}}\big(\xi_{L;0}^{x_C-[x_{b}]_C;x_S-[x_{b}]_S}(\cdot-x_{b})\big)\Big]\,h(x_{b},x_C,x_S)\,dx_{b}dx_Cdx_S.
\end{multline*}
Replacing $h$ by its periodization $h_L$ (which we can on $(Q_L)^{m+s}$), changing variables and using periodicity, the above becomes in these terms,
\begin{multline*}
\int_{(Q_L)^{m+s}}F\big[\calC_{L;T}^B(x_{[k]})\zeta_{L;x_B}^{x_S}\big]\,h(x_B,x_S)\,dx_Bdx_S\\
\,=\,\int_{(Q_L)^{m+s}}F\Big[\Jc_{L;x_T}^{x_{b}}\big(\xi_{L;0}^{x_C;x_S}(\cdot-x_{b})\big)\Big]\,h_L(x_{b},x_C+[x_{b}]_C,x_S+[x_{b}]_S)\,dx_{b}dx_Cdx_S.
\end{multline*}
If $h_L(x_{b},x_C+[x_{b}]_C,x_S+[x_{b}]_S)$ were replaced by $h(0,x_C,x_S)$ in the integrand, the cancellation property of Lemma~\ref{lem:new-cancel} would precisely entail that the integral vanishes (this would have been the case if we had considered a periodization in law of $\Ic$ rather than~\eqref{e.set-perio}). Adding and subtracting $h(0,x_C,x_S)$, we deduce
\begin{eqnarray*}
\lefteqn{\int_{(Q_L)^{m+s}}F\big[\calC_{L;T}^B(x_{[k]})\zeta_{L;x_B}^{x_S}\big]\,h(x_B,x_S)\,dx_Bdx_S}\\
&=&\int_{(Q_L)^{m+s}}F\Big[\Jc_{L;x_T}^{x_{b}}\big(\xi_{L;0}^{x_C;x_S}(\cdot-x_{b})\big)\Big]\\
&&\hspace{2cm}\times\Big(h_L\big(x_{b},x_C+[x_{b}]_C,x_S+[x_{b}]_S\big)-h(0,x_C,x_S)\Big)dx_{b}dx_Cdx_S.
\end{eqnarray*}
If $x_{b},x_C,x_S$ are such that $(x_{b},x_C+[x_{b}]_C,x_S+[x_{b}]_S)\in (Q_L)^{m+s}$, then the definition of the periodization~$h_L$ and the translation invariance of $h$ imply that the integrand vanishes.
This leads us to the bound
\begin{eqnarray*}
\lefteqn{\bigg|\int_{(Q_L)^{m+s}}F\big[\calC_{L;T}^B(x_{[k]})\zeta_{L;x_B}^{x_S}\big]\,h(x_B,x_S)\,dx_Bdx_S\bigg|}\\
&\le&\int_{Q_L}\bigg(\int_{(Q_L)^{m+s-1}\setminus(Q_L-x_{b})^{m+s-1}}\Big|F\Big[\Jc_{L;x_T}^{x_{b}}\big(\xi_{L;0}^{x_C;x_S}(\cdot-x_{b})\big)\Big]\Big|\\
&&\hspace{1.2cm}\times\Big(\big|h_L\big(x_{b},x_C+[x_{b}]_C,x_S+[x_{b}]_S\big)\big|+|h(0,x_C,x_S)|\Big)\,dx_{\langle B\rangle\setminus \{b\}}dx_S\bigg)dx_{b}.
\end{eqnarray*}
Using again~\eqref{eq:def-xi-zeta} and the equivariance of $\xi$, the claim~\eqref{eq:cancel-CB} follows.

\medskip
\step5 Uniform estimates: proof of~(i).\\
The starting point is the decomposition~\eqref{eq:decomp-BLj1-intens} of $\Bb_L^{k+1}$. For brevity, we shall focus on the term corresponding to $l=k$ in~\eqref{eq:decomp-BLj1-intens}, that is,
\begin{equation*}
\Cc^{k+1}_{L}\,:=\,\tfrac{k+1}2L^{-d}\int_{(\Qd)^{k+1}}\bigg(\int_{\partial B(x_0)}\del^{x_1,\ldots,x_k}\psi_{L}^\varnothing\cdot\sigma_{L}^{x_0}\nu\bigg) f_{k+1}(x_0,\ldots,x_k)\,dx_0\ldots dx_k,
\end{equation*}
while the other terms are simpler to estimate due to the additional decay given by the factor~$\delta^{x_{l+1},\ldots,x_k}\sigma_{L}^{x_0}$.
Inserting the diagrammatic decomposition~\eqref{eq:diagram}, we get
\[\Cc^{k+1}_{L}\,=\,\tfrac{k+1}2\sum_{r=1}^k~\sum_{B_1,\ldots, B_r}\Cc^{k+1;r}_L(B_1,\ldots,B_r),\]
where we recall that the sum runs over all $r$-tuples of disjoint blocks $B_1,\ldots,B_r$ such that $\langle B_1\uplus \ldots\uplus B_r\rangle=[k]$, and where we have set for abbreviation
\begin{multline*}
\Cc^{k+1;r}_L(B_1,\ldots,B_r)\\
\,:=\,L^{-d}\int_{(\Qd)^{k+1}}\bigg(\int_{\partial B(x_0)}\Big(\calC^{B_1}_{L;\varnothing}(x_{[k]})\calC^{B_2}_{L;\langle B_1\rangle}(x_{[k]})\ldots \calC^{B_r}_{L;\langle B_{r-1}\rangle}(x_{[k]}){\bar \psi_L^{x_{B_r}}}\Big)\cdot\sigma_{L}^{x_0}\nu\bigg)\\
\times f_{k+1}(x_0,x_{[k]})\,dx_0dx_{[k]}.
\end{multline*}
Let such $B_1,\ldots,B_r$ be fixed.
Replacing $f_{k+1}$ by its expansion~\eqref{eq:dens-correl} in terms of correlation functions, we find
\begin{multline}\label{eq:pre-decomp-pi-CLkr}
\Cc^{k+1;r}_L(B_1,\ldots,B_r)
\,=\,\sum_\pi L^{-d}\int_{(\Qd)^{k+1}}\bigg(\prod_{H\in\pi}h_{\sharp H}(x_H)\bigg)\\
\times\bigg(\int_{\partial B(x_0)}\Big(\calC^{B_1}_{L;\varnothing}(x_{[k]})\calC^{B_2}_{L;\langle B_1\rangle}(x_{[k]})
\ldots \calC^{B_r}_{L;\langle B_{r-1}\rangle}(x_{[k]}){\bar \psi_L^{x_{B_r}}}\Big)\cdot\sigma_{L}^{x_0}\nu\bigg)\,dx_0dx_{[k]},
\end{multline}
where $\pi$ runs over all partitions of the index set $\{0\}\cup[k]$ and where $H$ runs over all cells of the partition $\pi$.

\medskip\noindent
We shall say that a partition $\pi$ of $\{0\}\cup[k]$ is {\it covering} for~$B_1,\ldots,B_r$ if there is no `separating' index $1\le \alpha\le r$ such that each cell $H\in\pi$ is included either in $\{0\}\cup\bigcup_{i=1}^{\alpha-1}\langle B_i\rangle$ or in $\bigcup_{i=\alpha}^r\langle B_i\rangle$. We denote by $\Kc(B_1,\ldots,B_r)$ the set of such partitions.
Using the approximate cancellation property~\eqref{eq:cancel-CB}, and further noting as in~\eqref{eq:cancel-intx0} that
\begin{equation}\label{eq:cancel-CB-another}
\int_{Q_L}\bigg(\int_{\partial B(x_0)}\Big(\calC^{B_1}_{L;\varnothing}(x_{[k]})\calC^{B_2}_{L;\langle B_1\rangle}(x_{[k]})\ldots \calC^{B_r}_{L;\langle B_{r-1}\rangle}(x_{[k]})\psi_L^{x_{B_r}}\Big)\cdot\sigma_{L}^{x_0}\nu\bigg)dx_0\,=\,0,
\end{equation}
we note that only covering partitions produce nontrivial terms in~\eqref{eq:pre-decomp-pi-CLkr}: contributions from non-covering partitions either vanish or are reduced to boundary terms. Therefore, we naturally decompose
\begin{multline*}
|\Cc^{k+1;r}_L(B_1,\ldots,B_r)|\\
\,\le\,\sum_{\pi\in\Kc(B_1,\ldots,B_r)}|\Cc^{k+1;r}_L(B_1,\ldots,B_r;\pi)|+\sum_{\pi\notin\Kc(B_1,\ldots,B_r)}|\Cc^{k+1;r}_L(B_1,\ldots,B_r;\pi)|,
\end{multline*}
where we have set for abbreviation
\begin{multline*}
\Cc^{k+1;r}_L(B_1,\ldots,B_r;\pi)\,:=\,L^{-d}\int_{(\Qd)^{k+1}}\bigg(\prod_{H\in\pi} h_{\sharp H}(x_H)\bigg)\\
\times\bigg(\int_{\partial B(x_0)}\Big(\calC^{B_1}_{L;\varnothing}(x_{[k]})\calC^{B_2}_{L;\langle B_1\rangle}(x_{[k]})
\ldots \calC^{B_r}_{L;\langle B_{r-1}\rangle}(x_{[k]}){\bar \psi_L^{x_{B_r}}}\Big)\cdot\sigma_{L}^{x_0}\nu\bigg)
\,dx_0dx_{[k]}.
\end{multline*}
We split the proof into two further substeps, separately considering the two types of contributions.

\medskip
\substep{5.1} Main contributions: in case of an algebraic rate $\omega(t)\le Ct^{-\beta}$ for some $C,\beta>0$, we have for all $\pi\in\Kc(B_1,\ldots,B_r)$,
\begin{equation}\label{eq:bnd-main-Dk+1npi}
|\Cc^{k+1;r}_L(B_1,\ldots,B_r;\pi)|\,\lesssim\,\lambda_{k+1}(\Pc)|\!\log\lambda(\Pc)|^{k}.
\end{equation}
Without loss of generality, we may assume $\beta\in(0,d)$ (so we can appeal to~\eqref{eq:bound-integ-kernel-diag-dash}).
Using the boundary conditions for~$\psi^{x_0}_L$ and the incompressibility constraints to smuggle in arbitrary constants in the different factors, and then appealing to the trace estimates of Lemma~\ref{lem:trace-0}, we find
\begin{multline*}
|\Cc^{k+1;r}_L(B_1,\ldots,B_r;\pi)|\,\lesssim\,L^{-d}\int_{(\Qd)^{k+1}}\bigg(\prod_{H\in\pi} |h_{\sharp H}(x_H)|\bigg)\\
\times\bigg(\int_{B(x_0)}\Big|\nabla\calC^{B_1}_{L;\varnothing}(x_{[k]})\calC^{B_2}_{L;\langle B_1\rangle}(x_{[k]})
\ldots \calC^{B_r}_{L;\langle B_{r-1}\rangle}(x_{[k]}){\bar \psi_L^{x_{B_r}}}\Big|^2\bigg)^\frac12dx_0dx_{[k]}.
\end{multline*}
For all $1\le l\le r$, denote by $b_l$ the root of $B_l$ and by $f_l$ its endpoint, and set for notational convenience $f_0:=0$. Iterating the bound~\eqref{eq:decomp-CLB-integral}, we then get
\begin{multline}\label{eq:pre-bnd-Dpi}
|\Cc^{k+1;r}_L(B_1,\ldots,B_r;\pi)|\,\lesssim\,L^{-d}\int_{(\Qd)^{k+1}}\bigg(\prod_{H\in\pi} |h_{\sharp H}(x_H)|\bigg)\\
\times\bigg(\prod_{l=1}^{r}\langle(x_{f_{l-1}}-x_{b_{l}})_L\rangle^{-d}D_{B_l}(x_{B_l})\bigg)\,dx_0dx_{[k]}.
\end{multline}
Next, we examine the structure of the product of correlation functions.
Given a covering partition $\pi\in\Kc(B_1,\ldots,B_r)$, we can construct a sequence of intertwined pairings $(m_i,m_i')_{1\le i\le s}$ {(for some integer $s\ge 1$)} such that
\begin{enumerate}[---]
\item {$(m_j)_{1\le j \le s}$ and  $(m'_j)_{1\le j \le s}$ are increasing, $m_1=0$, and $m_s'=r$;}
\smallskip\item {$m_{i-1}'<m_{i+1}$ for all $1< i<s$, and $m_i\le m_{i-1}'$ for all $1<i\le s$;}
\smallskip\item for all $i$ there is a cell $H\in\pi$ such that $H\cap\langle B_{m_i}\rangle\ne\varnothing$ and $H\cap\langle B_{m_i'}\rangle\ne\varnothing$ {(with the understanding that $B_0=\{0\}$)}.  
\end{enumerate}
The construction is as follows:
Starting from $m_1=0$, we define $m_1'$ as the maximum index~$m$ such that there is $H\in\pi$ with $\{0\}\cap H\ne\varnothing$ and $\langle B_m\rangle\cap H\ne\varnothing$, which is well-defined by the covering assumption for $\pi$ with index $\alpha=1$. Once~$m_i$ and~$m_i'$ are defined for some $i\ge1$, if $m_i'<r$, we define $m_{i+1}'$ as the maximum index $m$ such that there is~$H\in\pi$ with $(\{0\}\cup\bigcup_{l\le m_i'}\langle B_l\rangle)\cap H\ne\varnothing$ and $\langle B_m\rangle\cap H\ne\varnothing$, which is well-defined by the covering assumption for $\pi$ with index $\alpha=m_i'+1\le r$ and  satisfies $m_{i+1}'>m_i'$ by construction. Next, we define $m_{i+1}$ as the minimum index $m$ such that there is $H\in\pi$ with $\langle B_m\rangle\cap H\ne\varnothing$ and $\langle B_{m_{i+1}'}\rangle\cap H\ne\varnothing$. We continue the construction until $m_s'=r$ is reached.
We claim that by construction we have $m_{i-1}'<m_{i+1} \le m_{i}'$ for all $i$ (which, since $(m'_j)_j$ is increasing, implies that $(m_j)_j$ is increasing as well). On the one hand, we indeed have $m_{i+1} \le m_i'$ by definition of $m_{i+1}'$. On the other hand, we must have $m_{i+1}>m_{i-1}'$ since the inequality
$m_{i+1}\le m_{i-1}'$ would imply $m_i'=m_{i+1}'$ and contradict the strict monotonicity of the sequence $(m'_j)_j$.

\medskip\noindent
With this construction of intertwined pairings $(m_i,m_i')_{1\le i\le s}$,
we can choose a sequence of distinct blocks $(H_{i})_{1\le i\le s}$ of $\pi$ such that \mbox{$\langle B_{m_i}\rangle \cap H_i\ne\varnothing$} and \mbox{$\langle B_{m_i'}\rangle \cap H_i\ne\varnothing$} for all $i$ {(recall that $B_0=\{0\}$)}. We may then pick indices $j_i,j_i'\in\{0\}\cup[k]$ such that $j_i\in\langle B_{m_i}\rangle \cap H_i$ and $j_i'\in\langle B_{m_i'}\rangle \cap H_i$ for all $i$.
In these terms, appealing both to~\eqref{eq:high-intens-correl} and to the decay assumption~\ref{Mix-om-n} with $n=k+1$,
the product of correlation functions in~\eqref{eq:pre-bnd-Dpi} can be bounded for instance as follows (up to integration, as in~\eqref{eq:example-Mix-om-n}),
\begin{equation}\label{eq:choice-correl}
\prod_{H\in\pi}|h_{\sharp H}(x_H)|\,\lesssim\,\lambda_{p_0}(\Pc)\prod_{i=1}^s\big(\omega(x_{j_i}-x_{j_i'})\wedge\lambda_{p_i}(\Pc)\big),
\end{equation}
for some $p_0,\ldots,p_s\ge1$ with $\sum_{i=0}^sp_i=k+1$.
We then define the following concatenations of blocks between paired indices: for $1\le i< s$,
\begin{eqnarray*}
A_{i}\,:=\,(f_{m_{i-1}'})\uplus B_{m_{i-1}'+1}\uplus\ldots\uplus B_{m_{i+1}-1}\uplus(b_{m_{i+1}}),\quad
A_i'\,:=\,B_{m_{i+1}}\uplus\ldots\uplus B_{m_{i}'},
\end{eqnarray*}
with the convention $m_0':=0$, and
\begin{eqnarray*}
A_s\,:=\,(f_{m_{s-1}'})\uplus B_{m_{s-1}'+1}\uplus\ldots\uplus B_{m_s},\qquad
A_s'\,:=\,\varnothing.
\end{eqnarray*}
In these terms,
inserting~\eqref{eq:choice-correl} into~\eqref{eq:pre-bnd-Dpi}, the integral can be reorganized as
\begin{multline*}
|\Cc^{k+1;r}_L(B_1,\ldots,B_r;\pi)|\,\lesssim\,\lambda_{p_0}(\Pc)\\
\times L^{-d}\int_{(\Qd)^{k+1}}\bigg(\prod_{i=1}^{s}D_{A_{i}}(x_{A_i})D_{A_i'}(x_{A_i'})\big(\omega(x_{j_i}-x_{j_i'})\wedge\lambda_{p_i}(\Pc)\big)\bigg)\,dx_0dx_{[k]}.
\end{multline*}
We emphasize that the coupled indices $j_i$'s and $j_i'$'s belong to $A_i'$'s and can thus intersect $A_i$'s only at their endpoints.
In terms of the graphical representation introduced in {Step~3}, the latter integral can be represented generically in the following way,
\begin{align}\label{eq:represent-integr}
&|\Cc^{k+1;r}_L(B_1,\ldots,B_r;\pi)|\\
&\,\lesssim\,L^{-d}\lambda_{k+1}'~
{\tiny\begin{tikzpicture}[baseline={([yshift=-.8ex]current bounding box.center)},scale=0.5]
\filldraw[gray] (1,-0.5) rectangle (2,0.5);
\filldraw[gray] (4,-0.5) rectangle (5,0.5);
\filldraw[gray] (7,-0.5) rectangle (8,0.5);
\filldraw[gray] (10,-0.5) rectangle (11,0.5);
\filldraw[gray] (13,-0.5) rectangle (14,0.5);
\filldraw[gray] (18,-0.5) rectangle (19,0.5);
\filldraw[gray] (21,-0.5) rectangle (22,0.5);
\filldraw[gray] (24,-0.5) rectangle (25,0.5);
\begin{scope}[every node/.style={circle,fill,inner sep=0pt,minimum size=3pt}]
    \node (1) at (0,0) {};
    \node (2b) at (1,0) {};
    \node (2f) at (2,0) {};
    \node (3b) at (4,0) {};
    \node (3f) at (5,0) {};
    \node (3d) at (4.5,-0.5) {};
    \node (4b) at (7,0) {};
    \node (4f) at (8,0) {};
    \node (4d) at (7.5,0.5) {};
    \node (5b) at (10,0) {};
    \node (5f) at (11,0) {};
    \node (5d) at (10.5,0.5) {};
    \node (6b) at (13,0) {};
    \node (6f) at (14,0) {};
    \node (6d) at (13.5,-0.5) {};
    \node (8b) at (18,0) {};
    \node (8f) at (19,0) {};
    \node (8d) at (18.5,0.5) {};
    \node (9b) at (21,0) {};
    \node (9f) at (22,0) {};
    \node (9d) at (21.5,-0.5) {};
    \node (10b) at (24,0) {};
    \node (10d) at (24.5,0.5) {};
\end{scope}
\begin{scope}[every node/.style={circle}]
    \node (2f+) at (2.8,0) {};
    \node (3b-) at (3.2,0) {};
    \node (3f+) at (5.8,0) {};
    \node (4b-) at (6.2,0) {};
    \node (4f+) at (8.8,0) {};
    \node (5b-) at (9.2,0) {};
    \node (5f+) at (11.8,0) {};
    \node (6b-) at (12.2,0) {};
    \node (6f+) at (14.8,0) {};
    \node (stop1-) at (14.8,1.5) {};
    \node (8b-) at (17.2,0) {};
    \node (8f+) at (19.8,0) {};
    \node (9b-) at (20.2,0) {};
    \node (9f+) at (22.8,0) {};
    \node (10b-) at (23.2,0) {};
    \node (stop2-) at (17.2,-1.5) {};
\end{scope}
\begin{scope}[every node/.style={circle}]
    \node (23) at (3,0) {$\ldots$};
    \node (34) at (6,0) {$\ldots$};
    \node (45) at (9,0) {$\ldots$};
    \node (56) at (12,0) {$\ldots$};
    \node (67) at (15,0) {$\ldots$};
    \node (stop1) at (15,1.5) {$\ldots$};
    \node (78) at (17,0) {$\ldots$};
    \node (89) at (20,0) {$\ldots$};
    \node (910) at (23,0) {$\ldots$};
    \node (stop2) at (17,-1.5) {$\ldots$};
    \node (stop2) at (15,-2) {$\ldots$};
    \node (stop2) at (17,-2) {$\ldots$};
\end{scope}
\begin{scope}[>={Stealth[black]},
every edge/.style={draw=black,very thick}]
    \path [-] (1) edge (2b);
    \path [-] (2f) edge (2f+);
    \path [-] (3b-) edge (3b);
    \path [-] (3f) edge (3f+);
    \path [-] (4b-) edge (4b);
    \path [-] (4f) edge (4f+);
    \path [-] (5b-) edge (5b);
    \path [-] (5f) edge (5f+);
    \path [-] (6b-) edge (6b);
    \path [-] (6f) edge (6f+);
    \path [-] (8b-) edge (8b);
    \path [-] (8f) edge (8f+);
    \path [-] (9b-) edge (9b);
    \path [-] (9f) edge (9f+);
    \path [-] (10b-) edge (10b);
\end{scope}
\begin{scope}[>={Stealth[black]},
every edge/.style={draw=black,very thick,dashed}]
    \path [-] (1) edge[bend left=40] (4d);
    \path [-] (3d) edge[bend left=-20] (6d);
    \path [-] (5d) edge[bend left=10] (stop1-);
    \path [-] (8d) edge[bend left=30] (10d);
    \path [-] (9d) edge[bend left=10] (stop2-);
\end{scope}
\begin{scope}[every node/.style={circle,fill,inner sep=0pt,minimum size=0.8pt}]
     \node (AA') at (0,-1.8) {};
     \node (AA'') at (0,-2.2) {};
     \node (BB') at (4,-1.8) {};
     \node (BB'') at (4,-2.2) {};
     \node (CC') at (8,-1.8) {};
     \node (CC'') at (8,-2.2) {};
     \node (DD') at (10,-1.8) {};
     \node (DD'') at (10,-2.2) {};
     \node (EE') at (14,-1.8) {};
     \node (EE'') at (14,-2.2) {};
     \node (FF') at (18,-1.8) {};
     \node (FF'') at (18,-2.2) {};
     \node (GG') at (22,-1.8) {};
     \node (GG'') at (22,-2.2) {};
     \node (HH') at (25,-1.8) {};
     \node (HH'') at (25,-2.2) {};
\end{scope}
\begin{scope}[every node/.style={circle,fill,inner sep=0pt,minimum size=0pt}]
     \node (A) at (0,-2) {};
     \node (A') at (0,-1.8) {};
     \node (A'') at (0,-2.2) {};
     \node (B) at (4,-2) {};
     \node (B') at (4,-1.8) {};
     \node (B'') at (4,-2.2) {};
     \node (C) at (8,-2) {};
     \node (C') at (8,-1.8) {};
     \node (C'') at (8,-2.2) {};
     \node (D) at (10,-2) {};
     \node (D') at (10,-1.8) {};
     \node (D'') at (10,-2.2) {};
     \node (E) at (14,-2) {};
     \node (E') at (14,-1.8) {};
     \node (E'') at (14,-2.2) {};
     \node (Estop) at (14.5,-2) {};
     \node (Fstop) at (17.5,-2) {};
     \node (F) at (18,-2) {};
     \node (F') at (18,-1.8) {};
     \node (F'') at (18,-2.2) {};
     \node (G) at (22,-2) {};
     \node (G') at (22,-1.8) {};
     \node (G'') at (22,-2.2) {};
     \node (H) at (25,-2) {};
     \node (H') at (25,-1.8) {};
     \node (H'') at (25,-2.2) {};
\end{scope}
\begin{scope}[>={Stealth[black]},
every edge/.style={draw=black,thick}]
    \path [-] (A) edge node[below,pos=0.5] {$A_1$} (B);
    \path [-] (B) edge node[below,pos=0.5] {$A_1'$} (C);
    \path [-] (C) edge node[below,pos=0.5] {$A_2$} (D);
    \path [-] (D) edge node[below,pos=0.5] {$A_2'$} (E);
    \path [-] (F) edge node[below,pos=0.5] {$A_{s-1}'$} (G);
    \path [-] (G) edge node[below,pos=0.5] {$A_s$} (H);
    \path [-] (A') edge (A'');
    \path [-] (B') edge (B'');
    \path [-] (C') edge (C'');
    \path [-] (D') edge (D'');
    \path [-] (E') edge (E'');
    \path [-] (E) edge (Estop);
    \path [-] (F) edge (Fstop);
    \path [-] (F') edge (F'');
    \path [-] (G') edge (G'');
    \path [-] (H') edge (H'');
\end{scope}
\end{tikzpicture}}\nonumber
\end{align}
where we further delineate the concatenations of blocks $A_i$'s and $A_i'$'s.
In particular, note these take the generic forms
\begin{eqnarray*}
A_i&\equiv&{\tiny\begin{tikzpicture}[baseline={([yshift=-.8ex]current bounding box.center)},scale=0.6]
\filldraw[gray] (1,-0.5) rectangle (2,0.5);
\filldraw[gray] (4,-0.5) rectangle (5,0.5);
\begin{scope}[every node/.style={circle,draw,inner sep=0pt,minimum size=3pt}]
    \node (1) at (0,0) {};
    \node (4) at (6,0) {};
\end{scope}
\begin{scope}[every node/.style={circle,fill,inner sep=0pt,minimum size=3pt}]
    \node (2b) at (1,0) {};
    \node (2f) at (2,0) {};
    \node (3b) at (4,0) {};
    \node (3f) at (5,0) {};
\end{scope}
\begin{scope}[every node/.style={circle}]
    \node (23) at (3,0) {$\ldots$};
\end{scope}
\begin{scope}[every node/.style={circle}]
    \node (2f+) at (2.8,0) {};
    \node (3b-) at (3.2,0) {};
\end{scope}
\begin{scope}[>={Stealth[black]},
every edge/.style={draw=black,very thick}]
    \path [-] (1) edge (2b);
    \path [-] (2f) edge (2f+);
    \path [-] (3b-) edge (3b);
    \path [-] (3f) edge (4);
\end{scope}
\end{tikzpicture}}
\\
A_i'&\equiv&
{\tiny\begin{tikzpicture}[baseline={([yshift=-.8ex]current bounding box.center)},scale=0.6]
\filldraw[gray] (1,-0.5) rectangle (2,0.5);
\filldraw[gray] (4,-0.5) rectangle (5,0.5);
\begin{scope}[every node/.style={circle,draw,fill=white,inner sep=0pt,minimum size=3pt}]
    \node (2b) at (1,0) {};
    \node (2d) at (1.5,0.5) {};
    \node (3d) at (4.5,-0.5) {};
    \node (3f) at (5,0) {};
\end{scope}
\begin{scope}[every node/.style={circle,fill,inner sep=0pt,minimum size=3pt}]
    \node (2f) at (2,0) {};
    \node (3b) at (4,0) {};
\end{scope}
\begin{scope}[every node/.style={circle}]
    \node (23) at (3,0) {$\ldots$};
\end{scope}
\begin{scope}[every node/.style={circle}]
    \node (2f+) at (2.8,0) {};
    \node (3b-) at (3.2,0) {};
\end{scope}
\begin{scope}[>={Stealth[black]},
every edge/.style={draw=black,very thick}]
    \path [-] (2f) edge (2f+);
    \path [-] (3b-) edge (3b);
\end{scope}
\end{tikzpicture}}
\quad\text{or}\quad
{\tiny\begin{tikzpicture}[baseline={([yshift=-.8ex]current bounding box.center)},scale=0.6]
\filldraw[gray] (1,-0.5) rectangle (2,0.5);
\begin{scope}[every node/.style={circle,draw,fill=white,inner sep=0pt,minimum size=3pt}]
    \node (2b) at (1,0) {};
    \node (2d) at (1.5,0.5) {};
    \node (3d) at (1.5,-0.5) {};
    \node (3f) at (2,0) {};
\end{scope}
\end{tikzpicture}}
\end{eqnarray*}
where the second possibility for $A_i'$ corresponds to the case when $m_{i+1}=m_i'$.
In order to estimate~\eqref{eq:represent-integr}, we first perform integration on $A_i$'s and $A_i'$'s:
using~\eqref{eq:integ-DB-plop-diagr-bis}--\eqref{eq:block-integr-b=alpha=bet} to estimate the integral on each block, and using~\eqref{eq:bound-integ-kernel-diag} to estimate consecutive edges, we find
\[\begin{array}{rllll}
{\tiny\begin{tikzpicture}[baseline={([yshift=-.8ex]current bounding box.center)},scale=0.6]
\filldraw[gray] (1,-0.5) rectangle (2,0.5);
\filldraw[gray] (4,-0.5) rectangle (5,0.5);
\begin{scope}[every node/.style={circle,draw,inner sep=0pt,minimum size=3pt}]
    \node (1) at (0,0) {};
    \node (4) at (6,0) {};
\end{scope}
\begin{scope}[every node/.style={circle,fill,inner sep=0pt,minimum size=3pt}]
    \node (2b) at (1,0) {};
    \node (2f) at (2,0) {};
    \node (3b) at (4,0) {};
    \node (3f) at (5,0) {};
\end{scope}
\begin{scope}[every node/.style={circle}]
    \node (23) at (3,0) {$\ldots$};
\end{scope}
\begin{scope}[every node/.style={circle}]
    \node (2f+) at (2.8,0) {};
    \node (3b-) at (3.2,0) {};
\end{scope}
\begin{scope}[>={Stealth[black]},
every edge/.style={draw=black,very thick}]
    \path [-] (1) edge (2b);
    \path [-] (2f) edge (2f+);
    \path [-] (3b-) edge (3b);
    \path [-] (3f) edge (4);
\end{scope}
\end{tikzpicture}}
&\lesssim&
\Lc^{[\sharp]}\,
{\tiny\begin{tikzpicture}[baseline={([yshift=-.8ex]current bounding box.center)},scale=0.6]
\begin{scope}[every node/.style={circle,draw,inner sep=0pt,minimum size=3pt}]
    \node (1) at (0,0) {};
    \node (4) at (6,0) {};
\end{scope}
\begin{scope}[every node/.style={circle,fill,inner sep=0pt,minimum size=3pt}]
    \node (2b) at (1,0) {};
    \node (2f) at (2,0) {};
    \node (3b) at (4,0) {};
    \node (3f) at (5,0) {};
\end{scope}
\begin{scope}[every node/.style={circle}]
    \node (23) at (3,0) {$\ldots$};
\end{scope}
\begin{scope}[every node/.style={circle}]
    \node (2f+) at (2.8,0) {};
    \node (3b-) at (3.2,0) {};
\end{scope}
\begin{scope}[>={Stealth[black]},
every edge/.style={draw=black,very thick}]
    \path [-] (1) edge (2b);
    \path [-] (2b) edge[bend left=30] (2f);
    \path [-] (2f) edge[bend left=30] (2b);
    \path [-] (2f) edge (2f+);
    \path [-] (3b-) edge (3b);
    \path [-] (3b) edge[bend left=30] (3f);
    \path [-] (3f) edge[bend left=30] (3b);
    \path [-] (3f) edge (4);
\end{scope}
\end{tikzpicture}}
&\lesssim&
\Lc^{[\sharp]}\,
{\tiny\begin{tikzpicture}[baseline={([yshift=-.8ex]current bounding box.center)},scale=0.6]
\begin{scope}[every node/.style={circle,draw,inner sep=0pt,minimum size=3pt}]
    \node (1) at (0,0) {};
    \node (4) at (1,0) {};
\end{scope}
\begin{scope}[>={Stealth[black]},
every edge/.style={draw=black,very thick}]
    \path [-] (1) edge (4);
\end{scope}
\end{tikzpicture}}\\
\vspace{-0.2cm}&&&&\\
{\tiny\begin{tikzpicture}[baseline={([yshift=-.8ex]current bounding box.center)},scale=0.6]
\filldraw[gray] (1,-0.5) rectangle (2,0.5);
\filldraw[gray] (4,-0.5) rectangle (5,0.5);
\begin{scope}[every node/.style={circle,draw,fill=white,inner sep=0pt,minimum size=3pt}]
    \node (2b) at (1,0) {};
    \node (2d) at (1.5,0.5) {};
    \node (3d) at (4.5,-0.5) {};
    \node (3f) at (5,0) {};
\end{scope}
\begin{scope}[every node/.style={circle,fill,inner sep=0pt,minimum size=3pt}]
    \node (2f) at (2,0) {};
    \node (3b) at (4,0) {};
\end{scope}
\begin{scope}[every node/.style={circle}]
    \node (23) at (3,0) {$\ldots$};
\end{scope}
\begin{scope}[every node/.style={circle}]
    \node (2f+) at (2.8,0) {};
    \node (3b-) at (3.2,0) {};
\end{scope}
\begin{scope}[>={Stealth[black]},
every edge/.style={draw=black,very thick}]
    \path [-] (2f) edge (2f+);
    \path [-] (3b-) edge (3b);
\end{scope}
\end{tikzpicture}}
&\lesssim&
\Lc^{[\sharp]}\,
{\tiny\begin{tikzpicture}[baseline={([yshift=-.8ex]current bounding box.center)},scale=0.6]
\begin{scope}[every node/.style={circle,draw,fill=white,inner sep=0pt,minimum size=3pt}]
    \node (2b) at (1,0) {};
    \node (2d) at (1.5,0.5) {};
    \node (3d) at (4.5,-0.5) {};
    \node (3f) at (5,0) {};
\end{scope}
\begin{scope}[every node/.style={circle,fill,inner sep=0pt,minimum size=3pt}]
    \node (2f) at (2,0) {};
    \node (3b) at (4,0) {};
\end{scope}
\begin{scope}[every node/.style={circle}]
    \node (23) at (3,0) {$\ldots$};
\end{scope}
\begin{scope}[every node/.style={circle}]
    \node (2f+) at (2.8,0) {};
    \node (3b-) at (3.2,0) {};
\end{scope}
\begin{scope}[>={Stealth[black]},
every edge/.style={draw=black,very thick}]
    \path [-] (2f) edge (2f+);
    \path [-] (3b-) edge (3b);
    \path [-] (2b) edge (2d);
    \path [-] (3f) edge (3d);
    \path [-] (2b) edge[bend left =30] (2f);
    \path [-] (2f) edge[bend left =30] (2b);
    \path [-] (3b) edge[bend left =30] (3f);
    \path [-] (3f) edge[bend left =30] (3b);
\end{scope}
\end{tikzpicture}}
&\lesssim&
\Lc^{[\sharp]}\,
{\tiny\begin{tikzpicture}[baseline={([yshift=-.8ex]current bounding box.center)},scale=0.6]
\begin{scope}[every node/.style={circle,draw,fill=white,inner sep=0pt,minimum size=3pt}]
    \node (2b) at (1,0) {};
    \node (2d) at (1.5,0.5) {};
    \node (3d) at (1.5,-0.5) {};
    \node (3f) at (2,0) {};
\end{scope}
\begin{scope}[>={Stealth[black]},
every edge/.style={draw=black,very thick}]
    \path [-] (2b) edge (2d);
    \path [-] (3f) edge (3d);
    \path [-] (2b) edge (3f);
\end{scope}
\end{tikzpicture}}\\
\vspace{-0.2cm}&&&&\\
{\tiny\begin{tikzpicture}[baseline={([yshift=-.8ex]current bounding box.center)},scale=0.6]
\filldraw[gray] (1,-0.5) rectangle (2,0.5);
\begin{scope}[every node/.style={circle,draw,fill=white,inner sep=0pt,minimum size=3pt}]
    \node (2b) at (1,0) {};
    \node (2d) at (1.5,0.5) {};
    \node (3d) at (1.5,-0.5) {};
    \node (3f) at (2,0) {};
\end{scope}
\end{tikzpicture}}
&\lesssim&
\Lc^{[\sharp]}\,\Big(
{\tiny\begin{tikzpicture}[baseline={([yshift=-.8ex]current bounding box.center)},scale=0.6]
\begin{scope}[every node/.style={circle,draw,fill=white,inner sep=0pt,minimum size=3pt}]
    \node (2b) at (1,0) {};
    \node (2d) at (1.5,0.5) {};
    \node (3d) at (1.5,-0.5) {};
    \node (3f) at (2,0) {};
\end{scope}
\begin{scope}[>={Stealth[black]},
every edge/.style={draw=black,very thick}]
    \path [-] (2b) edge (2d);
    \path [-] (2b) edge (3f);
    \path [-] (3f) edge (3d);
\end{scope}
\end{tikzpicture}}
+
{\tiny\begin{tikzpicture}[baseline={([yshift=-.8ex]current bounding box.center)},scale=0.6]
\begin{scope}[every node/.style={circle,draw,fill=white,inner sep=0pt,minimum size=3pt}]
    \node (2b) at (1,0) {};
    \node (2d) at (1.5,0.5) {};
    \node (3d) at (1.5,-0.5) {};
    \node (3f) at (2,0) {};
\end{scope}
\begin{scope}[>={Stealth[black]},
every edge/.style={draw=black,very thick}]
    \path [-] (2b) edge (3d);
    \path [-] (3f) edge (2d);
    \path [-] (2b) edge[bend left=30] (3f);
    \path [-] (2b) edge[bend left=-30] (3f);
\end{scope}
\end{tikzpicture}}
\Big)
&&
\end{array}\]
where henceforth we use the short-hand notation $\Lc^{[\sharp]}$ for a power of the logarithmic factor that can change from an occurrence to another and stands for the difference between the numbers of vertices in the left-hand side and in the right-hand side.
Inserting this into~\eqref{eq:represent-integr}, we are led to
\begin{equation}\label{eq:represent-integr-re}
|\Cc^{k+1;r}_L(B_1,\ldots,B_r;\pi)|
\,\lesssim\, L^{-d}\Lc^{[\sharp]}\lambda_{k+1}'
{\tiny\begin{tikzpicture}[baseline={([yshift=-.8ex]current bounding box.center)},scale=0.6]
\draw[pattern=north east lines] (1,0) rectangle (2,1);
\draw[pattern=north east lines] (3,0) rectangle (4,1);
\draw[pattern=north east lines] (5,0) rectangle (6,1);
\draw[pattern=north east lines] (8,0) rectangle (9,1);
\begin{scope}[every node/.style={circle,fill,inner sep=0pt,minimum size=3pt}]
    \node (1) at (0,0) {};
    \node (2b) at (1,0) {};
    \node (2f) at (2,0) {};
    \node (2d) at (1,1) {};
    \node (2u) at (2,1) {};
    \node (3b) at (3,0) {};
    \node (3f) at (4,0) {};
    \node (3d) at (3,1) {};
    \node (3u) at (4,1) {};
    \node (4b) at (5,0) {};
    \node (4f) at (6,0) {};
    \node (4d) at (5,1) {};
    \node (4u) at (6,1) {};
    \node (5b) at (8,0) {};
    \node (5f) at (9,0) {};
    \node (5d) at (8,1) {};
    \node (5u) at (9,1) {};
    \node (6) at (10,0) {};
\end{scope}
\begin{scope}[every node/.style={circle}]
    \node (4f+) at (6.8,0) {};
    \node (5b-) at (7.2,0) {};
    \node (stop1-) at (6.8,1) {};
    \node (stop2-) at (7.2,1) {};
\end{scope}
\begin{scope}[every node/.style={circle}]
    \node (45) at (7,0) {$\ldots$};
    \node (stop1) at (7,1) {$\ldots$};
\end{scope}
\begin{scope}[>={Stealth[black]},
every edge/.style={draw=black,very thick}]
    \path [-] (1) edge (2b);
    \path [-] (2f) edge (3b);
    \path [-] (3f) edge (4b);
    \path [-] (4f) edge (4f+);
    \path [-] (5b-) edge (5b);
    \path [-] (5f) edge (6);
\end{scope}
\begin{scope}[>={Stealth[black]},
every edge/.style={draw=black,very thick,dashed}]
    \path [-] (1) edge[bend left=10] (2d);
    \path [-] (2u) edge (3d);
    \path [-] (3u) edge (4d);
    \path [-] (4u) edge (stop1-);
    \path [-] (stop2-) edge (5d);
    \path [-] (5u) edge[bend left=10] (6);
\end{scope}
\end{tikzpicture}}
\end{equation}
where for abbreviation hatched boxes are given by
\begin{equation*}
{\tiny\begin{tikzpicture}[baseline={([yshift=-.8ex]current bounding box.center)},scale=0.6]
\draw[pattern=north east lines] (0,0) rectangle (1,1);
\begin{scope}[every node/.style={circle,draw,fill=white,inner sep=0pt,minimum size=3pt}]
    \node (1) at (0,0) {};
    \node (2) at (1,0) {};
    \node (3) at (0,1) {};
    \node (4) at (1,1) {};
\end{scope}
\end{tikzpicture}}
~~:=~~
{\tiny\begin{tikzpicture}[baseline={([yshift=-.8ex]current bounding box.center)},scale=0.6]
\begin{scope}[every node/.style={circle,draw,fill=white,inner sep=0pt,minimum size=3pt}]
    \node (1) at (0,0) {};
    \node (2) at (1,0) {};
    \node (3) at (0,1) {};
    \node (4) at (1,1) {};
\end{scope}
\begin{scope}[>={Stealth[black]},
every edge/.style={draw=black,very thick}]
    \path [-] (1) edge (4);
    \path [-] (2) edge (3);
    \path [-] (1) edge (2);
\end{scope}
\end{tikzpicture}}
+
{\tiny\begin{tikzpicture}[baseline={([yshift=-.8ex]current bounding box.center)},scale=0.6]
\begin{scope}[every node/.style={circle,draw,fill=white,inner sep=0pt,minimum size=3pt}]
    \node (1) at (0,0) {};
    \node (2) at (1,0) {};
    \node (3) at (0,1) {};
    \node (4) at (1,1) {};
\end{scope}
\begin{scope}[>={Stealth[black]},
every edge/.style={draw=black,very thick}]
    \path [-] (1) edge (3);
    \path [-] (2) edge (4);
    \path [-] (1) edge[bend left=20] (2);
    \path [-] (2) edge[bend left=20] (1);
\end{scope}
\end{tikzpicture}}
\end{equation*}
which we obtain by reorganizing the graphs as follows,
\[{\tiny\begin{tikzpicture}[baseline={([yshift=-.8ex]current bounding box.center)},scale=0.6]
\begin{scope}[every node/.style={circle,thick,draw}]
    \node (2) at (0,1) {2};
    \node (3) at (2,1) {3};
    \node (1) at (1,2) {1};
    \node (4) at (1,0) {4};
\end{scope}
\begin{scope}[>={Stealth[black]},
every node/.style={fill=white,circle},
every edge/.style={draw=black,very thick}]
    \path [-] (1) edge (2);
    \path [-] (2) edge (3);
    \path [-] (3) edge (4);
\end{scope}
\end{tikzpicture}}
\,\equiv\,
{\tiny\begin{tikzpicture}[baseline={([yshift=-.8ex]current bounding box.center)},scale=0.6]
\begin{scope}[every node/.style={circle,thick,draw}]
    \node (2) at (0,0) {2};
    \node (4) at (0,1.5) {4};
    \node (1) at (1.5,1.5) {1};
    \node (3) at (1.5,0) {3};
\end{scope}
\begin{scope}[>={Stealth[black]},
every node/.style={fill=white,circle},
every edge/.style={draw=black,very thick}]
    \path [-] (1) edge (2);
    \path [-] (2) edge (3);
    \path [-] (3) edge (4);
\end{scope}
\end{tikzpicture}}
\qquad\text{and}\qquad
{\tiny\begin{tikzpicture}[baseline={([yshift=-.8ex]current bounding box.center)},scale=0.6]
\begin{scope}[every node/.style={circle,thick,draw}]
    \node (3) at (0,1) {3};
    \node (2) at (2,1) {2};
    \node (1) at (1,2) {1};
    \node (4) at (1,0) {4};
\end{scope}
\begin{scope}[>={Stealth[black]},
every node/.style={fill=white,circle},
every edge/.style={draw=black,very thick}]
    \path [-] (1) edge (2);
    \path [-] (2) edge[bend left=15] (3);
    \path [-] (2) edge[bend left=-15] (3);
    \path [-] (3) edge (4);
\end{scope}
\end{tikzpicture}}
\,\equiv\,
{\tiny\begin{tikzpicture}[baseline={([yshift=-.8ex]current bounding box.center)},scale=0.6]
\begin{scope}[every node/.style={circle,thick,draw}]
    \node (3) at (0,0) {3};
    \node (4) at (0,1.5) {4};
    \node (1) at (1.5,1.5) {1};
    \node (2) at (1.5,0) {2};
\end{scope}
\begin{scope}[>={Stealth[black]},
every node/.style={fill=white,circle},
every edge/.style={draw=black,very thick}]
    \path [-] (1) edge (2);
    \path [-] (2) edge[bend left=15] (3);
    \path [-] (2) edge[bend left=-15] (3);
    \path [-] (3) edge (4);
\end{scope}
\end{tikzpicture}}\]
It remains to evaluate the right-hand side in~\eqref{eq:represent-integr-re}.
Using the graphical rules~\eqref{eq:bound-integ-kernel-diag}, \eqref{eq:rule-lambda-double}, and~\eqref{eq:bound-integ-kernel-diag-dash}, and noting that that direct integrations yield
\begin{equation*}
{\tiny\begin{tikzpicture}[baseline={([yshift=-.8ex]current bounding box.center)},scale=0.6]
\begin{scope}[every node/.style={circle,draw,fill=white,inner sep=0pt,minimum size=3pt}]
    \node (2) at (1,0) {};
\end{scope}
\begin{scope}[every node/.style={circle,fill,inner sep=0pt,minimum size=3pt}]
    \node (1) at (0,0) {};
\end{scope}
\begin{scope}[>={Stealth[black]},
every edge/.style={draw=black,very thick}]
    \path [-] (1) edge[bend left=30] (2);
    \path [-] (1) edge[bend left=-30] (2);
\end{scope}
\begin{scope}[>={Stealth[black]},
every edge/.style={draw=black,very thick,dashed}]
    \path [-] (1) edge[bend left=-30] (2);
\end{scope}
\end{tikzpicture}}
~~\lesssim~~
{\tiny\begin{tikzpicture}[baseline={([yshift=-.8ex]current bounding box.center)},scale=0.6]
\begin{scope}[every node/.style={circle,draw,fill=white,inner sep=0pt,minimum size=3pt}]
    \node (2) at (1,0) {};
\end{scope}
\end{tikzpicture}}
\qquad\text{and}\qquad
\Lc^\mu\lambda^\circ_{k+1}\,{\tiny\begin{tikzpicture}[baseline={([yshift=-.8ex]current bounding box.center)},scale=0.6]
\begin{scope}[every node/.style={circle,draw,fill=white,inner sep=0pt,minimum size=3pt}]
    \node (2) at (1,0) {};
\end{scope}
\begin{scope}[every node/.style={circle,fill,inner sep=0pt,minimum size=3pt}]
    \node (1) at (0,0) {};
\end{scope}
\begin{scope}[>={Stealth[black]},
every edge/.style={draw=black,very thick}]
    \path [-] (1) edge[bend left=30] (2);
\end{scope}
\begin{scope}[>={Stealth[black]},
every edge/.style={draw=black,very thick,dashed}]
    \path [-] (1) edge[bend left=-30] (2);
\end{scope}
\end{tikzpicture}}
~~\lesssim~~\Lc^{\mu+1}\lambda'_{k+1}\,
{\tiny\begin{tikzpicture}[baseline={([yshift=-.8ex]current bounding box.center)},scale=0.6]
\begin{scope}[every node/.style={circle,draw,fill=white,inner sep=0pt,minimum size=3pt}]
    \node (2) at (1,0) {};
\end{scope}
\end{tikzpicture}}
\end{equation*}
we can estimate
\begin{eqnarray*}
\Lc^\mu\lambda_{k+1}'\,
{\tiny\begin{tikzpicture}[baseline={([yshift=-.8ex]current bounding box.center)},scale=0.6]
\draw[pattern=north east lines] (1,0) rectangle (2,1);
\begin{scope}[every node/.style={circle,fill,draw,inner sep=0pt,minimum size=3pt}]
    \node (1) at (0,0) {};
    \node (2) at (1,0) {};
    \node (3) at (1,1) {};
    \node (4) at (2,0) {};
    \node (5) at (2,1) {};
\end{scope}
\begin{scope}[every node/.style={circle,draw,fill=white,inner sep=0pt,minimum size=3pt}]
    \node (6) at (3,0) {};
    \node (7) at (3,1) {};
\end{scope}
\begin{scope}[>={Stealth[black]},
every edge/.style={draw=black,very thick}]
    \path [-] (1) edge (2);
    \path [-] (4) edge (6);
\end{scope}
\begin{scope}[>={Stealth[black]},
every edge/.style={draw=black,very thick,dashed}]
    \path [-] (1) edge[bend left=10] (3);
    \path [-] (5) edge (7);
\end{scope}
\end{tikzpicture}}
&=&
\Lc^\mu\lambda_{k+1}'\Big(~
{\tiny\begin{tikzpicture}[baseline={([yshift=-.8ex]current bounding box.center)},scale=0.6]
\begin{scope}[every node/.style={circle,fill,draw,inner sep=0pt,minimum size=3pt}]
    \node (1) at (0,0) {};
    \node (2) at (1,0) {};
    \node (3) at (1,1) {};
    \node (4) at (2,0) {};
    \node (5) at (2,1) {};
\end{scope}
\begin{scope}[every node/.style={circle,draw,fill=white,inner sep=0pt,minimum size=3pt}]
    \node (6) at (3,0) {};
    \node (7) at (3,1) {};
\end{scope}
\begin{scope}[>={Stealth[black]},
every edge/.style={draw=black,very thick}]
    \path [-] (1) edge (2);
    \path [-] (3) edge (4);
    \path [-] (2) edge (5);
    \path [-] (2) edge (4);
    \path [-] (4) edge (6);
\end{scope}
\begin{scope}[>={Stealth[black]},
every edge/.style={draw=black,very thick,dashed}]
    \path [-] (1) edge[bend left=10] (3);
    \path [-] (5) edge (7);
\end{scope}
\end{tikzpicture}}
+
{\tiny\begin{tikzpicture}[baseline={([yshift=-.8ex]current bounding box.center)},scale=0.6]
\begin{scope}[every node/.style={circle,fill,draw,inner sep=0pt,minimum size=3pt}]
    \node (1) at (0,0) {};
    \node (2) at (1,0) {};
    \node (3) at (1,1) {};
    \node (4) at (2,0) {};
    \node (5) at (2,1) {};
\end{scope}
\begin{scope}[every node/.style={circle,draw,fill=white,inner sep=0pt,minimum size=3pt}]
    \node (6) at (3,0) {};
    \node (7) at (3,1) {};
\end{scope}
\begin{scope}[>={Stealth[black]},
every edge/.style={draw=black,very thick}]
    \path [-] (1) edge (2);
    \path [-] (2) edge[bend left=30] (4);
    \path [-] (2) edge[bend left=-30] (4);
    \path [-] (2) edge (3);
    \path [-] (4) edge (5);
    \path [-] (4) edge (6);
\end{scope}
\begin{scope}[>={Stealth[black]},
every edge/.style={draw=black,very thick,dashed}]
    \path [-] (1) edge[bend left=10] (3);
    \path [-] (5) edge (7);
\end{scope}
\end{tikzpicture}}
~\Big)
\\
&\lesssim&
\Lc^{\mu+1}\lambda_{k+1}'\Big(~
{\tiny\begin{tikzpicture}[baseline={([yshift=-.8ex]current bounding box.center)},scale=0.6]
\begin{scope}[every node/.style={circle,fill,draw,inner sep=0pt,minimum size=3pt}]
    \node (2) at (1,0) {};
    \node (3) at (1,1) {};
    \node (4) at (2,0) {};
    \node (5) at (2,1) {};
\end{scope}
\begin{scope}[every node/.style={circle,draw,fill=white,inner sep=0pt,minimum size=3pt}]
    \node (6) at (3,0) {};
    \node (7) at (3,1) {};
\end{scope}
\begin{scope}[>={Stealth[black]},
every edge/.style={draw=black,very thick}]
    \path [-] (3) edge (4);
    \path [-] (2) edge (5);
    \path [-] (2) edge (4);
    \path [-] (4) edge (6);
\end{scope}
\begin{scope}[>={Stealth[black]},
every edge/.style={draw=black,very thick,dashed}]
    \path [-] (2) edge (3);
    \path [-] (5) edge (7);
\end{scope}
\end{tikzpicture}}
+
{\tiny\begin{tikzpicture}[baseline={([yshift=-.8ex]current bounding box.center)},scale=0.6]
\begin{scope}[every node/.style={circle,fill,draw,inner sep=0pt,minimum size=3pt}]
    \node (2) at (1,0) {};
    \node (3) at (1,1) {};
    \node (4) at (2,0) {};
    \node (5) at (2,1) {};
\end{scope}
\begin{scope}[every node/.style={circle,draw,fill=white,inner sep=0pt,minimum size=3pt}]
    \node (6) at (3,0) {};
    \node (7) at (3,1) {};
\end{scope}
\begin{scope}[>={Stealth[black]},
every edge/.style={draw=black,very thick}]
    \path [-] (2) edge[bend left=30] (4);
    \path [-] (2) edge[bend left=-30] (4);
    \path [-] (2) edge[bend left=-30] (3);
    \path [-] (4) edge (5);
    \path [-] (4) edge (6);
\end{scope}
\begin{scope}[>={Stealth[black]},
every edge/.style={draw=black,very thick,dashed}]
    \path [-] (2) edge[bend left=30] (3);
    \path [-] (5) edge (7);
\end{scope}
\end{tikzpicture}}
~\Big)
\\
&\lesssim&
\Lc^{\mu+2}\lambda_{k+1}'\Big(~
{\tiny\begin{tikzpicture}[baseline={([yshift=-.8ex]current bounding box.center)},scale=0.6]
\begin{scope}[every node/.style={circle,fill,draw,inner sep=0pt,minimum size=3pt}]
    \node (2) at (1,0) {};
    \node (4) at (2,0) {};
    \node (5) at (2,1) {};
\end{scope}
\begin{scope}[every node/.style={circle,draw,fill=white,inner sep=0pt,minimum size=3pt}]
    \node (6) at (3,0) {};
    \node (7) at (3,1) {};
\end{scope}
\begin{scope}[>={Stealth[black]},
every edge/.style={draw=black,very thick}]
    \path [-] (2) edge (5);
    \path [-] (2) edge[bend left=30] (4);
    \path [-] (4) edge (6);
\end{scope}
\begin{scope}[>={Stealth[black]},
every edge/.style={draw=black,very thick,dashed}]
    \path [-] (2) edge[bend left=-30] (4);
    \path [-] (5) edge (7);
\end{scope}
\end{tikzpicture}}
+
{\tiny\begin{tikzpicture}[baseline={([yshift=-.8ex]current bounding box.center)},scale=0.6]
\begin{scope}[every node/.style={circle,fill,draw,inner sep=0pt,minimum size=3pt}]
    \node (2) at (1,0) {};
    \node (4) at (2,0) {};
    \node (5) at (2,1) {};
\end{scope}
\begin{scope}[every node/.style={circle,draw,fill=white,inner sep=0pt,minimum size=3pt}]
    \node (6) at (3,0) {};
    \node (7) at (3,1) {};
\end{scope}
\begin{scope}[>={Stealth[black]},
every edge/.style={draw=black,very thick}]
    \path [-] (2) edge[bend left=30] (4);
    \path [-] (2) edge[bend left=-30] (4);
    \path [-] (4) edge (5);
    \path [-] (4) edge (6);
\end{scope}
\begin{scope}[>={Stealth[black]},
every edge/.style={draw=black,very thick,dashed}]
    \path [-] (5) edge (7);
\end{scope}
\end{tikzpicture}}
~\Big)
\\
&\lesssim&
\Lc^{\mu+3}\lambda_{k+1}'\,
{\tiny\begin{tikzpicture}[baseline={([yshift=-.8ex]current bounding box.center)},scale=0.6]
\begin{scope}[every node/.style={circle,fill,draw,inner sep=0pt,minimum size=3pt}]
    \node (4) at (2,0) {};
    \node (5) at (2,1) {};
\end{scope}
\begin{scope}[every node/.style={circle,draw,fill=white,inner sep=0pt,minimum size=3pt}]
    \node (6) at (3,0) {};
    \node (7) at (3,1) {};
\end{scope}
\begin{scope}[>={Stealth[black]},
every edge/.style={draw=black,very thick}]
    \path [-] (4) edge (5);
    \path [-] (4) edge (6);
\end{scope}
\begin{scope}[>={Stealth[black]},
every edge/.style={draw=black,very thick,dashed}]
    \path [-] (5) edge (7);
\end{scope}
\end{tikzpicture}}
\\
&\lesssim&
\Lc^{\mu+4}\lambda_{k+1}'\,
{\tiny\begin{tikzpicture}[baseline={([yshift=-.8ex]current bounding box.center)},scale=0.6]
\begin{scope}[every node/.style={circle,fill,draw,inner sep=0pt,minimum size=3pt}]
    \node (4) at (2,0) {};
\end{scope}
\begin{scope}[every node/.style={circle,draw,fill=white,inner sep=0pt,minimum size=3pt}]
    \node (6) at (3,0) {};
    \node (7) at (3,1) {};
\end{scope}
\begin{scope}[>={Stealth[black]},
every edge/.style={draw=black,very thick}]
    \path [-] (4) edge (6);
\end{scope}
\begin{scope}[>={Stealth[black]},
every edge/.style={draw=black,very thick,dashed}]
    \path [-] (4) edge[bend left=10] (7);
\end{scope}
\end{tikzpicture}}
\end{eqnarray*}
Iterating this estimate, the right-hand side of~\eqref{eq:represent-integr-re} can now be estimated as follows,
\begin{equation*}
|\Cc^{k+1;r}_L(B_1,\ldots,B_r;\pi)|
\,\lesssim\, L^{-d}\Lc^{[\sharp]}\lambda_{k+1}'\,
{\tiny\begin{tikzpicture}[baseline={([yshift=-.8ex]current bounding box.center)},scale=0.6]
\begin{scope}[every node/.style={circle,fill,inner sep=0pt,minimum size=3pt}]
    \node (1) at (0,0) {};
\end{scope}
\end{tikzpicture}}
\end{equation*}
As the number of vertices in the left-hand side is equal to $k+1$ while only one vertex remains in the right-hand side, recalling our notation for $\Lc$ and $\lambda'_k$, and noting that free integration yields ${\tiny\begin{tikzpicture}[baseline={([yshift=-.8ex]current bounding box.center)},scale=0.6]
\begin{scope}[every node/.style={circle,fill,inner sep=0pt,minimum size=3pt}]
    \node (1) at (0,0) {};
\end{scope}
\end{tikzpicture}}=L^d$,
we get
\begin{equation*}
|\Cc^{k+1;r}_L(B_1,\ldots,B_r;\pi)|
\,\lesssim\,L^{-d}\Lc^{k}\lambda_{k+1}'\,{\tiny\begin{tikzpicture}[baseline={([yshift=-.8ex]current bounding box.center)},scale=0.6]
\begin{scope}[every node/.style={circle,fill,inner sep=0pt,minimum size=3pt}]
    \node (1) at (0,0) {};
\end{scope}
\end{tikzpicture}}
\,=\,\lambda_{k+1}(\Pc)|\!\log\lambda(\Pc)|^k,
\end{equation*}
that is,~\eqref{eq:bnd-main-Dk+1npi}.

\medskip\noindent
\substep{5.2} Boundary terms: in case of an algebraic rate $\omega(t)\le Ct^{-\beta}$ for some $C,\beta>0$, we have for all $\pi\notin\Kc(B_1,\ldots,B_r)$,
\begin{equation}\label{eq:boundary-terms-gen}
|\Cc^{k+1;r}_L(B_1,\ldots,B_r;\pi)|\\
\,\lesssim\,
\left\{\begin{array}{lll}
\big(\lambda_{k+1}(\Pc)|\!\log\lambda(\Pc)|^{k-1}\big)\wedge \tfrac{(\log L)^{k-1}}{L^{\beta\wedge1}}&:&\beta\ne1,\\
\vspace{-0.3cm}&&\\
\big(\lambda_{k+1}(\Pc)|\!\log\lambda(\Pc)|^k\big)\wedge \frac{(\log L)^k}{L}&:&\beta=1.
\end{array}\right.
\end{equation}
By definition, given $\pi\notin\Kc(B_1,\ldots,B_r)$, we can consider the largest separating index $\alpha\le r$ such that each cell $H\in\pi$ is included either in $\{0\}\cup\bigcup_{i=1}^{\alpha-1} \langle B_i\rangle$ or in $\bigcup_{i=\alpha}^r\langle B_i\rangle$.
Setting $\hat B:=B_\alpha\uplus\ldots\uplus B_{r}$,
the choice of $\alpha$ ensures that $\pi$ can be restricted to a partition~$\hat\pi$ of the index subset $\langle\hat B\rangle\subset[k]$ such that $\hat\pi$ is covering for $B_\alpha,\ldots,B_r$.
Arguing as in~\eqref{eq:pre-bnd-Dpi} and estimating the integrals over the first blocks $\{0\},B_1,\ldots,B_{\alpha-1}$ brutally as in Section~\ref{sec:non-unif} without taking any advantage of the decay of correlation functions, we get
\begin{multline*}
|\Cc^{k+1;r}_L(B_1,\ldots,B_r;\pi)|\,\lesssim\,(\log L)^\alpha\bigg(\prod_{H\in\pi\setminus\hat\pi}\lambda_{\sharp H}(\Pc)\bigg) \\
\times L^{-d}\int_{Q_L}\bigg(\int_{(Q_L+x_b)^{\sharp\hat B-1}\setminus(Q_L)^{\sharp\hat B-1}}\bigg(\prod_{H\in\hat \pi}|h_{\sharp H}(x_H)|\bigg) D_{\hat B}(x_{\hat B})\,
dx_{\langle\hat B\rangle\setminus\{b\}}\bigg)dx_b
\end{multline*}
It remains to show that the remaining integral is a boundary term that is algebraically small as $L\uparrow\infty$, so that in particular the logarithmic prefactor $(\log L)^\alpha$ plays no role.
For that purpose, we first note that
\[\mathds1_{(Q_L+x_b)^{\sharp\hat B-1}\setminus(Q_L)^{\sharp\hat B-1}}(x_{\langle\hat B\rangle\setminus\{b\}})\le\sum_{j\in\langle\hat B\rangle\setminus\{b\}}\mathds1_{(Q_L+x_b)\setminus Q_L}(x_j),\]
so the above can be bounded by
\begin{multline*}
|\Cc^{k+1;r}_L(B_1,\ldots,B_r;\pi)|\,\lesssim\,(\log L)^\alpha \bigg(\prod_{H\in\pi\setminus\hat\pi}\lambda_{\sharp H}\bigg)\sum_{j\in\langle\hat B\rangle\setminus\{b\}}\\
\times L^{-d}\int_{Q_L}\int_{(Q_L+x_b)\setminus Q_L}\bigg(\int_{(Q_L+x_b)^{\sharp\hat B-2}}\bigg(\prod_{H\in\hat \pi}|h_{\sharp H}(x_H)|\bigg)
D_{\hat B}(x_{\hat B})
dx_{\langle\hat B\rangle\setminus\{b,j\}}\bigg)dx_jdx_b.
\end{multline*}
As $\hat \pi$ is covering for $B_\alpha,\ldots,B_r$, that is, $\hat\pi\in\Kc(B_\alpha,\ldots,B_r)$, similar arguments based on the graphical representation as in Substep~5.1 allow to estimate the integral over $\langle\hat B\rangle\setminus\{b,j\}$, to the effect of
\begin{multline}\label{eq:c-prebound-bdary}
|\Cc^{k+1;r}_L(B_1,\ldots,B_r;\pi)|\\
\,\lesssim\,(\log L)^\alpha
\sum_{j\in\langle\hat B\rangle\setminus\{b\}}L^{-d}\int_{Q_L}\int_{(Q_L+x_b)\setminus Q_L}\Big(\Lc^{\sharp\hat B-2}\lambda_{k+1}'\,
{\tiny\begin{tikzpicture}[baseline={([yshift=-.8ex]current bounding box.center)},scale=0.6]
\begin{scope}[every node/.style={circle,draw,fill=white}]
    \node (1) at (0,0) {$b$};
    \node (2) at (2,0) {$j$};
\end{scope}
\begin{scope}[>={Stealth[black]},
every edge/.style={draw=black,very thick}]
    \path [-] (1) edge[bend left=20] (2);
\end{scope}
\begin{scope}[>={Stealth[black]},
every edge/.style={draw=black,very thick,dashed}]
    \path [-] (1) edge[bend left=-20] (2);
\end{scope}
\end{tikzpicture}}\Big)
\,dx_jdx_b.
\end{multline}
In order to estimate this integral, we note that
\begin{eqnarray*}
\lefteqn{L^{-d}\int_{Q_L}\int_{(Q_L+x_b)\setminus Q_L}\Big(\lambda_{k+1}^\circ\,
{\tiny\begin{tikzpicture}[baseline={([yshift=-.8ex]current bounding box.center)},scale=0.6]
\begin{scope}[every node/.style={circle,draw,fill=white}]
    \node (1) at (0,0) {$b$};
    \node (2) at (2,0) {$j$};
\end{scope}
\begin{scope}[>={Stealth[black]},
every edge/.style={draw=black,very thick}]
    \path [-] (1) edge[bend left=20] (2);
\end{scope}
\begin{scope}[>={Stealth[black]},
every edge/.style={draw=black,very thick,dashed}]
    \path [-] (1) edge[bend left=-20] (2);
\end{scope}
\end{tikzpicture}}\Big)
\,dx_jdx_b}\\
&=&L^{-d}\int_{Q_L}\int_{(Q_L+x_b)\setminus Q_L}\langle(x_b-x_j)_L\rangle^{-d}\big(\omega((x_b-x_j)_L)\wedge\lambda_{k+1}(\Pc)\big)\,dx_jdx_b\\
&=&L^{-d}\int_{Q_L}\int_{Q_L\setminus (Q_L-x_b)}\langle y\rangle^{-d}\big(\omega(y)\wedge\lambda_{k+1}(\Pc)\big)\,dydx_b\\
&=&L^{-d}\int_{Q_L}\langle y\rangle^{-d}\big(\omega(y)\wedge\lambda_{k+1}(\Pc)\big)\,|Q_L\setminus (Q_L-y)|\,dy,
\end{eqnarray*}
and thus, using~\eqref{eq:QL+QL} in form of $L^{-d}|Q_L\setminus(Q_L-y)|\lesssim\tfrac{|y|}L\wedge1$, in case of an algebraic rate $\omega(t)\le Ct^{-\beta}$ for some $C,\beta>0$,
\begin{eqnarray*}
L^{-d}\int_{Q_L}\int_{(Q_L+x_b)\setminus Q_L}\!\!\Big(\lambda_{p}^\circ\,
{\tiny\begin{tikzpicture}[baseline={([yshift=-.8ex]current bounding box.center)},scale=0.6]
\begin{scope}[every node/.style={circle,draw,fill=white}]
    \node (1) at (0,0) {$b$};
    \node (2) at (2,0) {$j$};
\end{scope}
\begin{scope}[>={Stealth[black]},
every edge/.style={draw=black,very thick}]
    \path [-] (1) edge[bend left=20] (2);
\end{scope}
\begin{scope}[>={Stealth[black]},
every edge/.style={draw=black,very thick,dashed}]
    \path [-] (1) edge[bend left=-20] (2);
\end{scope}
\end{tikzpicture}}\Big)\,dx_jdx_b
&\lesssim&L^{-1}\int_{Q_L}\langle y\rangle^{1-d}\big(\omega(y)\wedge\lambda_{p}(\Pc)\big)\,dy\\
&\lesssim&\left\{\begin{array}{lll}
\lambda_{p}(\Pc)\wedge L^{-\beta\wedge1}&:&\beta\ne1,\\
\big(\lambda_{p}(\Pc)|\!\log\lambda(\Pc)|\big)\wedge \frac{\log L}{L}&:&\beta=1.
\end{array}\right.
\end{eqnarray*}
Now turning back to the right-hand side in~\eqref{eq:c-prebound-bdary}, repeating the above computation after including logarithmic factors, and noting that $\alpha+\sharp\hat B\le \sharp B=k+1$, the claim~\eqref{eq:boundary-terms-gen} follows.

\medskip
\step6 Strategy for~(ii) and~(iii).\\
Both for (ii) and (iii), the arguments are similar to what we already did so far, and require no new insight. 
We omit lengthy details for brevity.

\medskip\noindent
We start with~(ii). In view of the estimation~\eqref{eq:boundary-terms-gen} for boundary terms, it remains to estimate the convergence of terms corresponding to covering partitions in~\eqref{eq:pre-decomp-pi-CLkr} in the large-volume limit. For that purpose, we appeal to the periodization error estimates of Lemma~\ref{lem:period-est}, as in the proof of Proposition~\ref{prop:B2-ren}(ii).

\medskip\noindent
We turn to~(iii). The starting point is the refined estimate~\eqref{e.DGV-control-bis} on $R_L^{k+1}$. In the spirit of the proof of Proposition~\ref{prop:B2-ren}(iii), a decomposition of the right-hand side in~\eqref{e.DGV-control-bis} can be performed in the same way as what we did above for~$\Bb_L^{k+1}$.
\end{proof}

\subsection{Optimality of the bound on $\Bb^2$}\label{sec:opti-B2}

This section is devoted to the proof of Theorem~\ref{lem:optimality}, which shows that logarithmic factors are optimal in general in our estimation of cluster coefficients, e.g.~Proposition~\ref{prop:Bgen-ren}(i).
As will be clear in the proof below, logarithmic factors are related to the lack of continuity of the Helmholtz projection in $\Ld^\infty(\R^d)$.

\begin{proof}[Proof of Theorem~\ref{lem:optimality}]
Let Assumptions~\ref{H0}, \ref{Gunif}, and~\ref{B1} hold, and assume that the correlation function satisfies the Dini condition~\eqref{eq:decay-h2-om}.
We split the proof into two steps.

\medskip
\step1 Proof of~(i).\\
Appealing to Proposition~\ref{prop:B2-ren} in form
of the explicit formula~\eqref{eq:lim-form-B2} for $\Bb^2$, and estimating the second right-hand side term as in the proof of Proposition~\ref{prop:B2-ren}(i), we find
\begin{equation}\label{e.optima10-0}
\bigg|E:\Bb^2E-\int_{\R^d}\expecM{\int_{\partial I^\circ}\psi^z\cdot\sigma^0\nu}h_2(0,z)\,dz\bigg|\,
\lesssim\,\lambda_2(\Pc),
\end{equation}
where $\sigma^0$ and $\psi^z$ are associated respectively with single particles at~$I^\circ$ and at~$z+I^{\circ\prime}$, where~$I^\circ$ and~$I^{\circ\prime}$ are iid copies of the same random shape.
Replacing $\psi^z$ by its Taylor expansion, using the boundary conditions for $\sigma^0$, and using standard decay properties of~$\psi^z$, we find
\[\bigg|\int_{\partial I^\circ}\psi^z\cdot\sigma^0\nu-\D(\psi^z)(0):\int_{\partial I^\circ}\sigma^0\nu\otimes x\bigg|\,\lesssim\,\langle z\rangle^{-d-1}.\]
Inserting this into~\eqref{e.optima10-0} together with~\eqref{eq:high-intens-correl},
and recalling the short-hand notation
\[E:\Bh^1E\,=\,\tfrac12\expecM{\int_{\partial I^{\circ}} Ex\cdot\sigma^\circ \nu}\,=\,\expecM{\int_{\R^d}|\!\D(\psi^\circ)|^2},\]
cf.~\eqref{e.Einstein0}, we get
\begin{equation}\label{e.optima10-1}
\bigg|E:\Bb^2E-(2\Bh^1E):\Big(\int_{\R^d}\expec{\D(\psi^z)(0)}h_2(0,z)\,dz\Big)\bigg|\,
\lesssim\,\lambda_2(\Pc).
\end{equation}
Next, we further analyze $\D(\psi^z)(0)$.
In view of Lemma~\ref{lem:eqn-corH}, we note that $\psi^z$ satisfies in $\R^d$,
\[-\triangle\psi^z+\nabla(\Sigma^z\mathds1_{\R^d\setminus (z+I^{\circ\prime})})\,=\,-\delta_{\partial (z+I^{\circ\prime})}\sigma^z\nu.\]
In terms of the Stokeslet $G$ for the free Stokes equation, Green's representation formula then yields
\[\nabla_i\psi^z(0)\,=\,-\int_{\partial(z+I^{\circ\prime})}\nabla_iG(-\cdot)\,\sigma^z\nu.\]
Replacing $\nabla_iG$ by its Taylor expansion, using the boundary conditions for $\sigma^z$, and using standard decay properties of $G$, we find
\begin{equation*}
\bigg|\nabla_i\psi^z(0)-\nabla_{ij}^2G(-z)\int_{\partial(z+I^{\circ\prime})}(\cdot-z)_j\,\sigma^z\nu\bigg|\,\lesssim\,\langle z\rangle^{-d-1},
\end{equation*}
and therefore, taking the expectation, noting that $\sigma^z=\sigma^0(\cdot-z)$, and recognizing $\Bh^1E$ again,
\begin{equation*}
\Big|\expec{\nabla_i\psi^z_k(0)}-(2\Bh^1E)_{lj}\nabla_{ij}^2G_{kl}(-z)\Big|\,\lesssim\,\langle z\rangle^{-d-1}.
\end{equation*}
Inserting this into~\eqref{e.optima10-1} together with~\eqref{eq:high-intens-correl} again, we get
\begin{equation}\label{e.optima10-2}
\bigg|E:\Bb^2E-(2\Bh^1E)_{lj}(2\Bh^1E)_{ki}\Big(\pv\int_{\R^d}\nabla_{ij}^2G_{kl}(z)\,h_2(0,z)\,dz\Big)\bigg|\,
\lesssim\,\lambda_2(\Pc),
\end{equation}
where the notation $\pv$ stands for the principal value.
%
It remains to analyze the integral term in the left-hand side.
As $h_2$ satisfies the Dini condition~\eqref{eq:decay-h2-om}, this integral is absolutely summable.
Further assuming that the point process $\Pc$ is statistically isotropic, the correlation function $h_2(0,\cdot)$ is radial. By symmetry, this entails $\pv\int_{\R^d}\nabla_{ij}^2G_{kl}(z)\,h_2(0,z)\,dz=0$, and the conclusion~(i) follows.

\medskip
\step2 Proof of~(ii).\\
In view of~\eqref{e.optima10-2}, as $2\Bh^1E$ does not vanish, it suffices to construct a point process~$\Pc$ that satisfies Assumptions~\ref{H0} and~\ref{Gunif}, has decay of correlations~\eqref{eq:decay-h2-om} with algebraic rate~$\omega(t)\le Ct^{-\beta}$ for some $C,\beta>0$,
and satisfies the local independence condition $\lambda_2(\Pc)\simeq\lambda(\Pc)^2\ll1$, such that the integral term in the left-hand side of~\eqref{e.optima10-2} satisfies
\begin{equation}\label{eq:to-prove-constrh2}
\bigg|\pv\int_{\R^d}\nabla^2G(z)\,h_{2}(0,z)\,dz\bigg|\,\gtrsim\,\lambda(\Pc)^2|\!\log\lambda(\Pc)|,
\end{equation}
with the logarithmic factor.
We shall consider spherical particles, $I^\circ=B$, and we start with the construction of the correlation function $h_2$.
%

\medskip\noindent
For that purpose, first note that we can find a smooth bounded function $g:\mathbb S^{d-1}\to[0,1]$ such that
\begin{equation}\label{eq:choice-g}
\bigg|\int_{\partial B}\nabla^2G(e)\,g(e)\,d\theta(e)\bigg|\,\gtrsim\,1,
\end{equation}
where $d\theta$ stands for the Lebesgue measure on $\partial B$,
and we then define
\[h(z)\,:=\,\frac{g(\tfrac{z}{|z|})}{1+\lambda^2|z|^{2d+1}}.\]
Using~\eqref{eq:choice-g}, a computation in spherical coordinates yields
\begin{eqnarray*}
\lefteqn{\bigg|\int_{|z|>2(1+\rho)}\nabla^2G(z)\,h(z)\,dz\bigg|}\\
&=&\bigg(\int_{2(1+\rho)}^\infty r^{-1}(1+\lambda^2r^{2d+1})^{-1}dr\bigg)\bigg|\int_{\partial B}\nabla^2G(e)\,g(e)\,d\theta(e)\bigg|\\
&\gtrsim&|\!\log\lambda|,
\end{eqnarray*}
which proves~\eqref{eq:to-prove-constrh2} if the point process is chosen with intensity $\lambda(\Pc)=\lambda$ and with two-point correlation function $h_2$ given by
\[h_2(x,y)+\lambda^2\,:=\,\lambda^2(h(x-y)+1)\mathds1_{|x-y|>2(1+\rho)}.\]
In particular, this choice also yields
\begin{equation*}
h_2(0,z)\mathds1_{|z|>2(1+\rho)}\ge0,\qquad\sup_z\int_{Q(z)}|h_2(0,\cdot)|\,\lesssim\,\lambda^2,\qquad |h_2(0,z)|\,\lesssim\,\langle z\rangle^{-2d-1}.
\end{equation*}
It remains to prove that this choice of $h_2$ can be realized as the correlation function of a point process with intensity $\lambda(\Pc)=\lambda$ and satisfying~\ref{H0} and~\ref{Gunif}: this is precisely the subject of Proposition~\ref{lem:real-pointproc} below.
\end{proof}

The construction of a point process with given intensity and given two-point density function is easily done under suitable positivity conditions, e.g.\@ following~\cite{KLS-11}. In the present setting, more care is needed to further ensure stationarity and ergodicity of the constructed point process. Note that we use here a sufficient positivity condition that is much stronger than the one in~\cite{KLS-11}, but is easier to handle and suffices for our purposes.

\begin{prop}[Realizability of point processes]\label{lem:real-pointproc}
Let $\lambda,\rho>0$ and let $h\in\Ld^\infty(\R^d)$ be nonnegative with $h(x)\to0$ uniformly as \mbox{$|x|\uparrow\infty$}.
Then, there exists a strongly mixing stationary point process $\Pc=\{x_n\}_n$ on~$\R^d$ with intensity $\lambda$ and two-point density
\begin{equation}\label{eq:defin-f2-choice-exist}
f_2(x,y):=\lambda^2(h(x-y)+1)\mathds1_{|x-y|>2(1+\rho)},
\end{equation}
such that $|x_n-x_m|\ge2(1+\rho)$ almost surely for all~$n\ne m$.
\end{prop}

\begin{proof}
Let $\calM_\rho$ denote the set of locally finite point sets $\{z_n\}_n$ with $|z_n-z_m|\ge2(1+\rho)$ for all~\mbox{$n\ne m$}. It is easily checked that~$\calM_\rho$ is compact for the topology of convergence of point sets restricted to compact domains (this coincides with the vague topology when viewing point sets $\{z_n\}_n$ as measures $\sum_n\delta_{z_n}$).
Consider the space $V:=C(\calM_\rho)$, and denote by~$V_0$ the dense vector subset of polynomials with continuous coefficients on $\calM_\rho$, that is, the subset of functions $P^N:\calM_\rho\to\R$ of the form
\begin{equation}\label{eq:PNpol}
P^N(\{z_n\}_n)\,=\,P_0^N+\sum_{k=1}^N\sum_{n_1,\ldots,n_k}^{\ne}P_k^N(z_{n_1},\ldots,z_{n_k}),
\end{equation}
with $P_0^N\in\R$ and $P_k^N\in C_c(S_\rho^k)$ for $1\le k\le N$,
where we use the short-hand notation
\[S_\rho^k\,:=\,\big\{(z_1,\ldots,z_k)\in(\R^d)^k:|z_n-z_m|\ge2(1+\rho)\text{ for all $n\ne m$}\big\}.\]
In order to construct a point process with the two-point density $f_2$ given by~\eqref{eq:defin-f2-choice-exist}, we shall further prescribe all its multi-point density functions. For convenience, these are chosen in form of Mayer cluster expansions with vanishing higher-order correlations: for all $k\ge1$,
\begin{multline}\label{eq:def-fk}
f_k(z_1,\ldots,z_k)\,:=\,\lambda^k\,\mathds1_{S_\rho^k}(z_1,\ldots,z_k)\\
\times\bigg(1+\sum_{j=1}^{k/2}\frac1{2^jj!}\sum_{1\le n_1,\ldots,n_{2j}\le k}^{\ne}h(z_{n_1}-z_{n_2})\ldots h(z_{n_{2\ell-1}}-z_{n_{2j}})\bigg).
\end{multline}
Next, we define in these terms a linear map $L:V_0\to\R$ as follows: for any polynomial $P^N$ of the form~\eqref{eq:PNpol}, we set
\begin{equation}\label{eq:form-gP}
L(P^N)\,:=\,P_0^N+\sum_{k=1}^N\int_{S_\rho^k}P_k^Nf_k.
\end{equation}
We argue that $L$ is a positive linear functional on $V_0$, hence it is also continuous on $V_0$ with respect to the topology of $V$.
Indeed, for any polynomial $P^N$ of the form~\eqref{eq:PNpol} with~$P^N\ge0$ pointwise, if we evaluate it at the points of a Poisson point process with intensity $\lambda$, and if we compute the expectation, we find
\[P_0^N+\sum_{k=1}^N\lambda^k\int_{S_\rho^k}P_k^N\,\ge\,0,\]
hence, noting that the positivity of $h$ entails $f_k\ge\lambda^k\mathds1_{S_\rho^k}$ for all~$k\ge1$, we get
\[L(P^N)\,\ge\,P_0^N+\sum_{k=1}^N\lambda^k\int_{S_\rho^k}P_k^N\,\ge\,0,\]
thus proving the claimed positivity.

\medskip\noindent
As $V_0$ is dense in $V$, we can extend $L$ uniquely into a positive linear functional \mbox{$L:V\to\R$}.
Next, by the Riesz--Markov--Kakutani representation theorem, there exists a random element in~$\calM_\rho$, that is, a random point process $\Pc=\{x_n\}_n$, such that
\[\expec{P(\Pc)}\,=\,L(P)\qquad\text{for all $P\in V$}.\]
Testing this relation with polynomials, and using~\eqref{eq:form-gP}, we deduce that for all $k\ge1$ the $k$-point density function of the point process~$\Pc$ coincides with~$f_k$. In particular, it has intensity $f_1=\lambda$ and two-point density $f_2(x,y)=\lambda^2(h(x-y)+1)\mathds1_{|x-y|>2(1+\rho)}$ as desired.
In addition, $L$ is translation-invariant by definition, hence $\Pc$ is stationary.

\medskip\noindent
It remains to check that $\Pc$ is strongly mixing.
For that purpose, we compute the covariance of $\sigma(\Pc)$-measurable random variables.
Choose a polynomial $P^N$ of the form~\eqref{eq:PNpol}, and let $R>0$ be such that $P_k^N$ is supported in $(B_R)^k$ for all $1\le k\le N$.
For $|x|>2R$, as we have $B_R\cap(x+B_R)=\varnothing$, we can compute
\begin{multline*}
\cov{P^N(\Pc+x)}{P^N(\Pc)}\\
\,=\,(P_0^N)^2
+\sum_{k\ge1}\sum_{j=0}^k\int_{S_\rho^k}\big(P_j^N(\cdot+x,\ldots,\cdot+x)\otimes P_{k-j}^N\big)\, \big(f_k-f_j\otimes f_{k-j}\big).
\end{multline*}
The definition~\eqref{eq:def-fk} of $f_k$ easily leads to
\begin{multline*}
\bigg|\int_{S_\rho^k}\big(P_j^N(\cdot+x,\ldots,\cdot+x)\otimes P_{k-j}^N\big)\, \big(f_k-f_j\otimes f_{k-j}\big)\bigg|\\
\,\lesssim\,\Big(\sup_{z\in B_{2R}}h(x+z)\Big)(2\lambda)^k\big(1+\|h\|_{\Ld^\infty(\R^d)}\big)^{\frac k2-1}\|P_j^N\|_{\Ld^1((\R^d)^j)}\|P_{k-j}^N\|_{\Ld^1((\R^d)^{k-j})}.
\end{multline*}
As by assumption $h(x)\to0$ uniformly as $|x|\uparrow\infty$, we get $\cov{P^N(\Pc+x)}{P^N(\Pc)}\to0$. By a density argument, the same holds if $P^N$ is replaced by any element of $V$, which proves that $\Pc$ is strongly mixing.
\end{proof}


\newpage
\section{Conclusion: summing up the main results}\label{sec:gen-dil}

In this last section, we recall, reformulate, and comment on our main findings on the validity of Einstein's formula and of higher-order cluster expansions for the effective viscosity, as obtained in Sections~\ref{sec:Einsteinx3},~\ref{sec:cluster}, and~\ref{sec:intermezzo}.

\subsection{Cluster expansion of the effective viscosity in the model-free setting}\label{sec:cluster-recap}
We start with the validity of Einstein's formula and the associated error estimates as proved in Section~\ref{sec:Einsteinx3}, cf.~Theorem~\ref{theor:Einsteinx3}.
The three important features of this result are the generality in terms of probabilistic assumptions (mere qualitative ergodicity 
under Assumption~\ref{Gunif}), the sharpness of the error estimate~\eqref{e.def-bd-Einstein-recap}, and the possibility for particles to touch (under Assumption~\ref{Gmom} or~\ref{Gperc}).
\begin{theor1}[Einstein's formula]\label{theor:Einsteinx3-recap}
Under Assumption~\ref{H0} and either Assumption~\ref{Gunif}, \ref{Gmom}, or~\ref{Gperc}, for some $\rho>0$ and $\kappa>1$,
we have
\begin{align}\label{e.def-bd-Einstein-recap}
&|\Bb-(\Id+\Bb^1)|\,\lesssim_{\rho}\,\lambda_2(\Pc) \log\big(2+\tfrac{\lambda(\Pc)}{\lambda_2(\Pc)}\big)\\
&\hspace{4cm}+\left\{
\begin{array}{lll}
0&:&\text{in case of~\ref{Gunif}},\\
\Kc_\kappa\, \lambda_2(\Pc)^{1-\frac1\kappa}\lambda(\Pc)^{\frac1\kappa}&:& \text{in case of~\ref{Gmom} or~\ref{Gperc}},
\end{array}
\right.\nonumber
\end{align}
where $\Bb^1$ satisfies
\[|\Bb^1|\simeq \lambda(\Pc),\]
and is defined for all $E\in\Md_0^\Sym$ by
\begin{equation}\label{eq:def-B1-recap}
E:\Bb^1E\,:=\,\sum_n\expecM{\frac{\mathds1_{0\in I_n}}{|I_n|}\int_{\R^d}|\!\D(\psi_E^{\{n\}})|^2},
\end{equation}
where $\psi_E^{\{n\}}$
is the unique decaying solution of the single-particle problem~\eqref{eq:singlepart}.
In particular, the estimate $|\Bb-(\Id+\Bb^1)|=o(\lambda(\Pc))$ holds provided the point process $\Pc$ satisfies the weak local independence condition $\lambda_2(\Pc)=o({\lambda(\Pc)}/{|\!\log\lambda(\Pc)|})$.
\end{theor1}

In order to address the optimality of this estimate, one needs to identify the next term in the expansion.
In Sections~\ref{sec:cluster} and~\ref{sec:intermezzo}, we have further investigated higher-order expansions of the effective viscosity in form of cluster expansions.
The upcoming result, which summarizes Theorems~\ref{prop:quant} and~\ref{th:renorm-B23} in Section~\ref{sec:intermezzo}, gives the optimal order of magnitude of the cluster coefficients and of the remainder. The two important features of this result are the generality of the point processes (to be compared with results in Section~\ref{sec:Dilat/Delet} below) and the sharpness of the estimates. The main achievement is the explicit understanding of the needed renormalizations to all orders,  solving a problem that was still open in the physics community.

\begin{theor1}[Cluster expansion in general dilute setting]\label{th:mod-free1}
On top of Assumptions~\ref{H0} and~\ref{Gunif},
assume that the inclusion process is $\alpha$-mixing in the sense of~\emph{\ref{Mix}} with algebraic rate $\omega$.
Then, for all $k\ge1$, the following holds for the effective viscosity,
\begingroup\allowdisplaybreaks
\begin{gather*}
\bigg|\Bb\,-\,\Id-\sum_{j=1}^k \frac{1}{j!}\Bb^{j}\bigg|\,\lesssim_k\,\sum_{l=k+1}^{2k+1}\lambda_l(\Pc)|\!\log\lambda(\Pc)|^{l-1},\\
|\Bb^j|\,\lesssim_j\,\lambda_j(\Pc)|\!\log\lambda(\Pc)|^{j-1},\quad\text{for all $1\le j\le k$},
\end{gather*}
\endgroup
where the cluster coefficients $\{\Bb^j\}_j$ are defined in~\eqref{eq:form-Bj} by means of finite-volume approximations. If in addition the independence assumption~\emph{\ref{B1}} holds for particle shapes, renormalized formulas can be given for cluster coefficients in form of absolutely convergent multiple integrals, cf.~Section~\ref{sec:explicit}, and the following quantitative convergence result holds for finite-volume approximations $\{\Bb^j_L\}_j$: in case of an algebraic $\alpha$-mixing rate $\omega(t)\le Ct^{-\beta}$ for some $C,\beta>0$,
\[|\Bb_L^j-\Bb^j|\,\lesssim_j\,\tfrac{(\log L)^{j-1}}{L^{\beta\wedge1}}.\qedhere\]
\end{theor1}

Note that the bound on $\Bb_2$ in Theorem~\ref{th:mod-free1} coincides with the estimate on the remainder in Theorem~\ref{theor:Einsteinx3-recap}, which contrasts with the results of Lemma~\ref{lem:short} in the short-range setting by a logarithmic correction.
Optimality of the latter is addressed in Theorem~\ref{lem:optimality}, which we presently recall.

\begin{theor1}[Optimality of estimates on $\Bb_2$]\label{lem:optimality-recap}$  $
\begin{enumerate}[(i)]
\item \emph{Isotropic setting:}
On top of Assumptions~\ref{H0}, \ref{Gunif}, and~\emph{\ref{B1}}, assume that the $2$-point correlation function $h_2(x,y):=f_2(x,y)-\lambda(\Pc)^2$ satisfies the following decay assumption,
\begin{equation*}
\quad\iint_{B(x)\times B(y)}|h_2|\,\le\,\omega(|x-y|),
\end{equation*}
with some rate $\omega$ satisfying the Dini condition $\int_1^\infty t^{-1}\omega(t)\,dt<\infty$.
If in addition the point process~$\Pc$ is statistically isotropic, which entails that the correlation function is radial, then the following improved estimate holds,
\[\quad|\Bb^2|\,\lesssim\, \lambda_2(\Pc).\]
\item \emph{Optimality in the general setting:} There exists an inclusion process $\Ic$ that satisfies Assumptions~\ref{H0}, \ref{Gunif}, \emph{\ref{B1}}, and~\eqref{eq:decay-h2-om}, as well as the local independence condition $\lambda_2(\Pc)\simeq\lambda(\Pc)^2\ll1$, such that we have
\[\quad|\Bb^2|\,\simeq\, \lambda_2(\Pc) |\!\log \lambda_2(\Pc)|.\qedhere\]
\end{enumerate}
\end{theor1}

Based on the explicit renormalization of higher-order cluster coefficients, it appears that Theorem~\ref{lem:optimality-recap}(ii) readily extends to higher orders, demonstrating the optimality of cluster estimates in Theorem~\ref{th:mod-free1}.

\medskip

To conclude this section, let us apply and confront Theorems~\ref{theor:Einsteinx3-recap},~\ref{th:mod-free1}, and~\ref{lem:optimality-recap} to some specific families of inclusion processes displaying multi-point intensities with different scaling laws. 
We start with the construction.

\smallskip\noindent
$\bullet$ \textit{Construction of inclusion processes $\{\Ic_{\beta,\lambda}\}_{\beta,\lambda}$:}
We define a family of point processes $\{\Pc_{\beta,\lambda}\}_{\beta,\lambda}$ with parameters $0\le \beta \le 1$ and $0<\lambda\ll1$ as follows.
Consider a hardcore Poisson process $\Pc'=\{x_n'\}_n$ with radius $6$ and with intensity $\lambda(\Pc')=\lambda$, see e.g.~\cite[Section~3.4]{DG2} using Penrose's graphical construction~\cite{Penrose-01}.
Next, independently choose a sequence $\{y_n\}_n$ of iid random points that are uniformly distributed in $B_4\setminus B_3$, and, given $\beta\in[0,1]$, also independently choose a sequence $\{b_{n,\beta}\}_n$ of iid Bernoulli variables with parameter $\lambda^\beta=\pr{b_{n,\beta}=1}$. 
The desired point processes and spherical inclusion processes are then defined by
\[\Pc_{\beta,\lambda}\,:=\,\Pc'\,\cup\,\big\{x_n'+y_n:b_{n,\beta}=1\big\},\qquad \Ic_{\beta,\lambda}:=\textstyle\bigcup_{x\in\Pc_{\beta,\lambda}}B(x).\]

\noindent
$\bullet$ \textit{Properties of the processes:}
$\Ic_{\beta,\lambda}$ satisfies~\ref{H0} and~\ref{Gunif} (with \mbox{$\rho=1$}) as well as~\ref{B1}.
In addition, the point process $\Pc_{\beta,\lambda}$ is statistically isotropic and $\alpha$-mixing with exponential rate uniformly with respect to $\beta,\lambda$ (e.g.~\cite[Proposition~1.4 (iii)]{DG1} and~\cite[Proposition~3.5]{DG2}), so that Theorem~\ref{th:mod-free1} applies.
A direct computation shows that the multi-point intensities scale as follows,
\begin{equation*}
\lambda(\Pc_{\beta,\lambda})\,\simeq\,\lambda ,
\qquad
\lambda_2(\Pc_{\beta,\lambda})\,\simeq \,\lambda^{1+\beta},
\qquad
\lambda_3(\Pc_{\beta,\lambda}) \, \simeq \,\lambda^{2+\beta},
\end{equation*}
and more generally $\lambda_{2k}(\Pc_{\beta,\lambda})\simeq_k\lambda^{k(1+\beta)}$ and $\lambda_{2k+1}(\Pc_{\beta,\lambda})\simeq_k\lambda^{1+k(1+\beta)}$.
In particular  the minimal local independence condition $\lambda_3(\Pc_{\beta,\lambda})\ll\lambda_2(\Pc_{\beta,\lambda})\ll\lambda(\Pc_{\beta,\lambda})$ holds for $\beta>0$.

\medskip\noindent
$\bullet$ \textit{Second-order cluster expansion:}
We denote by $\Bb_{\beta,\lambda}$ the effective viscosity associated with~$\Ic_{\beta,\lambda}$.
Theorem~\ref{th:mod-free1}  implies that 
\[\big|\Bb_{\beta,\lambda}-\big(\Id+\Bb^{1}_{\beta,\lambda}+\tfrac12\Bb^{2}_{\beta,\lambda}\big)\big| \,\lesssim\, \lambda^{2+\beta}|\!\log \lambda|^2,\]
where $|\Bb^1_{\beta,\lambda}|\simeq \lambda$ and $|\Bb^2_{\beta,\lambda}| \simeq  \lambda^{1+\beta}$ (cf.~\eqref{e.Einstein0} and Theorem~\ref{lem:optimality-recap}(i)).
In particular, discarding~$\Bb^{2}_{\beta,\lambda}$ in the above yields
the following (completely new) sharp error estimate for Einstein's formula in this setting: for all~$0\le \beta \le 1$ and~$\lambda\ll1$,
\[
| \Bb_{\beta,\lambda}-(\Id+\Bb^{1}_{\beta,\lambda})|\,\simeq\,\lambda^{1+\beta}\,\simeq\,|\Bb^1_{\beta,\lambda}|^{1+\beta}.
\]
In this example, Einstein's formula is thus accurate whenever $\beta>0$, which illustrates the full range of the local independence condition $\lambda_2(\Pc_{\beta,\lambda})=o({\lambda(\Pc_{\beta,\lambda})}/{|\!\log\lambda(\Pc_{\beta,\lambda})|})$ in
Theorem~\ref{theor:Einsteinx3-recap}.

\subsection{Summability of the cluster expansions for specific dilution procedures}\label{sec:Dilat/Delet}
Next, we consider the following two specific one-parameter dilution procedures, for which our results can be substantially strengthened using the uniform $\ell^1-\ell^2$ energy estimates of Theorem~\ref{prop:apest-re}: more precisely, logarithmic corrections in cluster estimates can be removed in that case and the full cluster expansion is absolutely converging.
\begin{enumerate}[\hspace{1cm}(1)]
\renewcommand{\labelenumi}{\theenumi}
\renewcommand{\theenumi}{\textbf{(Dilat)}}
\item\label{Ddilat} \emph{Dilution by geometric dilation:}
Given a point process $\Pc=\{x_n\}_n$ and random inclusions $I_n=x_n+I_n^\circ$ satisfying~\ref{H0},
we consider the dilated process~\mbox{$\Pc_s=\{s x_n\}_n$} and the corresponding inclusions $I_{n,s}=s x_n+I_n^\circ$. The latter has minimal distance $\ell(\Pc_s)=s\ell(\Pc)\simeq s$, still satisfies~\ref{H0}, and further satisfies~\ref{Gunif} with minimal interparticle distance
\begin{equation}\label{eq:ell-separation}
\qquad\qquad\inf_{n\ne m}\dist(I_{n,s},I_{m,s})\,\ge\,\inf_{n\ne m}|sx_n-sx_m|-2\,\ge\,s\ell(\Pc)-2\,\gtrsim\,s,
\end{equation}
provided $s\gg1$.
Its multi-point intensities take the form
\[\qquad\qquad\lambda_j(\Pc_s)=s^{-jd}\lambda_j(\Pc)\quad\text{for all $j\ge1$}.\]

\renewcommand{\labelenumi}{\theenumi}
\renewcommand{\theenumi}{\textbf{(Delet)}}
\item\label{Ddelet} \emph{Dilution by random deletion:}
Given a point process $\Pc=\{x_n\}_n$ and random inclusions $I_n=x_n+I_n^\circ$
satisfying~\ref{H0} and~\ref{Gunif}, the Bernoulli deletion scheme consists in keeping each inclusion only with given probability~$p\in[0,1]$.
More precisely, we attach to the inclusions iid Bernoulli variables~$\{b^{(p)}_n\}_n$, independent of $\Pc,\Ic$, with parameter
\[\qquad\qquad p=\prm{b_n^{(p)}=1},\]
and we define the corresponding decimated process
\[\qquad\qquad\Pc^{(p)}:=\{x_n\}_{n\in \Nb^{(p)}},\quad \Ic^{(p)}:=\textstyle\bigcup_{n\in \Nb^{(p)}}I_n,\quad\text{where $\Nb^{(p)}:=\{n:b_n^{(p)}=1\}$}.\]
This decimated process still satisfies~\ref{H0} and~\ref{Gunif},
and its multi-point intensities are given by
\[\qquad\qquad\lambda_j(\Pc^{(p)})=p^{j}\lambda_j(\Pc)\quad\text{for all $j\ge1$}.\]
\end{enumerate}
In these one-parameter settings, dilute expansions of the effective viscosity amount to expansions with respect to the dilution parameters $s^{-1}$ or $p$.
Given a random set of particles $\Ic=\bigcup_nI_n$ centered at the points of $\Pc=\{x_n\}_n$, we shall consider both dilution procedures at once, defining the dilated decimated process
\[\Pc_s^{(p)}:=\{x_{n,s}\}_{n\in N^{(p)}},\qquad\textstyle\Ic_s^{(p)}:=\bigcup_{n\in N^{(p)}}I_{n,s},\qquad x_{n,s}:=s x_n, \qquad I_{n,s}:=x_{n,s}+I_n^\circ.\]
As a consequence of Theorem~\ref{thm:bounds}, together with~\eqref{eq:BLpj-porder}--\eqref{eq:bnd-RLpk} in Section~\ref{sec:proof-eq:form-Bj}, we obtain the following summability result and estimates for the cluster expansion of the effective viscosity $\Bb_s^{(p)}$ associated with $\Ic_s^{(p)}$.
In particular, it shows that the scaling of cluster coefficients coincides in this case with that of Lemma~\ref{lem:short} for the short-range setting: indeed, we have $|\Bb^{(p),j}_s|=p^j|\Bb_s^j|\lesssim_j(ps^{-d})^j\simeq\lambda_j (\Pc^{(p)}_s)$. We emphasize that no mixing assumption is required here.

\begin{theor1}[Cluster expansion for one-parameter dilution procedures]\label{th:analytic}
Under Assumptions~\ref{H0} and~\ref{Gunif}, for the specific dilution models~\emph{\ref{Ddilat}} and~\emph{\ref{Ddelet}} above, with dilation parameter $s$ and Bernoulli parameter $p$, the cluster expansion of the effective viscosity is uniformly summable in the following sense:
there exists a constant $C$ (only depending on~$d,\rho$) such that for all $0\le p s^{-d}<\frac1{C}$ the effective viscosity satisfies
\begin{equation}\label{eq:summable-exp0}
\Bb_{s}^{(p)}\,=\,\Id+\sum_{j=1}^\infty \tfrac{p^j}{j!}\Bb_{s}^{j},\qquad\qquad
|\Bb_{s}^j|\,\le\,j!\,(Cs^{-d})^j\quad\text{for all $j\ge1$},
\end{equation}
where the cluster coefficients $\{\Bb_{s}^j\}_j$ are defined in~\eqref{eq:form-Bj} by means of finite-volume approximations.
\end{theor1}

\begin{rems}[Analyticity with respect to dilution parameters]$ $\label{rem:dilat/delet-anal}
\begin{enumerate}[(a)]
\item
In case of the random deletion model~\ref{Ddelet}, the expansion~\eqref{eq:summable-exp0} yields the local analyticity of $p\mapsto\Bb^{(p)}$ at $p=0$.
Local analyticity can, in fact, be established on the whole interval $0\le p\le1$; the reader is referred to~\cite{DG-16a} for a similar result in the scalar setting using an observation by Mourrat \cite{Mourrat-13}.
\smallskip\item
In case of the dilation model~\ref{Ddilat}, the expansion~\eqref{eq:summable-exp0} does not yield the analyticity of the map $s^{-d}\mapsto\Bb_s$ since the rescaled coefficients $\{s^{dj}\Bb_s^j\}_j$ also depend on $s$. 
By means of multipole expansions, the maps $s^{-1}\mapsto s^{dj}\Bb_s^j$ can be checked to be analytic themselves, as well as $s^{-1}\mapsto\Bb_s$. For a more direct approach to expansions in $s^{-1}$, we refer to the recent work~\cite{Pertinand} in the scalar setting; see also~\cite{Berdichevski-83}.
\qedhere
\end{enumerate}
\end{rems}

To illustrate Remark~\ref{rem:dilat/delet-anal}(b), we display the first term of the monopole expansion for the second-order coefficient $\Bb_s^2$.
In particular, as is natural, we note that $\Bb^2_s$ can be expressed to leading order in terms of the single-particle problem only, and it coincides with the formula obtained in~\cite[Proposition~5.6]{GVH} in case of spherical inclusions.

\begin{prop1}[Leading order of monopole expansion]\label{cor:B2ell-re}
On top of Assumptions~\ref{H0} and \ref{Gunif}, assume that particles have independent shapes, cf.~\emph{\ref{B1}}, and that
the two-point correlation function $h_{2}=f_{2}-\lambda(\Pc)^2$ satisfies the decay assumption
\begin{equation*}
\iint_{B(x)\times B(y)}|h_2|\,\le\,\omega(|x-y|),
\end{equation*}
with some rate $\omega$ satisfying the Dini condition $\int_1^\infty t^{-1}\omega(t)\,dt<\infty$.
Consider the dilated process $\Pc_s$, cf.~\emph{\ref{Ddilat}}, and the associated second-order cluster coefficient $\Bb^2_s$ defined in Theorem~\ref{th:analytic}.
Then, we have
\begin{equation*}
\big|\Bb^2_s-s^{-2d}\Bb^{2,1}\big|\,\lesssim\,s^{-2d-1},
\end{equation*}
and the leading-order contribution $\Bb^{2,1}$ is given by the following reduced formula,
\[E:\Bb^{2,1} E\,=\,(2\Bh^1E):\Big(\pv\int_{\R^d}\mathcal G(z)\,h_{2}(0,z)\,dz\Big)\,(2\Bh^1E),\]
where $\Bh^1E$ is defined in~\eqref{e.Einstein0},
where the notation $\pv$ stands for the principal value,
and where the $4$-tensor field $\Gc$ is given by $M:\Gc(z)M=M_{jk}M_{lm}\nabla^2_{km}G_{jl}(z)$
in terms of the standard Stokeslet
\[G(z)\,=\,\frac{|z|^{2-d}}{2(d-2)|\partial B|}\Big(\Id+(d-2)\frac{z\otimes z}{|z|^2}\Big).\]
In case of spherical particles, $I_n=B(x_n)$, we thus have
\begin{equation*}
E:\Bb^{2,1} E\,=\,(d+2)^2|B|~\pv\int_{\R^d}\bigg(\frac{d+2}2\frac{(z\cdot Ez)^2}{|z|^{d+4}}-\frac{|Ez|^2}{|z|^{d+2}}\bigg)\,h_{2}(0,z)\,dz.
\qedhere
\end{equation*}
\end{prop1}

\begin{proof}
Starting from the renormalized formula~\eqref{eq:lim-form-B2} in Proposition~\ref{prop:B2-ren}, repeating the proof of~\eqref{e.optima10-2} to decompose the first contribution, and using~\eqref{eq:est-psi-delsig-2d} to estimate the second one, we are led to
\begin{multline*}
\bigg|E:\Bb^2_sE-(2\Bh^1E)_{lj}(2\Bh^1E)_{ki}\Big(\pv\int_{\R^d}\nabla^2_{ij}G_{kl}(z)\,h_2(0,z)\,dz\Big)\bigg|\\
\,\lesssim\,\int_{\R^d}\langle z\rangle^{-d-1}|h_{2,s}(0,z)|\,dz+\int_{\R^d}\langle z\rangle^{-2d}f_{2,s}(0,z)\,dz.
\end{multline*}
Using that the two-point density and the correlation for the dilated process $\Pc_s$ take the form
\[f_{2,s}(0,z)=s^{-2d}f_2(0,s^{-1}z),\qquad h_{2,s}(0,z)=s^{-2d}h_2(0,s^{-1}z),\]
and changing variables, the conclusion follows by scaling. In case of spherical particles, we appeal to the proof of Proposition~\ref{prop:B1} for the explicit computation of $\Bh^1E$.
\end{proof}

Finally, we revisit a recent result by G\'erard-Varet~\cite{GV-20} that displays to second order similar estimates as for the random deletion procedure, cf.~\eqref{eq:summable-exp0}, but only assuming some specific structure of the multi-point densities up to order 5, thus contrasting with Theorem~\ref{th:analytic}.
As a corollary of Proposition~\ref{prop:Bgen-ren}, we establish the following result, which
constitutes an extension
of~\cite{GV-20} to higher orders with new, optimal error bounds. Note indeed that for $k=2$ the result~\eqref{e.DGV-revisited} below yields an error bound $O(p^3)$, which improves on the bound $O(p^\frac52)$ obtained in~\cite{GV-20}.
\begin{cor1}\label{cor:DVG}
Let $\Pc$ satisfy Assumptions~\ref{H0} and~\ref{Gunif}, and let $\Ic$ satisfy the independence assumption~\emph{\ref{B1}}.
Given $k\ge2$, assume that there exists $0<p\le 1$ such that the multi-point density functions of $\Pc$ 
can be written as $f_j = p^j f_j^\circ$ for all $1\le j \le 2k+1$, for some functions $(f_j^\circ)_{1\le j\le 2k+1}$.
Further define functions $(h_j^\circ)_{1\le j\le 2k+1}$ through the correlation/density relation~\eqref{e.inver-cor-func} starting from $(f_j^\circ)_{1\le j\le 2k+1}$ and assume that they satisfy~\emph{\ref{Mix-om-n}} to order $n=2k+1$ with algebraic rate $\omega$.
Then, we have
\begin{equation}\label{e.DGV-revisited}
\Big|\Bb-\Id-\sum_{j=1}^k \tfrac1{j!} \Bb^j\Big|\,\lesssim_k \, p^{k+1}, \qquad |\Bb^{j}|\lesssim_j p^j~~\text{for all $1\le j\le k$}.
\end{equation}
where the multiplicative constants are independent of $p$.
\end{cor1}
\begin{proof}
The assumption $f_j=p^jf^\circ_j$ entails $h_j=p^jh^\circ_j$, where $h^\circ_j$ is assumed to satisfy~{\ref{Mix-om-n}}.
Further writing $(f_j^\circ)_{j}$ in terms of $(h_j^\circ)_{j}$ by means of~\eqref{eq:dens-correl},
the assumption~{\ref{Mix-om-n}} for the latter yields
\[\lambda_j^\circ\,:=\,\sup_{z_1,\ldots,z_j}\fint_{Q(z_1)\times\ldots\times Q(z_j)} f_j^\circ\,\lesssim_j\,1,\]
where the bound only depends on $j,\omega$, and on the constant function $f_1^\circ$.
In this setting, the bounds of Proposition~\ref{prop:Bgen-ren}(i)--(iii) now take the form 
\begin{eqnarray*}
|\Bb^j| &\lesssim& p^j \bar \lambda_j^\circ |\log \lambda^\circ|^{j-1},\\
|R^{k+1}| &\lesssim&\sum_{j=k}^{2k+1} p^{j+1} \bar \lambda_{j+1}^\circ |\log \lambda^\circ|^{j} \lesssim p^{k+1},
\end{eqnarray*}
and the conclusion readily follows.
\end{proof}

\newpage\appendix
\section{Stokeslet estimates with rigid inclusions}\label{append:ell}

This appendix is dedicated to the proofs of several estimates on the behavior of the fluid velocity generated by a localized force dipole in the presence of a finite number of rigid inclusions. In other words, it concerns the Stokeslet for the Stokes problem with rigid inclusions, and we shall prove in particular Lemmas~\ref{lem:decay}, \ref{lem:decay-re}, and~\ref{lem:period-est}.

\subsection{Main results}
For convenience, we start by recalling the notation of Section~\ref{sec:Stokeslet-est-rig}.
Given a set $Y\subset Q_L$ of ``background'' positions with
\begin{equation}\label{*eq:dist-Y}
\dist(B(y),B(y'))\,>\,2\rho,\qquad\dist(B(y),\partial Q_L)\,>\,\rho,\qquad\text{for all $y,y'\in Y,~ y\ne y'$},
\end{equation}
we denote by $\psi_L^Y\in H^1_\per(Q_L)^d$ the solution of the following periodic corrector problem, using the short-hand notation $\sigma_L^Y:=\sigma(\psi_L^Y+Ex,\Sigma_L^Y)$,
\[\left\{\begin{array}{ll}
-\triangle \psi_L^Y+\nabla\Sigma_L^Y=0,&\text{in $ Q_L\setminus\cup_{y\in Y}B(y)$},\\
\Div(\psi_L^Y)=0,&\text{in $ Q_L\setminus\cup_{y\in Y}B(y)$},\\
\D(\psi_L^Y+Ex)=0,&\text{in $\cup_{y\in Y}B(y)$},\\
\int_{\partial B(y)}\sigma_L^Y\nu=0,&\forall y\in Y,\\
\int_{\partial B(y)}\Theta(x-y)\cdot\sigma_L^Y\nu=0,&\forall \Theta\in\Md^\Skew,~\forall y\in Y.
\end{array}\right.\]
Next, we turn to our notation for elementary single-particle contributions or so-called Stokeslets $\{\Jc_{L;Y}^z\}_{z,Y}$:
Given a ``tagged'' position~$z\in Q_L$, given $(\zeta,P)\in H^1(B_{1+\rho}(z))^d\times\Ld^2(B_{1+\rho}(z)\setminus B(z))$ satisfying the following Stokes equations in a neighborhood of $B(z)$,
\begin{equation}\label{*eq:zetaP-re}
\left\{\begin{array}{ll}
-\triangle\zeta+\nabla P=0,&\text{in $B_{1+\rho}(z)\setminus B(z)$},\\
\Div(\zeta)=0,&\text{in $B_{1+\rho}(z)\setminus B(z)$},\\
\D(\zeta)=0,&\text{in $B(z)$},\\
\int_{\partial B(z)}\sigma(\zeta,P)\nu=0,&\\
\int_{\partial B(z)}\Theta(x-z)\cdot\sigma(\zeta,P)\nu=0,&\forall\Theta\in\Md^\Skew,
\end{array}\right.
\end{equation}
and given a finite subset $Y\subset Q_L$ of ``background'' positions satisfying~\eqref{*eq:dist-Y},
we define $\Jc_{L;Y}^z\zeta\in H^1_\per(Q_L)^d$ as the solution of the following Stokes problem with force dipole localized around $z$ and rigid inclusions around points of $Y$,
\begin{equation}\label{*eq:def-JLy|z}
\left\{\begin{array}{ll}
-\triangle \Jc^{z}_{L;Y}\zeta+\nabla \Qc^{z}_{L;Y}\zeta=-\delta_{\partial B^L(z)}\sigma(\zeta,P)\nu,&\text{in $ Q_L\setminus \cup_{y\in Y}B(y)$},\\
\Div(\Jc^{z}_{L;Y}\zeta)=0,&\text{in $ Q_L\setminus \cup_{y\in Y}B(y)$},\\
\D(\Jc^{z}_{L;Y}\zeta)=0,&\text{in $\cup_{y\in Y\setminus Y^z}B(y)$},\\
\int_{\partial B(y)}\sigma(\Jc_{L;Y}^z\zeta,\Qc_{L;Y}^z\zeta)\nu=0,&\forall y\in Y\setminus Y_z,\\
\vspace{-0.4cm}&\\
\int_{\partial B(y)}\Theta (x-y)\cdot\sigma(\Jc_{L;Y}^z\zeta,\Qc_{L;Y}^z\zeta)\nu=0,&\forall\Theta\in\Md^\Skew,~\forall y\in Y\setminus Y_z,
\\
\vspace{-0.38cm}&\\
\Jc^{z}_{L;Y}\zeta=V_z+\Theta_z (x-z),&\text{in $\cup_{y\in  Y_z}B(y)$,}\\
&\quad\text{for some $V_z \in \R^d, \Theta_z \in\Md^\Skew$},
\\
\sum_{y\in  Y_z} \int_{\partial B(y)}\sigma(\Jc_{L;Y}^z\zeta,\Qc_{L;Y}^z\zeta)\nu
\\
\hspace{1cm}=\sum_{y\in Y_z}\int_{B(y)\cap\partial B^L(z)}\sigma(\zeta,P)\nu,&
\\
\vspace{-0.38cm}&\\
\sum_{y\in  Y_z} \int_{\partial B(y)}\Theta (x-z)\cdot\sigma(\Jc_{L;Y}^z\zeta,\Qc_{L;Y}^z\zeta)\nu&\\
\hspace{1cm}=\sum_{y\in Y_z}\int_{B(y)\cap\partial B^L(z)}\Theta(x-z)\cdot\sigma(\zeta,P)\nu,&\forall\Theta\in\Md^\Skew,
\end{array}\right.
\end{equation}
where we recall that $B^L(z)=(B(z)+L\Z^d)\cap Q_L$ stands for the periodization of the ball~$B(z)$ in~$Q_L$, where we have set $Y_z:=\{y \in Y:B(y)\cap B^L(z)\ne \varnothing\}$, and where we have implicitly extended $(\zeta,P)$ periodically to $B_{1+\rho}(z)+L\Z^d$.
The solution $\Jc_{L;Y}^z\zeta$ is only defined up to a rigid motion in $Q_L$, which we fix by further choosing
\[\int_{ Q_L}\Jc_{L;Y}^z\zeta~=~0,\qquad\int_{Q_L}\nabla\Jc_{L;Y}^z\zeta~\in~\Md_0^\Sym.\]
Note that $\Jc_{L;Y}^z\zeta$ depends of course on the pair $(\zeta,P)$, not only on $\zeta$, but we leave the pressure field implicit in the notation for convenience.
We refer to Section~\ref{sec:Stokeslet-est-rig} for motivation of the above equations~\eqref{*eq:def-JLy|z}, and we recall that it reduces to the following simpler equations when $\{z\}\cup Y$ satisfies~\eqref{*eq:dist-Y} (meaning that $z$ neither gets close to background positions $Y$  nor to the cell boundary $\partial Q_L$),
\begin{equation}\label{*eq:def-JLy|z-simpl}
\left\{\begin{array}{ll}
-\triangle \Jc^{z}_{L;Y}\zeta+\nabla \Qc^{z}_{L;Y}\zeta=-\delta_{\partial B(z)}\sigma(\zeta,P)\nu,&\text{in $ Q_L\setminus \cup_{y\in Y}B(y)$},\\
\Div(\Jc^{z}_{L;Y}\zeta)=0,&\text{in $ Q_L\setminus \cup_{y\in Y}B(y)$},\\
\D(\Jc^{z}_{L;Y}\zeta)=0,&\text{in $\cup_{y\in Y}B(y)$},\\
\int_{\partial B(y)}\sigma(\Jc_{L;Y}^z\zeta,\Qc_{L;Y}^z\zeta)\nu=0,&\forall y\in Y,\\
\vspace{-0.4cm}&\\
\int_{\partial B(y)}\Theta (x-y)\cdot\sigma(\Jc_{L;Y}^z\zeta,\Qc_{L;Y}^z\zeta)\nu=0,&\forall\Theta\in\Md^\Skew,~\forall y\in Y.
\end{array}\right.
\end{equation}
We further define
\[\Jc_{L}^z\zeta\,:=\,\Jc_{L;\varnothing}^z\zeta,\]
for which the Stokes problem~\eqref{*eq:def-JLy|z} reduces to
\begin{equation}\label{*eq:def-JLy}
-\triangle \Jc^{z}_{L}\zeta+\nabla \Qc^{z}_{L}\zeta=-\delta_{\partial B^L(z)}\sigma(\zeta,P)\nu,
\qquad\Div(\Jc^{z}_{L}\zeta)=0,\qquad\text{in $ Q_L$},
\end{equation}
and we define $\Jc_{Y}^z\zeta,\Jc^z\zeta$ as the corresponding operators on whole space, that is, with $B^L(z)$ and $Q_L$ replaced by $B(z)$ and $\R^d$, respectively, in~\eqref{*eq:def-JLy|z} and~\eqref{*eq:def-JLy}.

With the above notation, we start by recalling the statement of Lemma~\ref{lem:decay-re} regarding the optimal decay properties of the Stokeslets $\{\Jc_{L;Y}^z\}_{z,Y}$.
Note that Lemma~\ref{lem:decay} is a particular case of this result, using notation~\eqref{eq:corr-xy-notation}, when $\{z\}\cup Y$ satisfies~\eqref{*eq:dist-Y}. The proof is displayed in Section~\ref{sec:proof-*lem:decay-re}.

\begin{lem}[Decay of Stokeslets with rigid inclusions]\label{*lem:decay-re}
Let $z\in\R^d$, let $(\zeta,P)$ satisfy~\eqref{*eq:zetaP-re} at $z$, and let $Y\subset Q_L$ satisfy~\eqref{*eq:dist-Y}.
Then, we have for all $x\in Q_L$,
\begin{align}
\Big(\int_{B^L(x)}|\!\D(\Jc_{L;Y}^z\zeta)|^2\Big)^\frac12&~\lesssim_{\sharp Y}~\langle(x-z)_L\rangle^{-d}\Big(\int_{B_{1+\rho}(z)}|\!\D(\zeta)|^2\Big)^\frac12,\label{eq:res-*lem:decay-re}
\\
\Big(\int_{B(x)}|\!\D(\Jc_{Y}^z\zeta)|^2\Big)^\frac12&~\lesssim_{\sharp Y}~\langle x-z\rangle^{-d}\Big(\int_{B_{1+\rho}(z)}|\!\D(\zeta)|^2\Big)^\frac12.\nonumber\qedhere
\end{align}
\end{lem}

A similar argument leads us to the following version of the mean-value property for Stokes equations in the presence of a finite number of rigid inclusions.
The proof is displayed in Section~\ref{sec:proof-*lem:mean-value-inc}.

\begin{lem}[Mean-value property with rigid inclusions]\label{*lem:mean-value-inc}
Let $Y\subset Q_L$ satisfy~\eqref{*eq:dist-Y}
and let $w\in H^1(Q_L)^d$ satisfy the following free steady Stokes equations in $Q_L$,
\begin{equation}\label{*eq:def-w}
\left\{\begin{array}{ll}
-\triangle w +\nabla P =0,&\text{in $Q_L\setminus\cup_{y\in Y}B(y)$},\\
\Div(w)=0,&\text{in $Q_L\setminus\cup_{y\in Y}B(y)$},\\
\D(w)=0,&\text{in $\cup_{y\in Y}B(y)$},\\
\int_{\partial B(y)}\sigma(w,P)\nu=0,&\forall y\in Y,\\
\int_{\partial B(y)}\Theta(x-y)\cdot \sigma(w,P)\nu=0,&\forall y\in Y,\,\forall\Theta\in\Md^\Skew.
\end{array}\right.
\end{equation}
Then, we have for all $B(x) \subset  Q_L$,
\begin{equation}\label{*eq:mean-value-inc}
\int_{B^L(x)}|\!\D(w)|^2 \,\lesssim_{\sharp Y}\,\langle\dist(x,\partial Q_L)\rangle^{-d} \int_{Q_L}|\!\D( w)|^2.\qedhere
\end{equation}
\end{lem}
Finally, we recall the statement of Lemma~\ref{lem:period-est} regarding the error~\mbox{$\Jc_{L;Y}^z-\Jc_Y^z$} between periodized and whole-space Stokeslets.
The proof makes heavy use of the above mean-value property and is displayed in Section~\ref{sec:proof-*lem:period-est}.
The stated bounds are not optimal: finer estimates are given in the proof, but this simplified statement is good enough for our purposes.

\begin{lem}[Periodization error]\label{*lem:period-est}
Let $z\in Q_L$, let $(\zeta,P)$ satisfy~\eqref{*eq:zetaP-re} at $z$,
and let~$Y\subset Q_{L}$ be a finite subset such that $\{z\}\cup Y$ satisfies~\eqref{*eq:dist-Y}.
Then, we have for all~$x\in Q_{L}$
\begin{multline}\label{eq:period-est}
\Big(\int_{B^L_{1+\rho}(x)}|\!\D(\Jc_{L;Y}^z\zeta-\Jc_{Y}^z\zeta)|^2\Big)^\frac12
\,\lesssim_{\sharp Y} \, \Big(\int_{B_{1+\rho}(z)}|\!\D(\zeta)|^2\Big)^\frac12\\
\times\Big(\mathds{1}_{|x-z|>\frac L4} \langle (x-z)_L\rangle^{-d}+
 \mathds{1}_{|x-z| \le \frac L4}\langle\dist(Y \setminus\{x,z\},\partial Q_L)\rangle^{-d}\Big),
 \end{multline}
where we recall the notation $\dist(\varnothing, \partial Q_L)=L$ and $B^L_r(z)=(B_r(z)+L\Z^d)\cap Q_L$.
In addition,
\begin{equation}\label{eq:period-est-lala}
\Big(\int_{B^L_{1+\rho}(x)}|\!\D(\psi_L^Y-\psi^Y)|^2\Big)^\frac12\,\lesssim_{\sharp Y} \, \Big(\langle\dist(x,\partial Q_L)\rangle+\langle\dist(Y\setminus\{x\},\partial Q_L)\rangle\Big)^{-d} .\qedhere
\end{equation}
\end{lem}

In the above three lemmas, the multiplicative constants in the estimates crucially depend on the finite number of rigid particles: in Lemma~\ref{*lem:decay-re}, for instance, a quick inspection of the proof shows that the multiplicative constant can be bounded by $C^{\sharp H}(\sharp H)!^{3/2}$.
Although these deterministic results fail in general for an unbounded number of rigid inclusions, we refer the reader to~\cite{DG-21b} where corresponding results are proved to hold in a suitable annealed sense in case of a stationary and ergodic random ensemble of rigid inclusions.

\subsection{Decay of Stokeslets with rigid inclusions}\label{sec:proof-*lem:decay-re}
This section is devoted to the proof of Lemma~\ref{*lem:decay-re} (hence of Lemmas~\ref{lem:decay} and~\ref{lem:decay-re}).
We argue by comparing~$\Jc_{L;Y}^z\zeta$ to  $\Jc_{L;Y_z}^z\zeta$ (recall $Y_z=\{y\in Y:B(y)\cap B^L(z)\ne\varnothing\}$), which is a variant of the solution~$\Jc_{L}^z\zeta$ of the corresponding problem without rigid inclusions.
Equation~\eqref{*eq:def-JLy|z} for $\Jc_{L;Y_z}^z\zeta$ reads
\begin{gather}\label{*eq:def-JLy-re}
\left\{\begin{array}{ll}
-\triangle\Jc_{L;Y_z}^z\zeta+\nabla\Qc_{L;Y_z}^z\zeta=-\delta_{\partial B^L(z)}\sigma(\zeta,P)\nu,&\text{in $Q_L\setminus\cup_{y\in Y_z}B(y)$},\\
\Div(\Jc_{L;Y_z}^z\zeta)=0,&\text{in $Q_L\setminus\cup_{y\in Y_z}B(y)$},\\
\Jc_{L;Y_z}^z\zeta=V_z+\Theta_z(x-z),&\text{in $\cup_{y\in Y_z}B(y)$}\\
&\quad\text{for some $V_z\in\R^d,\Theta_z\in\Md^\Skew$},\\
\sum_{y\in Y_z}\int_{\partial B(y)}\sigma(\Jc_{L;Y_z}^z\zeta,\Qc_{L;Y_z}^z\zeta)\nu&\\
\hspace{1cm}=\sum_{y\in Y_z}\int_{B(y)\cap\partial B^L(z)}\sigma(\zeta,P)\nu,&\\
\sum_{y\in Y_z}\int_{\partial B(y)}\Theta(x-z)\cdot\sigma(\Jc_{L;Y_z}^z\zeta,\Qc_{L;Y_z}^z\zeta)\nu&\\
\hspace{1cm}=\sum_{y\in Y_z}\int_{B(y)\cap\partial B^L(z)}\Theta(x-z)\cdot\sigma(\zeta,P)\nu,&\forall\Theta\in\Md^\Skew.
\end{array}\right.
\end{gather}
We split the proof into three steps: we first apply elliptic regularity to unravel the decay properties of $\Jc_{L;Y_z}^z\zeta$ in the first step, and then estimate the difference $\Jc_{L;Y}^z\zeta-\Jc_{L;Y_z}^z\zeta$ in the last two steps.
Let $z\in Q_L$, let~$\zeta$ satisfy~\eqref{*eq:zetaP-re} at $z$, and let $Y\subset Q_L$ satisfy~\eqref{*eq:dist-Y}.

\medskip
\step{1} Proof that for all $x\in Q_L$,
\begin{equation}\label{*e.nub-3}
\Big(\int_{B^L(x)} |\!\D(\Jc_{L;Y_z}^z\zeta)|^2\Big)^\frac12 \,\lesssim\,\langle (x-z)_L\rangle^{-d}\Big(\int_{B_{1+\rho}(z)}|\!\D(\zeta)|^2\Big)^\frac12.
\end{equation}
The argument is based on elliptic regularity via a duality argument, in a form that is similar to the proof of Theorem~\ref{prop:apest-re} in Section~\ref{sec:proof-prop:apest-re-2}.
By an energy estimate for $\Jc_{L;Y_z}^z\zeta$, the claim~\eqref{*e.nub-3} is trivial if \mbox{$|(x-z)_L|\lesssim1$}, and we shall focus on the case when
\begin{equation}\label{eq:r-dist-xz}
r\,:=\,\tfrac12|(x-z)_L|\,>\,2(1+\rho).
\end{equation}
By definition~\eqref{*eq:def-JLy-re}, we then note that $\Jc_{L;Y_z}^z\zeta$ satisfies the free steady Stokes equation in~$B^L_r(x)=(B_r(x)+L\Z^d)\cap Q_L$, which is the periodization of the ball $B_r(x)$ in $Q_L$.
Elliptic regularity in form of Lemma~\ref{lem:regularity} then yields
\begin{equation}\label{*e.nub-4}
\int_{B^L(x)} |\!\D(\Jc_{L;Y_z}^z\zeta)|^2 \,\lesssim\, r^{-d} \int_{B^L_r(x)} |\!\D(\Jc_{L;Y_z}^z\zeta)|^2.
\end{equation}
Next, by duality, the right-hand side can be written as
\begin{multline}\label{*e.nub-4.1}
\int_{B_r^L(x)} |\!\D(\Jc_{L;Y_z}^n\zeta)|^2 = \sup\bigg\{\Big(\int_{Q_L}h:\D(\Jc_{L;Y_z}^z\zeta)\Big)^2~:~ h\in\Ld^2(Q_L)^{d\times d}_\Sym,\\
~\|h\|_{\Ld^2(Q_L)}=1,~~\supp h\subset B_r^L(x)\bigg\}.
\end{multline}
Given a test function $h\in\Ld^2(Q_L)^{d\times d}_\Sym$ with $\supp h\subset B_r^L(x)$, let $w_{L;h}\in H^1_\per(Q_L)^d$ be the solution of the auxiliary Stokes problem
\begin{equation}\label{*eq:hwh}
\left\{\begin{array}{ll}
-\triangle w_{L;h}+\nabla Q_{L;h} = \Div(h),&\text{in $Q_L\setminus\cup_{y\in Y_z}B(y)$},\\
\Div(w_{L;h})=0,&\text{in $Q_L\setminus\cup_{y\in Y_z}B(y)$},\\
w_{L;h}=V_z+\Theta_z(x-z),&\text{in $\cup_{y\in Y_z}B(y)$},\\
&\quad\text{for some $V_z\in\R^d,\Theta_z\in\Md^\Skew$,}\\
\sum_{y\in Y_z}\int_{\partial B(y)}\sigma(w_{L;h},Q_{L;h})\nu=0,&\\
\sum_{y\in Y_z}\int_{\partial B(y)}\Theta(x-z)\cdot\sigma(w_{L;h},Q_{L;h})\nu=0,&\forall\Theta\in\Md^\Skew.
\end{array}\right.
\end{equation}
These equations are indeed well-posed since by~\eqref{eq:r-dist-xz} the support $B_r^L(x)$ of the force term~$h$ does not intersect the rigid inclusions $\cup_{y\in Y_z}B(y)$.
In view of Lemma~\ref{lem:eqn-corH}, $w_{L;h}$ satisfies the following relation in $Q_L$,
\begin{equation}\label{*eq:hwh/QL}
-\triangle w_{L;h}+\nabla\big(\mathds1_{Q_L\setminus\cup_{y\in Y_z}B(y)}Q_{L;h}\big)\,=\,\Div(h)
-\sum_{y\in Y_z}\delta_{\partial B(y)}\sigma(w_{L;h},Q_{L;h})\nu.
\end{equation}
Similarly the defining equation~\eqref{*eq:def-JLy-re} for $\Jc_{L;Y_z}^z\zeta$ yields in $Q_L$,
\begin{multline}\label{*eq:def-JLy-re/QL}
-\triangle\Jc_{L;Y_z}^z\zeta+\nabla\big(\mathds1_{Q_L\setminus \cup_{y\in Y_z}B(y)}\Qc_{L;Y_z}^z\big)\,=\,-\mathds1_{Q_L\setminus \cup_{y\in Y_z}B(y)}\delta_{\partial B^L(z)}\sigma(\zeta,P)\nu\\
-\sum_{y\in Y_z}\delta_{\partial B(y)}\sigma(\Jc_{L;Y_z}^z\zeta,\Qc_{L;Y_z}^z\zeta)\nu.
\end{multline}
Testing~\eqref{*eq:hwh/QL} with $\Jc_{L;Y_z}^z\zeta$ and~\eqref{*eq:def-JLy-re/QL} with $w_{L;h}$, we are led to
\begin{multline*}
\int_{Q_L}h:\D(\Jc_{L;Y_z}^z\zeta)
\,=\,\int_{\partial B^L(z)\setminus\cup_{y\in Y_z}B(y)} w_{L;h}\cdot\sigma(\zeta,P)\nu\\
+\sum_{y\in Y_z}\int_{\partial B(y)}w_{L;h}\cdot\sigma(\Jc_{L;Y_z}^z\zeta,\Qc_{L;Y_z}^z\zeta)\nu
+\sum_{y\in Y_z}\int_{\partial B(y)}\Jc_{L;Y_z}^z\zeta\cdot\sigma(w_{L;h},Q_{L;h})\nu,
\end{multline*}
and thus, using the boundary conditions in~\eqref{*eq:def-JLy-re} and~\eqref{*eq:hwh},
\begin{equation*}
\int_{Q_L}h:\D(\Jc_{L;Y_z}^z\zeta)
\,=\,\int_{\partial B^L(z)} w_{L;h}\cdot\sigma(\zeta,P)\nu.
\end{equation*}
Recalling that $(\zeta,P)$ satisfies~\eqref{*eq:zetaP-re} and is implicitly extended by $Q_L$-periodicity,
using the boundary conditions and the incompressibility constraints to smuggle in arbitrary constants in the different factors, as in the proof of~\eqref{eq:subtr-press-cond},
and appealing to the trace estimates of Lemma~\ref{lem:trace-0}, we find
\begin{equation}\label{*eq:preconcl-dual-ILnzeta}
\Big(\int_{Q_L}h:\D(\Jc_{L;Y_z}^z\zeta)\Big)^2
\,\lesssim\,\Big(\int_{B^L(z)}|\!\D(w_{L;h})|^2\Big)\Big(\int_{B_{1+\rho}(z)}|\!\D(\zeta)|^2\Big).
\end{equation}
Since equation~\eqref{*eq:hwh} entails that $w_{L;h}$ satisfies the free steady Stokes equation in $B_r^L(z)$,  elliptic regularity in form of Lemma~\ref{lem:regularity} yields
\[\int_{B^L(z)}|\!\D(w_{L;h})|^2\,\lesssim\,r^{-d}\int_{Q_L}|\!\D(w_{L;h})|^2,\]
and thus, combining this with an energy estimate for~\eqref{*eq:hwh},
\[\int_{B^L(z)}|\!\D(w_{L;h})|^2\,\lesssim\,r^{-d}\int_{Q_L}|h|^2.\]
Combining this with~\eqref{*e.nub-4}, \eqref{*e.nub-4.1}, and~\eqref{*eq:preconcl-dual-ILnzeta}, the claim~\eqref{*e.nub-3} follows.

\medskip
\step{2} Proof that for all $x\in Q_L$,
\begin{equation}
\int_{B^L(x)}|\!\D(\Jc_{L;Y}^z\zeta)|^2\,\lesssim\,\bigg(\sum_{y\in\{x\}\cup (Y\setminus Y_z)}\langle(y-z)_L\rangle^{-2d}\bigg)\int_{B_{1+\rho}(z)}|\!\D(\zeta)|^2.\label{*eq:decay-borderline}
\end{equation}
In view of Lemma~\ref{lem:eqn-corH}, the defining equation~\eqref{*eq:def-JLy|z} for $\Jc_{L;Y}^z\zeta$ yields in $Q_L$,
\begin{multline}\label{*eq:def-JLy|z/QL}
-\triangle\Jc_{L;Y}^z\zeta+\nabla\big(\mathds1_{Q_L\setminus \cup_{y\in Y}B(y)}\Qc_{L;Y}^z\big)\,=\,-\mathds1_{Q_L\setminus \cup_{y\in Y_z}B(y)}\delta_{\partial B^L(z)}\sigma(\zeta,P)\nu\\
-\sum_{y\in Y}\delta_{\partial B(y)}\sigma(\Jc_{L;Y}^z\zeta,\Qc_{L;Y}^z\zeta)\nu.
\end{multline}
Subtracting~\eqref{*eq:def-JLy-re/QL} entails in $ Q_L$
\begin{multline}\label{*eq:def-JLy|z/QL-diff}
-\triangle(\Jc_{L;Y}^z\zeta-\Jc_{L;Y_z}^z\zeta)+\nabla\Big(\mathds1_{Q_L\setminus \cup_{y\in Y}B(y)}\Qc_{L;Y}^z-\mathds1_{Q_L\setminus \cup_{y\in Y_z}B(y)}\Qc_{L;Y_z}^z\Big)\\
\,=\,
-\sum_{y\in Y\setminus Y_z}\delta_{\partial B(y)}\sigma(\Jc_{L;Y}^z\zeta,\Qc_{L;Y}^z\zeta)\nu\\
-\sum_{y\in Y_z}\delta_{\partial B(y)}\Big(\sigma(\Jc_{L;Y}^z\zeta,\Qc_{L;Y}^z\zeta)\nu-\sigma(\Jc_{L;Y_z}^z\zeta,\Qc_{L;Y_z}^z\zeta)\nu\Big).
\end{multline}
Testing this equation with $\Jc_{L;Y}^z\zeta-\Jc_{L;Y_z}^z\zeta$ itself, and using the boundary conditions in~\eqref{*eq:def-JLy|z} and~\eqref{*eq:def-JLy-re}, we obtain the energy identity
\begin{equation*}
2\int_{Q_L}|\!\D(\Jc_{L;Y}^z\zeta-\Jc^z_{L;Y_z}\zeta)|^2
\,=\,
\sum_{y\in Y\setminus Y_z}\int_{\partial B(y)}\Jc^z_{L;Y_z}\zeta\cdot\sigma(\Jc_{L;Y}^z\zeta,\Qc_{L;Y}^z\zeta)\nu.
\end{equation*}
Further using the boundary conditions and the incompressibility constraints to smuggle in arbitrary constants in the different factors, as in the proof of~\eqref{eq:subtr-press-cond},
and appealing to the trace estimates of Lemma~\ref{lem:trace-0}, we deduce
\begin{equation*}
\int_{ Q_L}|\!\D(\Jc_{L;Y}^z\zeta-\Jc_{L;Y_z}^z\zeta)|^2\,\lesssim\,\sum_{y\in Y\setminus Y_z}\Big(\int_{B(y)}|\!\D(\Jc_{L;Y_z}^z\zeta)|^2\Big)^\frac12\Big(\int_{B_{1+\rho}(y)}|\!\D(\Jc_{L;Y}^z\zeta)|^2\Big)^\frac12.
\end{equation*}
Decomposing $\Jc_{L;Y}^z\zeta=(\Jc_{L;Y}^z\zeta-\Jc_{L;Y_z}^z\zeta)+\Jc_{L;Y_z}^z\zeta$ in the last factor, using the triangle inequality and Young's inequality, we are led to
\[\int_{Q_L}|\!\D(\Jc_{L;Y}^z\zeta-\Jc_{L;Y_z}^z\zeta)|^2\,\lesssim\,\sum_{y\in Y\setminus Y_z}\int_{B_{1+\rho}(y)}|\!\D(\Jc_{L;Y_z}^z\zeta)|^2.\]
The triangle inequality then yields for all $x\in Q_L$,
\begin{eqnarray*}
\int_{B^L(x)}|\!\D(\Jc_{L;Y}^z\zeta)|^2&\lesssim&\int_{B^L(x)}|\!\D(\Jc_{L;Y_z}^z\zeta)|^2+\int_{Q_L}|\!\D(\Jc_{L;Y}^z\zeta-\Jc_{L;Y_z}^z\zeta)|^2\\
&\lesssim&\int_{B^L(x)}|\!\D(\Jc_{L;Y_z}^z\zeta)|^2+\sum_{y\in Y\setminus Y_z}\int_{B_{1+\rho}(y)}|\!\D(\Jc_{L;Y_z}^z\zeta)|^2,
\end{eqnarray*}
which yields the claim~\eqref{*eq:decay-borderline} in combination with~\eqref{*e.nub-3}.

\medskip
\step3 Conclusion.\\
We argue by induction on the cardinality of $Y\setminus Y_z$ for~\eqref{eq:res-*lem:decay-re}.
%
If $\sharp (Y\setminus Y_z)=0$, that is, if $Y=Y_z$, the conclusion~\eqref{eq:res-*lem:decay-re} already follows from~\eqref{*e.nub-3}. Given $n\ge1$, we assume that~\eqref{eq:res-*lem:decay-re} holds whenever $\sharp (Y\setminus Y_z)<n$, and we shall show that it also holds when $\sharp (Y\setminus Y_z)=n$.
Let $Y\subset Q_L$ be fixed with $\sharp (Y\setminus Y_z)=n$.
For any $S\subset Y\setminus Y_z$, the same argument as for~\eqref{*eq:def-JLy|z/QL-diff} yields in $Q_L\setminus\cup_{y\in S}B(y)$  
\begin{multline*}
-\triangle(\Jc_{L;Y}^z\zeta-\Jc_{L;Y_z\cup S}^z\zeta)+\nabla\Big(\mathds1_{Q_L\setminus \cup_{y\in Y}B(y)}\Qc_{L;Y}^z-\mathds1_{Q_L\setminus \cup_{y\in Y_z\cup S}B(y)}\Qc_{L;Y_z\cup S}^z\Big)\\
\,=\,
-\sum_{y\in Y\setminus (Y_z\cup S)}\delta_{\partial B(y)}\sigma(\Jc_{L;Y}^z\zeta,\Qc_{L;Y}^z\zeta)\nu\\
-\sum_{y\in Y_z }\delta_{\partial B(y)}\sigma\big(\Jc_{L;Y}^z\zeta-\Jc_{L;Y_z\cup S}^z\zeta,\Qc_{L;Y}^z\zeta-\Qc_{L;Y_z\cup S}^z\zeta\big)\nu.
\end{multline*}
As $\Jc_{L;Y}^z\zeta-\Jc_{L;Y_z\cup S}^z\zeta$ is further rigid in $\cup_{y\in S}B(y)$,
this implies, by definition of~$\{\Jc_{L;S}^y\}_y$,
\begin{equation*}
\Jc_{L;Y}^z\zeta-\Jc_{L;Y_z\cup S}^z\zeta
\,=\,
\sum_{y\in Y\setminus (Y_z\cup S)}\Jc_{L;S}^y\Jc_{L;Y}^z\zeta
+\sum_{y\in Y_z}\Jc_{L;S}^y(\Jc_{L;Y}^z\zeta-\Jc_{L;Y_z\cup S}^z\zeta),
\end{equation*}
which we may further decompose as
\begin{multline*}
\Jc_{L;Y}^z\zeta-\Jc_{L;Y_z\cup S}^z\zeta
\,=\,\sum_{y\in Y\setminus (Y_z\cup S)}\Jc_{L;S}^y(\Jc_{L;Y}^z\zeta-\Jc_{L;Y_z\cup S\cup\{y\}}^z\zeta)\\
+\sum_{y\in Y\setminus (Y_z\cup S)}\Jc_{L;S}^y\Jc_{L;Y_z\cup S\cup\{y\}}^z\zeta
+\sum_{y\in Y_z}\Jc_{L;S}^y(\Jc_{L;Y}^z\zeta-\Jc_{L;Y_z\cup S}^z\zeta).
\end{multline*}
Iterating this identity, we find
\begin{multline*}
\Jc_{L;Y}^z\zeta-\Jc_{L;Y_z}^z\zeta
\,=\,
\sum_{l=1}^n\sum_{y_1,\ldots,y_l\in Y\setminus Y_z}^{\ne}\Jc^{y_1}_L\Jc_{L;\{y_1\}}^{y_2}\ldots \Jc_{L;\{y_1,\ldots,y_{l-1}\}}^{y_l}\Jc_{L;Y_z\cup\{y_1,\ldots,y_l\}}^z\zeta\\
+\sum_{l=1}^n\sum_{y_1,\ldots,y_{l-1}\in Y\setminus Y_z}^{\ne}\sum_{y\in Y_z}\Jc^{y_1}_L\Jc^{y_2}_{L;\{y_1\}}\ldots \Jc_{L;\{y_1,\ldots,y_{l-2}\}}^{y_{l-1}}\Jc_{L;\{y_1,\ldots,y_{l-1}\}}^{y}\\
\times\big(\Jc_{L;Y}^z\zeta-\Jc_{L;Y_z\cup\{y_1,\ldots,y_{l-1}\}}^z\zeta\big).
\end{multline*}
We now appeal to the induction hypothesis in form of~\eqref{eq:res-*lem:decay-re} for the terms  $\Jc_{L;\{y_1,\ldots,y_{j}\}}^{y}$ and $\Jc_{L;Y_z\cup\{y_1,\ldots,y_j\}}^z$ for all~$1\le j< n$ and~$y\in Y$, to the suboptimal decay estimate~\eqref{*eq:decay-borderline} for \mbox{$\Jc_{L;Y_z\cup\{y_1,\ldots,y_n\}}^z$} (which only appears in the first right-hand sum when $l=n$).
Recalling that $|(y-z)_L|\le2$ for all $y\in Y_z$, this yields for all $x\in Q_L$, after straightforward simplifications,
\begin{multline*}
\Big(\int_{B^L(x)}|\!\D(\Jc_{L;Y}^z\zeta-\Jc_{L;Y_z}^z\zeta)|^2\Big)^\frac12
\,\lesssim_n\,\Big(\int_{B_{1+\rho}(z)}|\!\D(\zeta)|^2\Big)^\frac12\\
\times\sum_{l=0}^n\sum_{y_1,\ldots,y_l\in Y\setminus Y_z}^{\ne}\langle(x-y_1)_L\rangle^{-d}\langle(y_1-y_2)_L\rangle^{-d}\ldots\langle(y_{l}-z)_L\rangle^{-d}.
\end{multline*}
The conclusion~\eqref{eq:res-*lem:decay-re} now follows from the bound $\langle(a-b)_L\rangle^{-d}\langle(b-c)_L\rangle^{-d}\lesssim\langle(a-c)_L\rangle^{-d}$ for all $a,b,c\in Q_L$.
\qed

\subsection{Mean-value property with rigid inclusions}\label{sec:proof-*lem:mean-value-inc}
This section is devoted to the proof of Lemma~\ref{*lem:mean-value-inc}.
We split the proof into two steps.
Let $Y\subset Q_L$ satisfy~\eqref{*eq:dist-Y}
and let $(w,P)\in H^1(Q_L)^d\times \Ld^2(Q_L)$ satisfy~\eqref{*eq:def-w} in~$Q_L$.

\medskip
\step1 Proof that for all $x\in Q_L$,
\begin{equation}\label{*eq:mvp-2}
\int_{B^L(x)}|\!\D(w)|^2  \,\lesssim_{\sharp Y}\, \bigg(\sum_{y\in \{x\}\cup Y}\langle \dist(y,\partial Q_L) \rangle^{-d}\bigg) \int_{Q_L} |\!\D( w)|^2.
\end{equation}
For that purpose, we shall compare $w$ to the solution $\tilde w\in w+H^1_\per(Q_L)^d$ of the free steady Stokes equations without rigid particles in $Q_L$,
\begin{equation}\label{*e.mvp-0}
-\triangle \tilde w+\nabla\tilde P =0,\qquad\Div(\tilde w)=0,\qquad\text{in $Q_L$}.
\end{equation}
In view of Lemma~\ref{lem:eqn-corH}, the equations~\eqref{*eq:def-w} for $w$ yield the following relation in $Q_L$,
\[-\triangle w+\nabla(\mathds1_{Q_L\setminus\cup_{y\in Y}B(y)}P)\,=\,-\sum_{y\in Y}\delta_{\partial B(y)}\sigma(w,P)\nu.\]
Subtracting~\eqref{*e.mvp-0}, we deduce that the difference $w-\tilde w \in H^1_\per(Q_L)$ satisfies 
\begin{equation}\label{eq:form-QL-wtilde}
-\triangle(w-\tilde w)+\nabla\big(\mathds1_{Q_L\setminus\cup_{y\in Y}B(y)}P -\tilde P\big)\,=\,-\sum_{y\in Y}\delta_{\partial B(y)}\sigma(w,P)\nu.
\end{equation}
Testing this equation with $w-\tilde w$ and using the boundary conditions in~\eqref{*eq:def-w},
we obtain the energy identity
\[2\int_{Q_L}|\!\D(w-\tilde w)|^2\,=\,\sum_{y\in Y}\int_{\partial B(y)}\tilde w\cdot\sigma(w,\Qc)\nu.\]
Further using the boundary conditions and the incompressibility constraints to smuggle in arbitrary constants in the different factors, as in the proof of~\eqref{eq:subtr-press-cond},
and appealing to the trace estimates of Lemma~\ref{lem:trace-0}, we get
\begin{equation}\label{eq:est-w-tildew}
\int_{Q_L}|\!\D(w-\tilde w)|^2\,\lesssim\,\sum_{y\in Y}\Big(\int_{B(y)}|\!\D(\tilde w)|^2\Big)^\frac12\Big(\int_{B_{1+\rho}(y)}|\!\D(w)|^2\Big)^\frac12.
\end{equation}
Decomposing $w=(w-\tilde w)+\tilde w$ in the last factor, using the triangle inequality and Young's inequality, we are led to
\[\int_{Q_L}|\!\D(w-\tilde w)|^2\,\lesssim\,\sum_{y\in Y}\int_{B_{1+\rho}(y)}|\!\D(\tilde w)|^2.\]
and thus, by the triangle inequality, for all $x\in Q_L$,
\begin{equation}\label{eq:pre-estim-tilde w-w}
\int_{B^L(x)}|\!\D( w)|^2\,\lesssim\,\int_{B^L(x)}|\!\D(\tilde w)|^2+\sum_{y\in Y}\int_{B_{1+\rho}(y)}|\!\D(\tilde w)|^2.
\end{equation}
Rather decomposing $\tilde w=w-(w-\tilde w)$, we note that~\eqref{eq:est-w-tildew} also yields the energy estimate
\begin{equation}\label{eq:apriori-tildew}
\int_{Q_L}|\!\D(\tilde w)|^2\,\lesssim\,\int_{Q_L}|\!\D(w)|^2.
\end{equation}
As $\tilde w$ satisfies the free steady Stokes equations in $Q_L$, cf.~\eqref{*e.mvp-0}, the mean-value property of Lemma~\ref{lem:regularity} yields for all $x\in Q_L$,
\begin{equation*} 
 \int_{B^L(x)} |\!\D( \tilde w)|^2 \,\lesssim\,\langle \dist(x,\partial Q_L) \rangle^{-d} \int_{Q_L} |\!\D( \tilde w)|^2,
\end{equation*}
and thus, combined with~\eqref{eq:apriori-tildew},
\begin{equation}\label{eq:mean-value-tildew}
 \int_{B^L(x)} |\!\D( \tilde w)|^2 \,\lesssim\,\langle \dist(x,\partial Q_L) \rangle^{-d} \int_{Q_L} |\!\D(w)|^2.
\end{equation}
Inserting this into~\eqref{eq:pre-estim-tilde w-w}, the claim~\eqref{*eq:mvp-2} follows.

\medskip
\step2 Conclusion.\\
Given $S\subset Y$, we denote by $w_S\in w+H^1_\per(Q_L)^d$ the solution of the free steady Stokes problem with rigid inclusions at points of $S$ only,
\begin{equation}\label{*e.mvp-3}
\left\{\begin{array}{ll}
-\triangle w_S+\nabla P_S =0,&\text{in $Q_L\setminus\cup_{y\in S}B(y)$},\\
\Div(w_S)=0,&\text{in $Q_L\setminus\cup_{y\in S}B(y)$},\\
\D(w_S)=0,&\text{in $\cup_{y\in S}B(y)$},\\
\int_{\partial B(y)}\sigma(w_S,P_S)\nu=0,&\forall y\in S,\\
\int_{\partial B(y)}\Theta(x-y)\cdot \sigma(w_S,P_S)\nu=0,&\forall y\in S,~\forall\Theta\in\Md^\Skew.
\end{array}\right.
\end{equation}
In particular, we recover $w_Y=w$ and $w_\varnothing=\tilde w$ as defined in~\eqref{*e.mvp-0}.
The result~\eqref{*eq:mvp-2} of Step~1 yields in this case, for all $x\in Q_L$,
\[\int_{B^L(x)}|\!\D(w_S)|^2\,\lesssim_{\sharp S}\,\bigg(\sum_{y\in\{x\}\cup S}\langle\dist(y,\partial Q_L)\rangle^{-d}\bigg)\int_{Q_L}|\!\D(w_S)|^2.\]
Noting that a similar argument as for~\eqref{eq:apriori-tildew} further yields the energy estimate
\[\int_{Q_L}|\!\D(w_S)|^2\,\lesssim\,\int_{Q_L}|\!\D(w)|^2,\]
we deduce for all $x\in Q_L$,
\begin{equation}\label{eq:mvp-2-bis-S}
\int_{B^L(x)}|\!\D(w_S)|^2\,\lesssim_{\sharp S}\,\bigg(\sum_{y\in\{x\}\cup S}\langle\dist(y,\partial Q_L)\rangle^{-d}\bigg)\int_{Q_L}|\!\D(w)|^2.
\end{equation}
We shall now decompose $w$ in terms of this sequence $(w_S)_{S\subset Y}$.
Arguing as for~\eqref{eq:form-QL-wtilde}, we note that for any $S\subset Y$ the following relation holds in $Q_L\setminus\cup_{y\in S}B(y)$,
\[-\triangle(w-w_S)+\nabla(P-P_S)\,=\,-\sum_{y\in Y\setminus S}\delta_{\partial B(y)}\sigma(w,P)\nu.\]
As $w-w_S$ is rigid in $\cup_{y\in S}B(y)$, this allows to decompose
\[w-w_S\,=\,\sum_{y\in Y\setminus S}\Jc_{L;S}^yw,\]
and thus, iterating this identity and starting with $w_\varnothing=\tilde w$,
\[w\,=\,\tilde w+\sum_{l=1}^{\sharp Y}\sum_{y_1,\ldots,y_l\in Y}^{\ne}\Jc_{L}^{y_1}\Jc_{L;\{y_1\}}^{y_2}\ldots\Jc_{L;\{y_1,\ldots,y_{l-1}\}}^{y_l}w_{\{y_1,\ldots,y_l\}}.\]
Appealing to the decay estimates for $\{\Jc_{L;S}^y\}_{y,S}$ in Lemma~\ref{*lem:decay-re}, and to~\eqref{eq:mean-value-tildew} and~\eqref{eq:mvp-2-bis-S},
we get after straightforward simplifications, for all $x\in Q_L$,
\begin{multline*}
\int_{B^L(x)}|\!\D(w)|^2\,\lesssim_{\sharp Y}\,\int_{Q_L}|\!\D(w)|^2
\\
\times\sum_{l=0}^{\sharp Y}\sum_{y_1,\ldots,y_l\in Y}^{\ne}\langle(x-y_1)_L\rangle^{-2d}\langle(y_1-y_2)_L\rangle^{-2d}\ldots\langle(y_{l-1}-y_l)_L\rangle^{-2d}\langle\dist(y_l,\partial Q_L)\rangle^{-d}.
\end{multline*}
Using the trivial bound $\langle(a-b)_L\rangle^{-d}\langle(b-c)_L\rangle^{-d}\lesssim\langle(a-c)_L\rangle^{-d}$ for all $a,b,c\in Q_L$, and noting that the infimum over $c\in\partial Q_L$ further yields
$\langle(a-b)_L\rangle^{-d}\langle\dist(b,\partial Q_L)\rangle^{-d}\lesssim\langle\dist(a,\partial Q_L)\rangle^{-d}$, 
the conclusion~\eqref{*eq:mean-value-inc} follows.
\qed

\subsection{Periodization errors}\label{sec:proof-*lem:period-est}
This section is devoted to the proof of Lemma~\ref{*lem:period-est}.
We split the proof into three steps. Let $z\in Q_L$, let $\zeta$ satisfy~\eqref{*eq:zetaP-re} at $z$, and let $Y\subset Q_L$ be such that $\{z\}\cup Y$ satisfies~\eqref{*eq:dist-Y}.

\medskip
\step1 Proof that for all $x\in Q_L$,
\begin{multline}\label{eq:period-est-raff}
\int_{B^L_{1+\rho}(x)}|\!\D(\Jc_{L;Y}^z\zeta-\Jc_Y^z\zeta)|^2\,\lesssim_{\sharp Y}\,\int_{B_{1+\rho}(z)}|\!\D(\zeta)|^2\\
\times\min\bigg\{\langle(x-z)_L\rangle^{-2d}\wedge\Big(\langle\dist(x,\partial Q_L(a))\rangle^{-d}\langle\dist(z,\partial Q_L(a))\rangle^{-d}\Big)~:~\\
a\in\R^d,~x,z\in Q_L(a),~Y\subset Q_L(a)\bigg\}.
\end{multline}
It suffices to prove this estimate for $a=0$, that is,
\begin{multline*}
\int_{B^L_{1+\rho}(x)}|\!\D(\Jc_{L;Y}^z\zeta-\Jc_Y^z\zeta)|^2\,\lesssim_{\sharp Y}\,\int_{B_{1+\rho}(z)}|\!\D(\zeta)|^2\\
\times\bigg(\langle(x-z)_L\rangle^{-2d}\wedge\Big(\langle\dist(x,\partial Q_L)\rangle^{-d}\langle\dist(z,\partial Q_L)\rangle^{-d}\Big)\bigg),
\end{multline*}
as the claim~\eqref{eq:period-est-raff} then follows by translating the underlying cell $Q_L$, which does indeed not change the equations provided that the translated cell still contains the relevant points~$x,z,Y$.
Further noting that Lemma~\ref{*lem:decay-re} together with the triangle inequality yields
\begin{equation*}
\int_{B^L_{1+\rho}(x)}|\!\D(\Jc_{L;Y}^z\zeta-\Jc_Y^z\zeta)|^2\,\lesssim_{\sharp Y}\,\langle(x-z)_L\rangle^{-2d}\int_{B_{1+\rho}(z)}|\!\D(\zeta)|^2,
\end{equation*}
it only remains to prove for all $x\in Q_L$,
\begin{equation}\label{eq:period-est-raff-0}
\int_{B^L_{1+\rho}(x)}|\!\D(\Jc_{L;Y}^z\zeta-\Jc_Y^z\zeta)|^2\,\lesssim_{\sharp Y}\,\langle\dist(x,\partial Q_L)\rangle^{-d}\langle\dist(z,\partial Q_L)\rangle^{-d}\int_{B_{1+\rho}(z)}|\!\D(\zeta)|^2.
\end{equation}
As $\{z\}\cup Y$ satisfies~\eqref{*eq:dist-Y}, we recall that $\Jc_{L;Y}^z\zeta$ satisfies the simpler Stokes problem~\eqref{*eq:def-JLy|z-simpl} (and likewise for $\Jc_{Y}^z\zeta$).
The difference $\Jc_{L;Y}^z\zeta-\Jc_{Y}^z\zeta$ then satisfies the free steady Stokes equations~\eqref{*eq:def-w}. Applying the mean-value property of Lemma~\ref{*lem:mean-value-inc} to this equation, we get for all $x\in Q_L$,
\begin{equation}\label{eq:diff-JLJ-Y-meanvalue-1}
\int_{B^L_{1+\rho}(x)}|\!\D(\Jc_{L;Y}^z\zeta-\Jc_{Y}^z\zeta)|^2\,\lesssim_{\sharp Y}\,\langle\dist(x,\partial Q_L)\rangle^{-d}\int_{Q_L}|\!\D(\Jc_{L;Y}^z\zeta-\Jc_{Y}^z\zeta)|^2.
\end{equation}
In order to estimate the last integral, taking some inspiration from the proof of~\eqref{diff-estE}, we note that it is convenient to further compare $\Jc_{L;Y}^z\zeta$ and $\Jc_{Y}^z\zeta$ to the solution of the corresponding Neumann problem in $Q_L$: we define $\Jc_{N;Y}^z\zeta\in H^1(Q_L)^d$ as the solution of
\begin{equation}\label{*eq:def-JLy|z-simpl-N}
\left\{\begin{array}{ll}
-\triangle \Jc_{N;Y}^z\zeta+\nabla\Qc_{N;Y}^z\zeta=-\delta_{\partial B^L(z)}\sigma(\zeta,P)\nu,&\text{in $Q_L\setminus\cup_{y\in Y}B(y)$},\\
\Div(\Jc_{N;Y}^z\zeta)=0,&\text{in $Q_L\setminus\cup_{y\in Y}B(y)$},\\
\sigma(\Jc_{N;Y}^z\zeta,\Qc_{N;Y}^z\zeta)\nu=0,&\text{on $\partial Q_L$},\\
\D(\Jc_{N;Y}^z\zeta)=0,&\text{in $\cup_{y\in Y}B(y)$},\\
\int_{\partial B(y)}\sigma(\Jc_{N;Y}^z\zeta,\Qc_{N;Y}^z\zeta)\nu=0,&\forall y\in Y,\\
\int_{\partial B(y)}\Theta(x-y)\cdot\sigma(\Jc_{N;Y}^z\zeta,\Qc_{N;Y}^z\zeta)\nu=0,&\forall y\in Y,~\forall\Theta\in\Md^\Skew.
\end{array}\right.
\end{equation}
In these terms, we start by estimating
\begin{equation}\label{eq:bound-H-H1H2}
\int_{Q_L}|\!\D(\Jc_{L;Y}^z\zeta-\Jc_{Y}^z\zeta)|^2\,\le\,2\int_{Q_L}|\!\D(H_1)|^2+2\int_{Q_L}|\!\D(H_2)|^2,
\end{equation}
where we have set for abbreviation
\[H_1\,:=\,\Jc_{L;Y}^z\zeta-\Jc_{N;Y}^z\zeta,\qquad H_2\,:=\,\Jc_{Y}^z\zeta-\Jc_{N;Y}^z\zeta.\]
We denote by $P_1,P_2$ the corresponding pressure differences.
In view of~\eqref{*eq:def-JLy|z-simpl} and~\eqref{*eq:def-JLy|z-simpl-N},  $(H_1,P_1)$ satisfies
\begin{equation}\label{eq:H1-free}
\left\{\begin{array}{ll}
-\triangle H_1+\nabla P_1=0,&\text{in $Q_L\setminus\cup_{y\in Y}B(y)$},\\
\Div(H_1)=0,&\text{in $Q_L\setminus\cup_{y\in Y}B(y)$},\\
\sigma(H_1,P_1)\nu=\sigma(\Jc_{L;Y}^z\zeta,\Qc_{L;Y}^z\zeta)\nu,&\text{on $\partial Q_L$},\\
\D(H_1)=0,&\text{in $\cup_{y\in Y}B(y)$},\\
\int_{\partial B(y)}\sigma(H_1,P_1)\nu=0,&\forall y\in Y,\\
\int_{\partial B(y)}\Theta(x-y)\cdot\sigma(H_1,P_1)\nu=0,&\forall y\in Y,~\forall\Theta\in\Md^\Skew,
\end{array}\right.
\end{equation}
for which the energy identity takes the form
\[2\int_{Q_L}|\!\D(H_1)|^2\,=\,\int_{\partial Q_L}H_1\cdot\sigma(\Jc_{L;Y}^z\zeta,\Qc_{L;Y}^z\zeta)\nu,\]
hence, recalling $H_1=\Jc_{L;Y}^z\zeta-\Jc_{N;Y}^z\zeta$ and the periodicity of $\Jc_{L;Y}^z\zeta$,
\begin{equation}\label{eq:energy-est-H1}
2\int_{Q_L}|\!\D(H_1)|^2\,=\,-\int_{\partial Q_L}\Jc_{N;Y}^z\zeta\cdot\sigma(\Jc_{L;Y}^z\zeta,\Qc_{L;Y}^z\zeta)\nu.
\end{equation}
By Lemma~\ref{lem:eqn-corH} and~\eqref{*eq:def-JLy-re}, $\Jc_{L;Y}^z\zeta$ satisfies in $Q_L$
\begin{equation*}
-\triangle\Jc_{L;Y}^z\zeta+\nabla\big(\mathds1_{\R^d\setminus \cup_{y\in Y}B(y)}\Qc_{L;Y}^z\big)\,=\,-\delta_{\partial B(z)}\,\sigma(\zeta,P)\nu
-\sum_{y\in Y}\delta_{\partial B(y)}\sigma(\Jc_{L;Y}^z\zeta,\Qc_{L;Y}^z\zeta)\nu,
\end{equation*}
whereas, by~\eqref{*eq:def-JLy|z-simpl-N},  $\Jc_{N;Y}^z\zeta$ satisfies
\begin{equation*}
-\triangle\Jc_{N;Y}^z\zeta+\nabla\big(\mathds1_{\R^d\setminus \cup_{y\in Y}B(y)}\Qc_{N;Y}^z\big)\,=\,-\delta_{\partial B(z)}\sigma(\zeta,P)\nu
-\sum_{y\in Y}\delta_{\partial B(y)}\sigma(\Jc_{N;Y}^z\zeta,\Qc_{N;Y}^z\zeta)\nu.
\end{equation*}
Testing the first relation with $\Jc_{N;Y}^z\zeta$, testing the second one with $\Jc_{L;Y}^z\zeta$, and using boundary conditions, we find
\begin{eqnarray*}
\lefteqn{\int_{\partial Q_L}\Jc^z_{N;Y}\zeta\cdot \sigma(\Jc^z_{L;Y}\zeta,\Qc^z_{L;Y}\zeta)\nu}\\
&=&2\int\D(\Jc^z_{N;Y}\zeta):\D(\Jc^z_{L;Y}\zeta)+\int_{\partial B(z)}\Jc^z_{N;Y}\zeta\cdot\sigma(\zeta,P)\nu\\
&=&\int_{\partial B(z)}(\Jc^z_{N;Y}\zeta-\Jc^z_{L;Y}\zeta)\cdot\sigma(\zeta,P)\nu,
\end{eqnarray*}
so that identity~\eqref{eq:energy-est-H1} becomes
\begin{equation}\label{eq:identity-energy-H1}
2\int_{Q_L}|\!\D(H_1)|^2\,=\,\int_{\partial B(z)}H_1\cdot\sigma(\zeta,P)\nu.
\end{equation}
Using the boundary conditions and the incompressibility constraint
to smuggle in arbitrary constants in the different factors, as in the proof of~\eqref{eq:subtr-press-cond},
and appealing to the trace estimates of Lemma~\ref{lem:trace-0}, we find
\begin{equation*}
\int_{Q_L}|\!\D(H_1)|^2\,\lesssim\,\Big(\int_{B(z)}|\!\D(H_1)|^2\Big)^\frac12\Big(\int_{B_{1+\rho}(z)}|\!\D(\zeta)|^2\Big)^\frac12.
\end{equation*}
Applying the mean-value property of Lemma~\ref{*lem:mean-value-inc} to equation~\eqref{eq:H1-free} for $H_1$, and using Young's inequality, we deduce
\begin{equation}\label{eq:estim-meanvalue-H1}
\int_{Q_L}|\!\D(H_1)|^2\,\lesssim\,\langle\dist(z,\partial Q_L)\rangle^{-d}\int_{B_{1+\rho}(z)}|\!\D(\zeta)|^2.
\end{equation}
Likewise, repeating the argument in favor of~\eqref{eq:identity-energy-H1}, this time for $H_2$, we obtain
\[2\int_{Q_L}|\!\D(H_2)|^2\,=\,\int_{\partial B(z)} H_2\cdot\sigma(\zeta,P)\nu+\int_{\partial Q_L}\Jc_{Y}^z\zeta\cdot\sigma(\Jc_Y^z\zeta,\Qc_Y^z\zeta)\nu,\]
or equivalently, using the free steady Stokes equations for $\Jc_Y^z\zeta$ in $\R^d\setminus Q_L$ and integrating by parts to reformulate the second right-hand side term,
\begin{eqnarray*}
2\int_{Q_L}|\!\D(H_2)|^2&=&\int_{\partial B(z)} H_2\cdot\sigma(\zeta,P)\nu-2\int_{\R^d\setminus Q_L}|\!\D(\Jc_{Y}^z\zeta)|^2\\
&\le&\int_{\partial B(z)} H_2\cdot\sigma(\zeta,P)\nu.
\end{eqnarray*}
Arguing as for $H_1$, we may then deduce
\begin{equation*}
\int_{Q_L}|\!\D(H_2)|^2\,\lesssim\,\langle\dist(z,\partial Q_L)\rangle^{-d}\int_{B_{1+\rho}(z)}|\!\D(\zeta)|^2.
\end{equation*}
Combined with~\eqref{eq:diff-JLJ-Y-meanvalue-1}, \eqref{eq:bound-H-H1H2}, and~\eqref{eq:estim-meanvalue-H1}, this yields the claim~\eqref{eq:period-est-raff-0}.

\medskip
\step2 Proof of~\eqref{eq:period-est}.\\
We claim that the conclusion~\eqref{eq:period-est} is a simple post-processing of~\eqref{eq:period-est-raff}.
As~\eqref{eq:period-est} trivially follows from~\eqref{eq:period-est-raff} if $|x-z|>\frac L4$,
it remains to consider the case when \mbox{$|x-z|\le\frac L4$}.
In that case, we can choose $q\in\frac L4\Z^d$ with $|q|_\infty\le\frac L4$ such that $x,z\in Q_{\frac12L}(q)$.
We then construct a translation vector $a$ componentwise: First, for all directions $1\le i\le d$ with $q_i=0$, we set $a_i:=0$. Second, for all $i$ with $q_i=\frac L4$, we set $a_i:=\dist(Y\setminus\{x,z\},P_L^{i,-})$, where $P_L^{i,-}$ is the cubic facet $\{v\in\partial Q_L:v_i=-\frac L2\}$. Third, for all $i$ with $q_i=-\frac L4$, we set $a_i:=-\dist(Y\setminus\{x,z\},P_L^{i,+})$, where $P_L^{i,+}$ is the facet $\{v\in\partial Q_L:v_i=\frac L2\}$. With this construction of $a$, we find that $Y\setminus\{x,z\}$ is included in the translated cube~$Q_L(a)$ (and actually intersects its boundary). Moreover, we find
\[\dist(x,\partial Q_L(a))\,\ge\,\dist(x,\partial Q_L)+\inf_i|a_i|\,\ge\,\dist(x,\partial Q_L)+\dist(Y\setminus\{x,z\},\partial Q_L),\]
and similarly
\[\dist(z,\partial Q_L(a))\,\ge\,\dist(z,\partial Q_L)+\dist(Y\setminus\{x,z\},\partial Q_L).\]
In particular, we get
\[\langle\dist(x,\partial Q_L(a))\rangle^{-d}\langle\dist(z,\partial Q_L(a))\rangle^{-d}\,\le\,\langle\dist(Y\setminus\{x,z\},\partial Q_L)\rangle^{-2d},\]
so that the conclusion~\eqref{eq:period-est} indeed follows from~\eqref{eq:period-est-raff}.

\medskip
\step3 Proof of~\eqref{eq:period-est-lala}.\\
We shall prove the following refined version of~\eqref{eq:period-est-lala}: for all $x\in Q_L$,
\begin{multline}\label{eq:period-est-lala-raff}
\int_{B^L_{1+\rho}(x)}|\!\D(\psi_L^Y-\psi^Y)|^2\,\lesssim\,\min\Big\{\langle\dist(x,\partial Q_L(a))\rangle^{-d}\langle\dist(Y,\partial Q_L(a))\rangle^{-d}~:~\\
a\in \R^d,~x\in Q_L(a),~Y\subset Q_L(a)\Big\}.
\end{multline}
Arguing similarly as in Step~2, it is easily seen that the translation $a$ can be suitably chosen so that this estimate yields the conclusion~\eqref{eq:period-est-lala}.
In order to prove~\eqref{eq:period-est-lala-raff},
it suffices, in fact, to prove it for $a=0$, that is,
\begin{equation}\label{eq:period-est-lala-0}
\int_{B^L_{1+\rho}(x)}|\!\D(\psi_L^Y-\psi^Y)|^2\,\lesssim\,\langle\dist(x,\partial Q_L)\rangle^{-d}\langle\dist(Y,\partial Q_L)\rangle^{-d},
\end{equation}
as the claim~\eqref{eq:period-est-lala-raff} then follows by translating the underlying cell $Q_L$, which does indeed not change the equations provided that the translated cell still contains~$x,Y$.

\medskip\noindent
We turn to the proof of~\eqref{eq:period-est-lala-0}.
As the difference $\psi_L^Y-\psi^Y$ satisfies a free steady Stokes problem of the form~\eqref{*eq:def-w}, we may apply the mean-value property of Lemma~\ref{*lem:mean-value-inc} to the effect that for all $x\in Q_L$,
\begin{equation}\label{eq:period-est-meanvalue-1}
\int_{B^L_{1+\rho}(x)}|\!\D(\psi_L^Y-\psi^Y)|^2\,\lesssim\,\langle\dist(x,\partial Q_L)\rangle^{-d}\int_{Q_L}|\!\D(\psi_L^Y-\psi^Y)|^2.
\end{equation}
In order to estimate the last integral, we argue similarly as in Step~1 by further comparing $\psi_L^Y,\psi^Y$ to the solution of the corresponding Neumann problem in $Q_L$: we define $\psi_N^Y\in H^1(Q_L)^d$ as the solution of
\[\left\{\begin{array}{ll}
-\triangle\psi_N^Y+\nabla\Sigma_N^Y=0,&\text{in $Q_L\setminus \cup_{y\in Y}B(y)$},\\
\Div(\psi_N^Y)=0,&\text{in $Q_L\setminus \cup_{y\in Y}B(y)$},\\
\sigma(\psi_N^Y,\Sigma_N^Y)\nu=0,&\text{on $\partial Q_L$},\\
\D(\psi_N^Y+Ex)=0,&\text{in $\cup_{y\in Y}B(y)$},\\
\int_{\partial B(y)}\sigma(\psi_N^Y,\Sigma_N^Y)\nu=0,&\forall y\in Y,\\
\int_{\partial B(y)}\Theta(x-y)\cdot\sigma(\psi_N^Y,\Sigma_N^Y)\nu=0,&\forall y\in Y,~\forall\Theta\in\Md^\Skew.
\end{array}\right.\]
In these terms, we start by estimating
\begin{equation}\label{eq:decomp-cor-err-L}
\int_{Q_L}|\!\D(\psi_L^Y-\psi^Y)|^2\,\le\,2\int_{Q_L}|\!\D(G_1)|^2+2\int_{Q_L}|\!\D(G_2)|^2,
\end{equation}
where we have set for abbreviation
\[G_1\,:=\,\psi_L^Y-\psi_N^Y,\qquad G_2\,:=\,\psi^Y-\psi_N^Y.\]
We denote by $R_1,R_2$ the corresponding pressure differences.
Similarly as in Step~1, energy identities take the form
\begin{eqnarray*}
2\int_{Q_L}|\!\D(G_1)|^2&=&\sum_{y\in Y}\int_{\partial B(y)}E(x-y)\cdot\sigma(G_1,R_1)\nu,\\
2\int_{Q_L}|\!\D(G_2)|^2&=&\sum_{y\in Y}\int_{\partial B(y)}E(x-y)\cdot\sigma(G_2,R_2)\nu-2\int_{\R^d\setminus Q_L}|\!\D(\psi^Y)|^2,
\end{eqnarray*}
and we deduce by means of trace estimates, for both $i=1,2$,
\begin{equation*}
\int_{Q_L}|\!\D(G_i)|^2\,\lesssim\,\sum_{y\in Y}\Big(\int_{B_{1+\rho}(y)}|\!\D(G_i)|^2\Big)^\frac12.
\end{equation*}
Hence, applying the mean-value property of Lemma~\ref{*lem:mean-value-inc} to $G_1,G_2$, together with Young's inequality,
\begin{equation*}
\int_{Q_L}|\!\D(G_i)|^2\,\lesssim_{\sharp Y}\,\sum_{y\in Y}\langle\dist(y,\partial Q_L)\rangle^{-d}.
\end{equation*}
Combined with~\eqref{eq:period-est-meanvalue-1} and~\eqref{eq:decomp-cor-err-L}, this yields the claim~\eqref{eq:period-est-lala-0}, and concludes the proof.\qed

\section{Finite-volume approximation of the effective viscosity}\label{sec:quant}
This appendix is devoted to the proof of an algebraic convergence rate for the finite-volume approximation $\Bb_L$ of the effective viscosity $\Bb$ under an algebraic $\alpha$-mixing condition, 
as announced in Remark~\ref{rem:Mix}.

\begin{prop}[Convergence rate for $\Bb_L$]\label{prop:conv-BL}
On top of Assumption~\ref{H0}, assume that the algebraic mixing condition~\emph{\ref{Mix}} holds. Then there exists $\gamma\in(0,\beta)$ (only depending on $d,\rho$ and on the mixing exponent $\beta$) such that for all $L$,
\[|\Bb_L-\Bb|\,\lesssim\,L^{-\gamma}.\qedhere\]
\end{prop}

The proof displayed below closely follows the monograph~\cite{AKM-book} by Armstrong, Kuusi, and Mourrat (albeit in the more general version~\cite{Armstrong-Mourrat-16} for $\alpha$-mixing coefficients) based on the original argument \cite{AS} by Armstrong and Smart.
We identify a suitable subadditive quantity $J$ that satisfies all the requirements of~\cite{Armstrong-Mourrat-16,AKM-book}
in the present Stokes context: the definition~\eqref{e.J} and
Lemma~\ref{lem:J} below constitute the only new insight wrt~\cite{AKM-book}, and the conclusion follows from elementary adaptations of the arguments in~\cite{Armstrong-Mourrat-16,AKM-book}.
Although we could have used the same subadditive quantity as in \cite{AKM-book}, 
we have chosen to use a subadditive quantity $J$ built on the approximations \eqref{e:def-Dir} and \eqref{e:def-Neu} that we used to prove Einstein's formula, that is, in the form of
\eqref{eq:def-tildeBL0} below.
This choice, which is specific to our problem, makes some of the upcoming arguments technically simpler than in \cite{AKM-book}, in particular avoiding the use of convex duality.

\medskip

Let $E\in\Md_0$ be fixed with $|E|=1$.
We say that a bounded domain~$U\subset\R^d$ is \emph{suitable} if $\dist(\Ic\cap U,\partial U)>\rho$.
We consider the following weakly closed subsets of $H^1(U)^d$,
\begin{eqnarray*}
\calH(U)&:=&\big\{ u \in H^1(U)^d\,:\, \Div( \phi)=0,~\text{ and }\D(\phi+Ex)=0\text{ on }\Ic \cap U\big\},\\
\calH_\circ(U)&:=&H^1_0(U)^d\cap\calH(U),
\end{eqnarray*}
and the following minimization problems (note that only the symmetrized gradient $\D(\psi_*(U))$ is uniquely defined in the first line),
\begin{eqnarray}
\psi_*(U)&:=&\arg\min\Big\{  \int_{U} |\!\D(\phi)|^2\,:\,\phi\in \calH(U)\Big\},\label{J2-sup}\\
\psi_\circ(U)&:=&\arg\min\Big\{  \int_{U} |\!\D(\phi)|^2\,:\,\phi\in \calH_0(U)\Big\}.\label{J1-sub}
\end{eqnarray}
Recalling that the fattened inclusions $\{I_n+\rho B\}_n$ are disjoint,
we define the modified cubes
\[U_L(x)\,:=\,\bigg( Q_L(x) \setminus \bigcup_{n:x_n \notin Q_L(x)} (I_n+\rho B)\bigg) \bigcup \bigg(\bigcup_{n:x_n \in Q_L(x)} (I_n+\rho B)\bigg),\]
which satisfy by definition $Q_{L-2(1+\rho)} \subset U_L(x) \subset Q_{L+2(1+\rho)}$ and $\Ic\cap\partial U_L(x)=\varnothing$.
The family $\{U_L(x)\}_{x\in L\Z^d}$ constitutes a partition of $\R^d$.
Setting $U_L=U_L(0)$, we then consider the following alternative finite-volume approximations of the effective viscosity $\Bb$,
\begin{eqnarray}
E: \Bt_{L,*} E&=&1+\expec{\fint_{U_L} |\!\D(\psi_*(U_L))|^2},\nonumber\\
E: \Bt_{L,\circ} E&=&1+\expec{\fint_{U_L} |\!\D(\psi_\circ(U_L))|^2}.\label{eq:def-tildeBL0}
\end{eqnarray}
Since $\calH_\circ(U_L)\subset \calH(U_L)$, we have $E: \Bt_{L,*} E\,\le\,E: \Bt_{L,\circ} E$.
We then define a random set function $J$ for suitable sets $U$ via
\begin{equation}\label{e.J}
J(U)\,:=\, \fint_{U} |\!\D( \psi_\circ(U))|^2- |\!\D( \psi_*(U))|^2.
\end{equation}
The following lemma collects elementary properties of $J$. In particular, item~(iii) states that $U\mapsto |U|J(U)$ is subadditive.
\begin{lem}[Properties of $J$]\label{lem:J}$ $
\begin{enumerate}[(i)]
\item Recalling the definition~\eqref{eq:def-tildeBL0} of finite-volume approximations $\Bt_{L,*},\Bt_{L,\circ}$ of the effective viscosity, there exists $C>0$ such that
\begin{eqnarray}\label{a++1}
E:\Bt_{L,*} E -CL^{-1}~\, \le &E:\Bb E  &\le~\,E:\Bt_{L,\circ} E+CL^{-1},\\
E:\Bt_{L,*} E -CL^{-1} ~\,\le &E:\Bb_{L+2(1+\rho)} E  &\le~\,E:\Bt_{L,\circ} E+CL^{-1}\label{a++2}.
\end{eqnarray}
\item For all suitable $U$,
\begin{eqnarray}\label{J.differences1}
J(U)&=&  \fint_{U} |\!\D (\psi_\circ(U)-\psi_*(U))|^2.
\end{eqnarray}
\item For all disjoint suitable sets $U^1,\dots,U^k$, setting $U=\inter\big(\bigcup_j \overline{U^j}\big)$,
\begin{equation}\label{J.subadd}
|U|J(U)\,\le\, \sum_{j=1}^k |U^j|J(U^j).
\end{equation}
In addition, setting $\delta \psi(U):=\psi_\circ(U)-\psi_*(U)$,
\begin{equation}\label{J.split}
\sum_{j=1}^k   \|\!\D(\delta \psi(U)-\delta \psi(U^j))\|_{\Ld^2(U^j)}^2
\,=\, \sum_{j=1}^k  |U^j| (J(U^j)-J(U)).
\qedhere
\end{equation}
\end{enumerate}
\end{lem}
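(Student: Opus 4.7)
For part~(i), no new work is required: the four inequalities in~\eqref{a++1}--\eqref{a++2} are exactly \eqref{a++1+}--\eqref{a++2+} already established in Substep~1.2 of Step~1 of the proof of Proposition~\ref{prop:quant-alt}. (Note that the role of $U_L$ there was precisely to align with the set function~$J$ through the shorthand notation $\psi_*(U_L)$ and $\psi_\circ(U_L)$.)

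For part~(ii), the key ingredient is the Euler--Lagrange equation for $\psi_*(U)$. Since $\psi_*(U)$ minimizes $\int_U|\D(\phi)|^2$ over the \emph{unconstrained} affine space $\calH(U)$, its first variation vanishes on the associated linear tangent space
\[
\calH^0(U)\,:=\,\big\{\phi\in H^1(U)^d\,:\,\Div\phi=0,~\D(\phi)=0~\text{on }\Ic\cap U\big\},
\]
that is, $\int_U\D(\psi_*(U)):\D(\phi)=0$ for all $\phi\in\calH^0(U)$. The difference $\psi_\circ(U)-\psi_*(U)$ belongs to $\calH^0(U)$ (as both lie in $\calH(U)$), so testing yields $\int_U\D(\psi_*(U)):\D(\psi_\circ(U)-\psi_*(U))=0$. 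Inserting this orthogonality in the expansion
\[
|\!\D(\psi_\circ(U)-\psi_*(U))|^2=|\!\D(\psi_\circ(U))|^2-2\D(\psi_\circ(U)):\D(\psi_*(U))+|\!\D(\psi_*(U))|^2
\]
and integrating directly gives~\eqref{J.differences1}.

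For the subadditivity inequality~\eqref{J.subadd} in part~(iii), we build admissible competitors on each side. The piecewise concatenation $\tilde\psi_\circ:=\sum_j\psi_\circ(U^j)\mathds1_{U^j}$, extended by zero on interfaces, lies in $\calH_\circ(U)$ since each $\psi_\circ(U^j)\in H^1_0(U^j)^d$ (and divergence-freeness plus the rigidity constraint on inclusions are local). Minimality of $\psi_\circ(U)$ then gives $\int_U|\D(\psi_\circ(U))|^2\le\sum_j\int_{U^j}|\D(\psi_\circ(U^j))|^2$. Conversely, the restriction $\psi_*(U)|_{U^j}$ is an admissible competitor in each problem~\eqref{J2-sup}, yielding $\sum_j\int_{U^j}|\D(\psi_*(U^j))|^2\le\int_U|\D(\psi_*(U))|^2$. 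Subtracting the two inequalities yields~\eqref{J.subadd}.

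The main task is the sharp identity~\eqref{J.split}. Write $\psi_\bullet=\psi_\bullet(U)$, $\psi_\bullet^j=\psi_\bullet(U^j)$, $w=\psi_\circ-\psi_*$, $w^j=\psi_\circ^j-\psi_*^j$, and note that~\eqref{J.differences1} gives $|U|J(U)=\int_U|\D(w)|^2$ and $|U^j|J(U^j)=\int_{U^j}|\D(w^j)|^2$. Expanding the left-hand side of~\eqref{J.split},
\[
\sum_j\int_{U^j}|\D(w-w^j)|^2\,=\,\int_U|\D(w)|^2-2\sum_j\int_{U^j}\D(w):\D(w^j)+\sum_j\int_{U^j}|\D(w^j)|^2,
\]
the claim reduces to the crucial equipartition relation
\begin{equation}\label{eq:equiv-equipart-prop}
\sum_j\int_{U^j}\D(w):\D(w^j)\,=\,\int_U|\D(w)|^2\,=\,|U|J(U).
\end{equation}
To prove~\eqref{eq:equiv-equipart-prop}, expand the product $\D(w):\D(w^j)=\D(\psi_\circ-\psi_*):\D(\psi_\circ^j-\psi_*^j)$ and treat its four terms separately using the following four orthogonalities: (a) testing the Euler--Lagrange equation for $\psi_\circ$ on $U$ with $\tilde\psi_\circ-\psi_\circ\in\calH^0_\circ(U)$ gives $\sum_j\int_{U^j}\D(\psi_\circ):\D(\psi_\circ^j)=\int_U|\D(\psi_\circ)|^2$; (b) testing the Euler--Lagrange equation for $\psi_*$ on $U$ with $\tilde\psi_\circ-\psi_\circ\in\calH^0(U)$ gives $\sum_j\int_{U^j}\D(\psi_*):\D(\psi_\circ^j)=\int_U\D(\psi_*):\D(\psi_\circ)$; (c) testing the Euler--Lagrange equation for $\psi_*^j$ on $U^j$ with $\psi_\circ|_{U^j}-\psi_*^j\in\calH^0(U^j)$ gives $\int_{U^j}\D(\psi_*^j):\D(\psi_\circ)=\int_{U^j}|\D(\psi_*^j)|^2$; (d) testing the Euler--Lagrange equation for $\psi_*^j$ on $U^j$ with $\psi_*|_{U^j}-\psi_*^j$ gives $\int_{U^j}\D(\psi_*^j):\D(\psi_*)=\int_{U^j}|\D(\psi_*^j)|^2$. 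Combined with the orthogonality from part~(ii), $\int_U\D(\psi_*):\D(\psi_\circ)=\int_U|\D(\psi_*)|^2$, the four terms assemble into $\int_U|\D(\psi_\circ)|^2-\int_U|\D(\psi_*)|^2=|U|J(U)$, which is precisely~\eqref{eq:equiv-equipart-prop}. The main technical care lies in verifying, in each of (a)--(d), that the tested field actually sits in the right test space (in particular that $\tilde\psi_\circ$ is in $H^1_0(U)^d$ despite being glued from pieces on adjacent subdomains, which uses that each $\psi_\circ(U^j)\in H^1_0(U^j)^d$).
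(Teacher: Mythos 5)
Your proof is correct and follows essentially the same route as the paper: part (i) by reference to the earlier comparison estimates, part (ii) by polarization plus the Euler--Lagrange orthogonality for $\psi_*(U)$, and subadditivity by gluing/restriction of competitors. For the identity \eqref{J.split} your reduction to the equipartition relation $\sum_j\int_{U^j}\D(\delta\psi(U)):\D(\delta\psi(U^j))=|U|J(U)$ is just an algebraic rearrangement of the paper's argument (which instead shows that the cross term $\sum_j\int_{U^j}\D(\delta\psi(U^j)-\delta\psi(U)):\D(\delta\psi(U))$ vanishes), and it rests on the same four Euler--Lagrange orthogonalities, so no substantive difference.
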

\begin{proof}
We split the proof into three steps.

\medskip

\step1 Proof of~(i).\\
We start with the proof of~\eqref{a++2}, that is, the comparison of $\Bt_{L,*},\Bt_{L,\circ}$ with the periodic approximation $\Bb_{L+2(1+\rho)}$.
First, we extend $\psi_\circ(U_L)$ by zero on $Q_{L+2(1+\rho)} \setminus U_L^-$, which makes it a  $Q_{L+2(1+\rho)}$-periodic function, and thus an admissible test function in~\eqref{eq:optim-psiL},
\begin{eqnarray*}
{\fint_{Q_{L+2(1+\rho)}} |\!\D(\psi_{L+2(1+\rho)})|^2} &\le& {\fint_{Q_{L+2(1+\rho)}} |\!\D(\psi_\circ(U_L))|^2} 
\\
&=& {\frac{|U_L|}{(L+2(1+\rho))^d}\fint_{U_L^-} |\!\D(\psi_\circ(U_L^-))|^2},
\end{eqnarray*}
which yields, in view of $\big|L^{-d}|U_L|-1\big| \lesssim L^{-1}$,
\begin{equation*}
{\fint_{Q_{L+2(1+\rho)}} |\!\D(\psi_{L+2(1+\rho)})|^2}
~\le~ { \fint_{U_L} |\!\D(\psi_\circ(U_L))|^2}(1+CL^{-1}) .
\end{equation*}
Second, as the restriction $\psi_{L+2(1+\rho)}|_{U_L}$ belongs to $\calH(U_L)$ and is thus an admissible test function in~\eqref{J2-sup}, we 
similarly obtain
\begin{eqnarray*}
{\fint_{U_L} |\!\D(\psi_*(U_L))|^2} &\le& {\fint_{U_L} |\!\D(\psi_{L+2(1+\rho)})|^2}\le{\frac{(L+2(1+\rho))^d}{|U_L|}\fint_{Q_{L+2(1+\rho)}} |\!\D(\psi_{L+2(1+\rho)})|^2}
\\
&\le& (1+CL^{-1}){ \fint_{Q_{L+2(1+\rho)}} |\!\D(\psi_{L+2(1+\rho)})|^2}.
\end{eqnarray*}
The claim~\eqref{a++2} follows from the combination of these two estimates with the following energy bounds, cf.~\eqref{eq:bnd-psi-L2},
\begin{equation}\label{e.energy-bd-lambda}
\expecM{\fint_{Q_{L+2(1+\rho)}} |\!\D(\psi_{L+2(1+\rho)})|^2}+\expecM{  \fint_{U_L} |\!\D(\psi_\circ(U_L))|^2}+\expecm{ |\!\D(\psi)|^2}\,\lesssim \, \lambda(\Pc).
\end{equation}
We turn to the proof of~\eqref{a++1}.
Since the restriction $\psi|_{U_L}$ belongs to $\calH(U_L)$ and is thus an admissible test function in~\eqref{J2-sup}, we find by stationarity of $\D(\psi)$,
\begin{eqnarray}
\expecM{\fint_{U_L} |\!\D(\psi_*(U_L))|^2} &\le& \expecM{\fint_{U_L} |\!\D(\psi)|^2} ~\le~ \expecM{\frac{L^d}{|U_L|}\fint_{Q_L} |\!\D(\psi)|^2}\nonumber
\\ &\le&(1+CL^{-1})\,\expec{|\!\D(\psi)|^2}.\label{e.useful-later1}
\end{eqnarray}
For the converse inequality, we appeal to a cut-and-paste argument. The starting point is the following convergence, cf.~\cite{DG-19},
$$\expec{|\!\D(\psi)|^2} \,=\, \lim_{k\uparrow \infty} \expecM{\fint_{U_{kL}} |\!\D(\psi_\circ(U_{kL}))|^2}.$$
Since $\tilde \psi_{\circ}(U_{kL}):=\sum_{j}  \psi_{\circ}(U_L(z_j)) \mathds1_{U_L(z_j)}$ belongs to $\calH_0(U_{kL})$, where $\{U_L(z_j)\}_j$ is a partition of $U_{kL}$,
we obtain for all $k$, by stationarity of $z\mapsto U_L(z)$,
\begin{eqnarray}
\expecM{\fint_{U_{kL}} |\!\D(\psi_\circ(U_{kL}))|^2} &\le &\sum_{j} \expecM{\frac{|U_L(z_j)|}{|U_{kL}|}\fint_{U_L(z_j)} |\!\D(\psi_\circ(U_L(z_j)))|^2} 
\nonumber
\\
&\le &(1+CL^{-1})\,\expecM{  \fint_{U_L} |\!\D(\psi_\circ(U_L))|^2}. \label{e.useful-later2}
\end{eqnarray}
The claim~\eqref{a++1} follows from the combination of these three properties with the above energy bounds \eqref{e.energy-bd-lambda}.

\medskip

\step2 Proof of~(ii).\\
By definition,
$$
J(U)\,=\,  \fint_{U} \D\big(\psi_\circ(U) -\psi_*(U)\big): \D\big(\psi_\circ(U)+\psi_*(U)\big).
$$
Since $\psi_\circ(U),\psi_*(U) \in\calH(U)$, the difference $\psi_\circ(U)-\psi_*(U)$ is a suitable test function for the Euler-Lagrange equation 
of the minimization problem~\eqref{J2-sup} defining $\psi_*(U)$, which yields
$$
 \int_{U} \D\big(\psi_\circ(U) -\psi_*(U)\big): \D (\psi_*(U))=0,
$$
and the claim~\eqref{J.differences1} follows.

\medskip
\step3 Proof of~(iii).\\
We start with the proof of \eqref{J.subadd}.
Since the minimization problem~\eqref{J1-sub} defines a subadditive set function due to the gluing property of $\calH_0(U)$,
and since the minimization problem~\eqref{J2-sup} defines a superadditive function due to the restriction property of $\calH(U)$, the function $J$ is subadditive 
as the difference of a subadditive and of a superadditive function.

\medskip\noindent
We turn to the proof of~\eqref{J.split}.
The starting point is~\eqref{J.differences1} for $U^j$, which yields
\begin{eqnarray}
|U^j|J(U^j)- \int_{U^j} |\!\D(\delta \psi(U))|^2&=& \int_{U^j} \D(\delta \psi(U^j)-\delta \psi(U)): \D(\delta \psi(U^j)+\delta \psi(U))\nonumber
\\
&=& \int_{U^j}| \D(\delta \psi(U^j)-\delta \psi(U))|^2 \nonumber
\\
&&+\,2\int_{U^j} \D(\delta \psi(U^j)-\delta \psi(U)): \D(\delta \psi(U)) .\label{J.+1}
\end{eqnarray}
We decompose the second right-hand side term into $2\sum_{k=1}^4 I_{k,j}$, in terms of
\begin{eqnarray*}
I_{1,j}&=&\int_{U^j} \D(\psi_\circ(U^j)-\psi_\circ(U)): \D(\psi_\circ(U)),\\
I_{2,j}&=&-\int_{U^j} \D(\psi_\circ(U^j)-\psi_\circ(U)): \D(\psi_*(U)),\\
I_{3,j}&=&\int_{U^j} \D(\psi_*(U)) : \D(\psi_\circ(U)-\psi_*(U)),\\
I_{4,j}&=&-\int_{U^j} \D(\psi_*(U^j)): \D(\psi_\circ(U)-\psi_*(U)).
\end{eqnarray*}
Since $\psi_\circ(U)|_{U^j},\psi_*(U)|_{U^j}\in \calH(U^j)$, the difference $(\psi_\circ(U)-\psi_*(U))|_{U^j}$ is a suitable test function for the Euler-Lagrange equation for $\psi_*(U^j)$, which yields $I_{4,j}=0$.
Likewise, since $\psi_\circ(U),\sum_j \psi_\circ(U^j)\mathds 1_{U^j} \in \calH_0(U)\subset \calH(U)$, we find both $\sum_j I_{1,j}=0$ and $\sum_j I_{2,j}=0$.
In addition, since $\psi_\circ(U),\psi_*(U) \in \calH(U)$, we find $\sum_j I_{3,j}=0$.
This entails
\[\sum_j  \int_{U^j} \D(\delta \psi(U^j)-\delta \psi(U)):\D(\delta \psi(U)) \,=\,0.\]
Summing~\eqref{J.+1} over $j$, inserting the above, and recalling the identity~\eqref{J.differences1}, the claim~\eqref{J.split} follows.
\end{proof}
For all $n \ge 0$ we set $U^n:=U_{3^n}$ and define the discrepancy
\begin{equation}\label{e.def-tau}
\tau_n := \expec{J(U^n)}-\expec{J(U^{n+1})}.
\end{equation}
In contrast with~\cite{AKM-book}, the set $U^n$ is now random, so that subbadditivity does not directly imply $\tau_n \ge 0$.
This is however true up to an error $O(3^{-n})$, as we briefly argue.
Choose a partition $\{U^n_j:=U_{3^n}(z_j)\}_j$ of the set $U^{n+1}$.
Taking the expectation of~\eqref{J.split} applied to this decomposition of $U^{n+1}$, we find
\begin{equation}\label{J+4} 
0\,\le\, \expecM{\sum_j\|\!\D(\delta \psi(U^{n+1})-\delta \psi(U_j^n))\|_{\Ld^2(U_j^n)}^2}
\,=\, \sum_j\expec{|U_j^n|(J(U_j^n)-J(U^{n+1}))},
\end{equation}
whereas by the deterministic bounds $| 3^d|U^n|-|U^{n+1}||\lesssim 3^{n(d-1)}$ and $J(U^n_j)\lesssim 1$
we have for some constant $C\simeq1$,
\begin{equation}\label{J+4bonus} 
 \sum_j\expec{|U_j^n|(J(U_j^n)-J(U^{n+1}))} \,\lesssim\, 3^{nd} \big(\expecm{J(U^n)}-\expecm{J(U^{n+1})}+C3^{-n}\big).
\end{equation}
The combination of \eqref{J+4} and \eqref{J+4bonus} yields the claim in form of 
\begin{equation}\label{J+5}
\bar \tau_n:= \tau_n+C 3^{-n} \ge 0.
\end{equation}

\medskip
The crux of the approach is the following control of the variance of averages of $\D(\delta \psi(U))$ in terms of $\tau_n$.
In view of Lemma~\ref{lem:J}, the proof is identical to that of \cite[Lemma~2.13]{AKM-book} (albeit in the $\alpha$-mixing version of \cite{Armstrong-Mourrat-16},
further arguing as in~\eqref{J+5} and absorbing the additional error term).

\begin{lem}\label{lem:AKM1}
There exist $C,\e>0$ (only depending on $d,\rho,\beta$) such that for all $n$,
\begin{equation*}
\var{\fint_{U^{n}}\D (\delta \psi(U^n)) } \,\le\, C3^{-\e n}+C \sum_{m=0}^n 3^{-\e (n-m)} \bar \tau_m.\qedhere
\end{equation*}
\end{lem}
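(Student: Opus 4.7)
The plan is to implement the multiscale variance estimate of \cite[Lemma~2.13]{AKM-book} in the $\alpha$-mixing version of \cite{Armstrong-Mourrat-16}, now that Lemma~\ref{lem:J} provides exactly the subadditive and orthogonality structure on which that proof relies. I expect the argument to split into three steps: a partition/orthogonality argument for the remainder, a mixing-based variance bound at each mesoscopic scale, and a geometric averaging over scales.

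First, I would fix a mesoscopic scale $m\le n$ and partition $U^n=\biguplus_{j=1}^{N}U^m_j$ with $N=3^{d(n-m)}$, where each subcube $U^m_j=U_{3^m}(z_j)$ sits at scale $3^m$. Setting $X_j:=\fint_{U^m_j}\D(\delta\psi(U^m_j))$ and $\bar X:=\fint_{U^n}\D(\delta\psi(U^n))$, I would write $\bar X=\sum_j\tfrac{|U^m_j|}{|U^n|}X_j+R_{m,n}$ and bound the remainder via Cauchy--Schwarz and the orthogonality identity~\eqref{J.split} by
\[
|R_{m,n}|^2\le \frac1{|U^n|}\sum_j\|\D(\delta\psi(U^n)-\delta\psi(U^m_j))\|_{\Ld^2(U^m_j)}^2=\frac1{|U^n|}\sum_j|U^m_j|\big(J(U^m_j)-J(U^n)\big).
\]
Taking expectation and absorbing the $O(3^{-m})$ mismatch between $|U^m_j|$ and $3^{md}$ exactly as in the derivation of \eqref{J+5}, this yields $\expec{|R_{m,n}|^2}\lesssim \sum_{k=m}^{n-1}\bar\tau_k$.

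Second, for the main term $\sum_j|U^m_j||U^n|^{-1}X_j$ I would appeal to the $\alpha$-mixing hypothesis. Since the random set $U^m_j$ depends on $\Ic$ only in an $O(1)$-neighborhood of $\partial Q_{3^m}(z_j)$, each $X_j$ is measurable with respect to $\sigma(\Ic|_{Q_{3^m+C}(z_j)})$, and is uniformly bounded by a deterministic constant in view of~\eqref{e.energy-bd-lambda}. Davydov's covariance inequality for $\alpha$-mixing then provides $|\operatorname{Cov}(X_j,X_k)|\lesssim\dist(U^m_j,U^m_k)^{-\beta}$ for $j\ne k$, and a standard lattice sum over the $N$ subcubes (distinguishing the regimes $\beta<d$ and $\beta\ge d$) yields
\[
\var{\frac{1}{|U^n|}\sum_j|U^m_j|X_j}\,\lesssim\, 3^{-d(n-m)}+3^{-(\beta\wedge d)n}.
\]

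Finally, combining the two bounds via $\var{\bar X}\le 2\var{\text{main}}+2\expec{|R_{m,n}|^2}$ yields, for every $m\le n$,
\[
\var{\bar X}\,\lesssim\, 3^{-d(n-m)}+3^{-(\beta\wedge d)n}+\sum_{k=m}^{n-1}\bar\tau_k.
\]
Averaging this inequality against the normalized geometric weights $w_m:=c\,3^{-\e(n-m)}$ for any fixed $\e\in(0,\beta\wedge d)$, the double sum rearranges as $\sum_m w_m\sum_{k\ge m}\bar\tau_k=\sum_k\bar\tau_k\sum_{m\le k}w_m\lesssim\sum_k\bar\tau_k\,3^{-\e(n-k)}$, while the two deterministic error terms contribute $O(3^{-\e n})$; this delivers the claimed inequality. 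The one subtlety (already handled in~\cite{Armstrong-Mourrat-16}) is to propagate the randomness of the sets $U^m_j$ through the mixing covariance bound, but since this dependence is localized to a boundary layer of thickness $O(1)$, it only affects the already-subdominant mismatch terms.
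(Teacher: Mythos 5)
Your Steps 1 and 2 are sound and are indeed the two ingredients of the argument in \cite[Lemma~2.13]{AKM-book}: the orthogonality identity \eqref{J.split} controls $\expec{|R_{m,n}|^2}$ by $\sum_{k=m}^{n-1}\bar\tau_k$, and the covariance inequality for $\alpha$-mixing handles the off-diagonal pairs. The gap is in the final combination. Writing $V_n:=\var{\fint_{U^n}\D(\delta\psi(U^n))}$, your two steps give, for each $m\le n$,
\begin{equation*}
V_n\,\lesssim\,3^{-d(n-m)}+3^{-(\beta\wedge d)n}+\sum_{k=m}^{n-1}\bar\tau_k,
\end{equation*}
and you then average over $m$ against $w_m\propto 3^{-\e(n-m)}$. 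But
$\sum_{m=0}^n w_m\,3^{-d(n-m)}\simeq(1-3^{-\e})\sum_{l=0}^n3^{-(\e+d)l}\simeq 1-3^{-\e}$
is of order one, not $O(3^{-\e n})$: the average is dominated by $m$ close to $n$, where the partition is trivial and your intermediate inequality vacuous. The defect is not merely one of bookkeeping: the family of intermediate inequalities above is genuinely weaker than the lemma. If, say, $\bar\tau_k\simeq a$ for all $0\le k\le n$ (consistent with $\sum_k\bar\tau_k\lesssim1$ as long as $n\lesssim a^{-1}$), the best they yield is $V_n\lesssim\min_m\big(3^{-d(n-m)}+(n-m)a\big)\simeq a|\!\log a|$, whereas the lemma asserts $V_n\lesssim 3^{-\e n}+a$ once $n\gg|\!\log a|$. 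No choice of weights or of a single mesoscale $m$ can close this.

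The missing idea is that the diagonal and nearest-neighbour contributions to $\var{|U^n|^{-1}\sum_j|U^m_j|X_j}$ must not be estimated by the crude deterministic bound $\|X_j\|_{\Ld^\infty}\lesssim1$, but kept as $\var{X_j}=V_m$, the same quantity at scale $m$ (by stationarity of $z\mapsto U_L(z)$ and of $\Ic$). Cauchy--Schwarz on the $O(N)$ pairs at distance $O(1)$ and your mixing bound on the far pairs then give
\begin{equation*}
V_n\,\le\,C3^{-d(n-m)}V_m+C3^{-\e_0 n}+C\sum_{k=m}^{n-1}\bar\tau_k,
\end{equation*}
which, upon fixing $l$ so large that $C3^{-dl}\le\frac12$ and taking $m=n-l$, is a contractive recursion $V_n\le\frac12V_{n-l}+C3^{-\e_0 n}+C\sum_{k=n-l}^{n-1}\bar\tau_k$. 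Iterating down to scale $O(1)$ (where $V_{n_0}\lesssim1$ by the energy bound \eqref{e.energy-bd-lambda}) produces the weights $3^{-\e(n-m)}$ in front of the $\bar\tau_m$ from the contraction factor $2^{-i}=3^{-\e_1 il}$ itself, and the term $C3^{-\e n}$ from the geometric decay of the iterates; this recursive structure is how the cited proof actually proceeds. Your estimates for $\expec{|R_{m,n}|^2}$ and for the far-pair covariances can be reused verbatim inside this recursion.
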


Recall the following version of Korn's inequality: for any bounded domain $D\subset\R^d$, for all divergence-free fields $v\in\Ld^2(D)$, we have
$$
\inf_{\kappa\in \R^d\atop\Theta \in \Md^{\Skew}} \int_{D} |v(x)-\kappa-\Theta x|^2\,dx ~\lesssim_D~\|\!\D(v)\|_{H^{-1}(D)}^2,
$$
where the multiplicative constant only depends on the regularity of $D$.
In contrast with Poincaré's inequality, the infimum over $\Theta\in\Md^\Skew$ allows to have the symmetrized gradient in the right-hand side instead of the full gradient.
By the so-called multiscale Poincar\'e inequality in~\cite[Proposition~1.12]{AKM-book}, using the above Korn inequality instead of~\cite[Lemma~1.13]{AKM-book}, Lemma~\ref{lem:AKM1} yields the following estimate as in~\cite[Lemma~2.15]{AKM-book}. This is  simpler than the statement in~\cite{AKM-book} since there is no convex duality involved.
\begin{lem}\label{lem:AKM2}
There exist $C,\e>0$ (only depending on $d,\rho,\beta$) such that for all $n$,
\begin{equation*}
\expecM{\inf_{\kappa\in \R^d\atop\Theta \in \Md^{\Skew}}\fint_{U^{n+1}}|\delta \psi(U^{n+1})(x)-\kappa-\Theta x|^2\,dx} \,\le\, C3^{2n}\Big(3^{-\e n}  + \sum_{m=0}^n 3^{-\e (n-m)} \bar \tau_m\Big).\qedhere
\end{equation*}
\end{lem}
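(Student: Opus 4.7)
The plan is to follow the template of \cite[Lemma~2.15]{AKM-book}, adapted to the Stokes setting by replacing Poincar\'e's inequality with Korn's inequality. First, I apply the multiscale Poincar\'e inequality of \cite[Proposition~1.12]{AKM-book} to the divergence-free field $\delta\psi(U^{n+1})$: this controls $\inf_\kappa \fint_{U^{n+1}} |\delta\psi(U^{n+1}) - \kappa|^2$ by $3^{2n}$ times a weighted sum over dyadic scales $m\in \{0,\dots,n\}$ of spatial averages of $\nabla\delta\psi(U^{n+1})$ over sub-cubes of size $3^m$. The novelty in the Stokes setting is that we only have access to $\D(\delta\psi)$ rather than $\nabla\delta\psi$. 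To bridge this gap, I invoke Korn's inequality as recalled just before Lemma~\ref{lem:AKM2}: by subtracting a general rigid motion $\kappa+\Theta x$ (rather than just a constant~$\kappa$), the $\dot H^{-1}$ norm of the full gradient may be replaced by that of the symmetrized gradient, which explains the infimum over $(\kappa,\Theta)\in\R^d\times\Md^\Skew$ in the statement.

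Second, I reduce the spatial average bound at each dyadic scale $m$ to the variance estimate of Lemma~\ref{lem:AKM1} via a two-step comparison. For a partition $\{U^m(z_j)\}_j$ of $U^{n+1}$, the subadditivity identity~\eqref{J.split} of Lemma~\ref{lem:J}(iii) yields, after taking expectation and summing a telescoping chain of partitions from scale $m$ up to scale $n+1$,
\begin{equation*}
\sum_j \expecM{\int_{U^m(z_j)} |\D(\delta\psi(U^{n+1}) - \delta\psi(U^m(z_j)))|^2} \,\lesssim\, 3^{(n+1)d}\sum_{k=m}^n \bar\tau_k,
\end{equation*}
where the lower-order $O(3^{-k})$ corrections of~\eqref{J+5} are absorbed into $\bar\tau_k$. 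This allows me to replace each local average $\fint_{U^m(z_j)} \D(\delta\psi(U^{n+1}))$ by $\fint_{U^m(z_j)} \D(\delta\psi(U^m(z_j)))$, up to an error controlled by $\sum_{k=m}^n \bar\tau_k$. The remaining spatial averages involve independently-defined local correctors, so Lemma~\ref{lem:AKM1} applies directly at scale $m$, giving a bound in terms of $3^{-\e m} + \sum_{k=0}^m 3^{-\e (m-k)}\bar\tau_k$.

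Finally, combining these ingredients and summing over dyadic scales, the weights from the multiscale Poincar\'e inequality interact with the geometric factor $3^{-\e(m-k)}$ from Lemma~\ref{lem:AKM1} and with the telescoping subadditivity errors to produce a double sum that resums, via a standard geometric resummation lemma, into the claimed expression $3^{2n}\big(3^{-\e n}+\sum_{m=0}^n 3^{-\e(n-m)}\bar\tau_m\big)$, possibly at the cost of slightly reducing the value of $\e$. The main obstacle is the bookkeeping in this last step: one must verify that the subadditivity defects produced at intermediate scales combine with the variance kernel of Lemma~\ref{lem:AKM1} to yield a \emph{single} geometric kernel $3^{-\e(n-m)}$, without spurious logarithmic factors. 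This is a routine application of the exchange-of-summation trick used in \cite{AKM-book}, and is in fact technically cleaner here since the simpler definition~\eqref{e.J} of $J$ avoids the convex duality arguments needed in the original reference.
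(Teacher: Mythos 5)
Your overall route (Korn's inequality to pass to $\|\D(\delta\psi)\|_{H^{-1}}$, the multiscale Poincar\'e inequality to reduce to spatial averages of $\D(\delta\psi)$ on dyadic subcubes, comparison with local minimizers via \eqref{J.split}, then Lemma~\ref{lem:AKM1} and a geometric resummation) is exactly the route the paper points to, but there is a genuine gap at the step where you invoke Lemma~\ref{lem:AKM1}: that lemma bounds the \emph{variance} of $\fint_{U^m}\D(\delta\psi(U^m))$, whereas the multiscale Poincar\'e inequality requires the full \emph{second moment} of the local averages, and the missing squared mean does not decay. Indeed, $\fint_U\D(\psi_\circ(U))=0$ exactly since $\psi_\circ(U)\in H^1_0(U)^d$, while for the free-boundary minimizer the direction $\phi'=\psi_*(U)+Ex$ is admissible in the Euler--Lagrange equation of \eqref{J2-sup} (it satisfies $\Div\phi'=\Tr E=0$ and $\D(\phi')=0$ on $\Ic\cap U$), which yields
\begin{equation*}
E:\fint_U\D(\psi_*(U))\,=\,-\fint_U|\!\D(\psi_*(U))|^2\,\le\,-\frac{|\Ic\cap U|}{|U|},
\end{equation*}
using $|E|=1$ and the rigidity constraint on the inclusions. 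Hence $\big|\expecm{\fint_{U^m}\D(\delta\psi(U^m))}\big|$ is bounded below by the volume fraction of the suspension, uniformly in $m$. The top scales $m\simeq n$ of the multiscale sum therefore contribute a term of order $3^{2n}$ times a positive constant, which is not of the claimed form $3^{2n}\big(3^{-\e n}+\sum_m 3^{-\e(n-m)}\bar\tau_m\big)$, whose $3^{-2n}$-normalized version tends to zero because $\sum_m\bar\tau_m<\infty$.

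This is not a bookkeeping issue that the resummation can absorb. Testing $\fint_{U^{n+1}}\D(w)$ against a cutoff shows that for $w=\delta\psi(U^{n+1})-\kappa-\Theta x$ one has $\fint_{U^{n+1}}|w|^2\gtrsim 3^{2n}\big(|\fint_{U^{n+1}}\D(\delta\psi(U^{n+1}))|-C\,J(U^{n+1})^{1/2}\big)_+^2$, and the nonzero mean slope of $\D(\delta\psi)$ computed above is a \emph{symmetric} matrix, so it cannot be removed by the infimum over rigid motions $\kappa+\Theta x$; the left-hand side of the lemma is thus itself bounded below by $c\,3^{2n}$ with $c>0$ depending on the volume fraction. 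In \cite[Lemma~2.15]{AKM-book} this is precisely what the convex-duality normalization handles: the affine map whose slope is the (pinned) expectation of $\fint\nabla v$ is subtracted before the multiscale Poincar\'e inequality is applied. Here that slope is not pinned by the scalar quantity $J$ of \eqref{e.J}: combining \eqref{J.differences1} with Jensen's inequality and the identity above even gives $\expec{J(U^n)}\ge\expecm{|\fint_{U^n}\D(\delta\psi(U^n))|^2}\ge\big(\E[|\Ic\cap U^n|]/|U^n|\big)^2$, so $\expec{J(U^n)}$ does not tend to zero, which is inconsistent with the conclusion this lemma is meant to feed into (Lemma~\ref{lem:AKM3}). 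The gap in your write-up therefore points to a real obstruction in this formulation — the Neumann-type minimizer $\psi_*(U)$ converges to the \emph{dual} energy rather than to the Dirichlet one — and not merely to a missing detail; the same issue is present in the paper's own one-sentence proof.
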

Next, we deduce the following estimate on $J$ as in~\cite[Lemma~2.16]{AKM-book} by means of the Caccioppoli inequality.
As the latter inequality in the present Stokes context involves the pressure, the proof slightly differs from~\cite{AKM-book} and is included below.
\begin{lem}\label{lem:AKM3}
There exist $C,\e>0$ (only depending on $d,\rho,\beta$) such that for all $n$,
\[
\expec{J(U^n)}\,\le\,C3^{-\e n}+C\sum_{m=0}^n 3^{-\e(n-m)} \bar \tau_m.\qedhere
\]
\end{lem}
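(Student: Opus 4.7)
The strategy parallels \cite[Lemma~2.16]{AKM-book}: apply a Caccioppoli inequality to the discrepancy $\delta\psi(U^{n+1})$ to control its $\D$-norm on the central sub-cube $U^n\subset U^{n+1}$ by its $\Ld^2$-norm on $U^{n+1}$, then invoke Lemma~\ref{lem:AKM2}, and finally transfer the resulting estimate to $\expec{J(U^n)}$ via the subadditive identity~\eqref{J.split}. The Caccioppoli step is where the Stokes structure requires genuine adaptation and constitutes the main obstacle.

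The first goal is an estimate of the form
\begin{equation*}
\int_{U^n}|\!\D(\delta\psi(U^{n+1}))|^2 \,\lesssim\, 3^{-2n}\inf_{\kappa\in\R^d,\,\Theta\in\Md^\Skew}\int_{U^{n+1}}|\delta\psi(U^{n+1}) - \kappa - \Theta x|^2.
\end{equation*}
Recall that $\delta\psi(U^{n+1})=\psi_\circ(U^{n+1})-\psi_*(U^{n+1})$ satisfies a homogeneous Stokes system in $U^{n+1}\setminus\Ic$, is rigid inside every inclusion, and has zero net force and torque on every inclusion boundary; the same holds for $\delta\psi(U^{n+1})-\kappa-\Theta x$ since rigid motions are compatible with these constraints. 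The estimate should follow from testing this homogeneous system against a suitable modification of $\eta^2(\delta\psi(U^{n+1})-\kappa-\Theta x)$, where $\eta$ is a smooth cutoff supported in $U^{n+1}$, equal to $1$ on $U^n$, and with $|\nabla\eta|\lesssim 3^{-n}$. The Stokes-specific difficulty is that the test function must simultaneously be divergence-free (to eliminate the pressure) and preserve the rigidity constraint on every inclusion: this will be arranged by first redefining $\eta$ to be locally constant on every inclusion intersecting the transition layer (which the hardcore separation in~\ref{Hd} makes possible without spoiling the gradient bound), and then applying Bogovskii's operator on the resulting annular region to absorb the divergence produced by the cutoff. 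Taking expectation of this Caccioppoli bound, dividing by $|U^n|\simeq3^{nd}$, and invoking Lemma~\ref{lem:AKM2}, I obtain
\begin{equation*}
\expecM{\fint_{U^n}|\!\D(\delta\psi(U^{n+1}))|^2} \,\lesssim\, 3^{-\e n}+\sum_{m=0}^n 3^{-\e(n-m)}\bar\tau_m.
\end{equation*}

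To transfer this into a bound on $\expec{J(U^n)}$, I will center $U^n$ at the middle of $U^{n+1}$ and use the triangle inequality
\begin{equation*}
\|\!\D(\delta\psi(U^n))\|_{\Ld^2(U^n)}^2 \,\le\, 2\|\!\D(\delta\psi(U^{n+1}))\|_{\Ld^2(U^n)}^2+2\|\!\D(\delta\psi(U^n)-\delta\psi(U^{n+1}))\|_{\Ld^2(U^n)}^2.
\end{equation*}
The first term on the right-hand side is controlled by the previous display. For the second, summing the identity~\eqref{J.split} applied to the partition $\{U^n_j\}_j$ of $U^{n+1}$ and combining with \eqref{J+4bonus} yields $\sum_j\expec{\|\!\D(\delta\psi(U^n_j)-\delta\psi(U^{n+1}))\|_{\Ld^2(U^n_j)}^2}\lesssim |U^{n+1}|\bar\tau_n$, from which the single-term bound at the central sub-cube is $\lesssim|U^n|\bar\tau_n$. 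Since $\expec{J(U^n)}$ does not depend on the position of $U^n$ by stationarity of $\Ic$, dividing by $|U^n|$ and absorbing $\bar\tau_n$ into the weighted sum as the $m=n$ term then yields the conclusion.
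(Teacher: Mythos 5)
Your transfer step (the triangle inequality at the central sub-cube combined with \eqref{J.split}--\eqref{J+4bonus}, then Lemma~\ref{lem:AKM2}) is sound and essentially matches what the paper does in~\eqref{e.30.01}. The gap is in the Caccioppoli step, which you assert in its clean, pressure-free form
\[
\int_{U^n}|\!\D(\delta\psi(U^{n+1}))|^2\,\lesssim\,3^{-2n}\inf_{\kappa\in\R^d,\,\Theta\in\Md^\Skew}\int_{U^{n+1}}|\delta\psi(U^{n+1})-\kappa-\Theta x|^2.
\]
Your construction of an admissible test function (cutoff made locally constant on inclusions, Bogovskii correction $w$ to restore the divergence-free constraint) is plausible, but when you test the equation with it, the correction contributes a term bounded only by
\[
\Big|\int\D(\delta\psi(U^{n+1})):\D(w)\Big|\,\lesssim\,\|\!\D(\delta\psi(U^{n+1}))\|_{\Ld^2(A)}\cdot 3^{-n}\,\|\delta\psi(U^{n+1})-\kappa-\Theta x\|_{\Ld^2(A)},
\]
where $A$ is the transition layer on which the cutoff $\eta$ is strictly between $0$ and $1$. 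After Young's inequality you are left with a constant multiple of $\int_A|\!\D(\delta\psi(U^{n+1}))|^2$, which is \emph{not} dominated by the weighted energy $\int\eta^2|\!\D(\delta\psi(U^{n+1}))|^2$ appearing on the left-hand side; it is of the same order as the full energy $|U^{n+1}|J(U^{n+1})$, so a single test-function computation cannot absorb it. This is precisely why the proof in the paper does not establish the clean inequality: it keeps the pressure term with a tunable weight $K^{-2}$, cf.~\eqref{eq:bnd-Cacc}, bounds the pressure by $J(U^{n+1})$ via the local pressure estimate~\eqref{eq:bnd-Cacc++}, converts $\expec{J(U^{n+1})}$ into $\expec{J(U^n)}+3^{-n}$ using $\bar\tau_n\ge0$, and only absorbs the resulting $K^{-2}\expec{J(U^n)}$ at the very end, in the inequality for $\expec{J(U^n)}$ itself, by choosing $K$ large. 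The absorption is possible at the level of the lemma's conclusion, not at the level of the Caccioppoli inequality, and this is the whole content of the adaptation to the Stokes setting.

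The standard rescue for a pressure-free Caccioppoli, namely hole-filling and iteration over intermediate radii, is also not immediate here: the cutoff must remain constant on each unit-size inclusion crossing the transition layer, so consecutive radii cannot be taken closer than the particle scale; a finite iteration between $3^n$ and $3^{n+1}$ then leaves either an unabsorbed fixed fraction of the total energy (bounded number of steps) or $n$-dependent losses in the prefactors (number of steps growing with $n$), neither of which yields the inequality you state, though a weakened version might still suffice for the lemma with extra bookkeeping. As written, the crux of the proof is missing: either adopt the paper's $K$-weighted pressure route (your remaining steps then go through, and your triangle-inequality transfer is a marginally simpler substitute for~\eqref{e.30.01}), or supply a genuine argument for the divergence/pressure term.
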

\begin{proof}
Caccioppoli's inequality in form of e.g.~\cite[Section~4.4, Step~1]{DG-21b} yields for all $K\ge1$, for any constants $c\in\R$, $\kappa\in\R^d$, and $\Theta\in\Md^\Skew$,
\begin{multline}\label{eq:bnd-Cacc}
\fint_{U^{n}}|\!\D( \delta \psi(U^{n+1}))|^2
\,\lesssim\,K^23^{-2n}\fint_{U^{n+1}}|\delta \psi(U^{n+1})(x)-\kappa-\Theta x|^2dx\\
+K^{-2}\fint_{U^{n+1}}|\delta\Sigma(U^{n+1})-c|^2\mathds1_{\R^d\setminus\Ic},
\end{multline}
where $\delta \Sigma(U^{n+1})$ is the difference of the pressures associated with $\psi_\circ(U^{n+1})$, $\psi_*(U^{n+1})$.
Appealing to a local pressure estimate in form of e.g.~\cite[Lemma~3.3]{DG-21b}, and recalling Lemma~\ref{lem:J}(ii), we find
\begin{equation}\label{eq:bnd-Cacc++}
\inf_{c\in\R}\fint_{U^{n+1}}|\delta\Sigma(U^{n+1})-c|^2\mathds1_{\R^d\setminus\Ic}\,\lesssim\,\fint_{U^{n+1}}|\!\D(\delta\psi(U^{n+1}))|^2\,=\,J(U^{n+1}).
\end{equation}
Taking the infimum over $c,\kappa,\Theta$ in~\eqref{eq:bnd-Cacc}, taking the expectation, inserting~\eqref{eq:bnd-Cacc++},
and using Lemma~\ref{lem:AKM2}, we obtain
\begin{equation*}
\expecM{\fint_{U^{n}}|\!\D( \delta \psi(U^{n+1}))|^2}
\,\lesssim\,K^2\Big(3^{-\e n}  + \sum_{m=0}^n 3^{-\e (n-m)} \bar \tau_m\Big)
+K^{-2}\expec{J(U^{n+1})},
\end{equation*}
and thus, in view of~\eqref{J+5},
\begin{equation}\label{eq:preconcl-decJ}
\expecM{\fint_{U^{n}}|\!\D( \delta \psi(U^{n+1}))|^2}
\,\lesssim\,K^2\Big(3^{-\e n}  + \sum_{m=0}^n 3^{-\e (n-m)} \bar \tau_m\Big)
+K^{-2}(\expec{J(U^n)}+3^{-n}).
\end{equation}
Next, we argue that 
\begin{equation}\label{e.30.01}
\expec{J(U^n)}\,\lesssim\, \expecM{\fint_{U^{n}}|\!\D( \delta \psi(U^{n+1}))|^2}+\bar \tau_n.
\end{equation}
For that purpose, we first note that the definition of $J$ yields
\begin{eqnarray*}
\lefteqn{\expec{J(U^n)}-\expecM{\fint_{U^{n}}|\!\D( \delta \psi(U^{n+1}))|^2}}
\\
&=&\expecM{\fint_{U^n} \D(\delta \psi(U^n)-\delta\psi(U^{n+1})): \D(\delta \psi(U^n)+\delta\psi(U^{n+1}))}
\\
&\lesssim& \expecM{\fint_{U^n} |\!\D(\delta \psi(U^n)-\delta\psi(U^{n+1}))|^2}^\frac12 \Big( \expec{J(U^n)}+\expec{J(U^{n+1})}\Big)^\frac12.
\end{eqnarray*}
In order to control the first factor, we appeal to~\eqref{J+4} and~\eqref{J+4bonus} in form of 
\[\expecM{\sum_j\|\!\D(\delta \psi(U_j^n)-\delta \psi(U^{n+1}))\|_{\Ld^2(U_j^n)}^2} \, \lesssim \, 3^{nd} \bar\tau_n.\]
Further using the definition~\eqref{J+5} of $\bar \tau_n$ to reformulate the second factor, we deduce
\[\expec{J(U^n)}-\expecM{\fint_{U^n}|\!\D(\delta \psi(U^{n+1}))|^2}\,\lesssim\,(\bar\tau_n)^\frac12 \big(\expec{J(U^n)}+\bar \tau_n\big)^\frac12,\]
and the claim~\eqref{e.30.01} follows.

\medskip\noindent
Choosing $K\simeq1$ large enough, \eqref{eq:preconcl-decJ} and~\eqref{e.30.01} combine to 
\begin{equation*}
\expec{J(U^n)}\,\lesssim\,\expecM{\fint_{U^{n}}|\!\D( \delta \psi(U^{n+1}))|^2}+\bar\tau_n
\,\lesssim\,3^{-(\e\wedge1) n}  + \sum_{m=0}^n 3^{-\beta (n-m)} \bar \tau_m,
\end{equation*}
and the conclusion follows.
\end{proof}

We may now proceed to the proof of Proposition~\ref{prop:conv-BL}, which follows from Lemma~\ref{lem:AKM3} by iteration. 
\begin{proof}[Proof of Proposition~\ref{prop:conv-BL}]
Set $F_n:=3^{-\frac12\e  n}\sum_{m=0}^n 3^{\frac12\e m} \expec{J(U^m)}$.
In terms of $\tau_n$, recognizing a telescoping sum, we find
\begin{eqnarray*}
F_n-F_{n+1} &=&3^{-\frac12\e n}\sum_{m=0}^{n} 3^{\frac12\e m} \expec{J(U^m)}  -3^{-\frac12\e  (n+1)}\sum_{m=0}^{n+1} 3^{\frac12\e m} \expec{J(U^m)} 
\\
&=&3^{-\frac12\e  n}\sum_{m=0}^{n} 3^{\frac12\e m} \tau_m-3^{-\frac12 \e (n+1)} \expec{J(U^{0})},
\end{eqnarray*}
and thus, using~\eqref{J+5} and $\expec{J(U^0)}\lesssim 1$,
\begin{equation}\label{eq:preconcl-diffFnn1}
F_n-F_{n+1}
\,\ge \,3^{-\frac12\e  n}\sum_{m=0}^{n} 3^{\frac12\e m} \bar \tau_m-C 3^{-\frac12 \e n}.
\end{equation}
Similarly, we find
\begin{eqnarray*}
F_{n+1} &\le&3^{-\frac12 \e (n+1)} \sum_{m=1}^{n+1} 3^{\frac12\e m} \expec{J(U^{m})}+C3^{-\frac12 \e (n+1)}\\
&\le &  3^{-\frac12 \e (n+1)} \sum_{m=1}^{n+1} 3^{\frac12\e m}\big(\expec{J(U^{m-1})}+C 3^{-(m-1)}\big)+C3^{-\frac12 \e (n+1)}
\\
&\le &F_n+C3^{-\frac12 \e n},
\end{eqnarray*}
which, by Lemma~\ref{lem:AKM3}, turns into 
\begin{eqnarray*}
F_{n+1}
&\le&C3^{-\frac12\e  n}\sum_{m=0}^n 3^{\frac12\e  m}\Big(3^{-\e m}+\sum_{k=0}^m 3^{-\e(m-k)} \bar \tau_k\Big)+C3^{-\frac12 \e n}
\\
&\le&C3^{-\frac12\e  n}+C3^{-\frac12\e  n} \sum_{m=0}^nC3^{\frac12\e  m} \bar \tau_m.
\end{eqnarray*}
Combining this with~\eqref{eq:preconcl-diffFnn1}, we obtain
\[F_{n+1} \,\le\, C(F_n-F_{n+1})+C3^{-\frac12\e  n},\]
and thus
\[F_{n+1} \le \frac{C}{C+1} ( F_n +3^{-\frac12\e  n}).\]
By iteration, this yields $F_n \le C3^{-\gamma n}$ for some $\gamma>0$, and thus $\expec{J(U^n)} \le C3^{-\gamma n}$ and $\tau_n \le C3^{-\gamma n}$.
Since $\expec{J(U^n)}=E:(\Bt_{3^n,\circ}-\Bt_{3^n,*}) E$, this implies
\begin{equation}\label{eq:convBL-0}
0\,\le\, E:(\Bt_{L,\circ}-\Bt_{L,*}) E \,\le\, L^{-\gamma}.
\end{equation}
Combined with Lemma~\ref{lem:J}(i), this yields the conclusion.
\end{proof}

\section*{Acknowledgements}
The authors thank David G\'erard-Varet, Richard Höfer, and Jules Pertinand for comments on previous versions of this work.
MD acknowledges financial support from the CNRS-Momentum program and from F.R.S.-FNRS,
and AG from the European Research Council (ERC) under the European Union's Horizon 2020 research and innovation programme (Grant Agreement n$^\circ$~864066).

\bibliographystyle{plain}
\bibliography{biblio}

\end{document}